\definecolor{rouge}{rgb}{0.7,0,0}
\definecolor{bleu}{rgb}{0,0,0.7}
\long\def\nnfoottext#1{\insert\footins{\footnotesize
    \interlinepenalty\interfootnotelinepenalty
    \splittopskip\footnotesep
    \splitmaxdepth \dp\strutbox \floatingpenalty \@MM
    \hsize\columnwidth \@parboxrestore
   \edef\@thefnmark{}
   \edef\@currentlabel{}\@makefntext
    {\rule{\z@}{\footnotesep}\ignorespaces
      #1\strut}}}
\def\multirowsetup{\raggedright} 
\def\multirow#1{\relax\@ifnextchar
  [{\@multirow{#1}}{\@multirow{#1}[0]}}
\def\@multirow#1[#2]#3{\@ifnextchar [{\@xmultirow{#1}[#2]{#3}}%
  {\@xmultirow{#1}[#2]{#3}[0pt]}}
\def\@xmultirow#1[#2]#3[#4]#5{\@tempcnta=#1%
  \@tempdima\@tempcnta\ht\@arstrutbox
  \advance\@tempdima\@tempcnta\dp\@arstrutbox
  \ifnum\@tempcnta<0\@tempdima=-\@tempdima\fi
  \advance\@tempdima#2\bigstrutjot
  \if*#3\setbox0\vtop to \@tempdima{\vfill\multirowsetup
    \hbox{\strut#5\strut}\vfill}%
  \else
      \setbox0\vtop to \@tempdima{\hsize#3\@parboxrestore
                \vfill \multirowsetup \strut#5\strut\par\vfill}%
  \fi
  \ht0\z@\dp0\z@
  \ifnum\@tempcnta<0\advance\@tempdima-\dp\@arstrutbox
  \else\@tempdima=\ht\@arstrutbox
    \ifnum#2>0 \advance\@tempdima\bigstrutjot \fi
  \fi
  \advance\@tempdima#4\relax\leavevmode\vtop{\vskip-\@tempdima\box0\vss}}
\numberwithin{equation}{section}
\newtheorem{thm}{Theorem}[subsection]
\newtheorem{prop}[thm]{Proposition}
\newtheorem{lm}[thm]{Lemma}
\newtheorem{cor}[thm]{Corollary}
\newtheorem{claim}[thm]{Claim}
\theoremstyle{definition}
\newtheorem{defi}[thm]{Definition}
\newtheorem*{cons}{Consequence}
\newtheorem{Rq}[thm]{Remark}
\newtheorem{Rqs}[thm]{Remarks}
\theoremstyle{remark}
\newtheorem*{remark}{Remark}
\newtheorem*{exemple}{Example}
\newtheorem*{mercis}{Acknowledgments}
\theoremstyle{remark}
\theoremstyle{plain}
\def\ad{\operatorname {ad}}
\DeclareMathOperator{\im}{im}
\DeclareMathOperator{\rk}{rk}
\def\Hom{\operatorname {Hom}}
\def\Aut{\operatorname {Aut}}
\def\GL{\operatorname {GL}}
\def\SO{\operatorname {SO}}
\def\Ort{\operatorname {O}}
\def\Sp{\operatorname {Sp}}
\DeclareMathOperator{\spec}{\mathsf{sp}}
\def\Lie{\operatorname {Lie}}
\DeclareMathAlphabet{\calptmx}{OMS}{ztmcm}{m}{n}
\newcommand{\K}{{\Bbbk}}
\newcommand{\Z}{\mathbb{Z}}
\newcommand{\N}{\mathbb{N}}
\newcommand{\LL}{\mathbb{L}}
\newcommand{\Q}{\mathbb{Q}}
\newcommand{\g}{\mathfrak{g}}
\newcommand{\h}{\mathfrak{h}}
\newcommand{\kk}{\mathfrak{k}}
\newcommand{\af}{\mathfrak{a}}
\newcommand{\df}{\mathfrak{d}}
\newcommand{\ff}{\mathfrak{f}}
\newcommand{\pp}{\mathfrak{p}}
\newcommand{\qq}{\mathfrak{q}}
\newcommand{\bb}{\mathfrak{b}}
\newcommand{\lf}{\mathfrak{l}}
\newcommand{\rf}{\mathfrak{r}}
\newcommand{\nf}{\mathfrak{n}}
\newcommand{\cc}{\mathfrak{c}}
\newcommand{\wfr}{\mathfrak{w}}
\newcommand{\tf}{\mathfrak{t}}
\newcommand{\zz}{\mathfrak{z}}
\newcommand{\gl}{\mathfrak{gl}}
\newcommand{\sld}{\mathfrak{sl}_2}
\newcommand{\fsl}{\mathfrak{sl}}
\newcommand{\sln}{\mathfrak{sl}}
\newcommand{\so}{\mathfrak{so}}
\newcommand{\spn}{\mathfrak{sp}}
\newcommand{\Sf}{\mathfrak{S}}
\newcommand{\Striplet}{$\mathfrak{sl}_{2}$-triple\xspace}
\newcommand{\Od}{\mathcal{O}}
\newcommand{\sS}{\mathcal S}
\newcommand{\pr}{\mathrm{pr}}
\newcommand{\Id}{\mathrm{Id}}
\newcommand{\lnq}{<}
\newcommand{\gnq}{>}
\newcommand{\Kt}{G^{\theta}}
\newcommand{\bolda}{\boldsymbol{\lambda}}
\newcommand{\cpps}{\cc_{\pp}(\pp^s)^{\bullet}}
\newcommand{\cggs}{\cc_{\g}(\g^s)^{\bullet}}
\newcommand{\cpp}[1]{\cc_{\pp}(\pp^{#1})^{\bullet}}
\newcommand{\cppnb}[1]{\cc_{\pp}(\pp^{#1})}
\newcommand{\cggnb}[1]{\cc_{\g}(\g^{#1})}
\newcommand{\cppsnb}{\cc_{\pp}(\pp^s)}
\newcommand{\cggsnb}{\cc_{\g}(\g^s)}
\newcommand{\XP}{X_{\pp}}
\newcommand{\nonv}{\neq\emptyset}
\newcommand{\Sppm}{\spec_{\pm}}
\newcommand{\qmod}{{/\!\!/}}
\newcommand{\sminus}{\smallsetminus}
\newcommand{\nempty}{{\neq \emptyset}}
\newcommand{\ttheta}{\tilde{\theta}}
\newcommand{\rtt}{\mathtt{r}}
\newcommand{\sscolumn}[2]{\mbox{$\begin{smallmatrix} {#1} \\
{#2} \end{smallmatrix}$}}
\newcommand{\imply}{\Rightarrow}
\def\preisomto{\vbox{\hbox to
               14pt{\hfill$\sim$\hfill}\nointerlineskip\vskip -0.2pt
               \hbox to 14pt{\rightarrowfill}}}
\def\isomto{\mathop{\preisomto}}
\def\prelongisomto{\vbox{\hbox to
                17pt{\hfill$\sim$\hfill}\nointerlineskip\vskip -0.2pt
                \hbox to 17pt{\rightarrowfill}}}
\def\longisomto{\mathop{\prelongisomto}}
\begin{document}


\title{Sheets of Symmetric Lie Algebras and Slodowy Slices}
\author{{\sc Micha\"el~Bulois}
\thanks{{\url{Michael.Bulois@univ-brest.fr}}}}
\date{}
\maketitle


\nnfoottext{Universit\'e de Brest, CNRS, UMR 6205, Laboratoire de Math\'ematiques de Brest, 6 avenue Victor Le Gorgeu, CS 93837, F-29238 BREST Cedex 3}
\nnfoottext{Universit\'e Europ\'eenne de Bretagne, France}


\vspace{1cm}

\begin{abstract}
  Let $\theta$ be an involution of the finite dimmensional
  reductive Lie algebra $\g$ and $\g=\kk\oplus\pp$ be the
  associated Cartan decomposition.  Denote by $K\subset G$ the connected
  subgroup having $\kk$ as Lie algebra. The $K$-module $\pp$ is the union
  of the subsets $\pp^{(m)}:=\{x \mid \dim K.x =m\}$, $m \in
  \N$, and the $K$-sheets of $(\g,\theta)$ are the
  irreducible components of the $\pp^{(m)}$.  The sheets can
  be, in turn, written as a union of so-called Jordan
  $K$-classes. We introduce conditions in order to describe
  the sheets and Jordan classes in terms of Slodowy
  slices. When $\g$ is of classical type, the $K$-sheets are
  shown to be smooth; if $\g=\gl_N$ a complete description
  of sheets and Jordan classes is then obtained.
\end{abstract}

\vspace{1cm}

\section*{Introduction\markright{INTRODUCTION}}
\addcontentsline{toc}{section}{\protect\numberline{}Introduction}

Let $\g$ be a finite dimensional reductive Lie algebra over
an algebraically closed field $\K$ of characteristic zero.
Fix an involutive automorphism $\theta$ of $\g$; it yields
an eigenspace decomposition $\g=\kk\oplus\pp$ associated to
respective eigenvalues $+1$ and $-1$. One then says that
$(\g,\theta)$, or $(\g,\kk)$, is a symmetric Lie algebra, or
a symmetric pair.  Denote by $G$ the adjoint group of $\g$
and by $K\subset G$ the connected subgroup with Lie algebra
$\kk\cap[\g,\g]$. The adjoint action of $g \in G$ on $x \in
\g$ is denoted by $g.x$. Recall that a \emph{$G$-sheet} of
$\g$ is an irreducible component of $\g^{(m)}:=\{x\in\g\mid
\dim G.x=m\}$ for some $m\in\N$.  This notion can be
obviously generalized to $(\g,\theta)$: the
\emph{$K$-sheets} of $\pp$ are the irreducible components of
the $\pp^{(m)}:=\{x\in\pp\mid \dim K.x=m\}$, $m \in \N$.
The study of these varieties is related to various geometric
problems occuring in Lie theory.  For example, the study of
the irreducibility of the commuting variety in \cite{Ri1}
and of its symmetric analogue in \cite{Pa4, SY2, PY} is
based on some results about $G$-sheets and $K$-sheets.

\smallskip

Let us first recall some results about $G$-sheets.  The
$G$-sheets containing a semisimple element are called
Dixmier sheets; they were introduced by Dixmier in
\cite{Dix1, Dix2}.  Any $G$-sheet is Dixmier when
$\g=\gl_{N}$; in \cite{Kr}, Kraft gave a parametrization of
conjugacy classes of sheets.  Borho and Kraft
introduced in \cite{BK} the notion of a sheet for an arbitrary
representation, which includes the above definitions of
$G$-sheets and $K$-sheets.  They also generalized in
\cite{Boh,BK} some of the results of \cite{Kr} to any
semisimple $\g$. In particular, they give a parametrization of $G$-sheets which relies on the
\emph{induction of nilpotent orbits}, defined by
Lusztig-Spaltenstein~\cite{LS}, and the notion of
 \emph{decomposition
  classes} or \emph{Zerlegungsklassen}.
Following~\cite[39.1]{TY}, a decomposition class will be called a \emph{Jordan $G$-class} here.  
The Jordan $G$-class
of an element $x\in\g$ can be defined by
\[
J_{G}(x):=\{y \in \g \, \mid \, \exists \, g\in G, \
g.\g^x=\g^y\}
\]
(where $\g^x$ is the centralizer of $x$ in $\g$).  Clearly,
Jordan $G$-classes are equivalence classes and one can show
that $\g$ is a finite disjoint union of these classes.
Then, it is easily seen that a $G$-sheet is the union of
Jordan $G$-classes.  A significant part of the work made in
\cite{Boh,BK} consists in characterizing a $G$-sheet by the
Jordan $G$-classes it contains.  Basic results on Jordan
classes (finiteness, smoothness, description of
closures,\dots) can be found in \cite[Chapter 39]{TY} and
one can refer to Broer \cite{Bro} for more advanced properties
(geometric quotients, normalisation of closure,\dots).

An important example of a $G$-sheet is the \emph{set of
  regular elements}:
$$
\g^{\mathit{reg}} := \{x\in \g \, \mid \, \dim\g^x \leqslant
\dim\g^y \ \text{for all $y\in\g$}\}.
$$ 
Kostant \cite{Ko} has shown that the geometric quotient
$\g^{\mathit{reg}}/G$ exists and is isomorphic to an affine
space.  This has been generalized to the so-called
\emph{admissible} $G$-sheets in \cite{Ru}.  Then, Katsylo
proved in \cite{Kat} the existence of a \emph{geometric
  quotient} $S/G$ for any $G$-sheet $S$.  More recently,
Im Hof \cite{IH} showed that the $G$-sheets are \emph{smooth} when
$\g$ is of classical type.

The parametrization of sheets used in \cite{Ko,Ru,Kat,IH}
differs from the one given in \cite{Kr, Boh,BK} by the use
of ``Slodowy slices''.  More precisely, let $S$ be a sheet
containing the nilpotent element $e$ and embed $e$ into an
\Striplet $(e,h,f)$. Following the work of Slodowy
\cite[\S7.4]{Sl}, the associated \emph{Slodowy slice} $e+X$
of $S$ is defined by
$$
e+X:=(e+\g^f)\cap S.
$$
Then, one has $S=G.(e+X)$ and $S/G$ is isomorphic to the
quotient of $e+X$ by a finite group \cite{Kat}.
Furthermore, since the morphism $G\times (e+X) \rightarrow
S$ is smooth \cite{IH}, the geometry of $S$ is closely
related to that of $e+X$.
We give a more detailed presentation of these results in the
first section.

\smallskip

In the symmetric case, much less properties of sheets are
known. The first important one was obtained in \cite{KR}
where the \emph{regular sheet} $\pp^{\mathit{reg}}$ of $\pp$
is studied. In particular, similarly to \cite{Ko}, it is
shown that $\pp^{\mathit{reg}}=\Kt.(e^{\mathit{reg}}+\pp^f)$
where $\Kt:=\{g\in G \mid g\circ\theta=\theta\circ g\}$.
Another interesting result is obtained in \cite{Pa4, SY2,
  PY} (where the symmetric commuting variety is studied):
each even nilpotent element of $\pp$ belongs to some
$K$-sheet containing a semisimple element.  More advanced
results can be found in \cite[\S39]{TY}.
The \emph{Jordan $K$-class} of $x \in \pp$ is defined by
$$
J_{K}(x):=\{y \in \pp \,\mid \, \exists \, k\in K, \,
k.\pp^x=\pp^y\}.
$$
One can find in \cite{TY} some properties of Jordan
$K$-classes (finiteness, dimension, \dots) and it is shown
that a $K$-sheet is a finite disjoint union of such classes.

Unfortunately, the key notion of ``orbit induction'' does
not seem to be well adapted to the symmetric case.  For
instance, the definition introduced by Ohta in \cite{Oh3} does not
leave invariant the orbit dimension anymore.

\smallskip

We now turn to the results of this paper.  The inclusion
$\pp^{(m)}\subset \g^{(2m)}$ is the starting point for
studying the intersection of $G$-sheets, or Jordan classes,
with $\pp$ in order to get some information about
$K$-sheets.

We first consider the case of symmetric pairs {of type}~0 in
section~\ref{type0}.  A symmetric pair is said to be of
type~0 if it is isomorphic to a pair $(\g'\times\g',\theta)$
with $\theta(x,y)=(y,x)$. This case, often called the
``group case'', is the symmetric analogue of the Lie algebra
$\g'$.  

In the general case we study the intersection $J\cap\pp$
when $J$ is a Jordan $G$-class.  Using the results obtained
in sections~\ref{basis} to~\ref{JKclass}, we show (see
Theorem~\ref{compirr}) that $J\cap\pp$ is smooth,
equidimensional, and that its irreducible components are
exactly the Jordan $K$-classes it contains.

We study the $K$-sheets, for a general symmetric pair, in
section~\ref{Ksheet}. After proving the smoothness of
$K$-sheets in classical cases (Remark~\ref{slsheets}), we
try to obtain a parametrization similar to the Lie algebra
case by using generalized ``Slodowy slices'' of the form
$e+X\cap\pp$, where $e \in \pp$ is a nilpotent element
contained in the $G$-sheet $S$.  To get this parametrization
we need to introduce three conditions (labelled by
\eqref{heart}, \eqref{diamond} and \eqref{club}) on the
sheet $S$.  Under these assumptions, we obtain the
parametrization result in Theorem~\ref{equidim}; it gives in
particular the equidimensionality of $S\cap\pp$.

In the third section we show that the conditions
\eqref{heart}, \eqref{diamond}, \eqref{club} hold when
$\g=\gl_{N}$ or $\mathfrak{sl}_N$ (type~A). In this case, up
to conjugacy, three types of irreducible symmetric pairs
exist (AI, AII, AIII in the notation of \cite{He1}) and have
to be analyzed in details. The most difficult one being
type~AIII, i.e.~$(\g,\kk) \cong (\gl_N, \gl_p \times
\gl_{N-p})$.

In Section~4 we prove the main result in type~A
(Theorem~\ref{final}), which gives a complete description of
the $K$-sheets and of the intersections of $G$-sheets with
$\pp$.
In particular, we give the dimension of a $K$-sheet in terms
of the dimension of the nilpotent $K$-orbits contained in
the sheet.  One can also determine the sheets which contain
semisimple elements ({i.e.}~the \emph{Dixmier $K$-sheets})
and characterize nilpotent orbits which are $K$-sheets
({i.e.}~the \emph{rigid nilpotent $K$-orbits}).

\begin{mercis}
  I would like to thank {Micha\"el Le Barbier}, {Oksana Yakimova}  and {Anne Moreau}
   for useful conversations. 
  I also thank the referees of my thesis {Dmitri Panyushev} and {Michel Brion} for
  their valuable comments which helped to improve significantly the quality of this article.
  I am grateful to {Michel Brion} (and {Thierry Levasseur}) for
  pointing out the relevance of Theorem~\ref{Iv} to the
  situation.
\end{mercis}

\section{Generalities}
\label{general}

\subsection{Notation and basics}
\label{somenotation}

We fix an algebraically closed field $\K$ of characteristic
zero and we set $\K^{\times} := \K \smallsetminus\{0\}$. If
$V, V'$ are $\K$-vector spaces, $\Hom(V,V')$ is the vector
space of $\K$-linear maps from $V$ to $V'$ and the dual of
$V$ is $V^*:=\Hom(V,\K)$. The space $\gl(V):=\Hom(V,V)$
is equipped with a natural Lie algebra structure by setting
$[x,y]=x\circ y - y\circ x$ for $x,y\in \gl(V)$.  The action
of $x \in \gl(V)$ on $v \in V$ is written $x.v= x(v)$ and
${}^tx$ is the transpose linear map of $x$.  If $M$ is a
subset of $\Hom(V,V')$ we set $\ker M:=\bigcap_{\alpha\in M}
\ker \alpha$.
 
If $\mathbf{v} =(v_{1},\dots,v_{N})$ is a basis of $V$, the
algebra $\gl(V)$ can be identified with $\gl(\mathbf{v})
:=\gl_N=\mathrm{M}_{N}(\K)$ (the algebra of $N\times N$
matrices). When $\mathbf{v}'=(v_{i_{1}},\dots,v_{i_{k}})$ is
a sub-basis of $\mathbf{v}$,
we may identify $\gl(\mathbf{v}')$ with a subalgebra of
$\gl(V)$ by extending $x\in\gl(\mathbf{v'})$ as follows:
$x.v_{i}:=x.v_{i_{j}}$ if $i=i_{j}$ for some $j \in
[\![1,k]\!]$, $x.v_{i}:=0$ otherwise.

All the varieties considered will be algebraic over $\K$ and
we (mostly) adopt notations and conventions of \cite{Ha} or
\cite{TY} for relevant algebraic and topological notions.
In particular, $\K[X]$ is the ring of globally defined
algebraic functions on an algebraic variety $X$. Recall that
when $V$ is a finite dimensional vector space one has
$\K[V]=S(V^*)$, the symmetric algebra of $V^*$.

We will refer to \cite{TY} for most of the classical results
concerning Lie algebras.  As said in the introduction, $\g$
denotes a finite dimensional \emph{reductive} Lie
$\K$-algebra.  We write $\g=[\g,\g] \oplus \zz(\g)$ where
$\zz(\g)$ is the centre of $\g$ and we denote by $\ad_\g(x)
: y \mapsto [x,y]$ the adjoint action of $x \in \g$ on $\g$. 
Let $G$ be the connected algebraic subgroup of
$\mathrm{GL}(\g)$ with Lie algebra $\Lie G =
\mathrm{ad}_{\g}(\g) \cong [\g,\g]$.  The group $G$ is
called the \emph{adjoint group} of $\g$.  The adjoint action
of $g \in G$ on $y \in \g$ is denoted by $g.y =
\mathrm{Ad}(g).y$; thus, $G.y$ is the (adjoint) orbit of
$y$.
 
We will generally denote Lie subalgebras of $\g$ by small
german letters (e.g.~$\lf$) and the smallest algebraic
subgroup of $G$ whose Lie algebra contains
$\mathrm{ad}_{\g}(\lf)$ by the corresponding capital roman
letter (e.g.~$L$). When $\lf$ is an algebraic subalgebra of
$\g$ the subgroup $L$ acts on $\lf$ as its adjoint algebraic
group, cf.~\cite[24.8.5]{TY}. We denote by $H^\circ$ the
identity component of an algebraic group $H$.

Let $E \subset \g$ be an arbitray subset. If $\lf$,
resp.~$L$, is a subalgebra of $\g$, resp.~algebraic subgroup
of $G$, we define the associated centralizers and
normalizers by:
\begin{gather*}
  \lf^{E}=\cc_{\lf}(E) := \{x \in \lf \mid [x,E]= (0) \},
  \quad L^{E} = Z_{L}(E)= C_L(E) := \{g \in L \mid g.x= x \
  \text{for all $x \in E$}\},
  \\
  N_{L}(E) := \{g\in L \mid g.E \subset E \}.
\end{gather*}
When $E=\{x\}$ we simply write $\lf^x$, $L^x$, etc. Recall
from \cite[24.3.6]{TY} that $\Lie L^{E}=\lf^{E}$.  As in
\cite{TY}, the set of ``regular'' elements in $E$ is denoted
by:
\begin{equation} \label{Ebullet} E^{\bullet} := \bigl\{x \in
  E : \dim \g^x = \textstyle{\min_{y \in E}} \dim \g^y
  \bigr\} = \bigl\{x \in E : \dim G.x = \max_{y \in E}\dim
  G.y \bigr\}.
\end{equation}

Any $x\in\g$ has a \emph{Jordan decomposition} in $\g$, that
we will very often write $x=s+n$ (cf.~\cite[20.4.5,
20.5.9]{TY}). Thus $s$ is semisimple, i.e.~$\ad_\g(s) \in
\gl(\g)$ is semisimple, $n$ is nilpotent, i.e.~$\ad_\g(n)$
is nilpotent, and $[s,n] =0$. The element $s$, resp.~$n$, is
called the semisimple, resp.~nilpotent, part (or component)
of~$x$. An \emph{\Striplet} is a triple $(e,h,f)$ of
elements of $\g$ satisfying the relations
$$[h,e]=2e, \quad [h,f]=-2f, \quad [e,f]=h.$$


Let $\h$ be a Cartan subalgebra of $\g$; then, $\h =
([\g,\g] \cap \h) \oplus \zz(\g)$ and the \emph{rank} of
$\g$ is $\rk \g := \dim \h$.  We denote by $R=R(\g,\h) =
R([\g,\g], [\g,\g] \cap \h) \subset \h^*$ the associated
\emph{root system}. Recall that the \emph{Weyl group}
$W=W(\g,\h)$ of $R$ can be naturally identified with
$N_{G}(\h)/Z_{G}(\h)\subset \GL(\h)$ (see, for example,
\cite[30.6.5]{TY}).  The type of the root system $R$, as
well as the type of the reflection group $W$, will be
indicated by capital roman letters, frequently indexed by
the rank of $[\g,\g]$, e.g.~E$_{8}$. If $\alpha\in
R(\g,\h)$, $\g^{\alpha} :=\{x\in \g \mid [h,x] = \alpha(h)x
\; \, \text{for all $h\in \h$} \}$ is the \emph{root
  subspace} associated to $\alpha$.  If $M$ is a subset of
$R(\g,\h)$, we denote by $\langle M \rangle$ the root
subsystem $\bigl(\sum_{\alpha\in M} \Q\alpha\bigr)\cap
R(\g,\h)$.

We use the notation $\lfloor \phantom{\mu} \rfloor$,
resp.~$\lceil \phantom{\mu} \rceil$, for the floor,
resp.~ceiling, function on $\Q$; thus $\lfloor \lambda
\rfloor$, resp.~$\lceil \lambda \rceil$, is the largest,
resp.~smallest, integer $\le \lambda$, resp.~$\ge \lambda$.
If $i,j$ are two integers, the set $[\![i,j]\!]$ stands for $\{k\in\Z \mid i\leqslant k\leqslant j\}$.

\label{reductions}

\medskip

Let $\g=\prod_i \g_i = \bigoplus_i \g_{i}$ be a
decomposition of $\g$ as a direct sum of reductive Lie
(sub)algebras.  Let $G_{i}$ be the adjoint group of
$\g_{i}$, thus $G=\prod_{i} G_{i}$. 
Under these notations, it is not difficult to prove the following lemma:

\begin{lm}\label{pppmmm}
  The $G$-sheets of $\g$ are of the form $\prod_{i} S_{i}$
  where each $S_{i}$ is a $G_{i}$-sheet of $\g_{i}$.
\end{lm}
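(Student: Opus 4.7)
My plan is to exploit the fact that centralizer dimensions add over the product decomposition and then argue that the sets $\g^{(m)}$ decompose into finitely many clopen pieces indexed by compositions of $m$.

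First, for any $x = (x_i) \in \g = \bigoplus_i \g_i$, the factors centralize each other, so $\g^x = \bigoplus_i \g_i^{x_i}$ and consequently
\[
\dim G.x \;=\; \dim \g - \dim \g^x \;=\; \sum_i \bigl(\dim \g_i - \dim \g_i^{x_i}\bigr) \;=\; \sum_i \dim G_i.x_i.
\]
This immediately yields the set-theoretic disjoint union
\[
\g^{(m)} \;=\; \bigsqcup_{(m_i):\,\sum_i m_i = m} \, \prod_i \g_i^{(m_i)},
\]
the union being finite since each $m_i$ is bounded by $\dim \g_i$.

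Next, I would verify that each piece $P_{(m_i)} := \prod_i \g_i^{(m_i)}$ is both open and closed in $\g^{(m)}$. By upper semi-continuity of $y \mapsto \dim \g_i^{y_i}$ on $\g$, each set $\{y \in \g : \dim G_i.y_i \geqslant m_i\}$ is open; intersecting over $i$ gives an open subset of $\g$ containing $P_{(m_i)}$. For any $y$ in that open set that also lies in $\g^{(m)}$, the identity $\sum_i \dim G_i.y_i = m = \sum_i m_i$ combined with the inequalities $\dim G_i.y_i \geqslant m_i$ forces equality in each coordinate, so $y \in P_{(m_i)}$. Hence $P_{(m_i)}$ is open in $\g^{(m)}$, and since the complement is the union of the finitely many other pieces, $P_{(m_i)}$ is also closed in $\g^{(m)}$.

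Finally, because $\g^{(m)}$ is the disjoint union of finitely many clopen subsets $P_{(m_i)}$, its irreducible components are exactly the irreducible components of the individual pieces $P_{(m_i)}$. Each $P_{(m_i)}$ is a product of locally closed sets $\g_i^{(m_i)}$; since $\K$ is algebraically closed, the irreducible components of such a finite product are precisely the products of the irreducible components of the factors. The irreducible components of $\g_i^{(m_i)}$ are, by definition, the $G_i$-sheets of $\g_i$, so the $G$-sheets of $\g$ are exactly the products $\prod_i S_i$ with $S_i$ a $G_i$-sheet of $\g_i$.

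There is no real obstacle here; the only point requiring a moment's care is the clopen decomposition, which follows from the upper semi-continuity argument above. Everything else is formal.
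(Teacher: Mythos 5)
Your argument is correct; the paper simply states that this lemma ``is not difficult to prove'' and omits the verification, and what you give is exactly the natural routine argument it alludes to. The chain of facts — additivity $\dim G.x = \sum_i \dim G_i.x_i$, the resulting finite partition of $\g^{(m)}$ into the clopen pieces $\prod_i \g_i^{(m_i)}$ via semi-continuity of orbit dimension, and the standard fact that over an algebraically closed field the irreducible components of a finite product of varieties are the products of irreducible components of the factors — is all in order and yields the claim.
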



Recall that, since $\g$ is reductive, there exists a
decomposition $\g=\zz \times \prod_{i}\g_{i}$ where $\zz$ is
the centre of $\g$ and $\g_{i}$ is a simple Lie algebra for
all $i$. So lemma~\ref{pppmmm} provides the following.

\begin{cor}
  \label{sheetdecomposition}
  \label{simpleprod}
  The $G$-sheets of $\g$ are the sets of the form $\zz
  \times \prod_{i} S_{i}$ where each $S_{i}$ is a
  $G_{i}$-sheet of~$\g_{i}$.
\end{cor}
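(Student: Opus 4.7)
The plan is simply to apply Lemma~\ref{pppmmm} to the decomposition $\g = \zz \times \prod_i \g_i$ recalled immediately before the corollary. That decomposition satisfies the hypotheses of the lemma: each simple $\g_i$ is reductive, and the centre $\zz$, being abelian, is also reductive.

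The only thing to check before invoking the lemma is the identification of the $G$-sheets of the centre $\zz$ regarded as a reductive Lie algebra in its own right. Since $[\zz,\zz] = 0$, the adjoint group of $\zz$ is trivial: every orbit is a point, so $\zz^{(0)} = \zz$ and $\zz^{(m)} = \emptyset$ for $m \geq 1$. As $\zz$ is a finite dimensional vector space it is irreducible, so it is the unique sheet of itself.

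Feeding this into Lemma~\ref{pppmmm} applied to $\g = \zz \times \prod_i \g_i$ yields the desired description: every $G$-sheet of $\g$ is of the form $\zz \times \prod_i S_i$ with $S_i$ a $G_i$-sheet of $\g_i$, and conversely each such product is a $G$-sheet. There is no real obstacle here; once one observes that the centre is its own unique sheet, the corollary is just a specialisation of the preceding lemma.
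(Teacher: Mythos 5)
Your proposal is correct and follows exactly the paper's intended route: invoke Lemma~\ref{pppmmm} for the decomposition $\g = \zz \times \prod_i \g_i$ into reductive factors, and observe that the abelian factor $\zz$, having a trivial adjoint group, is its own unique sheet. The paper leaves this observation implicit ("So lemma~\ref{pppmmm} provides the following"), and you have simply made it explicit.
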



The previous corollary allows us to restrict to the case
when $\g$ is simple.  Furthermore, it shows that the study
of sheets of $\g$ and of $[\g,\g]$ are obviously related by
adding the centre.  Therefore, we may for instance work with
$\g=\gl_{n}$ to study the $\fsl_n$-case.

\subsection{Levi factors}
\label{Levi}
We start by recalling the definition of Levi factors:
 
\begin{defi} \label{deflevifactor} A \emph{Levi factor} of
  $\g$ is a subalgebra of the form $\lf=\g^s$ where $s \in
  \g$ is semisimple.  The connected algebraic subgroup
  $L\subset G$ associated to a Levi factor
  $\lf$ is called a \emph{Levi factor} of $G$.
\end{defi}

Observe that the previous definition of a Levi factor of
$\g$ is equivalent to the definition given
in~\cite[29.5.6]{TY}, see, for example, \cite[Exercice~10,
p.~223]{Bou}.  Recall that a Levi factor $\lf = \g^s$ is
reductive \cite[20.5.13]{TY} and $L=G^s$,
cf.~\cite[Corollary~3.11]{St} and \cite[24.3.6]{TY}.

Let $\h$ be a Cartan subalgebra and $\lf$ be a Levi factor
containing $\h$. By \cite[20.8.6]{TY} there exists a subset
$M=M_{\lf}\subset R(\g,\h)$ such that $M=\langle M\rangle$
and
\begin{gather} \label{lM} \lf = \lf_M := \h \oplus
  \bigoplus_{\alpha\in M} \g^{\alpha}
  \\
  \cc_{\g}(\lf) =\zz(\lf)=\{t\in\h \mid \alpha(t)=0 \; \,
  \text{for all $\alpha \in M$}\} \ \text{and} \
  \cc_{\g}(\cc_{\g}(\lf))=\lf.
\end{gather}
Conversely, if $M\subset R(\g,\h)$ is a subset such that $M=
\langle M\rangle$, define $\lf=\lf_{M}$ as in
equation~\eqref{lM}; then $\lf_{M}$ is a Levi factor and:
\begin{equation}\label{centerlevi}
  \h \supseteq \{s\in \g \mid \lf_{M}=\g^s \}
  = \ker M \setminus \left( \,\bigcup_{\alpha\notin
      M}\ker\alpha\right) \neq \emptyset.
\end{equation}
This construction gives a bijective correspondence
$\lf=\lf_M \leftrightarrow M=M_\lf$ between Levi factors containing $\h$ and
subsets of $R(\g,\h)$ satisfying the above property. 
Then the action of the Weyl group $W = W(\g,\h)$ on $R(\g,\h)$ induces an action
on the set of Levi factors containing $\h$.  
In other words, if $g\in N_{G}(\h)$ and $\lf$ is a Levi factor containing $\h$, one has
$g.\lf=(gZ_G(\h)).\lf$ and if $w\in W$ is the class of $\g$, we define $w.\lf:=g.\lf$.  
Let $x,y \in \h$; we will say that the Levi factors
$\g^x, \g^y$ are $W$-conjugate if there exists $w\in W$ such
that $w.M_{\g^x}=M_{\g^y}$. From~\eqref{centerlevi} one
deduces that this definition is equivalent to
$w.\cggnb{x}=\cggnb{y}$ for some $w \in W$.

Assume that $\g$ is semisimple and denote by $\kappa$ the
isomorphism $\h \isomto \h^*$ induced by the restriction of
the Killing form of $\g$. Define a $\Q$-form of $\h$, or
$\h^*$, by $\h_{\Q} \stackrel{\kappa}{\cong}\h_{\Q}^*
:=\Q. R(\g,\h)$.  Fix the Cartan subalgebra $\h$ and a
fundamental system ({i.e.}~a basis) $B$ of $R(\g,\h)$.  We
say that a Levi factor $\lf$ is \emph{standard} if $\lf =
\g^s$ with $s\in\h_{\Q}$ in the positive Weyl chamber of
associated to $B$.  In this case, one can write
$M_{\lf}=\langle I_{\lf}\rangle =\Z I_\lf \cap R(\g,\h)$
where $I_{\lf}\subset B$.  The following proposition is
consequence of the definition of a Levi factor and
\eqref{centerlevi}.

\begin{prop}
  Any Levi factor of $\g$ is $G$-conjugate to a standard
  Levi factor.
\end{prop}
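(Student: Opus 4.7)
The plan is to reduce to the Cartan subalgebra, then to the positive Weyl chamber via the $W$-action, and finally to replace the semisimple element by a suitable rational element in the same face.

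First I would start with $\lf = \g^s$ for some semisimple $s \in \g$. By the conjugacy of Cartan subalgebras (any semisimple element is contained in some Cartan subalgebra of $\g$, and all Cartan subalgebras are $G$-conjugate), I may replace $s$ by a $G$-conjugate and assume $s \in \h$. Correspondingly, $\lf$ is replaced by a $G$-conjugate still of the form $\g^{s}$ with $s \in \h$, so $\lf \supset \h$ and the discussion preceding the proposition applies: there is an associated closed subsystem $M_{\lf} = \{\alpha \in R(\g,\h) \mid \alpha(s)=0\}$.

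Next I would use the fact that the Weyl group $W = W(\g,\h) \cong N_G(\h)/Z_G(\h)$ acts transitively on the Weyl chambers of $\h_{\R}$ (or, here, on those of $\h_{\Q}$). Pick $w \in W$ such that $w.s$ lies in the closure $\overline{C^+}$ of the positive Weyl chamber determined by $B$, and lift $w$ to an element of $N_G(\h) \subset G$. Then, after this further $G$-conjugation, I may assume $s \in \overline{C^+}$. The new Levi factor is $\g^s = \lf_{M}$, where now $M = \langle I \rangle$ with $I := \{\alpha \in B \mid \alpha(s) = 0\}$, since $s$ lies in the face of $\overline{C^+}$ cut out by the vanishing of the simple roots in $I$.

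Finally I would replace $s$ by a rational element lying in the same open face of $\overline{C^+}$. From equation~\eqref{centerlevi}, the set
\[
\{t\in\h \mid \g^t = \lf_M\} \;=\; \ker M \;\setminus\; \bigcup_{\alpha \notin M} \ker\alpha
\]
is a non-empty Zariski-open subset of the $\Q$-subspace $\ker M \cap \h_{\Q}$-span (both $M$ and the walls $\ker\alpha$ are defined over $\Q$ since $B \subset \h_{\Q}^*$). Intersecting with the open cone defined by $\alpha > 0$ for $\alpha \in B\setminus I$ still yields a non-empty set, which by density of $\Q$-points contains some $s' \in \h_{\Q}$. Then $s' \in \h_{\Q}$ lies in the positive Weyl chamber and satisfies $\g^{s'} = \lf_M = \lf$ (after conjugation), so $\lf$ is standard by definition.

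The main conceptual step is the third one — one must observe that the Levi factor depends only on the face of the Weyl chamber containing $s$, not on $s$ itself, so that the rationality and the positivity conditions can be enforced simultaneously by choosing a $\Q$-point in that face. The first two steps are standard applications of the conjugacy of Cartan subalgebras and of the transitivity of the Weyl group on chambers. No real obstacle arises, since all the ingredients have been set up in the paragraphs preceding the statement.
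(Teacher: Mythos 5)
Your overall strategy — conjugate into $\h$, use the Weyl group, and invoke the rationality coming from \eqref{centerlevi} — is essentially the one the paper has in mind, but the order of your last two steps opens a genuine gap. After step~1, $s$ is only known to lie in $\h$, a vector space over the arbitrary algebraically closed field $\K$ of characteristic zero; there is no reason for $s$ to belong to $\h_\Q$ or $\h_\R$. Hence in step~2 the instruction ``pick $w$ with $w.s$ in $\overline{C^+}$'' is not yet meaningful (the Weyl-chamber decomposition lives in $\h_\R$, not in $\h$), and in step~3 the phrase ``the same open face of $\overline{C^+}$'' presupposes exactly what step~2 was supposed to establish.

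The repair is to perform the rationality step \emph{before} invoking the Weyl group. Once $\h \subset \lf$ and $M := M_\lf$ are fixed, equation~\eqref{centerlevi} says that $\ker M \setminus \bigcup_{\alpha\notin M}\ker\alpha$ is nonempty; since $M$ and the walls $\ker\alpha$ are all cut out by elements of $\h^*_\Q$, and $\ker M \not\subset \ker\alpha$ for $\alpha\notin M$ (because $M = \langle M\rangle$ forces any root in the span of $M$ to lie in $M$), this set is the complement of finitely many proper $\Q$-defined subspaces of $\ker M$. It therefore contains a rational point $t\in\h_\Q$ with $\g^t=\lf_M=\lf$. Now one may legitimately pick $w\in W$ moving $t$ into the closed positive chamber, and $w.t\in\h_\Q$ exhibits $w.\lf$ as a standard Levi factor. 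This is precisely the two-ingredient argument — definition of Levi factor plus \eqref{centerlevi} — that the paper alludes to.
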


Let $\lf \subset \g$ be a Levi factor and $L$ be the
associated Levi factor of $G$. There exists a unique
decomposition $\lf=\zz(\lf)\oplus \bigoplus_{i} \lf_{i}$,
where $\zz(\lf)$ is the centre and the $\lf_i$ are simple
subalgebras.  Let $L_{i} \subset G$ be the connected
subgroup with Lie algebra $\lf_{i}$
(cf.~\cite[24.7.2]{TY}). Under this notation we have:

\begin{prop} \label{Levisimple} The subgroup $L\subset G$ is
  generated by $C_{G}(\lf)$ and the subgroups $L_{i}$.
\end{prop}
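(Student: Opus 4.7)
The strategy is to produce a connected algebraic subgroup of $L$ whose Lie algebra already equals $\Lie L$, so that the two coincide. Recall first that $L$ is connected with $\Lie L = \ad_\g(\lf)$, since $\lf$ is algebraic (a Levi factor is the centralizer of a semisimple element). Write $\lf = \g^s$ with $s \in \g$ semisimple; by the classical identity $L = G^s$ recalled after Definition~\ref{deflevifactor}, the inclusion $C_G(\lf) \subseteq C_G(\{s\}) = L$ is immediate. Similarly, each $\lf_i$ is a simple, hence algebraic, subalgebra, and from $\ad_\g(\lf_i) \subseteq \ad_\g(\lf) = \Lie L$ together with the minimality in the definition of $L_i$ we get $L_i \subseteq L$. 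So everything in sight already sits inside $L$, and only the reverse inclusion requires work.

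Next I would form the subgroup $L'$ of $L$ generated by the connected algebraic subgroups $C_G(\lf)^\circ$ and all the $L_i$. Being generated by connected algebraic subgroups of $G$, $L'$ is itself a connected closed subgroup of $L$, and its Lie algebra is generated by $\Lie C_G(\lf)^\circ$ together with the $\ad_\g(\lf_i)$. The former is $\cc_\g(\lf) = \zz(\lf)$, by the general formula $\Lie L^E = \lf^E$ cited in Section~\ref{somenotation} combined with the reflexivity relation $\cc_\g(\cc_\g(\lf)) = \lf$ established just before Proposition~\ref{Levisimple}. Using the stated decomposition $\lf = \zz(\lf) \oplus \bigoplus_i \lf_i$ and the fact that $\ad_\g$ is a Lie algebra homomorphism, one obtains
\[
\Lie L' \supseteq \ad_\g(\zz(\lf)) + \sum_i \ad_\g(\lf_i) = \ad_\g(\lf) = \Lie L.
\]
Since $L' \subseteq L$ and both groups are connected with the same Lie algebra, $L' = L$. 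The subgroup of $G$ generated by $C_G(\lf)$ and the $L_i$ then contains $L'$ and is contained in $L$, giving the claim.

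The only potentially delicate points are the identification $\Lie C_G(\lf)^\circ = \zz(\lf)$ and the assertion that a subgroup of $G$ generated by a finite family of connected algebraic subgroups is itself a connected algebraic subgroup with Lie algebra generated by the individual Lie algebras. Both are standard facts cited in \cite{TY}, and once they are granted the argument is really a one-line Lie-algebra dimension count, made possible by the very clean Jordan-type decomposition $\lf = \zz(\lf) \oplus \bigoplus_i \lf_i$.
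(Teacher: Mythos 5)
Your proof is correct and follows essentially the same route as the paper: the paper also invokes the decomposition $\lf = \zz(\lf) \oplus \bigoplus_i \lf_i$ to conclude that $L$ is generated by $C_G(\lf)^\circ$ and the $L_i$ (delegating this to the cited result \cite[24.5.9]{TY}, which your Lie-algebra comparison argument simply unpacks), and then uses $L = G^s$ to absorb the full group $C_G(\lf)$ into $L$ exactly as you do.
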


\begin{proof}
  Recall that $\Lie L_{i}=\lf_{i}$ and $\Lie
  Z_{G}(\lf)=\zz(\lf)$. By \cite[24.5.9]{TY} one gets that
  $L$ is generated by the connected subgroups $L_{i}$ and
  $C_{G}(\lf)^{\circ}$. Writing $\lf=\g^s$ with $s$
  semisimple, we have already observed that $L=G^s$, hence
  $C_{G}(\g^s)\subset G^s$ and the result follows.
\end{proof}

\subsection{Jordan $G$-classes}
\label{Gclass}

The description of $G$-sheets is closely related to the
study of Jordan $G$-classes, also called decomposition
classes. We now recall some facts about these classes (see,
for example, \cite{BK, Boh, Bro, TY}).

Recall from~\S\ref{somenotation} that any element $x \in \g$
has a unique Jordan decomposition $x=s+n$. We then say that
the pair $(\g^s,n)$ is the \emph{datum} of $x$.

\begin{defi}
  Let $x=s+n$ be the Jordan decomposition of $x\in\g$. The
  Jordan $G$-class of $x$, or $J_{G}$-class of $x$, is the
  set $J_{G}(x):=G.(\cggs+n)$.  Two elements are Jordan
  $G$-equivalent if they have the same $J_G$-class.
\end{defi}

Let $L$ be a Levi factor of $G$ with Lie algebra $\lf$, and
$L.n \subset \lf$ be a nilpotent orbit.  If $J$ is a
$J_{G}$-class, the pair $(\lf, L.n)$, or $(\lf, n)$, is
called a \emph{datum of $J$} if $(\lf,n)$ is the datum of an
element $x\in J$. Setting $\tf:=\g^{\lf}$ it is then easy to
see that $J=G.(\tf^{\bullet}+n)$.  From this result one can
deduce that Jordan $G$-classes are locally closed
\cite[39.1.7]{TY}, and smooth \cite{Bro}.  Furthermore, two
elements of $\g$ are Jordan $G$-equivalent if and only if
their data are conjugate under the diagonal action of $G$
\cite[39.1]{TY}.  Then, $\g$ is the finite disjoint union of
its Jordan $G$-classes (cf.~\cite[39.1.8]{TY}). 
The following result is taken from \cite{BK} (see also \cite[39.3.4]{TY}).

\begin{prop} \label{sheetjordan} A $G$-sheet of $\g$ is a
  finite (disjoint) union of Jordan $G$-classes.
\end{prop}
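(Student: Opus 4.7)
My strategy is to combine two ingredients: (a) Jordan $G$-classes partition $\g$ into finitely many irreducible locally closed pieces of constant orbit dimension, and (b) any closed $G$-stable subset of $\g$ which arises as the closure of a Jordan class is itself a (finite) union of whole Jordan classes. Granting (b), the conclusion is essentially formal.

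\smallskip

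\emph{Step 1: constancy of orbit dimension on a Jordan class.} For $x=s+n\in\g$ with datum $(\lf,n)$, one has $\g^x=\g^s\cap\g^n=\lf^n$, so $\dim G.x=\dim G-\dim\lf^n$ depends only on the $G$-conjugacy class of $(\lf,n)$, i.e.~only on $J_G(x)$. Hence each Jordan class is contained in a single $\g^{(m)}$, and since there are finitely many Jordan classes the set $\g^{(m)}$ is a finite disjoint union $\bigsqcup_i J_i$ of such classes.

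\smallskip

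\emph{Step 2: identifying a dense Jordan class in a sheet.} Let $S$ be an irreducible component of $\g^{(m)}$. Each intersection $S\cap J_i$ is locally closed in $S$, and since $S$ is irreducible and covered by the finitely many $S\cap J_i$, one of them, say $S\cap J^*$, is dense and open in $S$. Taking closures in $\g$ yields $S\subseteq \overline{J^*}$, and since $\overline{J^*}\cap\g^{(m)}$ is an irreducible closed subset of $\g^{(m)}$ containing the irreducible component $S$, maximality forces
\[
S \;=\; \overline{J^*}\cap\g^{(m)}.
\]

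\smallskip

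\emph{Step 3 (the core step): $\overline{J^*}$ is a union of Jordan classes.} This is the main obstacle. Writing the datum of $J^*$ as $(\lf,n)$, one has $\overline{J^*}=\overline{G.(\zz(\lf)^{\bullet}+n)}=\overline{G.(\zz(\lf)+n)}$. To show that $\overline{J^*}$ is saturated with respect to Jordan equivalence, one takes $y\in\overline{J^*}$ with datum $(\lf_y,n_y)$ and must check that the whole class $J_G(y)=G.(\zz(\lf_y)^{\bullet}+n_y)$ lies in $\overline{J^*}$. Using the $G$-stability of $\overline{J^*}$ this reduces to showing that $\zz(\lf_y)+n_y\subseteq\overline{J^*}$; one argues that this affine subspace can be realised as a limit of ``centre-translate'' slices through points of $J^*$, so its intersection with the closed set $\overline{J^*}$ is the whole thing. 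This is the only step that requires real work; the remaining arguments are formal.

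\smallskip

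\emph{Step 4: conclusion.} By Step~3, $\overline{J^*}$ is a finite disjoint union of Jordan classes. Intersecting with $\g^{(m)}$ and using Step~1 (so that only Jordan classes with orbit dimension exactly $m$ survive),
\[
S \;=\; \overline{J^*}\cap\g^{(m)} \;=\; \bigsqcup_{J\subseteq\overline{J^*},\,J\subseteq\g^{(m)}} J,
\]
which is the desired finite disjoint decomposition into Jordan $G$-classes.
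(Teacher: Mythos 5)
The paper does not prove this proposition itself; it refers to \cite{BK} and \cite[39.3.4]{TY}, so the question is whether your argument stands alone. Steps~1, 2 and 4 reproduce the standard reduction correctly, up to a small omission in Step~2: to conclude $S=\overline{J^*}\cap\g^{(m)}$ ``by maximality'' you must first justify that $\overline{J^*}\cap\g^{(m)}$ is irreducible; this holds because $\{x:\dim G.x\geqslant m\}$ is open in $\g$, so $\overline{J^*}\cap\g^{(m)}=\overline{J^*}^{\bullet}$ is open and dense in the irreducible set $\overline{J^*}$.

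The genuine gap is Step~3, which you rightly flag as the crux but do not prove. The assertion that $\overline{J^*}$ is saturated under Jordan equivalence is a substantive theorem (it is the main content of \cite[\S 39.2]{TY}, due to Borho--Kraft), and the ``limit of centre-translate slices'' sketch cannot deliver it. Two concrete obstructions: (a) the Jordan decomposition $x\mapsto(s_x,n_x)$ is discontinuous, so the data $(\lf_x,n_x)$ of points $x\in J^*$ approaching a boundary point $y$ need not converge to $(\lf_y,n_y)$ --- already in $\gl_2$ a family of regular semisimple elements can tend to a nonzero nilpotent, with nilpotent parts equal to $0$ throughout but jumping to a nonzero value at the limit; (b) the slices $\cc_{\g}(\g^x)+n_x$ through $x\in J^*$ all have dimension $\dim\zz(\lf)$, strictly larger than $\dim\zz(\lf_y)$ at a boundary $y$, so $\zz(\lf_y)+n_y$ cannot literally be a limit of such slices, at best contained in one, and that containment is precisely what needs proving. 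The standard proof instead establishes the explicit description $\overline{J^*}=G.\bigl(\tf+\overline{L.n}+\nf_1\bigr)$, where $\nf_1$ is the nilradical of a parabolic with Levi factor $\lf$ (one first shows $P.(\tf^{\bullet}+n)=\tf^{\bullet}+L.n+\nf_1$), and then analyses the Jordan decompositions of the elements of $\tf+\overline{L.n}+\nf_1$ via Lusztig--Spaltenstein induction of nilpotent orbits; that is the content your Step~3 would have to supply.
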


An immediate consequence of this proposition is that each
$G$-sheet $S$ contains a unique dense (open) Jordan
$G$-class $J$. It follows that we can define a \emph{datum
  of $S$} to be any datum $(\lf, L.n)$, or $(\lf,n)$, of
this dense class $J$.  For instance, if $S$ is a $G$-sheet
containing a semisimple element, i.e.~$S$ is a \emph{Dixmier
  sheet}, then $J$ is the class of semisimple elements of
$S$ and $(\lf,0)$ is a datum of $S$, see~\cite[39.4.5]{TY}.

\subsection{Slodowy slices}
\label{Katsylo}
We recall in this subsection some of the important results
obtained by Katsylo \cite{Kat}.  One of the first
fundamental properties of the sheets in $\g$ was obtained
by Borho-Kraft~\cite[Korollar 5.8]{BK}
(cf.~also~\cite[39.3.5]{TY}):

\begin{prop}
  \label{unilp}
  Each $G$-sheet contains a unique nilpotent orbit.
\end{prop}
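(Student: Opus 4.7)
The plan is to exploit Proposition~\ref{sheetjordan}, which decomposes the sheet $S$ as a finite disjoint union of Jordan $G$-classes with a unique open dense class $J$ having some datum $(\lf,L.n)$, together with the Lusztig–Spaltenstein induction alluded to in the introduction.

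First, I would establish \emph{existence}. The dense class is $J = G.(\tf^{\bullet}+n)$ with $\tf = \zz(\lf)$, and a direct computation using $\g^{s+n} = \lf^{n}$ for $s \in \tf^{\bullet}$ gives the common orbit dimension $m = \dim G/L + \dim L.n$. Fix a parabolic $\pp = \lf \oplus \nf$ of $\g$ with Levi $\lf$ and let $\overline{n}:=\mathrm{Ind}^{\g}_{\lf}(L.n)$ be the Lusztig–Spaltenstein induced orbit; by construction $\overline{n}$ meets $n+\nf$ in a dense open subset, and its dimension equals $\dim G/L + \dim L.n = m$. A cocharacter argument, using $\lambda : \K^\times \to Z(L)^\circ$ with $\lambda(t).s = t^2 s$ on $\tf$ combined with the contraction of $\nf$, shows that any $n' \in \overline{n} \cap (n+\nf)$ lies in $\overline{J}=\overline{S}$. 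Since $\dim G.n'=m$, the orbit $G.n'$ lies in $\g^{(m)}$; being an irreducible $G$-stable subset of $\overline{S}$ whose generic orbit dimension equals that of $S$, it must lie in the unique irreducible component of $\g^{(m)}$ passing through it, namely $S$ itself.

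For \emph{uniqueness}, I observe that a Jordan $G$-class $J_i$ appearing in the decomposition $S = J \sqcup J_1 \sqcup \cdots \sqcup J_r$ is reduced to a single nilpotent orbit exactly when its datum has the form $(\g, G.n')$, i.e.\ when its semisimple part vanishes. So it suffices to show there is at most one such $J_i$. Assume $G.n_1, G.n_2 \subset S$ are two nilpotent orbits; both have dimension $m$ and lie in $\overline{J}$ since $J$ is dense in the irreducible $S$. The closure relations for Jordan classes (see \cite[39.1]{TY}, \cite{Bro}) describe the data of classes contained in $\overline{J}$ as obtained from $(\lf,L.n)$ by enlarging the Levi factor $\lf$ to some Levi $\lf' \supset \lf$ and replacing $L.n$ by a specific induced orbit in $\lf'$. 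In the extreme case $\lf'=\g$, this specific orbit is unambiguously $\mathrm{Ind}^{\g}_{\lf}(L.n)$, so $G.n_1 = G.n_2 = G.\overline{n}$.

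The main obstacle will be justifying the uniqueness step cleanly: the closure-relation-on-Jordan-classes statement that pins down the nilpotent class in $\overline{J}$ is the substantive content, and without it one has to either reprove the relevant part of Borho–Kraft or give an \emph{ad hoc} argument (for instance via the adjoint quotient $\chi:\g\to\g/\!/G$ and the observation that $\chi(S)$ is an irreducible subvariety whose unique closed point determines the nilpotent fiber component intersected by $S$). Existence is comparatively routine once one has the dimension formula for induced orbits.
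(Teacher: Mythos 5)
The paper does not prove Proposition~\ref{unilp}; it cites Borho--Kraft \cite[Korollar 5.8]{BK} and \cite[39.3.5]{TY}, so you are reconstructing a proof from scratch. Two issues.

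In the existence step, the cocharacter you name cannot exist: every element of $Z(L)^\circ$ acts trivially under $\mathrm{Ad}$ on all of $\lf$, in particular on $\tf=\zz(\lf)$, so no $\lambda:\K^\times\to Z(L)^\circ$ satisfies $\lambda(t).s=t^2s$ for $s\in\tf$. The scaling you want on $\tf$ has to come from the ambient dilation $x\mapsto tx$ of $\g$ (which already preserves every $\g^{(m)}$ and hence every sheet), combined with a cocharacter of $Z(L)^\circ$ to contract the nilradical. More directly, one can bypass the limit argument altogether using the Borho--Kraft parameterization recalled in \S\ref{Katsylo}: $S=G.\rf^{\bullet}$ and $\overline S=G.\rf$ with $\rf=\nf\oplus\tf$, and the Lusztig--Spaltenstein dimension formula gives $\nf\cap\rf^{\bullet}\neq\emptyset$, so $S$ contains $G.(\nf\cap\rf^{\bullet})$, a nilpotent orbit. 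Your closing step of existence (that $\overline S\cap\g^{(m)}=S$) is fine.

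In the uniqueness step, the gap you flag is real, but the tool you reach for is the wrong one. The closure $\overline J$ of the dense Jordan class contains \emph{many} nilpotent Jordan classes (every orbit in $\overline{\mathrm{Ind}^{\g}_{\lf}(L.n)}$ lies in $\overline J$), so the Jordan-class closure combinatorics alone do not pin down a single nilpotent orbit. What does is the dimension constraint: any nilpotent orbit in $S$ has dimension $m$, and $S\cap\mathcal N\subset G.\nf$ (using $\overline S=G.\rf$ and that for $v\in\nf$, $t\in\tf$, the element $v+t$ is nilpotent iff $t=0$, which follows by restricting $G$-invariant polynomials to $\rf$). The Lusztig--Spaltenstein theorem then says $G.\nf$ has a \emph{unique} dense orbit and it has dimension $m$; every other orbit there is strictly smaller, hence not in $\g^{(m)}$, hence not in $S$. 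So uniqueness is genuinely a theorem about induced orbits, not about the Jordan-class decomposition of $S$; this is precisely the content of the cited Korollar 5.8 and must either be invoked or reproved.
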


Fix a $G$-sheet $S_G$, a datum $(\lf, L.n)$ of $S_G$,
cf.~\ref{Gclass}, and a Cartan subalgebra $\h\subset
\lf$. Set $\tf:=\g^{\lf}$ (thus $\tf\subset\h$).  Then,
following \cite{BK}, one can construct a parabolic
subalgebra $\mathfrak{j}$ of $\g$ and a nilpotent ideal
$\nf$ of $\mathfrak{j}$ such that $\rf=\nf\oplus\tf$
satisfies $S_G=G.\rf^{\bullet}$ (and
$\overline{S_G}=G.\rf$).  This is done as follows.  Recall,
see for example \cite[\S 5.7]{Ca}, that there exists a
grading $\lf = \bigoplus_{i \in \Z} \lf_i$ such that
$\mathfrak{j}_2 := \bigoplus_{i \ge 0} \lf_i$ is a parabolic
subalgebra of $\lf$, $\nf_2 := \bigoplus_{i \ge 2} \lf_i$ is
a nilpotent ideal of $\mathfrak{j}_2$ such that
$[\mathfrak{j}_2,n]=\nf_2$.  If $\nf_1$ is the nilradical of
any parabolic subalgebra with $\lf$ as Levi factor, one then
takes $\mathfrak{j}:= \mathfrak{j}_2+\nf_1$ and
$\nf:=\nf_1+\nf_2$.

Note here that when $S_G$ is \emph{Dixmier}, i.e.
contains semisimple elements, then $n=0$ and $\mathfrak{j} =
\lf + \nf$ has~$\lf$ as Levi factor and $\nf$ as nilradical.
This will be the case when $S_G$ is regular in section
\ref{regularsheet} or when $\g$ is of type A in~\ref{glN}.

Under the previous notation, the following result is proved
in~\cite[Lemma~3.2]{Kat} (cf.~also \cite[Proposition
2.6]{IH}).

\begin{prop}
  \label{axiomK}
  Let $(e,h,f)$ be an \Striplet such that $e
  \in\nf^{\bullet}$ and $h \in \h$, then
  \[
  S_G=G.(e+\tf).
  \]
\end{prop}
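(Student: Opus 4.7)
I follow the Borho--Kraft framework in which $S_G = G.\rf^{\bullet}$ with $\rf = \nf \oplus \tf$. The plan is to identify $G.(e+\tf)$ with $S_G$ by combining this description with the $\K^\times$-action coming from the $\mathfrak{sl}_2$-triple. Let $\lambda : \K^\times \to G$ be the one-parameter subgroup generated by $\ad h$; by construction it acts on the $\ad h$-weight $i$ eigenspace by $\tau^i$, so $[h,\tf]=0$ gives $\lambda(\tau)|_\tf = \mathrm{id}$, while $[h,e]=2e$ gives $\lambda(\tau).e = \tau^2 e$.

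For the first inclusion $G.(e+\tf) \subseteq S_G$, I start from the obvious $e + \tf \subseteq \nf + \tf = \rf$, which yields $G.(e+\tf) \subseteq G.\rf = \overline{S_G}$. To promote this to $G.(e+\tf) \subseteq S_G$, I would show $e + t \in \rf^{\bullet}$ for every $t \in \tf$. The element $e$ itself belongs to $\rf^{\bullet}$ since $e \in \nf^{\bullet}$ is a Richardson-type nilpotent attaining the generic centralizer dimension of the sheet. For general $t$, the relation $\lambda(\tau).(e+t) = \tau^2 e + t$ together with the scaling invariance $\g^{cx} = \g^x$ yields $\dim \g^{e + \sigma t} = \dim \g^{e+t}$ for all $\sigma \in \K^\times$; sending $\sigma \to 0$ and invoking upper semicontinuity together with the minimality of $\dim\g^e$ on $\rf$ forces $\dim\g^{e+t} = \dim\g^e$, i.e.~$e+t \in \rf^{\bullet}$. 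A standard dimension count of the morphism $\phi : G \times \tf \to \g$, $(g,t)\mapsto g.(e+t)$, via the generic fibre of dimension $\dim\g^{e+t_0}$ (for generic $t_0$), gives $\dim G.(e+\tf) = \dim G + \dim\tf - \dim\g^{e+t_0} = \dim S_G$. Hence $G.(e+\tf)$ is an irreducible constructible subset of the irreducible sheet $S_G$ of matching dimension, thus dense in $S_G$.

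The remaining and most delicate step is to upgrade density to set-theoretic equality. Given $y \in S_G$, after $G$-conjugation I suppose $y = u + t \in \rf^{\bullet}$ with $u \in \nf^{\bullet}$ and $t \in \tf$. Since $\mathfrak{j}$ is a parabolic of $\g$ with nilradical $\nf$, Richardson's theorem supplies $j \in J$ with $j.u = e$, and since the unipotent radical $N = \exp(\nf)$ of $J$ acts on $\tf \subset \h$ via $[\nf,\tf] \subseteq \nf$ only (the Levi of $J$ fixing $\tf = \zz(\lf)$ pointwise), we obtain $j.(u+t) = e + j.t = e + t + \delta$ for some $\delta \in \nf$. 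The main obstacle is to absorb the unwanted tail $\delta$: the cleanest route uses the $\ad h$-grading $\nf = \bigoplus_{i\geq 1} \nf\cap\g_i$ together with the limit $\tau \to \infty$ in the modified action $\tau^{-2}\lambda(\tau)$, which fixes $e$, acts trivially on $\tf$ and shrinks the positive-weight components of $\delta$; combined with the transversality of the Slodowy slice $e+\g^f$ to $G.e$ and the closedness of the locus $G.(e+\tf)$ inside $S_G$ coming from the sheet structure, one concludes $y \in G.(e+\tf)$. This absorption of $\delta$, rather than the dimension count or the initial containment, is where all the technical work of the proof concentrates.
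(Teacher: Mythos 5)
The paper does not prove this proposition; it cites Katsylo~\cite[Lemma~3.2]{Kat} (see also Im~Hof~\cite[Proposition~2.6]{IH}), so there is no internal proof to compare against. That said, your first inclusion $G.(e+\tf) \subseteq S_G$ is correct and argued cleanly: $\lambda(\tau).(e+t) = \tau^2(e + \tau^{-2}t)$ gives $\dim\g^{e+t} = \dim\g^{e+\sigma t}$ for all $\sigma\in\K^\times$, and upper semicontinuity of $x\mapsto\dim\g^x$ then forces $\dim\g^{e+t}=\dim\g^e$, i.e.\ $e+t\in\rf^\bullet$. This is exactly the kind of argument one expects here.

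Your reverse inclusion, however, has a genuine gap right at the start. You write an arbitrary $y\in S_G$ (after $G$-conjugation) as $y = u+t \in \rf^\bullet$ with $u\in\nf$, $t\in\tf$, and then \emph{assert} $u\in\nf^\bullet$. This does not follow from $u+t\in\rf^\bullet$: take $\g=\sln_2$ with the regular sheet, $\lf=\tf=\h$, $n=0$, $\nf$ the nilradical of a Borel. Then $t\in\h$ regular satisfies $0+t\in\rf^\bullet$ (since $\dim\g^t=1=\dim\g^e$), but $u=0\notin\nf^\bullet$. What is true is that $t$ is $G$-conjugate to $e+t$, i.e.\ to a sum with nilpart in $\nf^\bullet$ --- but producing such a conjugation for an arbitrary $z\in\rf^\bullet$ is essentially what the proposition asserts, so the assumption is circular. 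The subsequent ``absorption of $\delta$'' also does not work as stated: the modified action $\tau^{-2}\lambda(\tau)$ scales $\g(i,h)$ by $\tau^{i-2}$, hence it multiplies $\tf\subset\g(0,h)$ by $\tau^{-2}$ rather than fixing it, so no limit of $\tau^{-2}\lambda(\tau).(e+t+\delta)$ lands back in $e+\tf$ (nor would it stay in the $G$-orbit, since $\tau^{-2}\lambda(\tau)\notin G$). Invoking ``closedness of $G.(e+\tf)$ inside $S_G$ coming from the sheet structure'' is again begging the question --- that closedness is precisely the conclusion. Finally a small point of bookkeeping: $\nf=\nf_1+\nf_2$ is a nilpotent ideal of $\mathfrak{j}$ but is the nilradical only when $\lf_1=0$ (e.g.\ $n$ even or the sheet Dixmier); in general the unipotent radical of $J$ is $\exp(\lf_1+\nf)$, so ``Richardson's theorem'' must be replaced by the Lusztig--Spaltenstein statement about dense $P$-orbits on nilpotent ideals, and $\delta$ can a priori have a $\lf_1$-component.
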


From \cite[Lemma~3.1]{Kat} one knows that there exists an
\Striplet $\sS:=(e,h,f)$ such that $e \in \nf^{\bullet}$ and
$h \in \h$.  We fix $\sS=(e,h,f)$ for the rest of the
subsection. Note that $e\in S_G$. The adjoint action of $h$
on $\g$ yields a grading
\[
\g=\bigoplus_{i\in\Z} \g(i,h), \quad \g(i,h) := \{v \in \g :
[h,v]= iv\}.
\]
One of the main constructions in \cite{Kat} consists in
deforming the ``section'' $e+\tf$ into another ``section''
having nice properties. The construction goes as
follows. First, define a subset $e+X(S_G,\sS) \subset S_G$,
depending only on the sheet and the choice of the \Striplet,
by:
$$
e+X(S_G,\sS) := S_G\cap(e+\g^f).
$$
Then, the deformation is made by using a map
$\varepsilon_{S_{G},\sS}^{\g}: e+\tf\rightarrow
e+X(S_G,\sS)$, whose definition is recalled below, see
Remark~\ref{ggprime}.  Before going into the details, note
that when there is no ambiguity on the context, we write $X$
instead of $X(S_G,\sS)$ and $\varepsilon^{\g}$, or
$\varepsilon$, instead of $\varepsilon^{\g}_{S_{G},\sS}$.

\begin{Rq}\label{XsXss}
  When $\g$ is of type~A, there is a unique sheet containing
  a fixed nilpotent orbit (cf.~\cite[\S2]{Kr}). In this case
  we can therefore set $X(\sS):=X(S_G,\sS)$ where $S_G$ is
 the sheet containing the nilpotent element $e$ of $\sS$.
\end{Rq}

Define a one parameter subgroup
$(F_{t})_{t\in\K^\times}\subset \GL(\g)$ by setting
$F_{t}.y:=t^{(i-2)}y$ for $y\in\g(i,h)$.  One can show as in \cite{Kat} that
$F_t.e=e$, $F_t.S_G =S_G$, $F_t.X=X$ and $\lim_{t \to
  0}F_{t}.y = e$ for all $y \in e+X$.
 
One can slightly modify \cite[Lemma~5.1]{Kat} to obtain the
following result:

\begin{lm}\label{epsilon}
  There exists a polynomial map
  \[
  \epsilon 
  : e+\bigoplus_{i\leqslant 0}\g(2i,h)\longrightarrow e+(\g^f\cap\bigoplus_{i\leqslant0}\g(2i,h))
  \]
  such that:
\item[{\rm (i)}] $e+z\in G.\epsilon(e+z)$ for all
  $z\in\bigoplus_{i\leqslant 0}\g(2i,h)$;
\item[{\rm (ii)}] let $j\leqslant 0$ and set
  $P_{j}:=(\pi_{2j}\circ\epsilon)_{\mid e+\g(0,h)}$ where
  $\pi_{2j}$ is the canonical projection from
  $\bigoplus_{i\leqslant0}\g(2i,h)$ onto $\g(2j,h)$, then
  $P_j$ is either $0$ or a homogeneous polynomial of degree
  $-j+1$.
\end{lm}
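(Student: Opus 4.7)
My plan is to adapt Katsylo's construction from~\cite[Lemma~5.1]{Kat}, performing it in a manifestly equivariant way with respect to the one-parameter subgroup $(F_{t})_{t\in\K^{\times}}$, so that the degree statement~(ii) drops out as an immediate consequence of a weight computation.

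First I would record the \sld-module decomposition of $\g$ induced by $(e,h,f)$: for every integer $k$,
\[
\g(k,h) = (\g^{f}\cap\g(k,h)) \oplus [e,\g(k-2,h)].
\]
Fixing a linear complement $C_{k}\subset\g(k-2,h)$ of $\ker(\ad e|_{\g(k-2,h)})$, the restriction $[e,\cdot]:C_{k}\to[e,\g(k-2,h)]$ is a linear isomorphism; write $\sigma_{k}$ for its inverse and $\pi^{f}_{k}:\g(k,h)\to\g^{f}\cap\g(k,h)$ for the projection along the above splitting. Since each $\g(i,h)$ is a single $F_{t}$-eigenspace (of eigenvalue $t^{i-2}$), both $\sigma_{k}$ and $\pi^{f}_{k}$, being linear maps between subspaces of such eigenspaces, are automatically $F_{t}$-equivariant.

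I would then define $\epsilon$ by recursion on the weight. Set $y_{0}:=e+z$ and, for $m=0,1,2,\dots$, decompose the weight-$(-2m)$ component of $y_{m}$ as $v_{m}+[e,u_{m}]$ with $v_{m}\in\g^{f}\cap\g(-2m,h)$ and $u_{m}\in C_{-2m}\subset\g(-2m-2,h)$, and put $y_{m+1}:=\exp(\ad u_{m})(y_{m})$. A direct weight count shows that $[u_{m},e]=-[e,u_{m}]$ cancels the $[e,u_{m}]$ part of the weight-$(-2m)$ component, while all other contributions from $\exp(\ad u_{m})$ land in strictly lower weight spaces; hence the weight-$(-2m)$ component of $y_{m+1}$ is exactly $v_{m}\in\g^{f}$, the components of weight $>-2m$ are unchanged, and only components of weight $<-2m$ get modified. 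Because only finitely many $\g(2i,h)$ are nonzero, the recursion stabilises at some $y_{M}\in e+(\g^{f}\cap\bigoplus_{i\leqslant 0}\g(2i,h))$, and we set $\epsilon(y):=y_{M}$. Each step is polynomial in the entries of $y$ (the exponential series truncates for weight reasons), so $\epsilon$ is a polynomial map, and~(i) is immediate since every $y_{m+1}$ lies in $G.y_{m}$.

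For~(ii), I would observe that every ingredient of the construction is $F_{t}$-equivariant: $\sigma_{k}$ and $\pi^{f}_{k}$ by the second paragraph, while $\Ad(F_{t})\circ\exp(\ad u)=\exp(\ad(F_{t}.u))\circ\Ad(F_{t})$ handles the exponentials. Thus $\epsilon(F_{t}.y)=F_{t}.\epsilon(y)$. Restricting to $y=e+z_{0}$ with $z_{0}\in\g(0,h)$, one has $F_{t}.(e+z_{0})=e+t^{-2}z_{0}$ and, writing $\epsilon(e+z_{0})=e+\sum_{j\leqslant 0}P_{j}(e+z_{0})$ with $P_{j}(e+z_{0})\in\g(2j,h)$, the identity $\epsilon(F_{t}.y)=F_{t}.\epsilon(y)$ yields $P_{j}(e+t^{-2}z_{0})=t^{2j-2}P_{j}(e+z_{0})$ for every $t\in\K^{\times}$. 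Decomposing $P_{j}$ as a sum of homogeneous components in $z_{0}$ forces each component of degree $d$ to satisfy $-2d=2j-2$, i.e.\ $d=-j+1$, so $P_{j}$ is either zero or homogeneous of degree $-j+1$. The only real obstacle in the argument is keeping the construction $F_{t}$-equivariant throughout, which reduces to the trivial observation that the chosen complements $C_{k}$ sit inside single $F_{t}$-eigenspaces; everything else is straightforward weight bookkeeping.
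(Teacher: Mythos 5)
Your construction of $\epsilon$ is essentially the paper's: the tower $y_{m+1}=\exp(\ad u_m)(y_m)$ with $u_m\in\g(-2m-2,h)$ chosen so that the weight-$(-2m)$ component falls into $\g^f$ is exactly Katsylo's $\epsilon=\epsilon'_{-k}=\epsilon_{-k}\circ\cdots\circ\epsilon_1$, just reindexed, and your weight bookkeeping for parts (i) and the termination of the recursion is sound. Where you genuinely diverge is part (ii): the paper performs a decreasing induction that explicitly composes the maps and tracks degrees, whereas you appeal to $F_t$-equivariance of $\epsilon$ and extract the degree of $P_j$ from the weight $t^{2j-2}$. This is a more conceptual route and, in the end, reaches the correct conclusion — but your justification of the equivariance itself is flawed, and both flaws need to be named.

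First, the map $\sigma_k\colon [e,\g(k-2,h)]\to C_k$ is \emph{not} $F_t$-equivariant: its source sits in $\g(k,h)$ (eigenvalue $t^{k-2}$) while its target sits in $\g(k-2,h)$ (eigenvalue $t^{k-4}$), so a nonzero linear map between them cannot commute with $F_t$; the general principle you invoke (\emph{any} linear map between $F_t$-eigenspaces is equivariant) is false unless the eigenvalues agree, and it only applies to $\pi^f_k$. Second, the identity $\Ad(F_t)\circ\exp(\ad u)=\exp(\ad(F_t.u))\circ\Ad(F_t)$ is false: writing $F_t=t^{-2}H_t$ with $H_t:=\exp\bigl((\log t)\,\ad h\bigr)$, one has $F_t\circ\ad u\circ F_t^{-1}=\ad(H_t.u)=\ad(t^2\,F_t.u)$, so the conjugation produces $\exp(\ad(t^2 F_t.u))$, not $\exp(\ad(F_t.u))$. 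Each of these two errors is off by exactly $t^2$, and they cancel: one finds $u_m(F_t.y)=t^{-2m-2}u_m(y)=H_t.u_m(y)=t^2F_t.u_m(y)$, and plugging this into the correct conjugation formula for $H_t$, together with the fact that the scalar $t^{-2}$ commutes with $\exp(\ad u)$, does give $\epsilon(F_t.y)=F_t.\epsilon(y)$. So the conclusion stands, but not for the reasons you give. The clean version of your argument replaces ``$F_t$-equivariant'' throughout by ``$H_t$-equivariant'' (which all the $\sigma_k$, $\pi^f_k$, and the conjugation identity genuinely satisfy, since $H_t$ lies in the adjoint group) and then observes that $F_t$ differs from $H_t$ only by the scalar $t^{-2}$, which commutes with everything in sight.
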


\begin{proof}
  We set $\g_{i}:=\g(i,h)$ for $i\leqslant 1$. One can then
  define affine subspaces $L_{2i}$ and $M_{2i}$ by:
$$
L_{2i}:=\g^f\cap\g_{2i}, \quad
M_{2i}:=e+L_{2}+L_{0}+L_{-2}+\cdots+L_{2i}+\g_{2i-2}+
\g_{2i-4}+\cdots
$$
It is clear that $L_{2}=\{0\}$,
$M_{2}=e+\bigoplus_{i\leqslant0}\g_{2i}$ and
$M_{-2k}=e+(\g^f\cap\bigoplus_{i\leqslant0}\g(2i,h))$ for $k$ large enough. We fix such a $k$.
Now, define maps $\epsilon_{i}: M_{2i}\rightarrow M_{2i-2}$
as follows.
 
Denote the projections associated to the decomposition
$\g_{2i-2} = [e,\g_{2i-4}] \oplus L_{2i-2}$ by $\pr_{1}:
\g_{2i-2}\rightarrow [e,\g_{2i-4}]$ and $\pr_{2}:
\g_{2i-2}\rightarrow L_{2i-2}$ (hence
$\pr_{1}+\pr_{2}=\Id_{\g_{2i-2}}$). Next, define
$\eta_{2i-2}: \g_{2i-2}\rightarrow\g_{2i-4}$ to be the
linear map $(\ad e)^{-1}\circ\pr_{1}$. It satisfies
$[\eta_{2i-2}(x),e]+x\in L_{2i-2}$ for all $x\in\g_{2i-2}$.
If $e+z=e+\sum_{j=i}^0 z_{2j}+\sum_{j=k}^{i-1} w_{2j}\in
M_{2i}$, where $z_{2j}\in L_{2j}$, $w_{2j}\in\g_{2j}$, set:
$$
\epsilon_{i}(e+z):=\exp(\ad \eta_{2i-2}(w_{2i-2})) (e+z).
$$ 
Then, $\epsilon_{i}$ is a polynomial map such that
$\epsilon_{i}(e+z)\in M_{2i-2}$.  Now, set:
\[
\epsilon'_{i}:=\epsilon_{i}\circ\dots\circ\epsilon_{-1}\circ
\epsilon_{0}\circ\epsilon_{1}, \quad
\epsilon:=\epsilon'_{-k}.
\]
Clearly, $\epsilon$ is a polynomial map which satisfies (i).

To get (ii), we now show, by decreasing induction on
$i\leqslant 2$, that $P_j=(\pi_{2j}\circ\epsilon'_{i})_{\mid
  e+\g_{0}}$ is either $0$ or a homogeneous polynomial of
degree $-j+1$.  Set $\epsilon'_{2}:=\Id$ so that $P_1=0$ and the claim
is obviously true for $j=1$.  Assume that the assertion is true for a
given integer $i_{0}=i+1\leqslant 2$. Remark that the
construction of $\epsilon_i,\epsilon'_{i}$ gives
$$
\epsilon'_{i}(e+t)=\epsilon_{i}\circ\epsilon'_{i_{0}}(e+t)=\exp(
\ad\eta_{2i-2}(\pi_{2i-2}\circ\epsilon'_{i_{0}}(e+t))).
\epsilon'_{i_{0}}(e+t)
$$ 
for all $e+t\in e+\g_{0}$. By induction,
$u_{i}:=\eta_{2i-2}(\pi_{2i-2}\circ\epsilon'_{i_{0}}):e+\g_{0}
\rightarrow \g_{2i-4}$ is $0$ or homogeneous of degree
$-i+2$; thus
$$
\pi_{2j}\circ\epsilon'_{i}(e+t)=\sum_{l\geqslant0}
\frac{(\ad u_{i}(e+t))^l}{l!}\circ
\pi_{2j+l(-2i+4)}\circ\epsilon'_{i+1}(e+t)
$$ 
is either $0$ or homogeneous of degree
$l(-i+2)+(-j-l(-i+2)+1)=-j+1$, as desired.
\end{proof}

\begin{Rq}\label{ggprime}
  The polynomial map $\epsilon$ constructed in the proof of
  Lemma~\ref{epsilon} will be denoted by
  $\epsilon^\g=\epsilon^{\g}_{\sS}$.  By restriction, it
  induces a map $\varepsilon =
  \varepsilon^{\g}=\varepsilon^{\g}_{\sS}$ from $e+\h$ to
  $e+\g^f$ and Lemma~\ref{epsilon}(i) implies that
  $\epsilon$ maps $e+\tf$ into $e+X$.  One can therefore
  define $\varepsilon^{\g}_{S_{G},\sS}$ to be the polynomial
  map $(\varepsilon^{\g}_{\sS})_{\mid e+\tf}$.
   
  Furthermore, one may observe that the construction of
  $\epsilon^{\g}$ made in the proof of the previous
  proposition yields that $\varepsilon^{\g}$ does not depend
  on $\g$ in the following sense: if $\g'$ is a reductive
  Lie subalgebra of $\g$ containing $\sS$, then
  $\varepsilon^{\g'}=\varepsilon^{\g}_{\mid \h\cap\g'}$.  In
  the sequel, we will often write $\varepsilon$ when the
  subscript is obvious from the context.
\end{Rq}

The next lemma is due to Katsylo \cite{Kat}, see \cite{IH}
for a purely algebraic proof.

\begin{lm}
  \label{Gamma}
  Under the previous notation:
  \\
  {\rm (i)} $S_G= G.(e+X)$;
  \\
  {\rm (ii)} The group $(G^{e,h,f})^{\circ}$ acts trivially on $e+X$ so the action of $G$ on $\g$ induces an action of
  $A:=~G^{e}/(G^{e})^{\circ}\cong G^{e,h,f}/(G^{e,h,f})^{\circ}$ on $e+X$;
  \\
  {\rm (iii)} for all $x\in e+X$, one has $A.x=G.x\cap
  (e+X)$.
\end{lm}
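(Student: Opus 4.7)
The lemma is essentially Katsylo's Slodowy-slice theorem for sheets, so my plan is to reduce the three assertions to the ingredients already established (Proposition~\ref{axiomK}, Lemma~\ref{epsilon}, Remark~\ref{ggprime}) together with the transversality of the slice and the $\K^{\times}$-contraction $F_t$.

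For (i), I combine Proposition~\ref{axiomK}, which gives $S_G=G.(e+\tf)$, with the property recorded in Remark~\ref{ggprime}: the polynomial map $\varepsilon^{\g}_{S_G,\sS}:e+\tf\to e+X$ satisfies $e+z\in G.\varepsilon(e+z)$ for every $z\in\tf$. Hence $e+\tf\subset G.(e+X)$, which gives $S_G\subset G.(e+X)$; the reverse inclusion is immediate since $e+X\subset S_G$ and $S_G$ is $G$-stable.

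For (ii), every element of $G^{e,h,f}$ fixes $e$, preserves $\g^f$ (since it fixes $f$), and preserves $S_G$; it therefore stabilizes $e+X=(e+\g^f)\cap S_G$. The isomorphism $A\cong G^{e,h,f}/(G^{e,h,f})^\circ$ comes from the standard Levi decomposition of the centralizer $G^e$ into its reductive part $G^{e,h,f}$ and its connected unipotent radical. To prove triviality of the $(G^{e,h,f})^\circ$-action on $e+X$, I use the contraction $F_t$: since $F_t$ commutes with every element of $G^{e,h,f}$ (both centralize $\ad h$), any $(G^{e,h,f})^\circ$-orbit in $e+X$ is $F_t$-stable and hence contracts to $\{e\}$. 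A weight analysis of the $G^{e,h,f}$-action on the graded components of $\g^f$, combined with the degree/weight constraints on the image of $\varepsilon$ supplied by Lemma~\ref{epsilon}(ii), then forces $X$ to lie in the $(G^{e,h,f})^\circ$-fixed part of $\g^f$.

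For (iii), the inclusion $A.x\subset G.x\cap(e+X)$ is immediate from (ii) and the inclusion $A\subset G$. For the reverse, write $y=g.x$ with $g\in G$ and $y\in e+X$. The transversality $\g=\g^f\oplus[e,\g]$ implies that the natural map $G\times^{G^e}(e+X)\to S_G$ is smooth (as in \cite{IH}), so any $g$ sending one point of the slice to another lying in the same $G$-orbit can be modified, up to right multiplication by an element of $G^{x}$, to an element of $G^e$. Since $(G^e)^\circ$ acts trivially on $e+X$ by (ii), only the class of this element in $A=G^e/(G^e)^\circ$ matters, which yields $y\in A.x$.

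The main obstacle will be (ii): the triviality of $(G^{e,h,f})^\circ$ on $e+X$ is much stronger than on the full slice $e+\g^f$, where the reductive centralizer acts non-trivially in general. The argument therefore depends on the \emph{specific} construction of the deformation $\varepsilon$ via Lemma~\ref{epsilon}, not merely on the abstract existence of a Slodowy transversal.
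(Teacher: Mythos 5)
The paper does not prove Lemma~\ref{Gamma} itself; it is cited from Katsylo \cite{Kat} and Im Hof \cite{IH}, so there is no in-paper proof to compare to. Your part~(i) is correct and essentially the canonical argument: $e+\tf\subset G.(e+X)$ by Lemma~\ref{epsilon}(i), so Proposition~\ref{axiomK} gives $S_G=G.(e+\tf)\subset G.(e+X)$, and the converse follows from $e+X\subset S_G$.

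Parts (ii) and (iii), however, contain genuine gaps. In (ii), the assertion that each $(G^{e,h,f})^{\circ}$-orbit in $e+X$ is $F_t$-stable is unjustified: $F_t$ maps $(G^{e,h,f})^{\circ}$-orbits to $(G^{e,h,f})^{\circ}$-orbits (since it commutes with $G^{e,h,f}$), but there is no reason a priori that $F_t.x\in (G^{e,h,f})^{\circ}.x$. Moreover, even if it were, the fact that $e$ lies in the closure of an orbit does not make the orbit a point, and the ``weight analysis of $\varepsilon$'' you invoke is not a well-defined argument (Lemma~\ref{epsilon}(ii) controls homogeneity of $P_j$ as a function of $t\in\tf$, not equivariance under $G^{e,h,f}$). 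The standard route is quite different: by the transversality in Proposition~\ref{slodowy}(i), for $x\in e+X$ the intersection $G.x\cap(e+\g^f)$ is a constructible set of dimension $0$, hence finite; since $(G^{e,h,f})^{\circ}.x\subset G.x\cap(e+X)$ and the group is connected, the orbit reduces to $\{x\}$. In (iii), the expression $G\times^{G^e}(e+X)$ is ill-defined, because $G^e$ does \emph{not} act on $e+X$ (a general element of $G^e$ does not fix $f$, hence does not preserve $e+\g^f$); only $G^{e,h,f}$ does. Correcting this to $G^{e,h,f}$, the claim that any $g$ with $g.x\in e+X$ ``can be modified by $G^x$ to an element of $G^{e,h,f}$'' is precisely the hard content of Katsylo's lemma, not a formal consequence of the smoothness of $\delta'$. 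Establishing it requires an argument using the $\K^{\times}$-contraction $F_t$ in an essential, quantitative way (this is the degeneration step in \cite{Kat} and the bulk of Im Hof's algebraic proof). In short, (i) stands, but (ii) and (iii) both need the transversality/finiteness argument and, in (iii), the contraction argument, neither of which you have supplied.
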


These results enable us to define a quotient map (of sets)
by:
$$
\psi = \psi_{S_G,\sS}: S_{G}\longrightarrow (e+X)/A, \quad
\psi(x):=A.y \ \; \text{if $G.y=G.x$.}
$$
Since $e+X$ is an affine algebraic variety \cite[Lemma~4.1]{Kat} on which the
finite group $A$ acts rationally, it follows from
\cite[25.5.2]{TY} that $(e+X)/A$ can be endowed (in a
canonical way) with a structure of algebraic variety and
that the quotient map
\begin{equation}\label{mapgamma}
  \gamma:e+X \longrightarrow (e+X)/A 
\end{equation}
is the geometric quotient of $e+X$ under the action of $A$.
Using Lemma~\ref{epsilon}(i) and Lemma~\ref{Gamma} one
obtains:
\[
\psi = \gamma \circ \varepsilon \ \; \text{on $e + \tf$.}
\]
The following theorem is the main result in~\cite{Kat}:

\begin{thm}\label{psiquotient}
  The map $\psi: S_{G}\rightarrow (e+X)/A$ is a morphism of
  algebraic varieties and gives a geometric quotient $S_G/G$
  of the sheet $S_{G}$.
\end{thm}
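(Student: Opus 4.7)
The plan is to verify, in order: (a) $\psi$ is a well-defined map of sets, (b) it is a morphism of algebraic varieties, and (c) it realises the geometric quotient $S_G/G$. Set-theoretic well-definedness is immediate from Lemma~\ref{Gamma}: part~(i) guarantees $G.x \cap (e+X) \nonv$, and part~(iii) says any two points of this intersection lie in the same $A$-orbit. Hence $\psi(x) := A.y$ (for any $y \in G.x \cap (e+X)$) is unambiguous, and $\psi$ is manifestly $G$-invariant.

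For the morphism structure, the idea is to exploit the multiplication map $\Phi: G \times (e+X) \to S_G$, $(g,y) \mapsto g.y$, which is surjective by Lemma~\ref{Gamma}(i). Lemma~\ref{Gamma}(iii) ensures that $\gamma \circ \pr_2: G \times (e+X) \to (e+X)/A$ is constant on the fibres of $\Phi$ and agrees set-theoretically with $\psi \circ \Phi$, so there is at most one map of sets $\psi$ making the square commute. The decisive step is to promote this to a morphism. The key ingredient is the polynomial map $\varepsilon^{\g}_{S_G,\sS}: e+\tf \to e+X$ of Remark~\ref{ggprime}: by Lemma~\ref{epsilon}(i), each $e+z \in e+\tf$ lies in $G.\varepsilon(e+z)$, so $\psi = \gamma \circ \varepsilon$ on the dense Jordan class $e+\tf^{\bullet}$, which meets every $G$-orbit of $S_G$. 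Globalisation is carried out by constructing local sections of $\Phi$ over a $G$-saturated open cover of $S_G$; the contraction furnished by the one-parameter subgroup $F_t$ (which stabilises $S_G$ and retracts $e+X$ onto $e$) provides these sections in a neighbourhood of each nilpotent representative, and pulling $\gamma$ back through them yields regular maps that patch into $\psi$ by Lemma~\ref{Gamma}(iii).

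For the geometric quotient property, by Lemma~\ref{Gamma}(iii) the fibres of $\psi$ are exactly the $G$-orbits of $S_G$, so it remains to show that $\mathcal{O}_{(e+X)/A}(V) \isomto \mathcal{O}_{S_G}(\psi^{-1}(V))^G$ for every open $V \subset (e+X)/A$. Restriction to $e+X$ induces a morphism $\mathcal{O}_{S_G}(\psi^{-1}(V))^G \to \mathcal{O}_{e+X}(\gamma^{-1}(V))^A$: it is injective because $G\cdot (e+X) = \psi^{-1}(V)$, and surjective because any $A$-invariant $f$ on $\gamma^{-1}(V)$ extends to the $G$-invariant rule $\tilde f(g.y) := f(y)$, which is well-defined by Lemma~\ref{Gamma}(iii) and regular by the same local-section argument. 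Since $\gamma$ is itself a geometric quotient \cite[25.5.2]{TY}, combining these identifications yields the result.

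The principal obstacle is the passage from the set-theoretic factorisation through $\Phi$ to a genuine morphism of varieties: $\Phi$ admits no global section in general, so local sections must be manufactured. This is precisely where the detailed construction of $\varepsilon$—with the homogeneity provided by Lemma~\ref{epsilon}(ii)—and the $F_t$-dilation become indispensable, as they supply enough local structure to cover $S_G$ with open sets on which $\Phi$ splits and $\psi$ is visibly regular.
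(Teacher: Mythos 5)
The paper does not prove this statement: it cites it as ``the main result in \cite{Kat}'' (with \cite{IH} referenced elsewhere for a purely algebraic treatment of Lemma~\ref{Gamma}). So there is no in-paper proof to compare against; I assess your argument on its own terms.

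Your outline (well-definedness from Lemma~\ref{Gamma}, regularity, then the sheaf-theoretic verification of the quotient property) is the right skeleton, and you correctly identify the bottleneck: promoting the set-theoretic factorisation through $\Phi \colon G\times(e+X)\to S_G$ to a morphism. But exactly there the argument has a genuine gap. You assert that ``local sections of $\Phi$ over a $G$-saturated open cover of $S_G$'' exist and that ``the contraction furnished by $F_t$ \dots provides these sections.'' Neither claim is established. The morphism $\delta\colon G\times(e+\g^f)\to\g$ and its restriction to $G\times(e+X)$ are smooth (Proposition~\ref{slodowy}), hence admit sections \emph{\'etale}-locally, but smooth surjective morphisms do not in general admit Zariski-local sections, and nothing in the contraction $F_t$ (which lives in $\GL(\g)$, not in $G$, and merely rescales the graded pieces of $\g^f$) produces one. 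Saying the resulting regular maps ``patch into $\psi$'' presupposes the very regularity one is trying to prove. Likewise, your surjectivity argument for $\mathcal{O}_{S_G}(\psi^{-1}(V))^G\to\mathcal{O}_{e+X}(\gamma^{-1}(V))^A$ invokes ``the same local-section argument,'' so it inherits the same gap. This is precisely where the substance of Katsylo's proof lies: it is not a formal consequence of Lemmas~\ref{epsilon}--\ref{Gamma}, but requires a careful analysis of the $\K^\times$-contraction, the induced grading of $\K[e+X]$, and the homogeneity of $\varepsilon$ to actually produce the factorisation as a morphism.

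Two smaller points. First, $e+\tf^\bullet$ is not the dense Jordan class of $S_G$ (that class has datum $(\lf,L.n)$, i.e.\ is $G.(\tf^\bullet+n)$, not anything of the form $e+\tf^\bullet$); the correct statement you need is simply that $e+\tf$ meets every $G$-orbit of $S_G$, which is Proposition~\ref{axiomK}. Second, for a geometric quotient one must also verify that $\psi$ is (universally) submersive; the identification of fibres with $G$-orbits and the sheaf condition are not by themselves sufficient, and this condition should be stated and checked.
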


\begin{Rq}
  \label{epsilonfinite}
  One has $\dim S_G/G = \dim X = \dim \tf$, see
  \cite[\S5]{Boh}.  It is shown in \cite[Corollary~4.6]{IH}
  that, when $\g$ is classical, the map $\varepsilon : e +
  \tf \to e +X$ is quasi-finite (it is actually finite by
  \cite[Chaps.~5~\&~6]{IH}).
\end{Rq}

The variety $e+X$ will be called a \emph{Slodowy slice} of
$S_{G}$. One of the main results of \cite{IH} is that $e+X$
is smooth when $\g$ is of classical type,
cf.~Theorem~\ref{slicesmoothness}.  This result relies on
some properties of $e+\g^f$ that we now recall
(see~\cite[7.4]{Sl}).

\begin{prop}\label{slodowy}
  {\rm (i)} The intersection of $G.x$ with $e+\g^f$ is
  transverse for any $x \in e+X$ (i.e.
  $T_x(e+\g^f)\oplus T_x(G.x)=T_x(\g)$.)
  \\
  {\rm (ii)} The morphism $\delta: G\times (e+\g^f)
  \rightarrow\g$, $(g,x)\mapsto g.x$, is smooth.
  \\
  {\rm (iii)} Let $Y$ be a $G$-stable subvariety of $\g$ and
  set $Z := Y\cap(e+\g^f)$. Then the restricted morphism
  $\delta':G\times Z\rightarrow Y$ is smooth.  In
  particular, when $Y=G.Z$, $Y$ is smooth if and only if $Z$
  is smooth.
\end{prop}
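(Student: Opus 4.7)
The strategy is to treat (i) as the geometric core of the statement and derive (ii) and (iii) as formal consequences.

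For (i), I would first identify the tangent spaces: $T_x(e+\g^f)=\g^f$ and $T_x(G.x)=[\g,x]$, so that transversality at $x$ is equivalent to $[\g,x]+\g^f=\g$. At $x=e$ this is the classical $\sld$-decomposition of $\g$ associated with the triple $(e,h,f)$: decomposing $\g$ into irreducible $\sld$-submodules, the image of $\ad e$ together with the kernel of $\ad f$ fills each factor. The remaining task is to propagate this equality to every point of $e+X$.

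For this propagation I would exploit the contracting one-parameter subgroup $F_t$. Writing $F_t=t^{-2}\phi_t$ where $\phi_t:=\exp((\log t)\ad h)$ is a genuine Lie algebra automorphism of $\g$ that scales $\g(i,h)$ by $t^i$, I would observe that $\g^f$ is $\ad h$-stable (since $\ad h$ normalizes $\ad f$), so $\phi_t$ preserves $\g^f$. Consequently the condition $[\g,x]+\g^f=\g$ is invariant under $x\mapsto F_t.x$. The locus $U\subset e+\g^f$ where this condition holds is open (maximal rank condition), nonempty (it contains $e$) and $F_t$-stable; since $F_t$ contracts $e+\g^f$ to $e$, any nonempty $F_t$-stable closed subset of $e+\g^f$ must contain $e$. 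Applying this to the complement of $U$ forces $U=e+\g^f$; in particular $e+X\subset U$, which proves (i) together with the slightly stronger statement that transversality holds at every point of $e+\g^f$.

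For (ii), I would compute the differential of $\delta$: at $(g,x)$, after left-translating by $g^{-1}$, it is $(\xi,\eta)\mapsto g.([\xi,x]+\eta)$, whose image is $g.([\g,x]+\g^f)$. By the strengthened version of (i), this is all of $\g$ at every $(g,x)$, so $\delta$ is a submersion and hence smooth in characteristic zero. For (iii), the $G$-stability of $Y$ gives $\delta^{-1}(Y)=G\times Z$, so $\delta'$ is the base change of $\delta$ along the inclusion $Y\hookrightarrow\g$; smoothness of $\delta'$ then follows from base-change stability of smooth morphisms. When $Y=G.Z$ the morphism $\delta'$ is additionally surjective, hence faithfully flat, so smoothness of source and of target become equivalent by faithfully flat descent, giving the stated equivalence.

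The hardest point is (i): the subtlety is that $F_t$ is \emph{not} itself a Lie algebra automorphism, so one must factor out the scalar $t^{-2}$ and argue via the true automorphism $\phi_t$ to see that the transversality condition is preserved under the flow. Once that compatibility is isolated, (ii) and (iii) are routine.
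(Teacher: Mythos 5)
Your proposal is correct in substance, but it takes a different path from the paper for (i)--(ii) and elides one point in (iii) that the paper treats with care.

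For (i) and (ii) the paper simply cites Slodowy's Corollary~1 in~\cite[7.4]{Sl} rather than giving a proof. Your self-contained argument --- establish transversality at $e$ via $\sld$-theory, then propagate along the contracting $\K^\times$-action $F_t$ by factoring $F_t = t^{-2}\phi_t$ with $\phi_t = \exp((\log t)\ad h)$ a genuine automorphism preserving $\g^f$ --- is precisely the content of Slodowy's proof, so you are reproducing the cited argument rather than replacing it. One small precision: for arbitrary $x\in e+\g^f$ with $\dim G.x > \dim G.e$ you only obtain the spanning condition $[\g,x]+\g^f=\g$, not the direct sum; the direct sum as stated in (i) is specific to $x\in e+X$, where $\dim G.x=\dim G.e$. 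That is exactly what is needed for (ii), so no harm done, but the phrase ``transversality holds at every point of $e+\g^f$'' should be read as ``surjectivity of $d\delta$'', not literal transversality of the orbit and the slice.

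For (iii) your route (base change along $Y\hookrightarrow\g$, then faithfully flat descent for the equivalence of smoothness) is the paper's route too, but you identify $\delta'$ with the base change $\delta''\colon G\times\hat{Z}\to Y$, where $\hat{Z} := Y\times_\g(e+\g^f)$, without justifying that the schematic intersection $\hat{Z}$ coincides with $Z$ with its reduced structure. A priori $\hat{Z}$ could carry nilpotents, in which case $\delta'$ (defined on $G\times Z_{\mathrm{red}}$) would \emph{not} be the base change, and base-change stability of smoothness would say nothing about it. The paper closes this gap by invoking \cite[VII, Theorem 4.9]{AK}: since $\delta''$ is smooth and $Y$ is reduced, the source $G\times\hat{Z}$ is reduced, hence $\hat{Z}=Z$ and $\delta'=\delta''$. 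You should insert this step. The final equivalence ``$Y$ smooth iff $Z$ smooth'' via faithfully flat descent is then fine.
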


\begin{proof}
  Claims (i) and (ii) are essentially contained in
  \cite[7.4, Corollary~1]{Sl}.
  \\
  (iii) We merely repeat the argument given in \cite{IH}.
  Let
  $\hat{Z}=Y\cap_{\mathrm{sch}}(e+\g^f):=Y\times_{\g}(e+\g^f)$
  be the schematic intersection of $Y$ and $(e+\g^f)$
  (cf.~\cite[p.~87]{Ha}).  Writing $(G\times
  (e+\g^f))\times_{\g}Y\cong G\times((e+\g^f)\times_{\g} Y)
  = G\times \hat{Z}$, the base extension $Y \to \g$ gives
  the following diagram:
$$
\begin{array}{c c c} G\times (e+\g^f)&
  \stackrel{\delta}{\longrightarrow}&\g \\
  \uparrow & &\cup\\
  G\times \hat{Z}&\stackrel{\delta''}{\longrightarrow}&
  Y. \end{array}
$$
By \cite[III, Theorem 10.1]{Ha} $\delta''$ is smooth. Thus,
as $Y$ is reduced, \cite[VII, Theorem 4.9]{AK} implies that
$\hat{Z}$ is reduced.  Since $Y$ is $G$-stable, it is easy
to see that $\delta'$ factorizes through $\delta''$, hence
$\delta'=\delta''$.  When $Y=G.Z$, the morphism $\delta'$ is
surjective and \cite[VII, Theorem 4.9]{AK} then implies that
$Z$ is smooth if, and only if, $Y$ is smooth.
\end{proof}

Applying Proposition~\ref{slodowy}(iii) to a sheet
$Y=S_{G}$, one deduces that $S_{G}$ is smooth if and only if
the Slodowy slice $e+X$ is smooth. Using this method, the
following general result was obtained by Im~Hof:

\begin{thm}[\cite{IH}]
  \label{slicesmoothness}
  The sheets of a classical Lie algebra are smooth.
\end{thm}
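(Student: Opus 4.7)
The plan is to reduce, via Proposition~\ref{slodowy}(iii), to the smoothness of the Slodowy slice $e+X$, and then to carry out a case-by-case analysis for each classical type A, B, C, D.

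For the reduction: fix a sheet $S_G$ and an \Striplet $\sS=(e,h,f)$ chosen as in the paragraph preceding Proposition~\ref{axiomK}, and set $Y:=S_G$, $Z:=S_G\cap(e+\g^f)=e+X$. Since $S_G$ is $G$-stable and $S_G=G.(e+X)$ by Lemma~\ref{Gamma}(i), we have $Y=G.Z$. Proposition~\ref{slodowy}(iii) then yields that $S_G$ is smooth if and only if $e+X$ is smooth, so it suffices to prove that the Slodowy slice $e+X$ is smooth for every sheet in a classical Lie algebra.

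For type~A ($\g=\gl_N$ or $\fsl_N$), by Kraft's theorem every $G$-sheet is Dixmier, so the datum of $S_G$ has the form $(\lf,0)$; the element $e$ is then a Richardson element of the nilradical $\nf$ of a parabolic subalgebra with Levi factor $\lf$. Choosing a Jordan basis for $e$ provides explicit coordinates on the affine space $e+\g^f$, and Kraft's parametrization of sheets by the generic Jordan type of the pair $(s,n)$ lets one read off $e+X$ inside these coordinates. Combined with the identity $\dim(e+X)=\dim\tf$ from Remark~\ref{epsilonfinite}, one identifies $e+X$ with an affine space of the correct dimension, hence smooth. (Alternatively, one can use the finiteness of $\varepsilon:e+\tf\to e+X$ together with a normality check to deduce that $\varepsilon$ is an isomorphism in this setting.)

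For types B, C, D, the same reduction applies but the detailed analysis is more delicate: the nilpotent part $n$ of the datum may be nonzero, forcing the use of Lusztig--Spaltenstein induction to describe $(\mathfrak{j},\nf,e)$, and the partitions indexing nilpotent orbits carry parity constraints coming from the symmetric or symplectic bilinear form. One decomposes $\g=\bigoplus_i \g(i,h)$ with respect to the \Striplet, describes $e+\g^f$ and the form-preserving conditions in block-matrix form, and writes down equations cutting out $S_G\cap(e+\g^f)$ inside $e+\g^f$. A dimension count against $\dim\tf$, together with a direct inspection of the Jacobian of these equations (or an explicit smooth parametrization), then yields the smoothness of $e+X$. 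The main obstacle, and the technical heart of the argument, is precisely this last step in types B, C, D: controlling the defining equations with enough precision to verify smoothness requires careful combinatorial bookkeeping of partitions and weight multiplicities subject to parity constraints, with the parabolic data furnished by induction from a Levi subalgebra.
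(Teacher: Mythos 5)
The paper does not actually prove this theorem: it records only the reduction, via Proposition~\ref{slodowy}(iii), from the smoothness of a sheet $S_G$ to the smoothness of the Slodowy slice $e+X$, and then cites Im~Hof \cite{IH} for the verification that $e+X$ is smooth in each classical type (with Kraft--Luna and Peterson credited for type~A). Your opening paragraph reproduces that reduction correctly, so up to that point you are aligned with the paper.

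Everything after that is where the content lives, and your proposal does not supply it. In type~A, the claim that one ``identifies $e+X$ with an affine space of the correct dimension'' is unjustified: having $\dim(e+X)=\dim\tf$ does not make $e+X$ an affine space. The parenthetical alternative (finiteness of $\varepsilon$ plus a ``normality check'' to conclude $\varepsilon$ is an isomorphism) is circular as stated, since normality of $e+X$ is precisely the kind of fact smoothness is supposed to deliver, and bijectivity of $\varepsilon$ onto a non-normal target does not force an isomorphism. What actually closes type~A is the connectedness of $G^e$ (Lemma~\ref{GeIH}), making $\varepsilon:e+\tf\to e+X$ bijective, followed by nontrivial work to promote this bijection to an isomorphism; you invoke neither. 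For types B, C, D you concede outright that ``the main obstacle, and the technical heart of the argument, is precisely this last step'' of writing down and controlling the defining equations of $e+X$ --- that step is the substance of Chapters 5 and 6 of \cite{IH} and cannot be waved away as combinatorial bookkeeping. In short, your proposal identifies the same reduction the paper points to, but stops short of a proof: the case-by-case verification is described, not carried out.
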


Recall that the smoothness of sheets for $\fsl_N$ is due to
Kraft and Luna \cite{Kr} and, independentely, Peterson
\cite{Pe}. It is known that when $\g$ is of type
$\text{G}_2$, a subregular sheet of $\g$ is not normal
(hence is singular), see~\cite[8.11]{Sl}, \cite[6.4]{Boh} or
\cite{Pe}. It seems to be the only known example of non
smooothness of sheets.
\begin{Rqs}
  \label{remtriples}
  (1) Let $\sS = (e,h,f)$ be as above and pick $g \in
  G$. Then, the same results can be obtained for $g.e$ and
  $g.\sS$. In particular, one can construct a map
  \[
  \varepsilon : g.e + g.\h \to g.e + \g^{g.f}
  \]
  which induces a polynomial map $\varepsilon_{\mid g.e +
    g.\tf}$.
  \\
  (2) The results obtained from \ref{Gamma} to
  \ref{slicesmoothness} depend only on $S_G$ and $\sS$ but
  do not refer to $\lf$, $\tf$ or $\nf$.  Precisely, these
  results remain true when $e$ is replaced by $g.e$ and
  $\sS$ by any \Striplet containing $g.e$.  In particular,
  since $S_G$ contains a unique nilpotent $G$-orbit $G.e$,
  they remain true for any \Striplet $(e',h',f')$ such that
  $e' \in S_G$.
\end{Rqs}

\subsection{The regular $G$-sheet}
\label{regularsheet}
The set $\g^{\mathit{reg}}$ of regular elements in $\g$ is a
sheet, called the regular $G$-sheet, that we will denote by
$S_{G}^{\mathit{reg}}$.  We will use the notation and
results of the previous subsection with
$S_G=S_{G}^{\mathit{reg}}$.  One has $\tf=\h$ and
$G.(e+\h)=S_{G}^{\mathit{reg}}$ for any principal \Striplet
$(e,h,f)$ such that $e$ is regular and $h\in\h$.  Moreover,
$e+\g^f\subset S_{G}^{\mathit{reg}}$ and therefore
$S_{G}^{\mathit{reg}}=G.(e+\g^f)$ (cf. \cite{Ko}).

\begin{lm}\label{eplushconj}
  Adopt the previous notation.\\
  {\rm (i)} The semisimple part of an element $e+x\in e+\h$
  is conjugate to $x$.
  \\
  {\rm (ii)} Two regular elements are conjugate if and only
  if their semisimple parts are in the same $G$-orbit.
  \\
  {\rm (iii)} Two elements $e+x, e+y\in e+\h$ lie in the
  same $G$-orbit if and only if $W.x=W.y$.
\end{lm}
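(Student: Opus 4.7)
The plan is to establish (i) by a limit argument in the $G$-orbit of $e+x$ combined with the adjoint quotient, then prove (ii) by a direct analysis of regular elements via the Jordan decomposition, and finally deduce (iii) by combining the previous parts. The decisive step will be (i); the other two will be essentially formal.

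For (i), let $s$ denote the semisimple part of $e+x$ and let $\chi : \g \to \g\qmod G \cong \h/W$ be the Chevalley adjoint quotient; recall that $\chi(z) = W.z$ for $z \in \h$ and that $\chi(y) = \chi(s')$ for any $y \in \g$ with semisimple part $s'$. Since two semisimple elements of $\g$ with the same image under $\chi$ are $G$-conjugate, it will be enough to prove that $\chi(e+x) = \chi(x)$. Because $(e,h,f)$ is an \Striplet, $\ad h$ acts on $\g$ with integer eigenvalues, so there exists a cocharacter $\lambda : \K^\times \to G$ acting as $\lambda(t).y = t^i y$ for $y \in \g(i,h)$. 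Since $e \in \g(2,h)$ and $x \in \h \subset \g(0,h)$, this gives $\lambda(t).(e+x) = t^2 e + x$, so $e+x$ is $G$-conjugate to $t^2 e + x$ for every $t \in \K^\times$. Letting $t \to 0$ puts $x$ in the Zariski closure $\overline{G.(e+x)}$, and by continuity and $G$-invariance of $\chi$ one obtains $\chi(x) = \chi(e+x) = \chi(s)$, as desired.

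For (ii), one direction follows immediately from the $G$-equivariance of the Jordan decomposition. Conversely, after conjugation one may assume the two regular elements to be $y = s+n$ and $y' = s+n'$ with the same semisimple part $s$. Setting $\lf := \g^s$, which is a Levi factor of rank $\rk \lf = \rk \g$, the regularity of $y$ together with $\g^y = \lf^n$ yields $\dim \lf^n = \rk \g = \rk \lf$, so that $n$ is a regular nilpotent element of $\lf$; likewise for $n'$. The regular nilpotent elements of a reductive Lie algebra form a single orbit under its adjoint algebraic group, and by Proposition~\ref{Levisimple} the group $L$ acts on $\lf$ as this adjoint group, so there exists $g \in L \subset G^s$ with $g.n = n'$, whence $g.y = y'$.

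Statement (iii) now follows by combining (i) and (ii): both $e+x$ and $e+y$ lie in $S_G^{\mathit{reg}}$, so by (ii) they are $G$-conjugate if and only if their semisimple parts are; by (i) those semisimple parts are $G$-conjugate respectively to $x$ and to $y$, so the condition reduces to $G.x = G.y$, which for $x,y \in \h$ semisimple is equivalent to $W.x = W.y$ via the identification $N_G(\h)/Z_G(\h) \cong W$.
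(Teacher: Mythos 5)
Your proof is correct. The paper itself disposes of (i) and (ii) by simply citing Kostant~\cite{Ko} (Lemma~11 and Theorem~3 there), then derives (iii) exactly as you do, from (i), (ii), and the standard identification of $G$-conjugacy on $\h$ with $W$-conjugacy. You instead give self-contained arguments, both of which are sound and are essentially the standard proofs of the facts Kostant established: for (i), the cocharacter $\lambda$ attached to the grading $\g = \bigoplus_i \g(i,h)$ contracts $e+x$ to $x$, so $x \in \overline{G.(e+x)}$ and the $G$-invariant continuous map $\chi$ gives $\chi(x)=\chi(e+x)=\chi(s)$, whence $x$ and $s$ are conjugate as both are semisimple; for (ii), you correctly reduce (after conjugation) to two regular elements $s+n$, $s+n'$ with the same semisimple part, observe that $\g^{s+n}=\lf^n$ with $\lf=\g^s$, so $\dim\lf^n=\rk\g=\rk\lf$ forces $n$ (and likewise $n'$) to be regular nilpotent in $\lf$, and then use that regular nilpotents in $\lf$ form a single orbit under $L=G^s$. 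The only practical difference is one of exposition: the paper outsources the work to a reference, while you reconstruct the arguments; the mathematical content is the same.
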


\begin{proof}
  The assertions (i) and (ii) follow from \cite[Lemma~11,
  Theorem~3]{Ko}, whence (iii) is a direct consequence of
  (i) and (ii).
\end{proof}

We can now state an important result of Kostant
\cite[Theorem~8]{Ko} in the following form:

\begin{lm}\label{gammatriv}
  The group $A$ is trivial, thus $\psi :
  S_{G}^{\mathit{reg}} \to e+\g^f=\varepsilon(e+\h)$ is a
  geometric quotient of $S_{G}^{\mathit{reg}}$.
\end{lm}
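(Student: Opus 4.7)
The plan is to reduce everything to the triviality of $A$, which is essentially a restatement of Kostant's original result \cite[Theorem~8]{Ko} in the language of Slodowy slices; the remaining assertions are then formal consequences of the machinery set up in \S\ref{Katsylo}.

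First I would establish $A = \{1\}$. By Lemma~\ref{Gamma}(ii), $A \cong G^{e,h,f}/(G^{e,h,f})^\circ$. For a principal \Striplet $(e,h,f)$ the Lie algebra $\g^e$ has dimension $\rk \g$ and consists (apart from $0$) of nilpotent elements; consequently $\g^{e,h,f} \subset \g^e$ is both reductive (as the fixed points of the semisimple element $\ad h$ acting on $\g^{e,f}$) and contained in the nilpotent cone, hence zero. Thus $(G^{e,h,f})^\circ = \{1\}$ and $A = G^{e,h,f}$ is finite. The fact that this finite group is actually trivial is precisely the statement that $G^e$ is connected, which is the content of \cite[Theorem~8]{Ko}.

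Next I would identify the slice and its quotient. The subsection preamble already records $e+\g^f \subset S_G^{\mathit{reg}}$, so
\[
e + X = S_G^{\mathit{reg}} \cap (e+\g^f) = e + \g^f,
\]
i.e.\ $X = \g^f$. Feeding $A = \{1\}$ and $e+X = e+\g^f$ into Theorem~\ref{psiquotient}, the map $\psi : S_G^{\mathit{reg}} \to (e+X)/A$ becomes a morphism
\[
\psi : S_G^{\mathit{reg}} \longrightarrow e + \g^f
\]
which is a geometric quotient for the action of $G$ on $S_G^{\mathit{reg}}$.

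Finally I would prove the equality $\varepsilon(e+\h) = e+\g^f$. Since $A$ is trivial, the quotient map $\gamma$ of \eqref{mapgamma} is the identity, so the formula $\psi = \gamma \circ \varepsilon$ on $e + \tf = e + \h$ (recall $\tf = \h$ in the regular case) gives $\psi|_{e+\h} = \varepsilon$. Because $\psi$ is surjective and constant on $G$-orbits, and $S_G^{\mathit{reg}} = G.(e+\h)$, we obtain
\[
\varepsilon(e+\h) \;=\; \psi(e+\h) \;=\; \psi\bigl(G.(e+\h)\bigr) \;=\; \psi(S_G^{\mathit{reg}}) \;=\; e + \g^f.
\]
The only non-formal ingredient here is the triviality of $G^e$ for principal $e$, so the main obstacle really is Kostant's theorem itself; once it is invoked, the remaining identifications are bookkeeping within the framework of \S\ref{Katsylo}.
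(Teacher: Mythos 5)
Your proof is correct and follows the same route as the paper, which simply records the lemma as a reformulation of Kostant's \cite[Theorem~8]{Ko}; the key non-formal input in both cases is the connectedness of $G^e$ for regular nilpotent $e$. One slip worth flagging: for reductive $\g$ with nontrivial centre (e.g.~$\gl_N$), $\g^e$ contains $\zz(\g)$ and so does \emph{not} consist only of nilpotent elements, so the assertion that $\g^{e,h,f}=0$ should be made for $\Lie G^{e,h,f}=\ad_\g(\g^{e,h,f})\cong[\g,\g]^{e,h,f}$ (which does lie in the nilpotent cone because $G$ is the adjoint group); in any case this auxiliary step is redundant, since once $G^e$ is connected the group $A=G^e/(G^e)^\circ$ is trivial by definition, and the rest of your bookkeeping with $\psi=\gamma\circ\varepsilon$, $\tf=\h$ and $e+X=e+\g^f$ goes through as written.
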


\subsection{The case $\g=\gl_{N}$}
\label{glN}
\subsubsection{The setting}
\label{settings}
In this section we assume that $\g=\gl(V)$, where $V$ is a
$\K$-vector space of dimension $N$.  By \cite[\S2]{Kr}, we
know that there exist two natural bijections from $G$-sheets
to partitions of $N$. Let $S$ be a $G$-sheet,
\begin{itemize}
\item the first map sends $S$ to the
  partition $\bolda=(\lambda_{1}\geqslant\lambda_{2}\geqslant
  \dots\geqslant\lambda_{\delta_{\Od}})$ of the unique
  nilpotent orbit $\Od$ contained in $S$. 
  (cf.~Proposition~\ref{unilp});
\item the second one sends $S$ to the partition
  $\tilde{\bolda} =(\tilde{\lambda}_1 \geqslant \dots
  \geqslant \tilde{\lambda}_{\delta_{\lf}})$ given by the
  block sizes of the Levi factor $\lf$ occurring in the
  datum $(\lf,0)$ of the dense $J_G$-class contained in~$S$.
\end{itemize}
It is well known that $\tilde{\bolda}$ is the transpose of $\bolda$.

Let $S_{G}$ be a $G$-sheet and
$\bolda=(\lambda_{1}\geqslant\lambda_{2}\geqslant
\dots\geqslant\lambda_{\delta_{\Od}})$ be the partition of
$N$ associated to the nilpotent orbit $\Od$ contained in
$S_G$.  Fix an element $e\in\Od$ and a basis
$$
\mathbf{v} = \bigl\{v_{j}^{(i)}\mid
i\in[\![1,\delta_{\Od}]\!], j\in[\![1,\lambda_{i}]\!]\bigr\}
$$
providing a Jordan normal form of $e$. Precisely, write $e =
\sum_i e_i$, where $e_i \in \g$ is defined by:
\begin{equation}
  \label{vbasis}
  e_i.v_{j}^{(k)}=
  \begin{cases}
    v_{j-1}^{(i)} \ & \text{if $k=i$ and $j=2,\dots,
      \lambda_{i}$;}
    \\
    0 \ & \text{otherwise.}
  \end{cases}
\end{equation}
Set $\qq_{i}:=\gl\bigl(v_{j}^{(i)}\mid j \in
[\![1,\lambda_{i}]\!]\bigr)$, which is a reductive Lie
algebra isomorphic to $\gl_{\lambda_{i}}$, and define
\[
\qq:=\bigoplus_i \qq_{i}.
\]
Let $\pr_{i} : \qq \to \qq_{i}$ be canonical projection.
For $x\in\qq$ we set $x_{i}:=\pr_{i}(x)$; conversely, for any
family $(y_{i})_{i}$ of elements $y_{i}\in\qq_{i}$ we can
define $y=\sum_{i}y_{i}\in \qq$.
 
We apply this construction to get an \Striplet $(e,h,f)
\subset \qq$ as follows.  Fixing the basis
$\bigl(v_1^{(i)},\dots,v_{\lambda_i}^{(i)}\bigr)$, one can
identify $\qq_i$ with the algebra of $\lambda_i\times
\lambda_i$-matrices. Using this identification, embed $e_i$
in the standard \Striplet $(e_i,h_i,f_i)$ of $\qq_i$
induced by the irreducible representation of $\sld$ of
dimension $\lambda_{i}$, i.e.:
$$
e_{i}=\left(\begin{array}{c c c c c} 0&1& 0&\cdots&0\\ 0 &0
    &1& \cdots&0\\ \vdots & \vdots & \vdots & \ddots &
    \vdots\\
    0&0&0&\cdots& 1\\ 0& 0& 0& \cdots & 0\end{array}\right),
\quad h_{i}=\left(\begin{array}{c c c c c} \lambda_{i}-1&0 &
    0&\cdots&0 \\ 0 &\lambda_{i}-3 &0& \cdots&0\\ 0 & 0
    &\lambda_{i}-5 &\cdots & 0
    \\
    \vdots & \vdots & \vdots & \ddots & \vdots\\
    0&0&0&\cdots& -\lambda_{i}+1\end{array}\right)
$$ 
(a well known similar formula gives $f_i$).  Then, $h=\sum_i
h_i$ and $f= \sum_i f_i$.
 
Clearly, the subspace
$$
\lf:=\bigoplus_{j} \gl(v_{j}^{(i)}\mid
i\in[\![1,\tilde{\lambda}_{j}]\!] )
$$ 
is a Levi factor of $\g$.  Denote by $\h := \bigoplus_i \h_i$
the Cartan subalgebra of diagonal matrices with respect to
the chosen basis $\mathbf{v}$. If $\tf$ is the center of
$\lf$ we then have $\tf\subset\h\subset \lf\cap\qq$.
 
Let $E^{i,i}_{j,j}$ be the element of $\h$ defined by
$E^{i,i}_{j,j}.v_{k}^{(l)}=v_{j}^{(i)}$ if $(i,j)=(l,k)$,
and $E^{i,i}_{j,j}.v_{k}^{(l)}=0$ otherwise. Each $t \in \h$
can then be written $t=\sum_{i,j} t_{i,j}E^{i,i}_{j,j}$ and
one has the following easy characterization of $\tf$:
\begin{equation}\label{carat}
  \tf=\{t\in\h\mid
  t_{i,j}=t_{i',j} \mbox{ for all }i\leqslant i';\; j \in
  [\![1,\lambda_{i'}]\!]\}.
\end{equation}
We will need later the following isomorphism:
\begin{equation}\label{alpha}
  \alpha:\left\{\begin{array}{rcl}
      \K^{\lambda_{1}}& \longisomto &\tf\\ 
      (x_{j})_{j\in[\![1,\lambda_{1}]\!]}&\mapsto
      &(t_{i,j})_{i,j}\end{array}\right.
\end{equation} 
where $t_{i,j}=x_{j}$ for all $i\in
[\![1,\lambda_{\delta_{\Od}}]\!]$, $1\leqslant j\leqslant
\lambda_{i}$.

Order, lexicographically, the elements of $\mathbf{v}$ by:
$v_j^{(i)} < v_\ell^{(k)}$ if $j < \ell$ or $j=\ell$ and $i
<k$.  Denote by $\bb$ the Borel subalgebra of $\g$
consisting of upper triangular matrices with respect to this
ordering of $\mathbf{v}$.  Then, the subspace $\bb+\lf$ is a
parabolic subalgebra having $\lf$ as Levi factor. Observe
that $h \in \h \subset \lf$ and that $e$ is regular in the
nilradical of $\bb+\lf$.  The constructions of
\S\ref{Katsylo} can be made here with $\mathfrak{j}= \bb +
\lf$ and the results of that subsection yield
$S_{G}=G.(e+\tf)$ (Proposition~\ref{axiomK}) and a map
$\varepsilon: e+\h\rightarrow e+\g^f$ (Lemma~\ref{epsilon}).

\begin{lm}\label{GeIH}
  {\rm (i)} The group $G^{e}$ is connected.
  \\
  {\rm (ii)} The map $\psi$ induces a bijection between
  $G$-orbits in $S_{G}$ and points in $X$.
\end{lm}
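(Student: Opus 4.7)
The plan is to prove (i) directly by exhibiting $G^e$ as a connected open subset of an affine space, and then deduce (ii) formally from (i) together with Lemma~\ref{Gamma}.

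For (i), I would pass to $\tilde{G}:=\mathrm{GL}(V)$ and study $\tilde{G}^e$ first. The key observation is that the centralizer in the associative matrix algebra, $\End(V)^e := \{x \in \End(V) \mid x\circ e = e\circ x\}$, is a finite-dimensional $\K$-subalgebra, and $\tilde{G}^e = Z_{\tilde{G}}(e)$ coincides with its group of units. As the locus of non-vanishing of a single polynomial (the determinant) on a finite-dimensional vector space, $\tilde{G}^e$ is a non-empty Zariski-open subset of an affine space, hence irreducible and in particular connected. Since $G$ is the adjoint group of $\g=\gl(V)$, it is a quotient of $\tilde{G}$ by a central subgroup, and $G^e$ is the image of $\tilde{G}^e$ under this quotient morphism; the image of a connected set is connected, which yields~(i).

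For (ii), the conclusion of (i) means $A = G^e/(G^e)^\circ$ is the trivial group, so the geometric quotient $(e+X)/A$ is canonically identified with $e+X$ itself, and $\gamma$ in~\eqref{mapgamma} becomes the identity. The set-theoretic map $\psi: S_G \to e+X$ is then (a) surjective by Lemma~\ref{Gamma}(i), which asserts $S_G = G.(e+X)$, and (b) injective because Lemma~\ref{Gamma}(iii) reads $G.x \cap (e+X) = A.x = \{x\}$ for each $x \in e+X$, so two distinct points of $e+X$ lie in distinct $G$-orbits. Combining these gives the desired bijection between $G$-orbits in $S_G$ and points of $X$ (equivalently, of $e+X$).

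I do not foresee a genuine obstacle: the connectedness of centralizers of nilpotent elements in type~A is classical, and (ii) is a formal consequence once $A$ is known to be trivial. The only point requiring minor care is the passage from $\tilde{G}^e$ to $G^e$, which is handled by the standard fact that adjoint groups in $\gl_N$ differ from $\mathrm{GL}(V)$ only by a central subgroup contained in every centralizer.
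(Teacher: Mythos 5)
Your proof is correct and follows essentially the same route as the paper: the paper simply cites \cite[6.1.6]{CM} for part~(i) and then deduces~(ii) from Lemma~\ref{Gamma}, while you supply the standard argument behind the cited fact (the centralizer in $\GL(V)$ is the unit group of the associative centralizer algebra, hence a non-empty principal open subset of an affine space, hence connected, and $G^e=\rho(\tilde G^e)$ because the central kernel of $\rho$ already fixes~$e$). Part~(ii) is the same formal deduction as in the paper.
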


\begin{proof}
  Part (i) is a classical result, see for example
  \cite[6.1.6]{CM}.  Since the group
  $A=G^{e}/(G^{e})^{\circ}$ is then trivial, part~(ii)
  follows from Lemma~\ref{Gamma}.
\end{proof}

By Remark~\ref{ggprime} we may assume that
$\varepsilon=\varepsilon^{\qq}=\sum_{i}\varepsilon_{i}$
where
$$
\varepsilon_{i}:=\varepsilon^{\qq_{i}}:
e_{i}+\h_{i}\rightarrow e_{i}+\qq_{i}^{f_{i}}.
$$  
As $e_{i}\in\qq_{i}$ is regular, the study of $\varepsilon$
is therefore reduced to the regular case.

\subsubsection{The regular case and its consequences}
\label{regA}
We need to study in more details the maps $\varepsilon_{i} :
e_{i}+\h_{i} \to e_{i}+\qq_{i}^{f_{i}}$ introduced at the
end of the previous subsection, where, as already said,
$e_i$ is regular in $\qq_i\cong\gl_{\lambda_i}$.
\\
To simplify the notation we (temporarily) replace
$\gl_{\lambda_i}$ by $\gl_N$ and $e_i$ by
$e^{\mathit{reg}}$, the regular element of
$\g=\gl_{N}$. Hence,
$$
e^{\mathit{reg}}.v_{j}=\left\{\begin{array}{l l} v_{j-1} &
    \mbox{ if } j=2,\dots,N;\\ 0 & \mbox{ if }
    j=1. \end{array}\right.
$$
Recall that $\h \subset \gl_N$ is the set of diagonal
matrices in the basis ${\bf v}^{\mathit{reg}}:=(v_{j})_j$.
We can then define the canonical principal triple
$(e^{\mathit{reg}},h^{\mathit{reg}},f^{\mathit{reg}})$ with
respect to this basis (see the definition of the triple
$(e_{i}, h_{i}, f_{i})$ in~\ref{settings}).  In this case,
$\varepsilon^{\mathit{reg}}:e^{\mathit{reg}}+\h\rightarrow
e^{\mathit{reg}}+\g^{f^{\mathit{reg}}}$ can be considered as
the restriction of the geometric quotient map of
$\g^{\mathit{reg}}$ (cf.~Lemma~\ref{gammatriv}).

Let $0\leqslant k\lnq N$, the \emph{$k$-th subdiagonal}
(resp. $k$-th \emph{supdiagonal}) is the subspace of
matrices $[a_{i,j}]_{i,j}$ such that $a_{i,j}=0$ unless
$i=j+k$ (resp. $i=j-k$). We denote it by $\ff^{(k)}$.
\begin{lm}\label{sym}
  The map $\varepsilon^{\mathit{reg}}$ is given by
  \[
  \varepsilon^{\mathit{reg}}(e^{\mathit{reg}}+t)=e_i+\sum_{j\leqslant
    0} P_j(t) \quad \text{for all $t\in \h$},
  \]
  where each $P_{j}: \h\rightarrow \ff^{(-j)}$ is a
  homogeneous polynomial map of degree $-j+1$, symmetric in
  the eigenvalues of the elements of $\h$.
\end{lm}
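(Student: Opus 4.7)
The plan is to reduce the statement to two ingredients already in place: the degree/grading analysis from Lemma~\ref{epsilon} and the geometric-quotient interpretation of the regular sheet furnished by Lemmas~\ref{gammatriv} and~\ref{eplushconj}. The homogeneity of $P_j$ of degree $-j+1$ is already part of Lemma~\ref{epsilon}(ii) applied to the principal \Striplet $(e^{\mathit{reg}},h^{\mathit{reg}},f^{\mathit{reg}})$, so the genuinely new content is (a) the shape of $P_j$, namely that it lies in the $(-j)$-th subdiagonal $\ff^{(-j)}$, and (b) the symmetry in the eigenvalues of $t\in\h$.

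First I would verify the shape claim. By construction $\varepsilon^{\mathit{reg}}$ takes values in $e^{\mathit{reg}}+\g^{f^{\mathit{reg}}}$, and by Lemma~\ref{epsilon}(ii) the component $P_j(t)$ lies in $\g(2j,h^{\mathit{reg}})$. Thus $P_j(t)\in\g^{f^{\mathit{reg}}}\cap\g(2j,h^{\mathit{reg}})$. In the principal case a direct computation, using $h^{\mathit{reg}}.v_{k}=(N-2k+1)v_k$, shows that $\g^{f^{\mathit{reg}}}$ is spanned by the powers $(f^{\mathit{reg}})^k$ for $k=0,\dots,N-1$, and each $(f^{\mathit{reg}})^k$ lies in $\ff^{(k)}\cap\g(-2k,h^{\mathit{reg}})$. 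Matching weights yields $\g^{f^{\mathit{reg}}}\cap\g(2j,h^{\mathit{reg}})=\K(f^{\mathit{reg}})^{-j}$ for $j\leqslant 0$, and this line is contained in $\ff^{(-j)}$. Combined with Lemma~\ref{epsilon}(ii), this establishes both the target space $\ff^{(-j)}$ and the homogeneity of degree $-j+1$.

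Next I would establish the symmetry. By Lemma~\ref{gammatriv}, the finite group $A$ is trivial in the regular case, hence $\gamma$ is an isomorphism and $\psi=\gamma\circ\varepsilon^{\mathit{reg}}$ coincides (up to this isomorphism) with $\varepsilon^{\mathit{reg}}$ on $e^{\mathit{reg}}+\h$; in particular $\varepsilon^{\mathit{reg}}$ factors through the map sending $x$ to its $G$-orbit. By Lemma~\ref{eplushconj}(iii), $e^{\mathit{reg}}+t$ and $e^{\mathit{reg}}+w.t$ lie in the same $G$-orbit for every $w\in W=W(\g,\h)$, so $\varepsilon^{\mathit{reg}}(e^{\mathit{reg}}+t)=\varepsilon^{\mathit{reg}}(e^{\mathit{reg}}+w.t)$. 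Since $\g=\gl_N$, the Weyl group $W$ is the symmetric group $\Sf_N$ acting on the diagonal $\h$ by permutation of the eigenvalues, so each $P_j(t)$ is $\Sf_N$-invariant in the eigenvalues of $t$, as desired.

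The main obstacle is minor; the argument is essentially a consolidation of three earlier results. The only subtle point is correctly identifying the one-dimensional space $\g^{f^{\mathit{reg}}}\cap\g(2j,h^{\mathit{reg}})$ with $\K(f^{\mathit{reg}})^{-j}\subset\ff^{(-j)}$, which rests both on the explicit basis of $\g^{f^{\mathit{reg}}}$ by powers of $f^{\mathit{reg}}$ and on the weight computation for those powers under $\ad h^{\mathit{reg}}$.
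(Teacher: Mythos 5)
Your proof is correct and takes essentially the same route as the paper: homogeneity from Lemma~\ref{epsilon}(ii), the target space $\ff^{(-j)}$ from a weight computation for $\ad h^{\mathit{reg}}$, and symmetry from the $W$-invariance supplied by Lemma~\ref{gammatriv} together with Lemma~\ref{eplushconj}. The only cosmetic difference is that the paper observes directly that the whole graded piece $\g(2j,h^{\mathit{reg}})$ equals $\ff^{(-j)}$ for $j\leqslant 0$, whereas you make a slight detour through the smaller space $\g^{f^{\mathit{reg}}}\cap\g(2j,h^{\mathit{reg}})=\K(f^{\mathit{reg}})^{-j}$; both suffice.
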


\begin{proof}
  Recall that $\g(2j,h^{\mathit{reg}})$ is the $2j$-th
  eigenspace of $\ad_\g h^{\mathit{reg}}$.  It is easily
  seen that $\g(2j,h^{\mathit{reg}})=\ff^{(-j)}$ when
  $j\leqslant0$.  Using Lemma~\ref{epsilon}(ii), the only
  fact remaining to be proved is that the polynomial map
  $P_{j}$ is symmetric.  Observe that the Weyl group
  $W=W(\g,\h)$ acts as the permutation group of
  $[\![1,N]\!]$ on the eigenvalues of $\h$ and recall that,
  by Lemma~\ref{gammatriv}, $\varepsilon^{\mathit{reg}}$ is
  a quotient map with respect to $W$. Consequently, for all
  $t\in\h$ and $w\in W$ one has
  $\varepsilon^{\mathit{reg}}(e^{\mathit{reg}}+w.t)=
  \varepsilon^{\mathit{reg}}(e^{\mathit{reg}}+t)$. Thus
  $P_j$ is symmetric.
\end{proof}

If $t$ is a semisimple element of $\g$ we denote by
$\spec(t)$ the set of eigenvalues of $t$ and by $m(t,c)$ the
multiplicity of $c \in \K$ as an eigenvalue of $t$, with the
convention that $m(t,c)=0$ if $c\notin \spec(t)$.
 
The next lemma is a direct consequence of
Lemma~\ref{eplushconj}.

\begin{lm} \label{eigenjordan} Let $t\in\h$ and $c \in
  \spec(t)$. In a Jordan normal form of
  $e^{\mathit{reg}}+t$, there exists exactly one Jordan
  block associated to $c$, and its size is $m(t,c)$.
\end{lm}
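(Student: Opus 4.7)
The plan is to combine two facts: first, that $e^{\mathit{reg}}+t$ is a regular element of $\g=\gl_N$, and second, that its semisimple part is $G$-conjugate to $t$ (by Lemma~\ref{eplushconj}(i)).

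More precisely, I would begin by observing that $e^{\mathit{reg}}+t\in e^{\mathit{reg}}+\h\subset S_{G}^{\mathit{reg}}=\g^{\mathit{reg}}$, so $e^{\mathit{reg}}+t$ is a regular element of $\gl_N$. Recall the standard characterization: an element $x\in\gl_N$ is regular if and only if $\dim\g^x=N$, which in turn is equivalent to saying that the Jordan normal form of $x$ has exactly one Jordan block associated to each of its eigenvalues. Thus for every eigenvalue $c$ of $e^{\mathit{reg}}+t$, there exists a unique Jordan block of $e^{\mathit{reg}}+t$ attached to $c$.

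It remains to identify the sizes of these blocks. By Lemma~\ref{eplushconj}(i), the semisimple part of $e^{\mathit{reg}}+t$ is $G$-conjugate to $t$, so the two elements have the same spectrum with the same algebraic multiplicities; in particular $\spec(e^{\mathit{reg}}+t)=\spec(t)$, and the algebraic multiplicity of any $c\in\spec(t)$ as an eigenvalue of $e^{\mathit{reg}}+t$ equals $m(t,c)$. Since there is a unique Jordan block at $c$, its size must be precisely this algebraic multiplicity $m(t,c)$, which yields the claim.

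There is no serious obstacle here: the argument is essentially a two-line deduction from the regularity of $e^{\mathit{reg}}+t$ and Kostant's theorem packaged in Lemma~\ref{eplushconj}. The only minor care is to recall the equivalence between regularity in $\gl_N$ and the ``one Jordan block per eigenvalue'' condition, which is a classical centralizer-dimension computation.
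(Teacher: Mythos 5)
Your proof is correct and matches the intent of the paper's one-line proof (``direct consequence of Lemma~\ref{eplushconj}''): regularity of $e^{\mathit{reg}}+t$ gives one Jordan block per eigenvalue, and Lemma~\ref{eplushconj}(i) identifies the spectrum and multiplicities with those of $t$. The only step worth making explicit, as you do, is the classical centralizer count showing that regularity in $\gl_N$ is equivalent to having a single Jordan block per eigenvalue.
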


Recall that we want to apply Lemma~\ref{eigenjordan} to the
regular elements $e_{i}$ in $\qq_{i} \cong \gl_{\lambda_i}$;
we therefore generalize the previous notation as
follows. For $t=\sum_{i}t_{i}\in \h\subset\bigoplus_i
\qq_{i}$ and $c\in\K$, let $m_{i}(t,c)$ be the multiplicity
of $c$ as an eigenvalue of $t_{i}$.  Then, $\sum_{i}
m_{i}(t,c)=m(t,c)$ and we have the following easy
consequence of Lemmas~\ref{eplushconj}
and~\ref{eigenjordan}.

\begin{cor}\label{corlm}
  Let $t \in \h$. The semisimple part of $e+t$ is conjugate
  to $t$. Its nilpotent part is associated to the partition
  of $N$ given by the integers $m_{i}(t,c)$, $c \in
  \spec(t)$ and $i\in [\![1,\delta_{\Od}]\!]$.
\end{cor}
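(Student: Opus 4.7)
The plan is to exploit the direct sum decomposition $\qq=\bigoplus_i \qq_i$ in order to reduce the corollary to the regular case, where Lemmas~\ref{eplushconj} and~\ref{eigenjordan} apply directly. Recall that $e=\sum_i e_i$ with $e_i$ regular nilpotent in $\qq_i\cong\gl_{\lambda_i}$, and that $t=\sum_i t_i$ with $t_i\in\h_i\subset\qq_i$; hence $e+t=\sum_i(e_i+t_i)$, each summand living inside a distinct block $\qq_i$ of the direct sum.

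The first step will be to observe that, since the subalgebras $\qq_i$ pairwise commute, the Jordan decomposition of $e+t$ in $\g$ is simply the sum of the Jordan decompositions of the pieces $e_i+t_i$ computed inside each $\qq_i$. Next I would apply Lemma~\ref{eplushconj}(i) inside each block $\qq_i$: since $e_i$ is regular nilpotent in $\qq_i$ and $t_i$ lies in the Cartan subalgebra $\h_i$, the semisimple part of $e_i+t_i$ is $Q_i$-conjugate to $t_i$. Combining the local conjugations across $i$ delivers the first assertion: the semisimple part of $e+t$ is $Q$-conjugate, and therefore $G$-conjugate, to $t$.

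For the nilpotent part, I would invoke Lemma~\ref{eigenjordan} inside each $\qq_i$: for every $c\in\spec(t_i)$ the Jordan form of $e_i+t_i$ contains a unique Jordan block associated to $c$, of size $m(t_i,c)=m_i(t,c)$. Subtracting the semisimple part turns these into nilpotent Jordan blocks of the same sizes. Taking the union over $i\in[\![1,\delta_\Od]\!]$ and $c\in\spec(t)$ of these block sizes (the vanishing entries corresponding to $c\notin\spec(t_i)$ being simply discarded, and the total clearly summing to $\sum_i \lambda_i = N$) yields precisely the partition of $N$ described in the statement.

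No serious obstacle arises in this reduction: the only elementary point to be careful about is the additivity of Jordan decomposition under sums of elements lying in pairwise commuting reductive subalgebras, which guarantees that the block-by-block analysis assembles correctly into a global statement about $e+t\in\g$.
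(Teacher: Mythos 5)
Your proof is correct and matches the route the paper intends: the corollary is stated as an "easy consequence of Lemmas~\ref{eplushconj} and~\ref{eigenjordan}", and your reduction to the blocks $\qq_i$ via the decomposition $V=\bigoplus_i V^{(i)}$ (so that the Jordan decomposition of $e+t$ is the sum of the block-wise decompositions), followed by applying Lemma~\ref{eplushconj}(i) and Lemma~\ref{eigenjordan} inside each $\qq_i$, is exactly the filling-in the paper has in mind.
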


\section{Symmetric Lie algebras}
\setcounter{thm}{0}
\label{symmetric}\label{SLA}
We now turn to the {symmetric case}.  We will denote a
symmetric Lie algebra either by $(\g,\theta)$, $(\g,\kk)$ or
$(\g,\kk,\pp)$, where: $\theta$ is an involution of $\g$,
$\kk$ (resp.~$\pp$) is the $+1$(resp.~$-1$)-eigenspace of
$\theta$ in $\g$. Then, $\g=\kk\oplus \pp$, $\kk$ is a Lie
subalgebra and $\pp$ is a $\kk$-module under the adjoint
action. Recall from~\S\ref{somenotation} that $K$ is the
connected subgroup of $G$ such that $\Lie(K)= \ad_\g(\kk)$
and that $K$ is the connected component of
\begin{equation} \label{ktheta} \Kt :=\{g\in G\mid g
  \circ\theta=\theta \circ g\}=N_{G}(\kk).
\end{equation}
Sheets and Jordan classes can naturally be defined in this
setting, see \cite[39.5~\& 39.6]{TY}. One has,
cf.~\cite[Proposition~5]{KR},
\[
\dim K.x= \frac{1}{2} \dim G.x \ \; \text{for all $x \in
  \pp$}
\]
and we set:
\[
\pp^{(m)}:=\{x\in \pp\mid \dim K.x=m\}\subset \g^{(2m)}.
\]

\begin{defi}\label{defJK}
  The \emph{$K$-sheets} of $(\g,\theta)$ are the irreducible
  components of the $\pp^{(m)}$, $m \in \N$.
  \\
  Let $x=s+n$ (where $s,n \in \pp$) be the Jordan
  decomposition of an element $x\in\pp$. The \emph{Jordan
    $K$-class} of $x$, or $J_{K}$-class of $x$, is the set
  \[
  J_{K}(x):=K.(\cpps+n)\subset\pp.
  \]
\end{defi}

It is easily seen that $\pp$ is the finite disjoint union of
its $J_K$-classes and that a $K$-sheet is the union of the
$J_K$-classes it contains \cite[39.5.2]{TY}.

There exists a symmetric analogue to the notion of
\Striplet.  An \Striplet $(e,h,f)$ is called \emph{normal}
if $e,f\in\pp$ and $h\in \kk$.  Similarily to the Lie
algebra case, there is a bijection between $K$-orbits of
nilpotent elements and $K$-orbits of normal {\Striplet}s,
see~\cite[Proposition 4]{KR} or \cite[38.8.5]{TY}.

Any semisimple symmetric Lie algebra can be decomposed as
$(\g,\theta)=\prod_i(\g_{i},\theta_{\mid\g_{i}})$ where
$(\g_{i},\theta_{\mid\g_{i}})$ is a symmetric Lie subalgebra
of one of the following two types:
\begin{itemize}
\item[(a)] $\g_{i}$ simple;
\item[(b)] $\g_{i}=\g_{i}^{1}\oplus\g_{i}^2$, with
  $\g_{i}^{j}$ simple, $\theta_{\mid\g_{i}^j}$ isomorphism
  from $\g_{i}^j$ onto $\g_{i}^{3-j}$, $j=1,2$.
\end{itemize}
Each $(\g_{i},\theta_{\mid\g_{i}})$ is called an irreducible
factor of $(\g,\theta)$; this decomposition is unique (up to
permutation of the factors).

\subsection{Type~0} \label{type0} When $(\g,\theta)$ is the
sum of two simple factors as in the above case~(b), then
$\g$ is said to be of ``type~0''.  We slightly enlarge this
definition by saying that a pair $(\g,\theta)$ is a
\emph{symmetric pair of type}~0 if
$$
\g=\g'\times \g', \quad \theta(x,y)=(y,x), \quad
\kk=\{(x,x)\mid x\in\g'\}, \quad \pp=\{(x,-x)\mid x\in\g'\},
$$ 
where $\g'$ is only assumed to be reductive.  Recall the
following easy observations. Let $\pr_{1}$ be the projection
on the first coordinate. Via $\pr_{1}$, the Lie algebra
$\kk$ is isomorphic to $\g'$, thus $K$ is isomorphic to the
adjoint group $G'$ of $\g'$.  Moreover, the $K$-module $\pp$
is isomorphic to the $G'$-module $\g'$.  

Using Lemma~\ref{pppmmm} it is not hard to prove the following.

\begin{lm}\label{gmgm}
  {\rm (i)} The $G$-sheets of $\g=\g'\times \g'$ are the
  $S'\times S''$ where $S'$ and $S''$ are $G'$-sheets of
  $\g'$.
  \\
  {\rm (ii)} The sets $\{(x,-x)\mid x\in S'\}$, where $S'$
  is a $G'$-sheet of $\g'$, are the $K$-sheets of $\pp$.
\end{lm}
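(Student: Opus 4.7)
The plan is to treat the two assertions separately and in sequence, noting that (i) is essentially a direct application of results already available, while (ii) rests on a single explicit equivariant identification.

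For (i), I observe that $\g = \g' \times \g'$ is a decomposition of $\g$ as a direct sum of reductive Lie subalgebras. The adjoint group of $\g$ is then $G = G' \times G'$ where $G'$ denotes the adjoint group of $\g'$. Hence Lemma~\ref{pppmmm}, applied with indexing set $\{1,2\}$ and $\g_1 = \g_2 = \g'$, immediately gives that the $G$-sheets of $\g$ are exactly the products $S'\times S''$ where $S'$ and $S''$ are $G'$-sheets of $\g'$.

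For (ii), I would introduce the linear map
\[
\phi : \g' \longrightarrow \pp, \qquad x \longmapsto (x,-x),
\]
which is a $\K$-vector space isomorphism, hence an isomorphism of algebraic varieties. As recalled just before the statement, the projection $\pr_1 : \kk \to \g'$ is an isomorphism and it induces an isomorphism from $K$ onto the adjoint group $G'$ of $\g'$; concretely, $K = \{(g,g) : g \in G'\}$. Under this identification the action of $K$ on $\pp$ corresponds, via $\phi$, to the adjoint action of $G'$ on $\g'$: for $g \in G'$ and $x \in \g'$,
\[
(g,g).\phi(x) = (g,g).(x,-x) = (g.x,-g.x) = \phi(g.x).
\]
Thus $\phi$ is $K$-equivariant and maps $G'$-orbits bijectively onto $K$-orbits with preservation of dimension, so $\phi((\g')^{(m)}) = \pp^{(m)}$ for every $m \in \N$.

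Since $\phi$ is an isomorphism of algebraic varieties, it preserves irreducible components. The irreducible components of $(\g')^{(m)}$ are by definition the $G'$-sheets of $\g'$, so their images under $\phi$ are precisely the irreducible components of $\pp^{(m)}$, that is, the $K$-sheets of $(\g,\theta)$. This yields the explicit description $\{(x,-x) \mid x \in S'\}$ with $S'$ a $G'$-sheet, as claimed. No serious obstacle is expected: the only point to be careful about is the identification $K \cong G'$, but this is already set up in the paragraph preceding the lemma, so the argument reduces to the two equivariance computations above.
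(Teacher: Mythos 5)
Your proposal is correct and matches the approach the paper intends: the paper gives no explicit proof, instead stating that the result follows from Lemma~\ref{pppmmm} together with the identifications $K\cong G'$ and $\pp\cong\g'$ (as $K$- and $G'$-modules) recalled in the preceding paragraph, and you carry out exactly that argument. Part (i) is the direct two-factor application of Lemma~\ref{pppmmm}, and part (ii) is the expected transport of the stratification by orbit dimension and of irreducible components along the $K$-equivariant linear isomorphism $x\mapsto(x,-x)$.
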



We would like to link the Lie algebra case to the symmetric case in type~0. This partly rely on the following definition.
If $Y$ is a subset of $\pp$, we set
$$\phi(Y):=\pr_{1}(Y)\times \pr_{1}(-Y)\subset \g.$$

\begin{prop}
  {\rm (i)} If $Y$ is a $K$-orbit (resp.~a~$J_{K}$-class or
  a~$K$-sheet) of $\pp$, then $\phi(Y)$ is a $G$-orbit
  (resp.~a~$J_{G}$-class or a~$G$-sheet) of $\g$.
  \\
  {\rm (ii)} If $Z$ is a $G$-orbit (resp.~a~$J_{G}$-class)
  of $\g$ intersecting $\pp$, then $Z\cap\pp$ is a $K$-orbit
  (resp.~a~$J_{K}$-class) of~$\pp$.
  \\
  {\rm (iii)} Each pair of distinct sheets of $\g'$ have an empty
  intersection if, and only if, the intersection of each
  $G$-sheet of $\g$ with $\pp$ is either empty or a single
  $K$-sheet .
\end{prop}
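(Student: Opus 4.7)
The strategy will be to translate everything to $\g'$ via the $G'$-module isomorphism $\pr_{1} : \pp \isomto \g'$, under which $K$ acts as the adjoint group $G'$. First I would establish a dictionary: this isomorphism identifies $K$-orbits, Jordan $K$-classes, and $K$-sheets of $\pp$ with $G'$-orbits, Jordan $G'$-classes, and $G'$-sheets of $\g'$. For orbits and sheets this is contained in \S\ref{type0} and Lemma~\ref{gmgm}; for Jordan $K$-classes I would check directly that for $(u,-u) \in \pp$ with Jordan decomposition $u = s+n$ in $\g'$, the subspace $\pp^{(s,-s)}$ corresponds to $\g'^{s}$ and $\cpps$ to $\cc_{\g'}(\g'^{s})^{\bullet}$, whence $J_{K}((u,-u))$ is identified with $J_{G'}(u)$. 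A second key observation is that negation $y \mapsto -y$ on $\g'$ is $G'$-equivariant, hence permutes $G'$-orbits, $J_{G'}$-classes and $G'$-sheets; in particular $-S'$ is a $G'$-sheet whenever $S'$ is. Under these identifications one has the clean formula $\phi(Y) = Y' \times (-Y')$ with $Y' := \pr_{1}(Y)$.

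Part (i) is then immediate: $Y' \times (-Y')$ is a product of two $G'$-orbits (resp.\ Jordan $G'$-classes, resp.\ $G'$-sheets), and such a product is a $G$-orbit (resp.\ Jordan $G$-class, resp.\ $G$-sheet) of $\g' \times \g'$ --- for orbits and sheets this is clear or contained in Lemma~\ref{gmgm}(i), and for Jordan classes it follows from the compatibility of the centralizers $\g^{s}$, $\cc_{\g}(\g^{s})$ and of the $\bullet$-part with direct products. For part (ii), I would write the given $G$-orbit (resp.\ $J_{G}$-class) $Z$ as a product $Z_{1} \times Z_{2}$ of $G'$-orbits (resp.\ $J_{G'}$-classes); the existence of $(u,-u) \in Z \cap \pp$ yields $u \in Z_{1}$ and $-u \in Z_{2}$, forcing $Z_{2} = -Z_{1}$ by the negation-compatibility. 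Hence $Z \cap \pp$ corresponds via $\pr_{1}$ to $Z_{1} \cap (-Z_{2}) = Z_{1}$, a single $G'$-orbit (resp.\ $J_{G'}$-class), and thus is a single $K$-orbit (resp.\ $J_{K}$-class).

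For part (iii), I write any $G$-sheet as $S = S_{1} \times S_{2}$ with $S_{i}$ a $G'$-sheet (Lemma~\ref{gmgm}(i)), so that $S \cap \pp$ corresponds under $\pr_{1}$ to $S_{1} \cap (-S_{2})$, an intersection of two $G'$-sheets. The implication ``$\Rightarrow$'' is then immediate: under the empty-intersection hypothesis, either $S_{1} = -S_{2}$ (so $S \cap \pp$ is the single $K$-sheet associated to $S_{1}$) or $S_{1} \neq -S_{2}$ and the intersection is empty. For ``$\Leftarrow$'', assuming two distinct $G'$-sheets $S_{1}', S_{2}'$ of $\g'$ meet non-trivially, I would consider the $G$-sheet $S := S_{1}' \times (-S_{2}')$ as a candidate counterexample, since $S \cap \pp$ then corresponds to $S_{1}' \cap S_{2}' \neq \emptyset$. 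The main (mild) obstacle is to argue that $S_{1}' \cap S_{2}'$ is not itself a $G'$-sheet: any $y$ in the intersection has the same orbit dimension $m$ in $S_{1}'$ and in $S_{2}'$, so $S_{1}'$ and $S_{2}'$ are distinct irreducible components of the same $\g'^{(m)}$; if $S_{1}' \cap S_{2}'$ were a $G'$-sheet $S_{3}' \subset \g'^{(m)}$, then the irreducible component $S_{3}'$ would sit inside the irreducible component $S_{1}'$, forcing $S_{3}' = S_{1}'$ by maximality, and analogously $S_{3}' = S_{2}'$, contradicting $S_{1}' \neq S_{2}'$.
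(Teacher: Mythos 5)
Your proof is correct and takes essentially the paper's route: transport everything via the $G'$-module isomorphism $\pr_{1}:\pp\isomto\g'$, write a $G$-sheet as a product $S_{1}\times S_{2}$ (Lemma~\ref{gmgm}), and analyse when the intersection of two $G'$-sheets can again be a $G'$-sheet; the paper dismisses (i)--(ii) as straightforward and its brief proof of (iii) tacitly uses $-Z_{2}=Z_{2}$ for $G'$-sheets when it writes ``$x\in Z_{1}\cap Z_{2}$'', a fact your bookkeeping with $-S_{2}$ sidesteps by only needing that negation sends sheets to sheets. The irreducible-components argument you spell out to rule out $S_{1}'\cap S_{2}'$ being a $G'$-sheet is exactly the content the paper compresses into its closing line ``$Z\cap\pp\subsetneq(Z_{1}\times Z_{1})\cap\pp$ and $Z\cap\pp$ is not a $K$-sheet''.
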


\begin{proof}
(i) and (ii) are straightforward.\\
(iii) Let $Z$ be a $G$-sheet of $\g$ and write $Z$ as the
product of two $G'$-sheets of $\g'$, say $Z=Z_{1}\times
Z_{2}$.  If $(x,-x)\in Z$, it follows that $x \in Z_{1} \cap
Z_2$ and, in particular, $Z_{1}\cap Z_{2}\neq\emptyset$. If
$Z_{1}=Z_{2}$, then Lemma~\ref{gmgm} shows that $Z\cap\pp$
is a $K$-sheet.  Otherwise, one has $Z\cap\pp\subsetneq
(Z_{1}\times Z_{1})\cap\pp$ and $Z\cap\pp$ is not a
$K$-sheet of~$\pp$.
\end{proof}

Since a $G'$-sheet of $\g'$ contains exactly one nilpotent
orbit of $\g'$, two $G'$-sheets of $\g'$ have a non-empty
intersection if and only if they contain the same nilpotent
orbit (cf.~\cite[39.3.2]{TY}).  A necessary and sufficient
condition for $\g'$ to have intersecting sheets is therefore
to have more sheets than nilpotent orbits.  Using \cite{Boh}
one can show that there are only two cases where sheets are
in bijection with nilpotent orbits: when $\g'$ is of type~A
or $\text{D}_{4}$. Therefore we have:

\begin{cor}\label{cor000}
  Any $G$-sheet of $\g$ intersects $\pp$ along one $K$-sheet
  if and only if the simple factors of $\g'$ are of
  type~{\rm A} or {\rm D}$_{4}$.
\end{cor}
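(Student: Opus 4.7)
The strategy is to chain together three observations that have already been set up. First, applying the preceding proposition, the condition ``every $G$-sheet of $\g=\g'\times\g'$ meets $\pp$ in either $\emptyset$ or a single $K$-sheet'' is, by the very statement of part~(iii) of that proposition, equivalent to the condition that any two distinct $G'$-sheets of $\g'$ are disjoint. So the corollary reduces to showing that this ``disjointness property'' for sheets of $\g'$ holds exactly when every simple factor of $\g'$ is of type~A or~D$_{4}$.

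Next, I would recall from~\cite[39.3.2]{TY}, already cited in the preamble to the corollary, that two $G'$-sheets of $\g'$ meet if and only if they contain the same nilpotent orbit; since each $G'$-sheet contains a unique nilpotent orbit (Proposition~\ref{unilp}), the disjointness property is equivalent to the map from the set of $G'$-sheets to the set of nilpotent $G'$-orbits being a bijection. Now, writing $\g'=\zz(\g')\oplus\bigoplus_i\g'_i$ with each $\g'_i$ simple, Corollary~\ref{simpleprod} shows that a $G'$-sheet of $\g'$ is the product $\zz(\g')\times \prod_iS_i$ of a $G'_i$-sheet in each $\g'_i$, and by the same decomposition a nilpotent orbit is a product $\{0\}\times\prod_i \Od_i$ of nilpotent orbits in the simple factors. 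Thus the bijection property for $\g'$ is equivalent to its validity in every simple factor $\g'_i$.

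Finally, for a simple Lie algebra one invokes Borho's classification~\cite{Boh}: the number of sheets coincides with the number of nilpotent orbits precisely for the types~A and~D$_{4}$. In type~A this is classical (both are parametrized by partitions of $N$, cf.~\cite[\S2]{Kr}), while in all other simple types there is at least one non-rigid nilpotent orbit sitting in several sheets (equivalently, the induction datum of some nilpotent orbit is not unique), so the bijection fails. Combining these three steps yields the corollary.

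The main (and only substantive) obstacle is really step three, the case-by-case comparison of the number of sheets and of nilpotent orbits in the simple types; but this is a classical result of Borho which the paper has already pointed to, so the proof essentially amounts to assembling the three ingredients and need not redo the classification.
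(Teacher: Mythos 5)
Your argument is correct and takes essentially the same route as the paper: apply part~(iii) of the preceding proposition to reduce to the disjointness of $G'$-sheets, then use uniqueness of the nilpotent orbit in a sheet and \cite[39.3.2]{TY} to recast this as ``sheets are in bijection with nilpotent orbits,'' and finish with Borho's classification, which singles out types~A and~D$_4$. The only thing you make explicit that the paper leaves implicit is the reduction to simple factors via Corollary~\ref{simpleprod}; that step is sound and the conclusion matches.
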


The next (easy) result is true in type~0, but false in
general.

\begin{prop}\label{orbX0}
  Let $S_{G}$ be a $G$-sheet of $\g$ intersecting $\pp$. Let
  $\sS=(e,h,f)$ be a normal \Striplet containing a nilpotent
  element $e \in S_{G}\cap\pp$. Then, if
  $e+X(S_{G},\sS)=(e+\g^f)\cap S_{G}$, one has
  \[
  S_{G}\cap\pp=K.(e+X(S_{G},\sS)\cap\pp).
  \]
\end{prop}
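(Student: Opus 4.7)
The plan is to reduce the statement to the Lie algebra slice lemma (Lemma~\ref{Gamma}(i)) applied to the factor~$\g'$. First I would unpack the type~0 data: writing $\g=\g'\times\g'$, the normal \Striplet $\sS=(e,h,f)$ decomposes as $e=(a,-a)$, $h=(c,c)$, $f=(b,-b)$ for some \Striplet $\sS'=(a,c,b)$ in $\g'$ (the bracket relations transport verbatim). By Lemma~\ref{gmgm}(i) one has $S_G=S'\times S''$ with $G'$-sheets $S', S''$ of~$\g'$, and since $e\in S_G\cap\pp$ we get $a\in S'$ and $-a\in S''$. Under these identifications $\g^f=(\g')^b\times(\g')^b$, the group $K\cong G'$ acts diagonally, and the slice decomposes as
\[
  e+X(S_G,\sS) \ = \ \bigl((a+(\g')^b)\cap S'\bigr)\times\bigl((-a+(\g')^b)\cap S''\bigr).
\]

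The inclusion $K.(e+X(S_G,\sS)\cap\pp)\subseteq S_G\cap\pp$ is immediate, since $K\subseteq G$ preserves~$\pp$ and $e+X(S_G,\sS)\subseteq S_G$.

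For the reverse inclusion, I would start with an arbitrary $(z,-z)\in S_G\cap\pp$, so that $z\in S'$ and $-z\in S''$. Applying Lemma~\ref{Gamma}(i) in the reductive Lie algebra~$\g'$ to the $G'$-sheet $S'$ with the \Striplet $\sS'=(a,c,b)$ produces $g\in G'$ and $y\in(a+(\g')^b)\cap S'$ such that $g.y=z$. Set $k:=(g,g)\in K$ and $x:=(y,-y)\in\pp$; then $k.x=(z,-z)$, so it remains to check $x\in e+X(S_G,\sS)=(e+\g^f)\cap S_G$. The difference $x-e=(y-a,-(y-a))$ lies in $(\g')^b\times(\g')^b=\g^f$; moreover $y\in S'$ by construction, while $-y=g^{-1}.(-z)\in S''$ because $-z\in S''$ and $S''$ is $G'$-stable. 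Hence $x\in S_G$, as required.

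I do not expect a real obstacle here. Once the type~0 product structure is made explicit, the statement becomes essentially a direct transcription of Lemma~\ref{Gamma}(i) via the isomorphism $\pp\simeq\g'$. The only slightly subtle point is to ensure that the ``mirror'' second coordinate $-y$ automatically lands in the correct sheet~$S''$, and this follows for free from the $G'$-invariance of~$S''$ and the fact that the starting element $-z$ was already in $S''$.
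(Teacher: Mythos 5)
Your proof is correct and takes essentially the same route as the paper: both reduce to the product decomposition $S_G=S'\times S''$ and invoke Lemma~\ref{Gamma}(i) on a single factor, using $G'$-stability of the other factor's sheet to place the companion coordinate. The only cosmetic difference is that the paper phrases the argument through the isomorphism $\pr_1:\pp\isomto\g'$ (and the identity $G'.(e'+X_1\cap X_2)=S_1\cap S_2$), whereas you argue element by element in $\g'\times\g'$ directly — which, incidentally, sidesteps the paper's implicit use of $-S''=S''$.
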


\begin{proof}
  Write $S_{G}=S_{1}\times S_{2}$ with $S_{1},S_{2}$ sheets
  of $\g'$(cf.~Lemma \ref{gmgm}) and set $e=(e',-e')$,
  $f=(f',-f')$, $e',f' \in \g'$.
  Recall that $\pr_{1}$ yields an isomorphism between $\pp$
  and $\g'$ and that $\pr_{1}(S_{G}\cap\pp)=S_{1}\cap
  S_{2}$.  If $X_i \subset \g'$ is defined by
  $(e'+X_{i})=(e'+\g'^{f'})\cap S_{i}$, one has
  $\pr_{1}(e+X\cap\pp)=e'+X_{1}\cap X_{2}$.  Moreover,
  $\pr_{1}(K.(e+X\cap\pp))=G'.(e'+X_{1}\cap X_{2})=S_{1}\cap
  S_{2} = \pr_{1}(S_{G}\cap\pp)$. Since ${\pr_{1}}_{\mid
    \pp}$ is an isomorphism, we get the desired result.
\end{proof}

\subsection{Root systems and semisimple elements}
\label{basis}
Let $(\g,\kk,\pp)$ be the semisimple symmetric Lie algebra
associated to an involution $\theta$.  Fix a Cartan
subspace $\af$ of $\pp$; recall that the \emph{rank} of the
symmetric pair $(\g,\kk)= (\g,\theta)$ is $\rk (\g,\theta):=
\dim \af$. Let $\df$ be a Cartan subalgebra of
$\cc_{\kk}(\af)$.  Then, $\h:=\af\oplus\df$ is a
$\theta$-stable Cartan subalgebra of $\g$
(\cite[37.5.2]{TY}).
If $V:=\h^*$ and $\sigma$ denotes the transpose of $\theta$,
one can consider the $\sigma$-stable root system
$R=R(\g,\h)\subset V$ and we set (see~\cite[36.1]{TY}):
\begin{gather*}
  V':=\{x\in \h^*\mid \sigma(x)=x\}=\{x\mid x_{\mid\af}=0\},
  \\ V'':=\{x\in \h^*\mid \sigma(x)=-x\}=\{x\mid
  x_{\mid\df}=0\},
  \\
  R^{0}:=R\cap V'=\{\alpha\in R\mid \sigma(\alpha)=\alpha\},
  \quad R^{1}:=\{\alpha\in R\mid \sigma(\alpha)\neq\alpha\}.
\end{gather*}
Recall that $R^0$ is a root system. One has $V=V'\oplus
V''$; more precisely, $x \in V$ decomposes as $x= x' + x''$,
where $x' :=\frac{1}{2} (x+\sigma(x)) \in V'$, $x'':=
\frac{1}{2}(x-\sigma(x))=x_{\mid\af}\in V''$. When $x \in R$
is a root, $x''$ is called its restricted root.  Set:
\[
S=\{\alpha'' \mid \alpha\in R^{1}\}.
\]
Then, $S \subset \af^*$ is a (not necessarily reduced) root
system, see \cite[36.2.1]{TY}, which is called the
\emph{restricted root system} of $(\g,\theta)$.  We denote
by $W$, resp.~$W_S$, the Weyl group of the root system $R$,
resp.~$S$, and we set
\[
W_\sigma := \{ w \in W \mid w \circ \sigma = \sigma \circ w
\}.
\]
If $B \subset R$ is a fundamental system (i.e.~a basis of
$R$), denote by $R_+$ (resp.~$R_-$) the set of positive
(resp.~negative) roots associated to $B$.  In order to
define the Satake diagram of the symmetric pair $(\g,\kk)$
one needs to work with some special fundamental systems for
$R$. Setting
$$
R^{1}_{\pm}:=R^{1}\cap R_{\pm}
$$
one can give the following definition:
 
\begin{defi} \label{fundamental} (\cite[36.1.4]{TY},
  \cite[2.8]{Ar}) A $\sigma$-fundamental system $B \subset
  R$ is a fundamental system satisfying the following
  conditions:
  \begin{enumerate}
  \item[(i)] $\sigma(R^{1}_{+})=R^{1}_{-}$;
  \item[(ii)] If $\alpha\in R^{1}_{+}$, $\beta\in R$ and
    $\alpha-\beta\in V'$, then $\beta\in R^{1}_{+}$;
  \item[(iii)] $(R^{1}_{+}+R^{1}_{+})\cap R\subset
    R^{1}_{+}$;
  \end{enumerate}
\end{defi}

Let $V_\Q$ be the $\Q$-vector space spanned by $R$; then
$V_\Q=V_\Q'\oplus V_\Q''$ where $V'_\Q:= V_\Q \cap V'$,
resp.~$V''_\Q := V_\Q \cap V''$, are $\Q$-forms of $V'$,
resp.~$V''$ (cf.~\cite[proof of 36.1.4]{TY}). Denote by
$\af_{\Q}$ the $\Q$-form of $\af$ given by the dual of
$V_\Q''$.  The choice of a $\Q$-basis $C=(e_1,\dots, e_l)$
of $V_\Q$ gives rise to a lexicographic ordering $\prec$ on
$V_\Q$ and, therefore, to a set of positive roots
$R_{+,C}:=\{\alpha\in R\mid \alpha\succ0\}$. Recall
\cite[18.7]{TY} that for each choice of such a basis $C$,
there exists a unique fundamental system $B_C$ such that
$R_{+,C}$ is the set of positive roots with respect to $B$.
The existence of a $\sigma$-fundamental system is ensured by
the next lemma, which provides all the $\sigma$-fundamental
systems, see Proposition~\ref{wawsa}(iv).

\begin{lm}\label{lexi}
  Let $(e_1,\dots, e_p)$, resp.~$(e_{p+1},\dots, e_l)$, be a
  basis of $V_\Q''$, resp.~$V_\Q'$, and set $C=(e_1,\dots,
  e_l)$. Then $B_C$ is a $\sigma$-fundamental system such
  that $B_{C}^0 := B_{C}\cap V'$ is a fundamental system of
  $R^0$.
\end{lm}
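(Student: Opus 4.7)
The plan is to exploit the compatibility between the chosen lexicographic order and the decomposition $V_\Q = V_\Q'' \oplus V_\Q'$. Since $(e_1,\ldots,e_p)$ precedes $(e_{p+1},\ldots,e_l)$ in $C$, writing $\alpha = \alpha'' + \alpha' \in V_\Q$ in the basis $C$ yields coordinates that come from $\alpha''$ first, then from $\alpha'$. Hence the lex sign of $\alpha$ is governed by $\alpha''$ whenever $\alpha''\neq 0$, and by $\alpha'$ otherwise. Applied to $R = R^0 \sqcup R^1$ this gives: for $\alpha \in R^1$, $\alpha \in R_+$ iff $\alpha'' \succ 0$ in the lex order on $V_\Q''$ induced by $(e_1,\ldots,e_p)$; for $\alpha \in R^0 \subset V'$, $\alpha \in R_+$ iff $\alpha \succ 0$ in the lex order on $V_\Q'$ induced by $(e_{p+1},\ldots,e_l)$. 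In particular $R^0_+ := R^0 \cap R_+$ is the set of positive roots of $R^0$ for this restricted lex order.

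The three axioms of Definition~\ref{fundamental} now follow by direct verification. For (i), since $\sigma(\alpha) = \alpha' - \alpha''$ one has $\sigma(\alpha)'' = -\alpha''$, so $\alpha \in R^1_+ \Leftrightarrow \alpha'' \succ 0 \Leftrightarrow \sigma(\alpha) \in R^1_-$. For (ii), the hypothesis $\alpha-\beta \in V'$ forces $\beta'' = \alpha''$, hence $\beta \in R^1$ with $\beta'' \succ 0$, so $\beta \in R^1_+$. For (iii), if $\alpha,\beta \in R^1_+$ and $\alpha+\beta \in R$, then $(\alpha+\beta)'' = \alpha'' + \beta''$ is a sum of two lex-positive vectors of $V_\Q''$, therefore lex-positive (in particular nonzero), so $\alpha+\beta \in R^1_+$.

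The main step, and the main obstacle, is showing that $B_C^0 = B_C \cap V'$ is a fundamental system of $R^0$. Linear independence is inherited from $B_C$. For the generation property, take $\alpha \in R^0_+$ and expand $\alpha = \sum_{\beta \in B_C} n_\beta \beta$ with $n_\beta \in \N$. Applying $\sigma$ and using $\sigma(\alpha) = \alpha$ gives $\sum_{\beta} n_\beta\, \sigma(\beta) = \alpha$; subtracting these two expressions of $\alpha$ and dividing by $2$ yields
\[
\sum_{\beta \in B_C} n_\beta\, \beta'' \;=\; 0 \quad \text{in } V_\Q''.
\]
For each $\beta \in B_C$, either $\beta \in V'$ and $\beta'' = 0$, or $\beta \notin V'$ and $\beta \in R^1_+$, so $\beta'' \succ 0$ in $V_\Q''$. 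A standard lex-positivity argument (consider the smallest index $j^*$ for which some contributing $\beta''$ has a nonzero $e_{j^*}$-coefficient; that coefficient is then strictly positive and all other contributing $\beta''$ have nonnegative $e_{j^*}$-coefficient, so the $e_{j^*}$-coefficient of the sum is strictly positive unless all such $n_\beta$ vanish; iterate) forces $n_\beta = 0$ for every $\beta \in B_C \setminus V'$. Hence $\alpha$ is an $\N$-linear combination of $B_C \cap V'$, and this finishes the proof.
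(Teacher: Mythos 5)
Your proof is correct and rests on the same key observation as the paper's (the lexicographic order attached to $C=(e_1,\dots,e_l)$ reads $V''_\Q$-coordinates before $V'_\Q$-coordinates, so the sign of a root is governed by its restricted part when that is nonzero and by its $V'$-component otherwise); the paper merely cites \cite[36.1.4]{TY} for the $\sigma$-fundamental property and gives a one-line sketch for $B_C^0$, whereas you verify the three axioms of Definition~\ref{fundamental} directly and establish that $B_C\cap V'$ is a base of $R^0$ via the symmetrization $\alpha-\sigma(\alpha)$ and lex-positivity of the $\beta''$. This is a sound, self-contained expansion of the argument the paper leaves to the reference.
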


\begin{proof}
  By \cite[36.1.4]{TY} $B_{C}$ is a $\sigma$-fundamental
  system. The second statement follows from the fact that
  $B_C\cap V'$ is the set of simple roots associated to the
  lexicographic ordering associated to the basis
  $(e_{p+1},\dots, e_l)$.
\end{proof}
 
\begin{prop}\label{wawsa}
  {\rm (i)} The map $w\mapsto w_{\mid V''}$ induces a
  surjective homomorphism $W_\sigma\rightarrow W_S$ whose
  kernel is $W^0$, the Weyl group of $R^0$.
  \\
  {\rm (ii)} For $x\in V_\Q''$, one has $W_S.x=W.x \cap
  V''_\Q$. Dually, $W_S.a = W.a \cap \af_\Q$ for all $a\in
  \af_\Q$.
  \\
  {\rm (iii)} Let $B$ be a $\sigma$-fundamental
  system. Then, the restricted fundamental system
  $B'':=\{\alpha''\mid \alpha\in B\}$ is a fundamental system
  of the restricted root system $S$.
  \\
  {\rm (iv)} $W_{\sigma}$ acts transitively on the set of
  $\sigma$-fundamental systems.
\end{prop}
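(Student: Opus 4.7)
The plan is to establish the four parts in order, using the structure of the orthogonal decomposition $V=V'\oplus V''$ under the $\sigma$-invariant bilinear form induced by the Killing form, and repeatedly invoking Steinberg's theorem that the stabilizer $W^x$ of $x\in \h^*$ is generated by the reflections $s_\alpha$ with $(\alpha,x)=0$.

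\medskip

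\noindent\textbf{Part (i).} Every $w\in W_\sigma$ commutes with $\sigma$, hence preserves the eigenspaces $V'$ and $V''$; so $r:W_\sigma\to \mathrm{GL}(V'')$, $w\mapsto w_{\mid V''}$, is a homomorphism. Since $V'$ and $V''$ are orthogonal, each $s_\alpha$ with $\alpha\in R^0\subset V'$ fixes $V''$ pointwise, giving $W^0\subset \ker r$. Conversely, if $w\in W_\sigma$ acts trivially on $V''$, pick a generic $x\in V''_\Q$; then $w\in W^x$, which by Steinberg's theorem is generated by $\{s_\alpha : (\alpha,x)=0\}$, and for generic such $x$ the condition forces $\alpha\in V'\cap R=R^0$; hence $\ker r=W^0$. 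For surjectivity, for each $\alpha\in R^1$ I would produce an explicit element of $W_\sigma$ restricting to $s_{\alpha''}$: take the product of the reflections through a $\sigma$-orbit of $\alpha$ (with a case distinction depending on whether $\alpha$ and $\sigma(\alpha)$ are orthogonal or $\alpha+\sigma(\alpha)\in R$). Since $W_S$ is generated by the $s_{\alpha''}$, this yields surjectivity.

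\medskip

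\noindent\textbf{Part (ii).} The inclusion $W_S.x\subset W.x\cap V''_\Q$ is immediate from (i) and the $W_\sigma$-stability of $V''_\Q$. For the converse, let $y=w.x$ with $x,y\in V''_\Q$. Setting $\tilde{w}:=\sigma w\sigma^{-1}$ and using $\sigma(x)=-x$, $\sigma(y)=-y$, one computes $\tilde{w}.x=y=w.x$, so $u:=w^{-1}\tilde{w}\in W^x$. One checks $u\cdot\sigma(u)=1$, so $u$ defines a $1$-cocycle of $\langle\sigma\rangle$ with values in $W^x$. The root subsystem $\Psi:=\{\alpha\in R:\alpha(x)=0\}$ is $\sigma$-stable (because $x\in V''$) and $W^x=W(\Psi)$ by Steinberg. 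I would then show this cocycle is a coboundary, i.e.\ $u=u'\cdot\sigma(u')^{-1}$ for some $u'\in W(\Psi)$, which is done by invoking (iv) applied to $\Psi$: the two $\sigma$-fundamental systems $B$ and $u^{-1}.B$ of $\Psi$ are $W_\sigma(\Psi)$-conjugate, which provides the required $u'$. Then $w_1:=wu'\in W_\sigma$ satisfies $w_1.x=y$. The dual statement follows by applying $\kappa:\h\isomto\h^*$, which identifies $\af_\Q$ with $V''_\Q$ equivariantly for the Weyl group action.

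\medskip

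\noindent\textbf{Part (iii).} Write $B=B^0\sqcup B_1$ with $B_1=B\cap R^1$. Conditions (i)--(iii) of Definition \ref{fundamental} imply that for each $\alpha\in B_1$ one has $-\sigma(\alpha)\in R^1_+$, and decomposing it in the basis $B$ gives $-\sigma(\alpha)=\tau(\alpha)+\sum_{\beta\in B^0}n_{\alpha,\beta}\beta$ for some $\tau(\alpha)\in B_1$ and nonnegative integers $n_{\alpha,\beta}$. A direct computation (using $\sigma_{\mid V'}=\Id$) shows that $\tau$ is an involution of $B_1$ and that $\alpha''=\tau(\alpha)''$. Any $\gamma\in R^1_+$ expands with nonnegative integer coefficients in $B$, so $\gamma''$ expands likewise in $B''\setminus\{0\}=\{\alpha'' : \alpha\in B_1\}$; thus $B''\setminus\{0\}$ positively spans the positive restricted roots. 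Linear independence of $B''\setminus\{0\}$ over distinct $\tau$-orbits follows from the linear independence of $B$ together with $B^0\subset V'$. Hence $B''\setminus\{0\}$ is a fundamental system of $S$.

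\medskip

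\noindent\textbf{Part (iv).} Given two $\sigma$-fundamental systems $B_1,B_2$, use (iii) to form $B_1'',B_2''$, two fundamental systems of $S$. Since $W_S$ acts simply transitively on fundamental systems of $S$, there exists $\bar{w}\in W_S$ with $\bar{w}(B_1'')=B_2''$, and by (i) there is a lift $w_\sigma\in W_\sigma$. Then $w_\sigma(B_1)$ is a $\sigma$-fundamental system of $R$ whose restriction equals $B_2''$. Two $\sigma$-fundamental systems with the same restriction differ only by the choice of a fundamental system of $R^0$ (the $B^0$ part), and $W^0\subset W_\sigma$ acts transitively on these. Composing with an appropriate element of $W^0$ produces $w\in W_\sigma$ with $w(B_1)=B_2$, concluding the proof.

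\medskip

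The main obstacle is the cohomological step in (ii): showing that the natural $1$-cocycle $u\in W(\Psi)$ is a coboundary. I would handle it by reducing to part (iv) applied to the subsystem $\Psi$ (which is a genuine induction since $\Psi\subsetneq R$ whenever $x\ne 0$, and the base case $x=0$ is trivial), or alternatively by a direct geometric argument showing that any $\sigma$-stable chamber of $\Psi$ contains a $\sigma$-fixed vertex. The combinatorial verification in (iii)---in particular the construction of the involution $\tau$ and the identity $\alpha''=\tau(\alpha)''$---is the other step requiring care but is entirely a root-system bookkeeping exercise.
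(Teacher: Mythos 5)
The paper itself does not prove this proposition; it simply cites \cite[36.2.5, 36.2.6]{TY} for (i)--(ii) and \cite[2.8, 2.9]{Ar} for (iii)--(iv). Your attempt at a self-contained proof is welcome, and parts (i), (iii), (iv) are correct in outline (though the final lemma in (iv), that two $\sigma$-fundamental systems with the same restricted system are $W^0$-conjugate, is asserted rather than proved; it does follow from condition (ii) of Definition~\ref{fundamental}).

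The real problem is in your part (ii). The cocycle set-up is fine: one checks $u:=w^{-1}\sigma(w)\in W(\Psi)$ and $u\,\sigma(u)=1$. But the step ``this cocycle is a coboundary... done by invoking (iv) applied to $\Psi$: the two $\sigma$-fundamental systems $B$ and $u^{-1}.B$ of $\Psi$ are $W_\sigma(\Psi)$-conjugate'' does not parse: $B$ is a fundamental system of $R$, not of the (generally proper) subsystem $\Psi$. Even if one replaces $B$ by an actual $\sigma$-fundamental system $B_\Psi$ of $\Psi$, it is not clear that $u^{-1}.B_\Psi$ is again $\sigma$-fundamental (since $u$ is not assumed to commute with $\sigma$, it need not preserve $\Psi\cap R^1$), and, moreover, conjugacy of $B_\Psi$ and $u^{-1}.B_\Psi$ under $W_\sigma(\Psi)$ would give $u\in W_\sigma(\Psi)$, which is weaker than the coboundary condition $u=u'\,\sigma(u')^{-1}$ and does not by itself produce an element of $W_\sigma$ sending $x$ to $y$. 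The triviality of this nonabelian $1$-cocycle (equivalently, the existence of a chamber of $\Psi$ fixed by the twisted involution $u\circ\sigma$) is a genuine assertion that your sketch leaves unjustified.

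A much shorter argument, which is essentially what underlies \cite[36.2.6]{TY}, avoids cohomology entirely and uses only (i) and (iii). Given $x,y\in V''_\Q$ with $y=w.x$, use (i) to replace $x$ and $y$ by $W_\sigma$-conjugates lying in the closed positive Weyl chamber of $S$ with respect to $B''$. They remain $W$-conjugate. Now observe that for $x\in V''_\Q$ and a $\sigma$-fundamental system $B$ one has, for $\alpha\in B^0$, $(\alpha,x)=0$ (since $B^0\subset V'\perp V''$), and for $\alpha\in B\smallsetminus B^0$, $(\alpha,x)=(\alpha',x)+(\alpha'',x)=(\alpha'',x)$. Hence $S$-dominance of $x$ is equivalent to $R$-dominance of $x$. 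Since two $W$-conjugate $R$-dominant elements are equal, $x=y$, and one is done. This also makes explicit that the crux is the orthogonality $B^0\perp V''$ built into $\sigma$-fundamental systems, which your write-up never invokes in (ii).
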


\begin{proof}
  Claims (i) and (ii) are proved in \cite[36.2.5,
  36.2.6]{TY}, while (iii) and (iv) can be found in
  \cite[2.8 and 2.9]{Ar}.
\end{proof}

\begin{Rqs}
  \label{Klift}
  (1) The restriction to $\af$ yields an isomorphism
  $N_K(\af)/Z_K(\af) \isomto W_S$, cf.~\cite[38.7.2]{TY}.
  \\
  (2) Let $w \in W_\sigma$, then there exists $k \in K$ such
  that $k_{\mid \h} = w$. This can be shown as
  follows. Recall that $\h= \af \oplus \df$, where $\df$ is
  a Cartan subalgebra of $\mathfrak{u}:=
  \cc_{\kk}(\af)$. Note that $w.\af =\af$ and $w.\df = \df$.
  Pick $k_1\in K$ such that ${k_1}_{\mid\af}=w_{\mid \af}
  \in W_S$. Let $U \subset C_K(\af)$ be the connected
  subgroup of $K$ with Lie algebra $\mathfrak{u}$. The Weyl
  group of the root system $R^0= R(\mathfrak{u},\df)$ is
  $W^0\cong N_U(\df)/Z_U(\df)$, see \cite[38.2.1]{TY}.  By
  composing $k_1$ with an element of $U$ we may assume that
  $k_1.\h = \h$ and ${k_1}_{\mid\af}=w_{\mid \af}$. Set $w_0
  := (w \circ {k_1^{-1}})_{\mid \h} \in W$; one has
  ${w_0}_{\mid \af} = \Id_{\af}$, therefore $w_0 \in W^0$
  and we can find $k_0 \in N_U(\df)$ such that ${k_0}_{\mid
    \df}={w_0}_{\mid\df}= w_{\mid\df} \circ{k_1^{-1}}_{\mid
    \df}$. Setting $k:=k_0k_1 \in K$ we obtain $k_{\mid \af}
  = {k_1}_{\mid \af} = w_{\mid \af}$ and $k_{\mid \df}
  ={k_0}_{\mid \df} \circ {k_1}_{\mid \df}= w_{\mid\df}$,
  thus $k_{\mid\h}= w$.
\end{Rqs}

Fix a $\sigma$-fundamental system $B$; from the Dynkin
diagram $D$ associated to $B$ one can construct the Satake
diagram $\bar{D}$ of $(\g,\theta)$ as follows.  The nodes
$\alpha$ of $D$ such that $\alpha''=0$ are colored in black,
the other nodes being white; two white nodes $\alpha \ne
\beta$ of $D$ such that $\alpha''=\beta''$ are related by a
two-sided arrow.  This defines the new diagram $\bar{D}$.
Recall that the Satake diagram of $(\g,\theta)$ does not
depend on the choice of the $\sigma$-fundamental system $B$,
and that two semisimple symmetric Lie algebras are
isomorphic if and only if they have the same Satake diagram
(cf.~\cite[Theorem 2.14]{Ar}).  A classification of
symmetric Lie algebras together with their Satake diagrams
and restricted root systems is given in \cite[Ch.~X]{He1}.

We now recall the (well-known) links between $G$-conjugacy
and $W$-conjugacy, and their analogues for a symmetric Lie
algebra.

\begin{lm} \label{Korbss1} {\rm (i)} Two elements of $\h$
  (resp.~$\af$) are $G$(resp.~$K$)-conjugate if and only if
  they are $W$(resp.~$W_{S}$ or, equivalently,
  $W_{\sigma}$)-conjugate.
  \\
  {\rm (ii)} Let $x,y\in\h$ (resp.~$x,y\in\af$), then the
  Levi factors $\g^x$ and $\g^y$ are
  $G$(resp.~$K$)-conjugate if, and only if, they are
  $W$(resp.~$W_{S}$ or, equivalently,
  $W_{\sigma}$)-conjugate.
\end{lm}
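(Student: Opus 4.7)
The plan is to reduce both parts to conjugacy-of-Cartan arguments combined with the lifting results in Remark~\ref{Klift}, treating the Lie algebra case as a classical statement and concentrating the actual work on the symmetric case.

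For part~(i), the assertion about elements of $\h$ is the standard result that $G$-conjugation between elements of a fixed Cartan subalgebra is realized inside $N_G(\h)$, see~\cite[29.2.1]{TY}. For elements of $\af$: if $x$ and $y$ are $W_S$-conjugate, Remark~\ref{Klift}(1) produces $k \in N_K(\af)$ realizing the conjugation. Conversely, given $k \in K$ with $k.x = y$, note that $\af$ and $k.\af$ are both Cartan subspaces of the reductive symmetric pair $(\g^y, \theta_{|\g^y})$ in $\pp^y = k.\pp^x$, and are therefore $K^y$-conjugate (by the analogue of~\cite[38.7.2]{TY} applied inside $\g^y$); composing $k$ with an appropriate $u \in K^y$ gives an element of $N_K(\af)$ conjugating $x$ to $y$, producing a representative of $W_S$. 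The equivalence of $W_S$- and $W_\sigma$-conjugacy then comes from Proposition~\ref{wawsa}(i).

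For part~(ii), the Lie algebra case goes similarly: if $g.\g^x = \g^y$, then $g.\h$ and $\h$ are two Cartan subalgebras of the reductive algebra $\g^y$ (since $x, y \in \h$ forces $\h \subset \g^x \cap \g^y$), so after left-multiplication by an element of $G^y$ we may assume $g \in N_G(\h)$, producing the desired element of $W$; the converse is immediate from the definition of the $W$-action on Levi factors containing $\h$. For the symmetric version, the ``if'' direction uses Remark~\ref{Klift}(2) to lift $W_\sigma$ to $N_K(\h) \cap K$. For ``only if'', given $k \in K$ with $k.\g^x = \g^y$, I first observe that $\df \subset \cc_{\kk}(\af) \subset \g^x \cap \g^y$, so both Levi factors are $\theta$-stable and contain $\h$; I then perform two successive modifications inside $K^y$. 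First multiply $k$ by an element $u \in K^y$ to arrange $uk \in N_K(\af)$, exactly as in part~(i). Next, $(uk).\df$ is a Cartan subalgebra of the reductive Lie algebra $\cc_{\kk}(\af)$, so there exists $u' \in Z_K(\af)$ (which is contained in $K^y$ since $y \in \af$) with $(u'uk).\df = \df$. The resulting element lies in $N_K(\h) \cap K \subset N_G(\h) \cap \Kt$, still sends $\g^x$ to $\g^y$, and its class in $W$ commutes with $\sigma$, hence belongs to $W_\sigma$.

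The main technical point to keep track of is that each modification of $k$ must preserve the property of sending $\g^x$ to $\g^y$; this is what forces the correcting elements to be chosen inside $K^y$ (and inside $Z_K(\af) \subset K^y$ at the second step), rather than from $K$ at large. Once this is done, the passage between $W_S$- and $W_\sigma$-conjugacy of Levi factors is formal via Proposition~\ref{wawsa}(i), because the kernel $W^0$ of $W_\sigma \twoheadrightarrow W_S$ is contained in the pointwise stabilizer of $\af$ and hence fixes any Levi factor $\g^x$ with $x \in \af$.
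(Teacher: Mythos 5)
Your proof is correct and takes essentially the same approach as the paper's: in both, given a $K$-conjugation $k.\g^x=\g^y$, one modifies $k$ by elements of $K^y$ (using conjugacy of Cartan subspaces and then of Cartan subalgebras of $\cc_{\kk}(\af)$) so that the resulting element preserves $\h$, then reads off a class in $W_\sigma$ and passes to $W_S$ via Proposition~\ref{wawsa}(i). The paper compresses the two normalization steps into a single citation of \cite[29.2.3~\&~37.4.10]{TY}, and dismisses (i) as standard, whereas you spell both out explicitly — but the underlying strategy is the same.
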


\begin{proof}
  (i) is standard. 

  \noindent (ii) We write the proof for $x,y \in \af$. Thanks to
  \cite[29.2.3 \&~37.4.10]{TY} applied to $(\g^y,\kk^y)$, the Levi factors $\g^x, \g^y$ are ~$K$-conjugate if, and only if,
  there exists $g\in K$ such that $g.\g^x=\g^y$
  and $g.\h=\h$. Then $g$ induces an element of $W$, and
  therefore of $W_{\sigma}$ since $g\circ\sigma=\sigma\circ g$. Observe finally that Proposition~\ref{wawsa}(i) implies the equivalence of $W_{\sigma}$ and  $W_{S}$-conjugacy.
  Conversely, \cite[38.7.2]{TY} applied to $\cppnb{x}$ and $\cppnb{y}$ shows
  that the conjugation under $W_{S}$ implies the
  $K$-conjugation.

\end{proof}

In general, if $x \in \pp$, the intersection of $G.x$ with
$\pp$ contains more than one orbit
(cf.~\cite[38.6.1(i)]{TY}). But, when $x$ is semisimple one
can prove the following result, for which we provide a proof
since we did not find a reference in the literature.

\begin{prop}\label{Korbss}
  Let $s\in\pp$ be semisimple. Then, $G.s\cap\pp=K.s$.
\end{prop}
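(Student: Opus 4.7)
The plan is to deduce this from the Chevalley--Kostant--Rallis restriction theorem~\cite{KR} together with standard facts about closed orbits. Pick $y \in G.s \cap \pp$. Since $s$ is semisimple, its $G$-orbit $G.s$ is closed in $\g$, so $y \in G.s$ is itself semisimple; the Kostant--Rallis theory then ensures that both $K.s$ and $K.y$ are closed $K$-orbits in $\pp$.

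The key input is that the restriction map $\K[\g]^G \to \K[\pp]^K$ is surjective (Kostant--Rallis). This yields a closed immersion of categorical quotients $\pp \qmod K \hookrightarrow \g \qmod G$, which is in particular injective on $\K$-points. As $s$ and $y$ lie in the same $G$-orbit, they have the same image in $\g \qmod G$, hence the same image in $\pp \qmod K$. Since $K$ is reductive acting on the affine variety $\pp$, each fibre of the quotient $\pi : \pp \to \pp \qmod K$ contains a unique closed $K$-orbit; the closed orbits $K.s$ and $K.y$ both lie in $\pi^{-1}(\pi(s))$, so $K.s = K.y$, proving $y \in K.s$.

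The only non-formal ingredient is the surjectivity of $\K[\g]^G \to \K[\pp]^K$, already available from the literature; the rest is a formal consequence of GIT. A more hands-on alternative would be to reduce to $s, y \in \af$ via the $K$-conjugacy of Cartan subspaces, extend $\af$ to a $\theta$-stable Cartan $\h = \af \oplus \df$ and invoke Lemma~\ref{Korbss1}(i) to obtain $w \in W$ with $y = w.s$, improve $w$ to an element of $W_\sigma$ using a version of Proposition~\ref{wawsa}(ii), and finally lift to $K$ via Remark~\ref{Klift}. The main obstacle along this route is extending Proposition~\ref{wawsa}(ii) from the $\Q$-form $\af_\Q$ to all of $\af$, which is possible but slightly technical; this is why I would favour the invariant-theoretic argument above.
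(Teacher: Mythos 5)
The invariant-theoretic argument you favour has a false key input, and this gap cannot be repaired by the same route. The Kostant--Rallis theorem you invoke is the Chevalley-type restriction theorem for the pair $(\pp,K)$: it asserts that restriction gives an isomorphism $\K[\pp]^K \isomto \K[\af]^{W_S}$. It says nothing about the map $\K[\g]^G \to \K[\pp]^K$ obtained by restricting $G$-invariants on $\g$ to $\pp$, and that map is \emph{not} surjective in general. This is stated explicitly in the paper just after Corollary~\ref{normalization}: the comorphism $\gamma^*$ (equivalently $\phi^*$) fails to be surjective in types EIII, EIV, EVII, EIX. Consequently the morphism $\pp \qmod K \to \g \qmod G$ is not a closed immersion in general, and you cannot read off its injectivity on $\K$-points from a surjectivity of rings that does not hold. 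The remaining steps of your argument (closedness of semisimple $G$- and $K$-orbits, the unique closed $K$-orbit in a fibre of $\pp \to \pp\qmod K$) correctly reduce the statement \emph{to} the injectivity of $\pp\qmod K \to \g\qmod G$ on closed points, but that injectivity is precisely the content of Proposition~\ref{Korbss} and its Corollary~\ref{normalization}; the map onto its image $Z$ is only a normalization, and normalizations of non-normal varieties can fail to be injective (nodal curve), so nothing formal gives you the injectivity for free.

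The ``hands-on alternative'' you sketched and set aside is in fact the paper's proof, and the technical step you were wary of is exactly the one the paper addresses. After $K$-conjugating both elements into $\af$ and invoking Lemma~\ref{Korbss1}(i), the issue is to pass Proposition~\ref{wawsa}(ii), namely $W_S.a = W.a\cap\af$, from $a\in\af_\Q$ to arbitrary $a\in\af$. The paper does this with a small linear-algebra argument: for fixed $w\in W$ it compares the $\Q$-forms and $\K$-forms of the subspaces $E^w := w^{-1}(\af)\cap\af$ and $E^{w,w'} := \ker_{\af}(w-w')$; over $\Q$ one has $E^w_\Q = \bigcup_{w'\in W_S} E^{w,w'}_\Q$ by Proposition~\ref{wawsa}(ii), so by finiteness of $W_S$ one of the $E^{w,w'}_\Q$ equals $E^w_\Q$, and flatness of $-\otimes_\Q\K$ then yields $E^{w,w'}_\K = E^w_\K$, hence $w'_{\mid E^w_\K} = w_{\mid E^w_\K}$. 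This extends the $\Q$-statement to all of $\af$, completing the argument; no lift to $K$ via Remark~\ref{Klift} is even needed at this stage, since Lemma~\ref{Korbss1}(i) already converts $W_S$-conjugacy back into $K$-conjugacy.
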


\begin{proof}
  Recall that any semisimple element of $\pp$ is
  $K$-conjugate to an element of $\af$,
  cf.~\cite[37.4.10]{TY}. Therefore, by
  Lemma~\ref{Korbss1}(i), it suffices to show that the
  property~(ii) of Proposition~\ref{wawsa} holds for all
  $a\in\af$, i.e.~$W_S.a = W.a\cap \af$.  Denote by $\LL$
  one of the fields $\Q$ or $\K$. For $(w,w') \in W\times
  W_{S}$, define linear subspaces of
  $\af_{\LL}:=\af_{\Q}\otimes_\Q \LL$ by:
  \[
  E^{w,w'}_{\LL} :=\ker_{\af_{\LL}}(w-w') =
  \{a\in\af_{\LL}\mid w.a=w'.a\}, \quad
  E^w_{\LL}:=w^{-1}(\af_{\LL})\cap\af_{\LL}.
  \]
  From Proposition~\ref{wawsa}(ii) one gets that
  $E^w_{\Q}=\bigcup_{w'\in W_{S}} E^{w,w'}_{\Q}$; thus,
  there exists $w'\in W_{S}$ such that
  $E^w_{\Q}=E^{w,w'}_{\Q}$.
  The flatness of ${-}\otimes_\Q\K$ yields:
  \[
  E^{w,w'}_{\K}=E^{w,w'}_{\Q} \otimes_\Q\K, \quad E^w_{\K}=
  E^w_{\Q} \otimes_\Q \K.
  \]
  Therefore, for any $w\in W$, there exists $w'\in W_{S}$
  such that $w'_{\mid E^w_{\K}}=w_{\mid E^w_{\K}}$. It
  follows that Proposition~\ref{wawsa}(ii) is satisfied for
  all $a\in \af= \af_{\K}$.
\end{proof}

\begin{cons}
  Proposition~\ref{Korbss} yields a bijection between
  $K$-orbits of semisimple elements of $\pp$ and $G$-orbits
  of semisimple elements intersecting $\pp$.
\end{cons}

Recall \cite{Ko,KR} that the set of semisimple
$G$(resp.~$K$)-orbits is parameterized by the categorical
quotient $\g \qmod G$ (resp.~$\pp\qmod K)$, and that $\K[\g
\qmod G] \cong \K[\h/W]= S(\h^*)^W$, $\K[\pp\qmod K] \cong
\K[\af/ W_S] = S(\af^*)^{W_S}$.  The previous consequence
can then be interpreted as follows.
 
Let $\gamma$ be the map which associates to the
$W_{S}$-orbit of $a \in \af$, the orbit $W.a \subset \h$;
hence, $\gamma : \af/W_S \to \h/W$. Define
$Z:=\gamma(\af/W_{S})\subset \h/W$ to be the image of
$\gamma$ and let $\phi : \af/W_S \to Z$ be the induced
surjective map. Write $\gamma = \iota \circ \phi$, where
$\iota : Z \to \h/W$ is the natural inclusion.  

Then we can get the following from Proposition~\ref{Korbss}:

\begin{cor} \label{normalization} The morphism $\phi :
  \af/W_{S}\rightarrow Z$ is a bijective birational map, and
  $\af/W_{S}$ is the normalization of $Z$.
\end{cor}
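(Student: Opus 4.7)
The plan is to reduce the statement to a standard fact: a finite bijective morphism in characteristic zero from an irreducible normal variety is the normalization of its image. Since surjectivity of $\phi$ is built into the definition of $Z$, essentially all the symmetric-pair content lies in verifying injectivity, and the just-proved Proposition~\ref{Korbss} provides exactly what is needed. The remaining steps---finiteness, normality of the source, and the passage to birationality and normalization---are routine invariant-theoretic considerations.

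First I would prove injectivity. Take $a,a'\in\af$ with $W.a=W.a'$. Lemma~\ref{Korbss1}(i) applied to $\h$ translates this into $a'\in G.a$. Since $a$ is semisimple in $\g$ and $a'\in\pp$, Proposition~\ref{Korbss} gives $a'\in G.a\cap\pp=K.a$; Lemma~\ref{Korbss1}(i) applied now to $\af$ then forces $W_{S}.a=W_{S}.a'$. This is the one step that genuinely uses the geometry of $(\g,\theta)$, and I expect it to be the main (albeit already prepared) obstacle.

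Next I would check that $\phi$ is finite. Proposition~\ref{wawsa}(ii) ensures that restriction to $\af$ sends $\K[\h]^{W}$ into $\K[\af]^{W_{S}}$, so the comorphism of $\gamma$ is well defined. The Chevalley--Shephard--Todd theorem makes $\K[\h]$ a finitely generated $\K[\h]^{W}$-module, hence its quotient $\K[\af]$ is module-finite over $\K[\h]^{W}$ as well; since $\K[\h]^{W}$ is Noetherian, the $\K[\h]^{W}$-submodule $\K[\af]^{W_{S}}\subset\K[\af]$ is itself module-finite. Therefore $\gamma$ is finite, its image $Z$ is a closed subvariety of $\h/W$, and $\phi:\af/W_{S}\to Z$ is a finite surjective morphism.

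To finish, I would invoke two standard facts. Since $W_{S}$ is a finite reflection group, Chevalley--Shephard--Todd again gives that $\K[\af]^{W_{S}}$ is a polynomial ring, so $\af/W_{S}$ is isomorphic to an affine space and in particular is normal and irreducible. In characteristic zero a finite bijective morphism between irreducible varieties has generic fibre of cardinality one, hence is birational. Combining these points, $\phi$ is a finite birational morphism from a normal irreducible variety to $Z$, which by the universal property of the normalization identifies $\af/W_{S}$ with the normalization of $Z$.
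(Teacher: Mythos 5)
Your proof is correct and fills in precisely the details that the paper leaves implicit when it writes ``Then we can get the following from Proposition~\ref{Korbss}.'' Your injectivity step (reducing $W$-conjugacy on $\af$ to $W_S$-conjugacy via Lemma~\ref{Korbss1}(i) and Proposition~\ref{Korbss}) is exactly the content the paper intends, and the remaining steps (finiteness of $\gamma$ via Chevalley's theorem and Noetherianity, normality of $\af/W_S$ via Chevalley--Shephard--Todd, and the universal property of normalization for a finite bijective morphism from a normal irreducible variety in characteristic zero) are the standard invariant-theoretic scaffolding; one very minor point is that the well-definedness of the comorphism $\gamma^{*}:\K[\h]^{W}\to\K[\af]^{W_{S}}$ is more directly a consequence of the surjectivity $W_\sigma\twoheadrightarrow W_S$ in Proposition~\ref{wawsa}(i) (each $w'\in W_S$ lifts to $w\in W_\sigma\subset W$ preserving $\af$) than of part (ii), though the latter plus Zariski-density of $\af_\Q$ also works.
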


One must observe that the injective comorphism $\phi^*$ is not
surjective, i.e.~$Z$ is not normal, in general.  This
question has been studied in \cite{He2,He3,Ri2,Pa4}.  The
notation being as in \cite[Ch.~X]{He1}, the results obtained
in the previous references show that $\phi$ is an
isomorphism when $\g$ is of classical type, and in the
exceptional cases of type EI, EII, EV, EVI, EVIII, FI,
FII,~G. In cases EIII, EIV, EVII, EIX, it is known that
$\phi^*$ (or, equivalently, $\gamma^*$) is not surjective,
cf.~\cite{He2,Ri2}.

\begin{Rq}
  By standard arguments one can see that the results
  obtained in \ref{Klift}, \ref{Korbss1} and \ref{Korbss}
  remain true when $(\g,\theta)$ is a \emph{reductive}
  symmetric Lie algebra.
\end{Rq}

\subsection{Property (L)} \label{property(L)} Let
$(\g,\theta)=(\g,\kk,\pp)$, $\af, \h$, $R, R^0, R^1, S$ be
as in~\ref{basis}, and fix a $\sigma$-fundamental system $B$
of $R$ (cf.~Definition~\ref{fundamental}).  The next
definition introduces an important property in order to
study the $K$-conjugacy classes of Levi factors of the form
$\g^s$, $s\in\pp$ semisimple. Observe that $(\g^s,\kk^s)$ is
a symmetric Lie algebra, that we will call a
\emph{subsymmetric pair}. 

\begin{defi}\label{propL}
  The pair $(\g,\kk)$ satisfies the property~(L) if, for all
  semisimple elements $s,u\in\pp$:
  \begin{equation*}
    \tag{L}
    \{\exists \, g\in G, \ g.\g^s=\g^{u}\} \, \iff \,
    \{\exists \, k\in K, \ k.\g^s=\g^{u}\} . 
  \end{equation*}
\end{defi}




\begin{Rq}
  More generally, when $(\g,\theta)$ is a reductive
  symmetric Lie algebra, the condition (L) holds if and only
  if it holds for $([\g,\g],\theta)$.
\end{Rq}

The aim of this section is to prove that the property~(L)
holds for any reductive symmetric Lie algebra
(cf. Theorem~\ref{L}).  We are going to show that it is
sufficient to check~(L) for some Levi factors $\g^s$ of a
particular type, cf.~Corollary~\ref{carac2}.

\begin{defi} \label{standardlevi}
One says that a standard Levi factor $\lf$ \emph{arises from $\pp$} if there is $s\in \af_{\Q}$ lying in the positive Weyl chamber for $B$ and such that $\lf=\g^s$.
\end{defi}

Recall from Section~\ref{general} that there is a natural
one to one correspondence between standard Levi factors and
subsets of $B$. In this correspondence, to a Levi factor
$\lf$ one associates the subset
$$
I_{\lf}:=\{\alpha\in B\mid \alpha(s)=0\}
$$ 
where $s$ is any element in $(\g^{\lf})^{\bullet}$.
Conversely, from any subset $I\subset B$ one gets a Levi
subalgebra by setting:
$$
\lf_{I}:=\h \oplus \left(\oplus_{\alpha\in\langle
    I\rangle}\g^{\alpha}\right)
$$
where $\langle I\rangle = \Z I \cap R$. Remark that
$\g^{\lf_{I}}=\{h \in \h : \alpha(h) = 0 \ \text{for all
  $\alpha \in I$}\}$.

Let $D$ be the Dynkin diagram defined by $B$ and denote by
$\bar{D}$ the associated Satake diagram.  Let $B^{0}\subset
B$ be the set of black nodes of $\bar{D}$; recall that $B^0$
is a fundamental system of $R^0$ (cf.~Lemmas~\ref{lexi}
and~\ref{wawsa}).  Set
\[
B^{2} := \left\{ (\alpha_{1},\alpha_{2}) \in B \times B :
  \alpha_1 \ne \alpha_2, \; \alpha_1'' = \alpha_2''\right\},
\quad B^{3}:= \left\{\alpha_{1}-\alpha_{2} \mid
  (\alpha_{1},\alpha_{2}) \in B^{2}\right\} \subset\h^*_{\Q}
.
\]
Thus, $B^{2}$ is the set of pairs of white nodes
$(\alpha_{1} \ne \alpha_{2})$ of $\bar{D}$ connected by a
two-sided arrow (note that $(\alpha_{1},\alpha_{2})\in
B^{2}$ $\iff$ $(\alpha_{2},\alpha_{1}) \in B^{2}$).  Denote
by $\overline{B^{2}}\subset B$ the set of all nodes pointed
by such an arrow, i.e.~$\overline{B^{2}} = \{\alpha \in B :
\exists \, \beta \in B, \; (\alpha,\beta) \in B^2\}$.  A
subset $I \subset B$ is said to be \emph{stable under
  arrows} if $(\alpha_{1},\alpha_{2}) \in B^2$ with
$\alpha_1 \in I$ implies $\alpha_2 \in I$.

\begin{Rq}\label{carac}
  The subspace $\af_\Q \subset \h_\Q$ is the intersection of
  the kernels of elements of $B^{0}\cup B^{3}$.
  A standard Levi factor $\lf$ arises from $\pp$ if, and
  only if, $I_{\lf}$ is stable under arrows and contains
  $B^{0}$.
\end{Rq}

We now want to describe the subalgebra $\g^s$ when $s\in
\af$ semisimple.  Set
\[
E_s := \{\varphi \in \h^*_\Q=V_\Q : \varphi(s) = 0\}, \quad
R_{s}:=E_{s}\cap R .
\]
Then, $R_{s}$ is a root subsystem of $R$
(cf.~\cite[18.2.5]{TY}) and, with obvious notation, the
$\Q$-vector space $F_{s}$ spanned by $R_s$ decomposes as
$F_{s}'\oplus F_{s}''$.  The restriction to
$\h_{s,\Q}:=\h_{\Q}\cap[\g^s,\g^s]$ identifies $F_{s}$ with
$\h_{s,\Q}^*$ and $R_{s}$ with the root system of
$(\g^s,\kk^s)$. One can therefore apply to $R_s$ the results
of section~\ref{basis}.
 
Let $S_{s}$ be the restricted root system of $R_{s}$. As
$s\in \af$, one has:
\begin{equation}\label{Fs}
  S_{s}=\{x''\mid x\in R^1,
  x(s)=0\}=\{x''\mid x\in R^1, x''(s)=0\}=S\cap
  F_{s}''.
\end{equation}
Let $B_{s}$ be a $\sigma$-fundamental system of $R_{s}$. One
can write $B_{s}=B_s^0\sqcup B_{s}^{1}$ with $B_{s}^0\subset
R^0$, $B_{s}^1\subset R^1$ and we denote by $B_{s}''$ the
restricted fundamental system of $S_{s}$ associated to
$B_{s}$.
 
We can now prove the following result:

\begin{prop}\label{levistandard}
  Each Levi factor $\g^s$, $s\in \pp$, is $K$-conjugate to a
  standard Levi factor that arises from $\pp$.
\end{prop}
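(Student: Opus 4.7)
The plan is to modify $s$ by successive $K$-transformations until it becomes a $\Q$-rational element of $\af$ lying in the positive Weyl chamber for $B$, at which point Definition~\ref{standardlevi} applies directly. Since the property to be proved is stable under $K$-conjugation, by \cite[37.4.10]{TY} I may assume $s\in\af$ from the outset.

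Next, I would replace $s$ by a $\Q$-rational element $s'\in\af_\Q$ with \emph{identical} centralizer $\g^{s'}=\g^s$. Consider the $\Q$-defined subspace $F_s:=\bigcap_{\alpha\in R_s}\ker\alpha\subset\h$, where $R_s=\{\alpha\in R\mid\alpha(s)=0\}$. The locus $\{x\in F_s\cap\af\mid\beta(x)\neq0\text{ for all }\beta\in R\sminus R_s\}$ is a non-empty Zariski open subset of the $\Q$-vector space $F_s\cap\af$ (it contains $s$), hence contains a $\Q$-rational point $s'$; by construction $R_{s'}=R_s$, and so $\g^{s'}=\g^s$.

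Finally, I would use the restricted Weyl group $W_S$, which acts $\Q$-linearly on $\af_\Q$ and transitively on the Weyl chambers of $S$, to choose $w\in W_S$ with $w.s'$ in the closed positive chamber $\{a\in\af_\Q\mid\alpha''(a)\geqslant0\text{ for all }\alpha\in B\}$. Lifting $w$ to $k\in N_K(\af)$ via Remark~\ref{Klift}(1), and using that $\alpha_{\mid\af}=\alpha''$ for every $\alpha\in B$, the element $k.s'\in\af_\Q$ lies in the positive Weyl chamber of $B$ inside $\h_\Q$; so $\g^{k.s'}$ is a standard Levi factor arising from $\pp$ in the sense of Definition~\ref{standardlevi}, and $\g^{k.s'}=k.\g^{s'}=k.\g^s$, as desired. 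The main subtlety worth highlighting is Step~2: the stratum $\{x\in\af\mid\g^x=\g^s\}$ (and not merely the set $\{x\mid\g^x\supseteq\g^s\}$) must contain a $\Q$-point, for otherwise one could only pass to a possibly larger Levi factor; the other two steps are immediate consequences of the structure theory recalled earlier.
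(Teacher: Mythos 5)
Your argument is correct and takes a genuinely different route from the paper's. The paper works combinatorially: it introduces a $\sigma$-fundamental system $B_s$ of the root subsystem $R_s$, proves (using $W^0$, $W_S$ and Lemma~\ref{wawsa}, followed by a delicate sign analysis on $B_s^1$) that $B_s\subset w.B$ for some $w\in W_\sigma$, lifts $w$ to $K$ via Remark~\ref{Klift}(2), and then constructs an explicit $\Q$-rational $t$ by prescribing $\alpha(t)$ for $\alpha\in B$ and checking $t\in\af_\Q$ with Remark~\ref{carac}. You avoid all of that with a $\Q$-density argument: the stratum $\{x\in F_s\cap\af\mid R_x=R_s\}$ is non-empty Zariski open in a $\Q$-defined subspace of $\af$, hence contains a $\Q$-point $s'$ with $\g^{s'}=\g^s$; after that you only need transitivity of $W_S$ on its Weyl chambers, the lift of Remark~\ref{Klift}(1) (which is the simpler of the two lifting statements), and the identity $\alpha_{\mid\af}=\alpha''$ to place $k.s'$ in the dominant chamber for $B$. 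Your version is shorter and sidesteps the combinatorics of $\sigma$-fundamental systems, at the (small) cost of invoking density of rational points — harmless here as $\K$ has characteristic zero. The paper's version is more constructive and incidentally establishes the inclusion $B_s\subset w.B$, a structural fact about how $\sigma$-fundamental systems of Levi subalgebras sit inside that of $\g$, which is of some independent interest even though it is not reused later. Both proofs rest on the same two pillars: reduction to $s\in\af$ via \cite[37.4.10]{TY} and a lift of a restricted Weyl element to $K$; your observation that one must hit the stratum $\{\g^x=\g^s\}$ rather than merely $\{\g^x\supseteq\g^s\}$ is exactly the right thing to flag.
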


\begin{proof}
  Since the element $s\in\pp$ is semisimple, it is
  $K$-conjugate to an element of $\af$ and we may as well
  suppose that $s\in \af$.  We will use the previous
  notation relative to $R_{s}, S_{s}$ and a fixed
  $\sigma$-fundamental system $B_s \subset R_s$.

  We first show that there exists $w\in W_{\sigma}$ such
  that $B_{s}\subset w.B$.  Since $V'_{\Q}\subset E_{s}$ one
  has $R^0\subset R_s$, and $B_{s}^0$ being a fundamental
  system of the root system $R^0$, it can be conjugated to
  $B^{0}$ by an element of $W^0$.  As $B_{s}''$ is a
  fundamental system of $S_{s}=S\cap F_{s}''$
  (see~\eqref{Fs}), \cite[18.7.9(ii)]{TY} implies that
  $B_{s}''$ is a $W_{S}$-conjugate of a subset of $B''$.
  Combining these two facts and Lemma~\ref{wawsa}(i), one
  gets the existence of $w\in W_{\sigma}$ such that
  $B_{s}^0=w.B^{0}$ and $B_{s}''\subset w.B''$.
  \\
  We claim that $B_{s}\subset w.B$, i.e.~$B_s^1 \subset
  w.B$.  Let $\alpha \in B_s^1$.  Since $w.B$ is a
  $\sigma$-fundamental system of $R$, there exist integers
  $(n_{\gamma})_{\gamma\in w.B}$, of the same sign, such
  that $\alpha=\sum_{\gamma\in w.B}n_{\gamma}\gamma$ and
  $\alpha''=\sum_{\gamma\in w.B^1}n_{\gamma}\gamma''$.  As
  $\alpha''\in w.B''$, the $n_{\gamma}$'s must be positive
  and there exists a unique $\beta\in w.B^1$ such that
  $\alpha''=\beta''$, $n_{\beta}=1$, $n_{\gamma}=0$ for
  $\gamma\in w.B^1\smallsetminus\{\beta\}$.  One then gets
  $\beta=\alpha-\sum_{\gamma\in w.B^0=B^0_{s}} n_{\gamma}
  \gamma$, hence $\beta\in R_{s}$.  But $B_{s}$ is a
  fundamental system of $R_{s}$, thus the previous
  decomposition of $\beta$ as a sum of positive and negative
  elements of $B_{s}$ forces $n_{\gamma}=0$ for $\gamma\in
  B^0_{s}$. Therefore $\alpha=\beta\in w.B$, as desired.
  
  Pick $\dot{w} \in K$ such that $\dot{w}.s=w.s$, see
  Remark~\ref{Klift}(2); replacing $\g^s$ by
  $\g^{\dot{w}.s}$ we may assume that $w=\Id$ and
  $B_{s}\subset B$.  Define $t\in \h_{\Q}$ by the
  conditions: $\alpha(t)=0$ for $\alpha\in B_{s}$ and
  $\beta(t)=1$ for $\beta\in B\smallsetminus B_{s}$.  Then,
  $t \in \bigcap_{\varphi \in B^0 \cup B^3} \ker \varphi =
  \af_{\Q}$ (cf.~Remark~\ref{carac}). Finally, since $B_{s}$
  is a fundamental system of $R_{s}$, it is easily seen that
  $\g^t=\g^s$.
\end{proof}

From the previous proposition one deduces the announced
result:

\begin{cor}\label{carac2}
  The property~{\rm (L)} is equivalent to: ``Two standard
  Levi factors arising from $\pp$ are $G$-conjugate if, and
  only if, they are $K$-conjugate''.
\end{cor}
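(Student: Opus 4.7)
The plan is to reduce property~(L), stated for arbitrary semisimple elements of $\pp$, to the analogous statement restricted to standard Levi factors arising from $\pp$, using Proposition~\ref{levistandard} as the main tool. One direction is trivial, so all the content sits in showing that checking the conjugacy criterion on this distinguished family of Levi factors suffices.

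First I would dispatch the easy implication: if property~(L) holds, then it applies in particular to any two semisimple elements $s_{0}, u_{0}\in\af_{\Q}$ in the positive Weyl chamber for $B$ whose centralizers $\g^{s_{0}}, \g^{u_{0}}$ are standard Levi factors arising from $\pp$ (cf.~Definition~\ref{standardlevi}); hence the restricted statement in Corollary~\ref{carac2} is immediate.

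For the converse, suppose that two standard Levi factors arising from $\pp$ are $G$-conjugate if and only if they are $K$-conjugate, and fix arbitrary semisimple $s,u\in\pp$. By Proposition~\ref{levistandard}, there exist $k_{1},k_{2}\in K$ such that $k_{1}.\g^{s}=\g^{s_{0}}$ and $k_{2}.\g^{u}=\g^{u_{0}}$ with $\g^{s_{0}},\g^{u_{0}}$ standard Levi factors arising from $\pp$. Since $K\subset G$, the pair $\g^{s},\g^{u}$ is $G$-conjugate (resp.~$K$-conjugate) if and only if the pair $\g^{s_{0}},\g^{u_{0}}$ is $G$-conjugate (resp.~$K$-conjugate). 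Applying the assumed equivalence to $\g^{s_{0}}$ and $\g^{u_{0}}$ closes the chain of equivalences and yields~(L) for $s$ and $u$.

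There is no real obstacle here beyond invoking Proposition~\ref{levistandard}: the main labor has already been done in constructing, for a semisimple element $s\in\pp$, an element $t\in\af_{\Q}$ in the positive Weyl chamber with $\g^{t}=k.\g^{s}$ for some $k\in K$. The only point to be careful about is that the two reductions (for $s$ and for $u$) are performed inside $K$, so that $K$-conjugacy of the standard representatives transfers back to $K$-conjugacy of $\g^{s}$ and $\g^{u}$ via $k_{1}^{-1}$ and $k_{2}^{-1}$; this is automatic from the fact that $K$ acts by Lie algebra automorphisms on $\g$ commuting with~$\theta$.
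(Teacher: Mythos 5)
Your argument is correct and is exactly the deduction the paper has in mind when it writes, after Proposition~\ref{levistandard}, that ``from the previous proposition one deduces the announced result.'' Reducing both $\g^s$ and $\g^u$ to standard Levi factors arising from $\pp$ via elements of $K$, and then noting that $K$-conjugation preserves both $G$-conjugacy and $K$-conjugacy of the centralizers, is precisely the intended chain of equivalences.
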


\begin{Rq}\label{rqthmL} 
 Assume that there is no arrow in the Satake
  diagram of $(\g,\kk)$. 
  Then $\overline{B^{2}}=\emptyset$.  
  Let $s\in \pp$ be a semisimple element.
  By Proposition~\ref{levistandard} we may assume that $\g^s=\g^t$ 
  with $t \in \af_\Q$ is standard. Then, obviously, $B^0 \subset
  I_{\g^s}$ and one deduces from  the characterization
  of $\af$ given in Remark~\ref{carac} that $\cc_{\g}(\g^s) \subset
  \pp$. 
Hence, the center of any Levi arising from $\pp$ is wholly included in $\pp$ in this case.
\end{Rq}


\begin{thm}\label{L} 
Every reductive symmetric Lie algebra satisfies
  the property {\rm (L)}.
\end{thm}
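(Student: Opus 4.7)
The plan is to combine the reductions spelled out earlier in the paper with a split according to whether the Satake diagram has arrows. By the remark just after Definition~\ref{propL}, I may assume $\g$ is semisimple. Property~(L) clearly passes to direct products of symmetric pairs (a semisimple element decomposes along the factors, as does its centraliser), so I may further assume that $(\g,\theta)$ is irreducible. If $(\g,\theta)$ is of type~0, Lemma~\ref{gmgm} shows that a semisimple element of $\pp$ has centraliser of the form $(\g')^{x}\times(\g')^{x}$; via the isomorphism $K\cong G'$ coming from $\pr_1$, $G$-conjugacy of such centralisers is the same as $G'$-conjugacy, hence $K$-conjugacy. Assuming henceforth $\g$ simple, Corollary~\ref{carac2} reduces the problem to showing that two standard Levi factors $\lf_1,\lf_2$ arising from $\pp$ that are $G$-conjugate (equivalently, $W$-conjugate, by Lemma~\ref{Korbss1}(ii)) are $K$-conjugate (equivalently, $W_\sigma$-conjugate).

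I would handle the arrow-free case uniformly first. If the Satake diagram of $(\g,\theta)$ has no arrows, Remark~\ref{rqthmL} gives $\cc_\g(\lf_i)\subset\pp$ for $i=1,2$. Pick a semisimple $s\in\cc_\g(\lf_1)\subset\pp$ with $\g^s=\lf_1$ and $g\in G$ with $g.\lf_1=\lf_2$. Then $g.s\in g.\cc_\g(\lf_1)=\cc_\g(\lf_2)\subset\pp$ and $\g^{g.s}=\lf_2$. So $s$ and $g.s$ both lie in $G.s\cap\pp$, which equals $K.s$ by Proposition~\ref{Korbss}; any $k\in K$ with $k.s=g.s$ then satisfies $k.\lf_1=\g^{k.s}=\lf_2$. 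This settles, in one shot, all types AI, AII, BDI with maximal rank, CI, CII, DIII of even rank, EI, EV, EVI, EVIII, EIX, FI, FII, G.

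The remaining case, where the Satake diagram has arrows (i.e.~types AIII with $p\ne q$, DIII of odd rank, EII, EIII, EIV), is the main obstacle. The plan is as follows. Fix $s_1\in\af_\Q$ in the positive Weyl chamber with $\g^{s_1}=\lf_1$ and $w\in W$ with $w.\lf_1=\lf_2$; it suffices to produce $t\in W.s_1\cap\af_\Q$ with $\g^t=\lf_2$, because then Proposition~\ref{wawsa}(ii) gives $w_S\in W_S$ with $w_S.s_1=t$, and Remark~\ref{Klift}(2) lifts this to $k\in K$ with $k.\lf_1=\lf_2$. Now $w.s_1$ lies in $\cc_\g(\lf_2)=\cc_\af(\lf_2)\oplus\cc_\df(\lf_2)$ (the decomposition is $\theta$-stable because $\lf_2$ is) and the issue is to kill its $\df$-component by acting through $N_W(\lf_2)/W(\lf_2)$, which acts on $\cc_\g(\lf_2)$ as the Weyl group of the root system formed by restrictions of roots in $R\smallsetminus R_{\lf_2}$. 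Equivalently, since both $\lf_i$ are $\sigma$-stable, the coset $wN_W(\lf_1)$ is $\sigma$-stable, and the obstruction to finding a $\sigma$-fixed representative is a class in $H^1(\langle\sigma\rangle,N_W(\lf_1)/W(\lf_1))$.

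The hard part will be showing this obstruction vanishes in the arrow cases. My plan is to do this by direct case-by-case inspection using the classification of \cite[Ch.~X]{He1}: the relative Weyl group and the $\sigma$-action on it are explicit in each of AIII, DIII-odd, EII, EIII, EIV, and one checks that every element of $N_W(\lf)/W(\lf)$ is of the form $n\sigma(n)^{-1}$ for suitable $n$. The most delicate verification is type AIII, where the combinatorics of restricted roots and of subsets of $B$ stable under arrows is exactly that developed in Section~3 of the paper; indeed, that section can be invoked to finish this case using the explicit description of $K$-conjugacy classes of Levis via partition data. Alternatively, one can attempt a more conceptual reduction by noting that the exceptional arrow cases involve Levi sub-pairs of type AIII, and iterate.
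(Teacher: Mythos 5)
Your reduction steps (to semisimple, then irreducible, the explicit treatment of type~0, and the arrow-free case via Remark~\ref{rqthmL} and Proposition~\ref{Korbss}) are sound and essentially reproduce the first half of the paper's proof. Where you diverge is in the arrow case, and there the proposal has a genuine gap. You set up a correct obstruction: writing $c=w^{-1}\sigma(w)\in N_W(\lf_1)$, the existence of $w'=wn$ with $(w')^{-1}\sigma(w')\in W(\lf_1)$ (equivalently, $w'.s_1\in\af_\Q$) is indeed governed by the vanishing of $[\bar c]$ in the pointed set $H^1(\langle\sigma\rangle,N_W(\lf_1)/W(\lf_1))$. But you do not verify this vanishing in any of the arrow cases; you only say ``one checks'' that every element of $N_W(\lf)/W(\lf)$ is of the form $n\sigma(n)^{-1}$, which is the entire content of the theorem in these cases. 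The paper instead translates the problem, via Proposition~\ref{Korbss1}(ii), into comparing the cardinalities of $\Phi/W$ and $\Phi''/W_S$ (the $W$- and $W_S$-conjugacy classes of admissible subsets of $B$ and $B''$), for which one already has a natural surjection and only needs an inequality of finite numbers, read off from the tables in \cite{BC} and \cite[p.~532]{He1}. That is a more directly checkable reduction than a nonabelian $H^1$ computation, for which you would still need to know the relative Weyl group $N_W(\lf)/W(\lf)$ together with the $\sigma$-action for each Levi arising from $\pp$.

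Two further problems. First, the alternative you suggest for type~AIII -- invoking Section~3's description of $K$-conjugacy classes of Levis -- is circular: Section~3 relies on Lemma~\ref{Jclass2} and Theorem~\ref{compirr}, which themselves use Theorem~\ref{L}. Second, your list of arrow cases is incorrect: AIII has arrows for \emph{all} $p\le q$ with $p+q\ge 3$ (including $p=q$, where the two-sided arrows $\alpha_i\leftrightarrow\alpha_{2p-i}$, $i<p$, are present even though there are no black nodes), the Satake diagram of EIV has no arrows, and you have omitted DI, which does occur in the paper's list. Getting this list right is not cosmetic, since the arrow-free argument (your first paragraph) is the only uniform argument available and the remaining cases must be enumerated exactly.
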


\begin{proof}

Assume that $\lf_{1}$ and $\lf_{2}$ are two standard
  $G$-conjugate Levi factors arising from $\pp$ such that
  \begin{equation}B^{0}\cup\overline{B^{2}}\subset I_{\lf_{1}}.\label{noarrow}\end{equation}
  The characterization
  of $\af$ given in Remark~\ref{carac} yields $\g^{\lf_{1}}\subset \af \subset \pp$.  Let
  $s\in(\g^{\lf_{2}})^{\bullet}\cap\pp$, hence $\g^{s}=\lf_{2}$ and, by
  hypothesis, there exists $g\in G$ such that $g.s\in
  (\g^{\lf_{1}})^{\bullet}\subset\pp$.
  Proposition~\ref{Korbss} then implies the existence of
  $k\in K$ such that $g.s=k.s$, thus: $\lf_{1}
  =\g^{k.s}=k.\lf_{2}$.

 When there is no arrow in the Satake diagram of $(\g,\kk)$, \eqref{noarrow} is satisfied (cf. Remark \ref{rqthmL}).
It then follows from the previous discussion that property~{\rm (L)} is satisfied in this case.
  
 In the other cases,
 let $\g^{s_i}$, $s_i \in \af_{\Q}$,
  $i=1,2$, be two standard Levi factors arising from
  $\pp$. Observe first that Proposition~\ref{Korbss1}(ii)
  yields: \begin{itemize}\item$\g^{s_1}, \g^{s_2}$ are $G$-conjugate $\iff$
  $\g^{s_1}, \g^{s_2}$ are $W$-conjugate, \item $\g^{s_1},
  \g^{s_2}$ are $K$-conjugate $\iff$ $\g^{s_1}, \g^{s_2}$
  are $W_{\sigma}$-conjugate.\end{itemize}
  Let $B$ be a $\sigma$-fundamental system; denote by $\Phi$
  the set of all subsets of $B$ which contain all black
  nodes and which are stable under arrows.  Observe that $E
  \in \Phi$ is equivalent to $E= I_{\lf}$ for some standard
  Levi factor $\lf$ arising from $\pp$. Therefore, by the
  previous remark, we need to show that two elements of
  $\Phi$ are $W$-conjugate if and only if they are
  $W_\sigma$-conjugate.
 
  For $E \in \Phi$ we define a subset $\phi(E)$ of $B''$,
  the fundamental system of the restricted root system $S$,
  by setting $\phi(E):=\{\alpha'' : \alpha \in E\} \sminus
  \{0\}$.  It is easy to see that $\phi$ defines a bijection
  from $\Phi$ onto $\Phi''$, the set of all subsets of
  $B''$, and that two elements of $\Phi$ are
  $W_{\sigma}$-conjugate if and only if their images by
  $\phi$ are $W_{S}$-conjugate.  By abuse of notation, we
  denote by $\Phi/W$ and $\Phi/W_\sigma$ resp.~$\Phi''/W_S$,
  the set of orbits under $W$ and $W_\sigma$, resp.~$W_S$,
  of elements of $\Phi$, resp.~$\Phi''$. Since $W_\sigma
  \subset W$, there exists a natural surjection $\pi$ from
  $\Phi/W_\sigma$ onto $\Phi/W$, hence $\#\Phi/W \le
  \#\Phi''/W_S = \#\Phi/W_\sigma$, and we need to show that
  $\pi$ is bijective. We have remarked above that
  $\phi^{-1}$ yields a bijection between $\Phi''/W_S$ and
  $\Phi/W_\sigma$.  Let $\delta : \Phi''/W_S \to \Phi/W$ be
  the surjection induced by $\pi \circ \phi^{-1}$. It
  remains to show that $\delta$ is injective, or,
  equivalently, that $\# \Phi/W \ge \# \Phi''/W_S$.

  When $(\g,\theta)$ is of type~0 there is an
  obvious bijection between $W$-conjugacy classes of
  elements $\Phi$ and $W_{S}$-conjugacy classes in $\Phi''$. 
  In the other types, the description of $\phi$, $\Phi$ and $\Phi''$ can be
  deduced from~\cite[p.~532]{He1}.  The $W$-conjugacy classes of subsets of $B$ are
  given in \cite[p.~5]{BC} (cf.~\cite[Theorem 5.4]{Dy} for the original
  classification). 
  Using these results, it is then easy to
  make a case by case comparison of $\Phi/W$ and
  $\Phi''/W_S$ and prove that they are in one-to-one correspondence.  
For example when $(\g,\kk)$ is irreducible of type EIII, one finds that
  $\Phi/W=\{\text{E}_{6},\text{A}_{5}, \text{D}_{4},
  \text{A}_{3}\}$ and
  $\Phi''/W_S=\{\text{B}_{2},\text{B}_{1},\text{A}_{1},
  \emptyset\}$.  In case EII, one easily sees that $\#
  \Phi/W=\#\Phi''/W_S=12$. One can deal with cases DI, DIII and AIII in the same way.

  Since $\g$ is a direct product of irreducible symmetric
  Lie algebras and the only irreducible Lie algebra whose Satake diagram has arrows 
are of type 0 or of type {\rm AIII, DI, DIII, EII, EIII}, property~(L) follows in the general case.
\end{proof}

\subsection{Jordan $K$-classes}
\label{JKclass}
Let $(\g,\kk)$ be a reductive symmetric Lie algebra. We
adopt the notation of \S\ref{Gclass} and
Definition~\ref{defJK}. Observe the following easy result:

\begin{lm}{\label{Jclass}}
  The intersection of a $J_{G}$-class with $\pp$ is either
  empty or the union of $J_{K}$-classes it contains.
\end{lm}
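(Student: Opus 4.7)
The proof splits naturally into a set-theoretic reduction and one concrete statement about centralizers. Since (as the excerpt recalls just after Definition~\ref{defJK}) $\pp$ is the finite disjoint union of its $J_K$-classes, any $J_G$-class $J$ satisfies
\[
J\cap\pp \;=\; \bigsqcup_{J_K}\, (J_K\cap J),
\]
and the lemma becomes equivalent to the assertion that every $J_K$-class meeting $J$ is already contained in $J$. Since $J=J_G(y)$ for any $y\in J\cap\pp$, it therefore suffices to prove the inclusion $J_K(y)\subset J_G(y)$ for every $y\in\pp$.

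Let $y=s+n$ be the Jordan decomposition (so $s,n\in\pp$ since $y\in\pp$). By the descriptions $J_K(y)=K.(\cpps+n)$ and $J_G(y)=G.(\cggs+n)$, and the inclusion $K\subset G$, the problem reduces to showing the set-theoretic inclusion $\cpps\subset\cggs$. My plan is to establish the sharper equality $\cc_\pp(\pp^s)=\cc_\g(\g^s)\cap\pp$: once this is in hand, the matching of the two $\bullet$-parts is automatic, because in either ambient subspace the minimum of $\dim\g^t$ equals $\dim\g^s$ and is realised exactly when $\g^t=\g^s$ (note that any $t$ in either centralizer satisfies $\g^s\subset\g^t$).

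Only the inclusion $\cc_\pp(\pp^s)\subset\cc_\g(\g^s)$ is non-trivial. The key observation is that elements of $\cc_\pp(\pp^s)$ are semisimple: fix a Cartan subspace $\af$ of $\pp$ with $s\in\af$; then $\af\subset\pp^s$ and $\cc_\pp(\af)=\af$, hence $\cc_\pp(\pp^s)\subset\cc_\pp(\af)=\af$. In particular any $t\in\cc_\pp(\pp^s)$ is semisimple, so $\ad_\g(t)$ is semisimple. For $y\in\kk^s$, the Jacobi identity together with $[s,t]=0$ gives $[s,[t,y]]=0$, so $[t,y]\in\pp^s$; applying $\ad(t)$ once more and using $[t,\pp^s]=0$ yields $\ad(t)^2 y=0$, which by semisimplicity of $\ad(t)$ forces $[t,y]=0$. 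Thus $t$ centralizes $\kk^s$ as well as $\pp^s$, so $t\in\cc_\g(\g^s)$, finishing the argument.

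The potentially delicate step is the final one, where semisimplicity of $\ad_\g(t)$ is used to pass from $\ad(t)^2=0$ on $\kk^s$ to $\ad(t)=0$ on $\kk^s$. This is precisely why it is worth first confining $t$ to a Cartan subspace of $\pp$; after that observation, everything is formal and the lemma drops out.
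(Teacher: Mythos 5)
Your proof is correct and follows the same strategy as the paper, whose proof is the one-line observation $J_K(x)=K.(\cpps+n)\subset G.(\cggs+n)=J_G(x)$ for $x=s+n\in J\cap\pp$. You have simply supplied the justification for the inclusion $\cc_\pp(\pp^s)\subset\cc_\g(\g^s)$ (via the Cartan subspace to get semisimplicity, then Jacobi and $\ad(t)^2=0$), a detail the paper treats as known.
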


\begin{proof}
  Let $J$ be a Jordan $G$-class intersecting $\pp$ and
  $x=s+n\in J\cap\pp$.  Then $J_{K}(x)=K.(\cpps+n)\subset
  G.(\cggs+n)=J_{G}(x)$.
\end{proof}

In Lemma~\ref{Jclass2} we fix a $J_{G}$-class $J$ such that
$J\cap\pp\neq \emptyset$, and an element $x=s+n\in J \cap
\pp$. Let $\lf:=\g^s$ and $L :=G^s \subset G$ be the
associated Levi factors.  Observe that:
\begin{equation}\label{LGs}
  L  = Z_{G}(\cggs).  
\end{equation}
Then, $(\g^s,\kk^s)$ is a symmetric pair and $K_{L}:=(K\cap
L)^{\circ} \subset K^s$ acts naturally on $\pp^s$.  Denote
by $\Od_{1}$ the orbit $L.n\in\lf$, so that $(\lf,\Od_{1})$
is a datum of $J$.  Let $\Od_{i} \subset \lf$ ($i\gnq 1$) be
the $L$-orbits (if they exist) different from $\Od_1$ such
that $(\lf,\Od_{i})$ is a datum of $J$.  Define nilpotent
$K_{L}$-orbits in $\pp^s$ by
\[
\Od_{i}\cap\pp^s=\bigcup_{j} \Od_{i}^j, \quad \Od_{i}^j =
K_{L}.n^{j}_{i}.
\]

\begin{lm}\label{Jclass2}
{\rm (i)} One has
$J\cap\pp=\bigcup_{i,j}K.(\cpps+n^{j}_{i})$.
\\
{\rm (ii)} Any $J_{K}$-class contained in $J\cap\pp$ has
dimension $\dim \cppsnb +\dim K.x$.
\end{lm}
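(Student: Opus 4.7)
My plan is to establish (i) by a direct reduction based on property~(L), and then obtain (ii) from a dimension count applied to the natural parametrization map $K\times\cppsnb\to J_{K}(y_{0})$, combined with the constancy of $G$-orbit dimension on a $J_{G}$-class.

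For (i), the inclusion $\supseteq$ is essentially a book-keeping exercise: given $c\in\cpps$ one has $\g^{c}=\g^{s}=\lf$, so the Jordan decomposition of $c+n_{i}^{j}$ has datum $(\lf,n_{i}^{j})$; since $n_{i}^{j}\in\Od_{i}$ and $(\lf,\Od_{i})$ is by construction a datum of $J$, the point $c+n_{i}^{j}$ belongs to $J$ and lies in $\pp$. For $\subseteq$, I take $y\in J\cap\pp$ with Jordan decomposition $y=s'+n'$, noting that the $\theta$-stability of Jordan decomposition forces $s',n'\in\pp$. Since $y$ and $x$ are Jordan $G$-equivalent, $\g^{s'}$ and $\g^{s}$ are $G$-conjugate; property~(L), namely Theorem~\ref{L}, then yields $k\in K$ with $k.\g^{s'}=\g^{s}$. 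A short calculation using $\g^{k.s'}=\g^{s}$ places $k.s'$ in $\zz(\g^{s})\cap\pp=\cppsnb$ (with regular centralizer, so actually in $\cpps$), and $k.n'\in\g^{s}\cap\pp=\pp^{s}$. The pair $(\g^{s},k.n')$ is a datum of $J$, so $k.n'\in\Od_{i}$ for some~$i$, hence $k.n'=\ell.n_{i}^{j}$ for some $\ell\in K_{L}$ and some~$j$. As $\ell\in K_{L}\subset L$ normalizes $\lf$ and fixes $\pp^{s}$, it preserves $\cpps$; applying $\ell^{-1}$ brings $k.y$ into $\cpps+n_{i}^{j}$, proving $y\in K.(\cpps+n_{i}^{j})$.

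For (ii), part~(i) together with Lemma~\ref{Jclass} identifies each $J_{K}$-class in $J\cap\pp$ as $J_{K}(y_{0})=K.(\cpps+n_{i}^{j})$ for some choice $y_{0}=c_{0}+n_{i}^{j}$ with $c_{0}\in\cpps$. I will compute $\dim J_{K}(y_{0})$ via the $K$-equivariant morphism $\tau:K\times\cpps\longrightarrow J_{K}(y_{0})$, $(k,c)\mapsto k.(c+n_{i}^{j})$. Over a generic $y_{1}=c_{1}+n_{i}^{j}$ in the image, uniqueness of Jordan decomposition forces $k.c=c_{1}$ and $k.n_{i}^{j}=n_{i}^{j}$; since $c$ and $c_{1}$ both have centralizer $\lf$, the condition $k.c=c_{1}$ forces $k\in N_{K}(\lf)$. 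Using the standard fact $\mathfrak{n}_{\g}(\lf)=\lf$ for a Levi factor (proved by the root-space argument against $s$), one gets $\mathfrak{n}_{\kk}(\lf)=\kk^{s}$, so the Lie algebra of $K^{n_{i}^{j}}\cap N_{K}(\lf)$ is $(\kk^{s})^{n_{i}^{j}}=\kk^{y_{0}}$. Consequently the generic fibre of $\tau$ has dimension $\dim\kk^{y_{0}}$, and
\[
\dim J_{K}(y_{0})=\dim K+\dim\cppsnb-\dim\kk^{y_{0}}=\dim K.y_{0}+\dim\cppsnb .
\]

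It remains to identify $\dim K.y_{0}$ with $\dim K.x$. Because $y_{0}$ and $x$ lie in the same Jordan $G$-class, one has $\dim G.y_{0}=\dim G.x$ (a standard property of $J_{G}$-classes: the data share a common Levi and a common nilpotent $L$-orbit up to $N_{G}(\lf)$, all giving equal centralizer dimensions). Combined with the Kostant-Rallis formula $\dim K.z=\tfrac{1}{2}\dim G.z$ for $z\in\pp$, this yields $\dim K.y_{0}=\dim K.x$, completing (ii). The main technical point I expect to need care is the generic-fiber analysis of $\tau$, specifically the identification $\mathfrak{n}_{\kk}(\lf)=\kk^{s}$ and the verification that the open condition ``$k^{-1}.c_{1}\in\cpps$'' is generic, so that the fibre dimension equals $\dim K^{n_{i}^{j}}\cap N_{K}(\lf)$ as required.
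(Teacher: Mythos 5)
Your proof of (i) follows exactly the paper's route: use property~(L) (Theorem~\ref{L}) to upgrade the $G$-conjugacy of $\g^{s'}$ and $\g^{s}$ to $K$-conjugacy, reduce to $s'\in\cpps$, and then place $n'$ in one of the orbits $K_{L}.n_{i}^{j}$; your extra step of applying $\ell^{-1}$ is fine (and in fact $\ell^{-1}$ fixes $\cpps$ pointwise since $K_{L}\subset L=Z_{G}(\cggsnb)$ by~\eqref{LGs}, so the preservation you invoke is automatic). Where you diverge is in (ii): the paper simply quotes \cite[39.5.8]{TY} for $\dim J_{K}(y)=\dim K.y+\dim\cppsnb$, whereas you reprove this via the surjection $\tau:K\times\cpps\to J_{K}(y_{0})$ and a fibre count. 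Your fibre analysis is correct: over a point $c_{1}+n_{i}^{j}$, uniqueness of Jordan decomposition plus $\g^{c}=\lf$ for $c\in\cpps$ forces $k\in N_{K}(\lf)\cap K^{n_{i}^{j}}$, whose Lie algebra is $(\kk^{s})^{n_{i}^{j}}=\kk^{y_{0}}$; $K$-equivariance of $\tau$ shows all fibres have this dimension, giving the formula. Two small points worth making explicit if you write this up: the identity $\cppsnb=\zz(\g^{s})\cap\pp$ (which you tacitly use) requires a one-line Killing-form argument (for $y\in\cc_{\pp}(\pp^{s})$ and $w\in\kk^{s}$ one has $[y,w]\in\pp^{s}$ orthogonal to $\pp^{s}$, hence zero), and the condition $k^{-1}.c_{1}\in\cpps$ is automatic (not merely generic) once $k\in N_{K}(\lf)$, since $N_{K}(\lf)$ stabilizes $\cppsnb$ and preserves centralizer dimension. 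Overall your argument is a self-contained substitute for the cited [TY, 39.5.8]; it buys independence from that reference at the cost of a longer proof, while the paper's citation is shorter but opaque.
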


\begin{proof}
  Let $y=s'+n' \in J \cap \pp$. Since $x$ and $y$ belong to
  the same $J_{G}$-class, $\g^{s'}$ is $G$-conjugate to
  $\g^s$ \cite[39.1.3]{TY}. By Property~(L), see
  Theorem~\ref{L}, the subalgebra $\g^{s'}$ is then
  $K$-conjugate to $\g^{s}$.  We can therefore assume that
  $s'\in \cpps$. It follows that $n'$ belongs to one of the
  orbits $K_{L}.n^{j}_{i}$, hence $J_K(y)=
  K.(\cpps+n^{j}_{i}) \subset J \cap \pp$.
  \\
  By \cite[39.5.8]{TY} one knows that $\dim J_{K}(y)=\dim
  K.y+ \dim \cppsnb = \dim K.x+ \dim \cppsnb = \dim
  J_K(x)$. This proves (i) and (ii).
\end{proof}

Note that the union in Lemma~\ref{Jclass2}(i) is not
necessarily a disjoint union.

\begin{lm}\label{GJ}
{\rm (i)}  Let $g\in G$ and a semisimple element $s \in \pp$ be such
that $g.s\in\pp$; then $g.\cppsnb\subset\pp$.
\\
{\rm (ii)} For $x,y\in\pp$ such that $G.x=G.y$, one has
$G.J_{K}(x)=G.J_{K}(y).$
\end{lm}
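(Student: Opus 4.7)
My plan is to deduce both statements from a single structural identification: whenever $s \in \pp$ is semisimple,
\[
\cppsnb = \cc_\pp(\pp^s) = \zz(\g^s) \cap \pp.
\]

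\textbf{Step 1: The identification.} The inclusion $\zz(\g^s)\cap\pp\subseteq\cppsnb$ is immediate since $\zz(\g^s)=\cc_\g(\g^s)$ (cf.~\S\ref{Levi}). For the reverse, let $t\in\cc_\pp(\pp^s)$. Since $s\in\pp^s$, $[t,s]=0$ forces $t\in\g^s\cap\pp=\pp^s$. Decompose $\g^s=\zz(\g^s)\oplus[\g^s,\g^s]$ and write $t=t_0+t_1$ accordingly; then $[t_1,\pp^s]=[t,\pp^s]=0$, and we may reduce to $\g^s$ semisimple. Decomposing $\g^s$ into $\theta$-orbits of simple ideals, every orbit with non-zero $\pp^s$-component is either a $\theta$-stable simple factor with non-trivial involution or a type-0 subpair $\g'\oplus\g'$; in both cases the $\theta$-stable ideal $[\pp',\pp']+\pp'$ is non-zero, hence fills the whole factor, so $[\pp',\pp']=\kk'$. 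Thus $t_1$ commuting with $\pp^s$ also commutes with $[\pp^s,\pp^s]=\kk^s$, hence with $\g^s$, so $t_1\in\zz(\g^s)=0$.

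\textbf{Step 2: Part (i).} Since $s,g.s$ are semisimple elements of $\pp$, Proposition~\ref{Korbss} provides $k\in K$ with $g.s=k.s$, so $h:=k^{-1}g\in G^s$. The centralizer $G^s$ is connected with Lie algebra $\g^s$ (cf.~\S\ref{Levi}), and the adjoint action of a connected algebraic group fixes the centre of its Lie algebra pointwise. Combined with Step~1, $h$ acts as the identity on $\cppsnb$, and therefore
\[
g.\cppsnb = k.(h.\cppsnb) = k.\cppsnb \subseteq k.\pp = \pp.
\]

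\textbf{Step 3: Part (ii).} Write $x=s+n$ and $y=s'+n'$ for the Jordan decompositions in $\pp$; the hypothesis $G.x=G.y$ supplies $g\in G$ with $s'=g.s$ and $n'=g.n$. Since $K\subseteq G$,
\[
G.J_K(x)=G.(\cpps+n), \qquad G.J_K(y)=G.(\cpps'+n').
\]
Applying Step~2 to both $g$ and $g^{-1}$ (with the roles of $s$ and $s'$ exchanged) yields $g.\cppsnb=\cc_\pp(\pp^{s'})$. Because $g.\g^t=\g^{g.t}$, the defining condition $\g^t=\g^s$ of the bullet subset is transported by $g$ to $\g^{g.t}=\g^{s'}$, so $g.\cpps=\cpps'$. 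Hence $g.(\cpps+n)=\cpps'+n'$, and applying $G$ to both sides gives $G.J_K(x)=G.J_K(y)$.

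The main obstacle is Step~1, specifically the inclusion $\cppsnb\subseteq\zz(\g^s)$: it depends on the non-trivial structural fact $[\pp',\pp']=\kk'$ for each semisimple irreducible factor of $(\g^s,\theta)$. Once this is in hand, the remainder is bookkeeping built on the decomposition $g=kh$ from Proposition~\ref{Korbss} combined with the triviality of the adjoint action of $G^s$ on $\zz(\g^s)$.
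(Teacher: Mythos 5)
Your argument is correct and follows essentially the same route as the paper: both proofs use Proposition~\ref{Korbss} to write $g$ as a product of an element of $K$ and an element of $G^s = Z_G(\cggsnb)$ (equation~\eqref{LGs}), which acts trivially on $\cppsnb\subseteq\zz(\g^s)$; your Step~1 merely makes this last inclusion explicit where the paper leaves it implicit. One small imprecision there: the identity $[\pp^s,\pp^s]=\kk^s$ can fail when $[\g^s,\g^s]$ has $\theta$-stable simple factors with trivial $\pp$-part, but the conclusion $t_1=0$ survives since $t_1\in\pp$ has no component in such factors and they commute with all the others.
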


\begin{proof}
  (i) By Lemma~\ref{Korbss} there exists $k\in K$ such that
  $k.(g.s)=s$, hence $kg\in L =Z_{G}(\cggs)$
  (see~\eqref{LGs}) and $kg.\cppsnb=\cppsnb$.  This gives
  $g.\cppsnb=k^{-1}.\cppsnb\subset\pp$.
  \\
  (ii) By Lemma~\ref{Korbss}, again, we may assume that
  $x=s+n$ and $y=s+n'$. Then, $J_{K}(x)=K.(\cpps+n)$ and
  $J_{K}(y)=K.(\cpps+n')$.  Write $y=g.x$, $g \in G$; from
  \eqref{LGs} it follows that $g.(s'+n)=s'+n'$ for all
  $s'\in\cpps$.
\end{proof}

We can now describe the intersection of a $J_G$-class with
$\pp$.

\begin{thm}\label{compirr}
  Let $J$ be a Jordan $G$-class. The variety $J \cap \pp$ is
  smooth. The $J_K$-classes contained in $J \cap \pp$ are
  its (pairwise disjoint and smooth) irreducible components.
\end{thm}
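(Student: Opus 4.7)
The plan is to combine Lemmas~\ref{Jclass} and~\ref{Jclass2} with the symmetric analog of \cite[39.1.8]{TY}: each $J_K$-class $Z$ is locally closed and smooth in $\pp$, and its closure $\overline{Z}$ in $\pp$ is a finite union of $J_K$-classes, all of strictly smaller dimension than $\dim Z$, except for $Z$ itself. This closure property is proved in a way entirely parallel to the Lie algebra case (see \cite[\S39.5]{TY}): one uses the description $Z = K.(\cpps + n)$ together with the fact that Jordan decomposition is constructible along such families.

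Let $Z_{1},\dots,Z_{N}$ denote the distinct $J_{K}$-classes contained in $J\cap\pp$. By Lemma~\ref{Jclass} they partition $J\cap\pp$, and by Lemma~\ref{Jclass2} there are finitely many of them, and each has the same dimension $d := \dim\cppsnb + \dim K.x$. Each $Z_{\alpha}=K.(\cpps+n_{\alpha})$ is irreducible: indeed, $K$ is connected and $\cpps$ is open in the linear space $\cc_{\pp}(\pp^{s})$, so $K\times \cpps$ is irreducible, and $Z_{\alpha}$ is its image under the action map.

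The key step is to show that each $Z_{\alpha}$ is closed in $J\cap\pp$. The closure $\overline{Z_{\alpha}}$ in $\pp$ is irreducible of dimension $d$, so $\overline{Z_{\alpha}}\smallsetminus Z_{\alpha}$ has dimension strictly less than $d$. By the closure property quoted above, every $J_{K}$-class contained in $\overline{Z_{\alpha}}\smallsetminus Z_{\alpha}$ has dimension $<d$. But Lemma~\ref{Jclass2}(ii) tells us that every $J_{K}$-class contained in $J\cap\pp$ has dimension exactly $d$; hence $\overline{Z_{\alpha}}\smallsetminus Z_{\alpha}$ is disjoint from $J\cap\pp$, which means $\overline{Z_{\alpha}}\cap(J\cap\pp)=Z_{\alpha}$, as claimed.

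The theorem follows: $J\cap\pp=\bigsqcup_{\alpha}Z_{\alpha}$ is a finite disjoint union of irreducible smooth subsets, each closed in $J\cap\pp$. Since the family is finite, each $Z_{\alpha}$ is also open in $J\cap\pp$ (its complement being the finite disjoint union of the remaining closed $Z_{\beta}$'s). So the $Z_{\alpha}$'s are clopen and irreducible in $J\cap\pp$, hence they are precisely its connected components and its irreducible components; they are pairwise disjoint and smooth, and $J\cap\pp$ itself is smooth because it is a clopen disjoint union of smooth pieces. The main obstacle is the cited closure statement for $J_{K}$-classes; if a direct reference is not available, I would prove it by mimicking the argument of \cite[39.1.8]{TY} for the action map $K\times(\cpps+n)\to\pp$.
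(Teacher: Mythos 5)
Your proof takes a genuinely different route from the paper's, but it contains a gap that makes it incomplete: you assume from the outset that each $J_K$-class is smooth, whereas that smoothness is precisely part of what the theorem is establishing. In the Lie algebra case, smoothness of Jordan $G$-classes is a nontrivial result of Broer (cited as \cite{Bro}); it is \emph{not} a consequence of the finiteness/local-closedness material in \cite[39.1]{TY} that you reference, and the symmetric analog is not available off the shelf. The paper handles this by a tangent-space computation: it shows
$$
T_x(J\cap\pp)\subset T_xJ\cap\pp = ([x,\g]\oplus\cc_\g(\g^s))\cap\pp = [x,\kk]\oplus\cc_\pp(\pp^s)\subset T_xJ_K(x)\subset T_x(J\cap\pp),
$$
forcing equality, and since this common space has dimension $\dim J_K(x)=\dim\cc_\pp(\pp^s)+\dim K.x$ (constant over $J\cap\pp$ by Lemma~\ref{Jclass2}(ii)), both $J_K(x)$ and $J\cap\pp$ are smooth simultaneously. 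The key inputs are the description of $T_xJ$ via \cite{Bro} and \cite[39.2.8, 39.2.9]{TY}, and the dominant morphism $\mu:K\times\cc_\pp(\pp^x)^\bullet\to J_K(x)$ from \cite[39.5.5]{TY}; your argument invokes none of these.

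A second soft spot in your approach is the ``closure property'' for $J_K$-classes (that $\overline{Z}$ is a union of $J_K$-classes). You propose to prove it by ``mimicking'' the Lie algebra argument, but the symmetric version is in fact a genuine theorem, proved only later by Le~Barbier; the paper explicitly flags this in Remark~\ref{slsheets}(3) and deliberately avoids relying on it here. Without it, your deduction that $\overline{Z_\alpha}\smallsetminus Z_\alpha$ misses $J\cap\pp$ does not go through: a $J_K$-class $Z_\gamma$ of dimension $d$ could a priori meet $\overline{Z_\alpha}\smallsetminus Z_\alpha$ in a proper lower-dimensional subset without being contained in it.

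That said, the overall shape of your argument is sound once these two facts are granted, and the clopen-decomposition strategy is a legitimate alternative to the paper's tangent-space proof. The paper itself offers a second proof in the same spirit of avoiding the tangent-space chain, but via Theorem~\ref{Iv} (fixed-point schemes of reductive groups on smooth varieties applied to $\Gamma=\{\Id,-\theta\}$ acting on $J$); that route sidesteps both of the inputs you were missing, so if you want an argument that does not go through the explicit tangent-space identity, that is the one to use.
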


\begin{proof} 
  We may obviously assume that $J \cap \pp \nempty$; pick $x
  \in J \cap \pp$.  Recall \cite{Bro} that $J$ is smooth and
  that the tangent space $T_x J$ is equal to $[x,\g]\oplus
  \cggsnb$, see \cite[39.2.8, 39.2.9]{TY}.  By
  \cite[39.5.5]{TY} there exists a dominant morphism $\mu: K
  \times \cpp{x} \to J_K(x)$, $(k,u) \mapsto k.u$. Therefore
  $d_{(\Id,x)}\mu(\kk \times \cc_{\pp}(\pp^x)) =
  [x,\kk]\oplus \cppsnb$ (cf.~\cite[39.5.7]{TY}) is a
  subspace of the tangent space $T_xJ_K(x)$, and we then
  obtain:
$$
T_x(J\cap\pp)\subset T_xJ \cap \pp =
([x,\g]\oplus\cggsnb)\cap\pp=[x,\kk]\oplus\cppsnb \subset
T_xJ_K(x)\subset T_x(J\cap\pp).
$$ 
Thus $T_x(J\cap\pp)=T_xJ_K(x)$ has dimension $\dim
J_K(x)=\dim \cppsnb +\dim K.x$.  By Lemma~\ref{Jclass2}(ii),
this dimension does not depend on the element $x$ chosen in
$J \cap \pp$.  Therefore $J_K(x)$, $J\cap\pp$ are smooth and
each element of $J \cap \pp$ belongs to a unique irreducible
component (see, for example, \cite[17.1.3]{TY}).  Then,
Lemma~\ref{Jclass} yields the desired result.
\end{proof}

The smoothness of $J \cap \pp$ can be deduced from a
general result that we now recall, see, for example,
\cite[Proposition 1.3]{Iv} or \cite[6.5, Corollary]{PV}.

\begin{thm} \label{Iv} Let $\Gamma$ be a linear reductive group
  acting on a smooth variety $X$. Then the subvariety of
  fixed points $X^\Gamma:=\{x\in X\mid \Gamma.x=x\}$ is
  smooth, and $T_xX^\Gamma= (T_x X)^\Gamma$ for all $x \in
  X^\Gamma$.
\end{thm}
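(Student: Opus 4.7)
The inclusion $T_x X^\Gamma \subseteq (T_xX)^\Gamma$ is immediate: any morphism $\mathrm{Spec}\, \K[\epsilon]/(\epsilon^{2}) \to X^\Gamma$ representing a tangent vector is in particular $\Gamma$-equivariant, so it factors through $(T_xX)^\Gamma$. The substance of the theorem is therefore to establish the smoothness of $X^\Gamma$ at $x$ with the matching dimension $\dim (T_xX)^\Gamma$; once this is proved, the reverse inclusion follows by dimension comparison.

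The plan is to linearize the action of $\Gamma$ in an étale neighborhood of each $x \in X^\Gamma$. First I would apply Sumihiro's theorem to produce a $\Gamma$-stable affine open neighborhood $U$ of $x$, set $A := \K[U]$, and let $\mathfrak{m} \subset A$ denote the maximal ideal at $x$. Both $\mathfrak{m}$ and $\mathfrak{m}^{2}$ are $\Gamma$-stable, so the conormal exact sequence
\[
0 \longrightarrow \mathfrak{m}^{2} \longrightarrow \mathfrak{m} \longrightarrow T_x^{*}X \longrightarrow 0
\]
is an exact sequence of rational $\Gamma$-modules. Since $\Gamma$ is linearly reductive it admits a $\Gamma$-equivariant section, yielding a finite-dimensional $\Gamma$-stable subspace $V \subset \mathfrak{m}$ mapping isomorphically onto $T_x^{*}X$.

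The inclusion $V \hookrightarrow A$ extends uniquely to a $\Gamma$-equivariant $\K$-algebra homomorphism $\mathrm{Sym}(V) \to A$, equivalently to a $\Gamma$-equivariant morphism $\varphi : U \to V^{*} = T_xX$ with $\varphi(x) = 0$ and $d_x\varphi = \mathrm{Id}_{T_xX}$. As $X$ is smooth at $x$ of dimension $\dim T_xX$, the morphism $\varphi$ is étale on a neighborhood of $x$; shrinking $U$ to a smaller $\Gamma$-stable affine open (again invoking reductivity), we may arrange $\varphi$ to be étale on all of $U$. This construction is essentially Luna's étale slice theorem in the degenerate case of a single-point orbit.

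It remains to identify $U^{\Gamma}$ locally at $x$ with the linear subspace $(T_xX)^{\Gamma} \subset T_xX$ via $\varphi$. The $\Gamma$-equivariant étale morphism $\varphi$ induces a $\Gamma$-equivariant isomorphism of completed local rings $\widehat{A}_{\mathfrak{m}} \isomto \widehat{\mathrm{Sym}(V)}$, and because $\Gamma$ is linearly reductive the functor of $\Gamma$-invariants is exact and commutes with $\mathfrak{m}$-adic completion, so we obtain an isomorphism between the completion of $X^{\Gamma}$ at $x$ and the completion of $(T_xX)^{\Gamma}$ at $0$. The latter is a linear subspace, hence smooth of dimension $\dim (T_xX)^{\Gamma}$, so the same holds for $X^{\Gamma}$ at $x$, and $T_x X^{\Gamma} = (T_xX)^{\Gamma}$ follows. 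The main obstacle is precisely this last step, namely that taking $\Gamma$-fixed points preserves étaleness at $x$; it is the technical heart of the argument and depends crucially on the linear reductivity of $\Gamma$ to guarantee that invariants behave well with respect to the relevant exact sequences and completions.
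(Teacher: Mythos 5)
The paper does not give a proof of this theorem; it simply recalls the statement and cites \cite[Proposition 1.3]{Iv} and \cite[6.5, Corollary]{PV}. Your argument is the standard linearization proof one finds in those sources: split the conormal sequence $\Gamma$-equivariantly, obtain a $\Gamma$-equivariant morphism $\varphi : U \to T_xX$ that is \'etale at $x$, and transfer the problem to the linear action on $T_xX$ via the induced isomorphism of completed local rings. This is correct and essentially the argument the cited references give, so you have supplied a proof that the paper omits. Two small points are worth tightening. First, Sumihiro's theorem is usually stated for \emph{connected} groups acting on normal varieties; for disconnected $\Gamma$ (in the paper's application $\Gamma$ is of order two) one should add that, $X$ being separated, the intersection of the finitely many $\Gamma^\circ$-stable affine neighborhoods $\gamma U_0$, $\gamma$ running over $\Gamma/\Gamma^\circ$, is again affine and $\Gamma$-stable. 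Second, the phrase that ``the functor of $\Gamma$-invariants commutes with completion'' is not quite the right justification: the fixed-point scheme $X^\Gamma$ has coordinate ring $A/(A_\Gamma)$, where $A = A^\Gamma \oplus A_\Gamma$ is the isotypic decomposition, not $A^\Gamma$. What the argument really uses is that the $\Gamma$-equivariant isomorphism $\widehat{\mathrm{Sym}(V)} \cong \widehat{A}_{\mathfrak m}$ respects the (formal) isotypic decompositions and hence carries the ideal generated by the non-trivial isotypic part on one side to the corresponding ideal on the other; this identifies $\widehat{\mathcal O}_{X^\Gamma,x}$ with $\widehat{\mathcal O}_{(T_xX)^\Gamma,0}$, and smoothness and $T_xX^\Gamma = (T_xX)^\Gamma$ then follow by the dimension comparison you indicate. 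With these clarifications the proof is complete.
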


This theorem can be applied to a $J_G$-class $J$ as
follows. Let
\[
\Gamma :=\{\Id,\tilde{\theta}\} \subset \GL(\g)
\]
be the group, of order two, generated by $\tilde{\theta}:= -
\theta$ (thus $\tilde{\theta}$ is an anti-automorphism of
$\g$).  Now, we can note \cite[39.1.7]{TY} that $J =
J_G(x)=G.{\cc_{\g}(\g^x)^{\bullet}}$. From the definition of
a Jordan class, or this description, it follows that $J$
is stable under the $\K^\times$-action $y \mapsto \lambda
y$, $\lambda \in \K^\times$ so, when $J \cap \pp \nempty$,
we have $\tilde{\theta}(J)= \theta(J)= J$.  Therefore, the
group $\Gamma$ acts on the smooth variety $J$ and we get
from Theorem~\ref{Iv} that $J^\Gamma= J \cap \pp$ is
smooth. This provides another proof of
Theorem~\ref{compirr} (see the four last lines in the proof
of that theorem).

\subsection{$K$-sheets}
\label{Ksheet}
We continue with the same notation. Fix a $G$-sheet
$S=S_{G}\subset\g^{(2m)}$, $m\in\N$. Since each $K$-sheet is an irreducible component of $\pp^{(m')}\subset \g^{(2m')}$ for some $m'\in \N$, we aim to describe the
irreducible components of $S_G \cap \pp$. 
In this way, we will get informations on $K$-sheets 
One important remark is the following
\begin{lm} If $S_G\cap\pp\neq \emptyset$ then the
unique nilpotent orbit $\Od$ contained in $S_G$ intersects $\pp$.\label{nilporbp}
\end{lm}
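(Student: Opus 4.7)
The plan is to take any $y \in S_G \cap \pp$ and exploit the Borho--Kraft description of sheets to produce a nilpotent element of $\Od$ inside $\pp$. Write $y = s+n$ for the Jordan decomposition of $y$. Since $\theta$ preserves the Jordan decomposition and $\theta(y) = -y$, both $s, n \in \pp$ and $[s,n] = 0$; in particular $n$ is already a nilpotent element of $\pp$ lying in $\pp^s$, and $\lf := \g^s$ is a $\theta$-stable Levi factor. If $n$ happens to lie in $\Od$, there is nothing left to prove.

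In general, the Jordan class $J_G(y) = G.(\cggs + n) \subset S_G$ has datum $(\lf, L.n)$, and by the Borho--Kraft parametrization of sheets (see \cite{BK, Boh}) the unique nilpotent orbit of $S_G$ is the induced orbit $\Od = \mathrm{Ind}_{\lf}^{\g}(L.n)$. Equivalently, for any parabolic subalgebra $\mathfrak{q}$ of $\g$ with Levi $\lf$ and nilradical $\mathfrak{u}$, the subset $\Od \cap (n + \mathfrak{u})$ is open and dense in the affine space $n + \mathfrak{u}$.

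The main obstacle is to exhibit an element of $\Od$ that actually lies in $\pp$. The naive option of choosing $\mathfrak{q}$ itself $\theta$-stable is unavailable in general: for $s \in \pp$, the $\mathrm{ad}(s)$-grading produces a pair of opposite parabolics $\mathfrak{q}^{\pm}$ with Levi $\lf$ that $\theta$ swaps, so $\theta(\mathfrak{u}^+) = \mathfrak{u}^-$. One must therefore exploit the full subsymmetric structure on $(\lf, \lf \cap \kk, \lf \cap \pp)$: using the Kostant--Rallis Jacobson--Morozov theorem, embed $n \in \lf \cap \pp$ in a normal \Striplet $(n,h,f) \subset \lf$ with $h \in \lf \cap \kk$ and $f \in \lf \cap \pp$, and combine the resulting $\mathrm{ad}(h)$-grading on $\g$ with Richardson-type density on $n + \mathfrak{u}^+$ to extract a $\theta$-antiinvariant Richardson element. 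Any such element is automatically nilpotent, belongs to $\Od$ (by density of $\Od \cap (n+\mathfrak{u}^+)$), and lies in $\pp$. This symmetric induction step, in the spirit of Ohta \cite{Oh3} and the Kostant--Sekiguchi philosophy mentioned in the introduction, is where the bulk of the technical work resides.
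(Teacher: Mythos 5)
Your reduction via the Jordan decomposition is sound but does not actually make progress: after writing $y=s+n$ you have a $\theta$-stable Levi $\lf=\g^s$, a nilpotent $n\in\lf\cap\pp$, and the task of showing that $\Od=\mathrm{Ind}_{\lf}^{\g}(L.n)$ meets $\pp$ --- which is essentially a restatement of the original problem (take $s'\in(\cc_\g(\lf)\cap\pp)^{\bullet}$ and observe $s'+n\in S_G\cap\pp$). The step you describe as ``where the bulk of the technical work resides'', namely extracting a $\theta$-antiinvariant Richardson element from $n+\mathfrak{u}^{+}$ by ``combining'' the $\ad(h)$-grading with Richardson density, is not a sketch of a proof but a restatement of the lemma: no mechanism is given for how these ingredients combine. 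You have already noted yourself that $\theta(\mathfrak{u}^{+})=\mathfrak{u}^{-}$, so $n+\mathfrak{u}^{+}$ is not $\theta$-stable, and density of $\Od$ in $n+\mathfrak{u}^{+}$ says nothing a priori about $\Od\cap(n+\mathfrak{u}^{+})\cap\pp$. Moreover, the framework you invoke is the one the introduction explicitly flags as ill-behaved: Ohta's induction \cite{Oh3} does not preserve orbit dimension in the symmetric setting, so ``symmetric induction in the spirit of Ohta'' is precisely the tool the paper is trying to avoid.

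The paper's proof is instead a short degeneration argument that sidesteps orbit induction altogether. Given $x\in S_G\cap\pp$, the irreducible set $K.(\K^{\times}x)$ lies in $S_G$ (scaling and $K$-conjugation preserve $\dim G.x$, and the set contains $x$), and \cite[38.6.9]{TY} produces a nilpotent $n\in\pp$ inside $\overline{K.(\K x)}^{\bullet}$; since this regular locus is still contained in $S_G$, one gets $n\in\Od\cap\pp$. The content of that cited result is a limit $t\to 0$ along a one-parameter subgroup adapted to a normal \Striplet, i.e.\ a downward degeneration of $x$ to a nilpotent element of $\pp$ with the same orbit dimension; it replaces, in one stroke, the entire parabolic/Richardson apparatus you were trying to assemble. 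If you want to salvage your route, the missing piece is a genuine proof that the induced orbit of a $\theta$-stable datum with $n\in\pp$ meets $\pp$, and that is not easier than the lemma itself.
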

\begin{proof}
Let $x\in S_G\cap\pp$. It follows from \cite[38.6.9]{TY} that there exists a nilpotent element $n\in\pp$ such that $n\in \overline{K.(\K x)}^{\bullet}$.
Since $K.(\K^{\times} x)\subset S_G$, we get that $n\in S_G$ and $n\in \Od\cap\pp$.
\end{proof}
The description of the
irreducible components of $S_G \cap \pp$ will be given in
terms of the $K$-orbits contained in $\Od$, see
Theorem~\ref{equidim}.

We first want to prove that when $S$ is smooth, and
$(\g,\theta)$ has no irreducible factor of type~0, the
intersection $S \cap \pp$ (which can be empty) is also
smooth. To obtain this result we will apply
Theorem~\ref{Iv}, as in the case of a Jordan $G$-class. We
adopt the notation of the end of the previous subsection, in
particular we set $\Gamma:=\{\Id,\ttheta=-\theta\}$.  Observe
that $S$ is stable under the $\K^\times$-action, thus
$\ttheta(S) = \theta(S)$; but, contrary to the case of a
Jordan class, the stability of $S$ under $\Gamma$ requires
some hypothesis, even in the case where $S \cap \pp
\nempty$.

We begin with the following technical
result which is a reformulation of \cite[Lemma 4.5]{Boh}. 
Its proof is based on a case by case study and goes along the same lines as \cite[\S3.9]{Boh}.
Recall \cite[7.1]{CM} that a nilpotent orbit
$\mathcal{O}$ is called \emph{rigid} if it can not be
obtained by induction of a proper parabolic subalgebra of
$\g$; equivalently, when $\g$ is semisimple, $\mathcal{O}$
is rigid if $\mathcal{O}$ is a $G$-sheet,
cf.~\cite[\S4]{Boh}. 

\begin{lm} \label{autodata} Let $\lf$ be a Levi factor of a
  simple Lie algebra $\g$ and $\Od$ be a rigid nilpotent
  orbit of $\lf$. Then, $\tau(\Od)=\Od$ for all $\tau\in
  \Aut(\lf)$.
\end{lm}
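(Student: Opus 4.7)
The plan is to reduce the statement to a case-by-case check on the simple constituents of $\lf$. First, every $\tau\in\Aut(\lf)$ preserves the canonical decomposition $\lf=\zz(\lf)\oplus[\lf,\lf]$, and since $\Od\subset[\lf,\lf]$ (nilpotent elements are in the derived subalgebra), only $\tau_{|[\lf,\lf]}$ matters. Writing $[\lf,\lf]=\bigoplus_{i}\lf_i$ as a direct sum of simple ideals, $\tau$ induces a permutation $\sigma$ of the index set together with isomorphisms $\tau_i:\lf_i\isomto\lf_{\sigma(i)}$.

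Next, I would observe that rigidity decomposes along the product: by Corollary~\ref{simpleprod} the $L$-sheets of $\lf$ are products of sheets of the simple factors, so a rigid orbit $\Od$ (i.e.~a sheet that is a single orbit) factors as $\Od=\prod_i\Od_i$ with each $\Od_i$ a rigid nilpotent orbit of $\lf_i$. It therefore suffices to prove $\tau_i(\Od_i)=\Od_{\sigma(i)}$ for every $i$. For indices $i$ with $\sigma(i)=i$, the automorphism $\tau_i\in\Aut(\lf_i)$ acts on a rigid orbit of the simple Lie algebra $\lf_i$ and inner automorphisms act trivially, so only the induced outer automorphism matters. For indices with $\sigma(i)\ne i$, the factors $\lf_i$ and $\lf_{\sigma(i)}$ are isomorphic simple Lie algebras, and the rigid orbit in each only depends on the isomorphism class up to $\Out$; so the problem again reduces to outer automorphism invariance.

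The remaining core is a case-by-case verification that every rigid nilpotent orbit of a simple Lie algebra is fixed by $\Out$. This is immediate in type~A (only $\{0\}$ is rigid), and empty in types B, C, $F_4$, $G_2$, $E_7$, $E_8$ (no nontrivial diagram automorphisms). In type $D_n$, the outer automorphism acts nontrivially on the set of nilpotent orbits precisely by swapping the two orbits attached to each very even partition; using Spaltenstein's characterization of rigid partitions (essentially: consecutive parts differ by at most $1$, with extra parity constraints on multiplicities) one checks that no very even partition is rigid. In type $E_6$, one runs through the finite list of rigid orbits (given by their Bala--Carter labels) and checks that each such label is invariant under the unique nontrivial diagram automorphism. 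The constraint that $\lf$ be a Levi factor of a \emph{simple} $\g$ limits the possible simple constituents $\lf_i$ and is used to ensure the list of cases is finite and manageable.

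The main obstacle is this last step in types~$D_n$ and $E_6$, where $\Out$ genuinely permutes some nilpotent orbits; the point is that the orbits it permutes never happen to be rigid. This is precisely the content of the case-by-case analysis of \cite[\S3.9]{Boh} to which the authors refer, and the proof here will consist in transcribing that verification.
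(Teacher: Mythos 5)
Your strategy---decompose $[\lf,\lf]$ into simple ideals, reduce to invariance under outer automorphisms, and check cases---is indeed the kind of verification the paper alludes to via Borho, and most of it is sound. But the step for indices with $\sigma(i)\ne i$ contains a real gap: the phrase ``the rigid orbit in each only depends on the isomorphism class up to $\Out$'' does not actually yield $\tau_i(\Od_i)=\Od_{\sigma(i)}$. If two isomorphic simple ideals $\lf_i\cong\lf_{\sigma(i)}$ carried rigid orbits $\Od_i$, $\Od_{\sigma(i)}$ that did \emph{not} correspond under any isomorphism, a swap $\tau$ would move $\Od$ and the lemma would be false. The missing ingredient is a structural fact about Levi factors of a \emph{simple} $\g$: the Dynkin diagram of $\g$ is a tree with at most one branch node and at most one multiple bond, so the subdiagram defining $[\lf,\lf]$ is a forest in which at most one component carries such a feature, i.e.\ at most one simple ideal $\lf_i$ is not of type~A. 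Consequently any simple ideal that can be permuted to a distinct one is of type~A, where $\{0\}$ is the only rigid orbit, and the permuted case is trivial. This is the actual structural reason the lemma survives permutation of factors, not merely a matter of keeping the case list ``finite and manageable'' as you put it.

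A smaller issue: your $D_n$ case treats $\Out$ as $\Z/2$ acting only by swapping very even partitions, which misses $D_4$, where $\Out(D_4)\cong\Sf_3$ and triality permutes orbits not distinguishable by partition (it mixes $[3,1^5]$ with $[2^4]^I$, $[2^4]^{II}$, the three orbits of dimension~$12$, and similarly $[5,1^3]$ with $[4^2]^{I,II}$ in dimension~$20$). The conclusion still holds because the nonzero rigid orbits of $D_4$, with partitions $[2^2,1^4]$ (dimension~$10$) and $[3,2^2,1]$ (dimension~$16$), are the unique orbits of their respective dimensions and hence $\Sf_3$-fixed; but your sketch as written does not cover this case.
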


The next lemma ensures that when $\g$ is simple, $S \cup \theta(S)$ inherites its smoothness from $S$.

\begin{lm}
  \label{autosheet}
  Let $\g$ be a simple Lie algebra. If $\g$ is not of type
  \emph{D}, then $\theta(S)=S$.
  \\
  If $\g$ is of type \emph{D}, one has either $\theta(S)=S$
  or $S \cap\theta(S)=\emptyset$.
\end{lm}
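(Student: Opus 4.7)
The plan is to pass to the Borho--Kraft datum description of sheets: each $G$-sheet $S$ corresponds to a $G$-conjugacy class of pairs $(\lf,\Od)$, where $\lf$ is a Levi factor of $\g$ and $\Od$ is a rigid nilpotent $L$-orbit in $\lf$ (cf.~the discussion following Proposition~\ref{sheetjordan}). Since $\theta\in\Aut(\g)$, the image $\theta(S)$ is a $G$-sheet with datum $(\theta(\lf),\theta(\Od))$, and the equality $\theta(S)=S$ translates to the $G$-conjugacy of these two data.

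The central reduction, made possible by Lemma~\ref{autodata}, is that $\theta(S)=S$ is equivalent to the single condition $\theta(\lf)\sim_G\lf$. Necessity follows from the datum description. For sufficiency, if $g\in G$ satisfies $g.\theta(\lf)=\lf$, then $\Ad(g)\circ\theta$ restricts to an element of $\Aut(\lf)$, which by Lemma~\ref{autodata} preserves the rigid orbit $\Od$; hence $g.\theta(\Od)=\Od$. The first assertion of the lemma therefore reduces to verifying that $\theta(\lf)\sim_G\lf$ for every Levi factor $\lf$ of $\g$ when $\g$ is simple and not of type D. This is automatic when $\theta$ is inner, which covers the types $\text{B}_n,\text{C}_n,\text{F}_4,\text{G}_2,\text{E}_7,\text{E}_8$ with no non-trivial diagram involution. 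For the outer involution in type $\text{A}_n$, reversal of the Dynkin diagram preserves the unordered multiset of block sizes describing a standard Levi factor, hence its $W$-conjugacy class. For type $\text{E}_6$, a short direct check using the classification of $W$-conjugacy classes of subsets of simple roots (the tables of \cite{BC} already invoked in the proof of Theorem~\ref{L}) shows that every such class is stable under the diagram involution.

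For type $\text{D}_n$, there genuinely exist Levi factors $\lf$ with $\theta(\lf)\not\sim_G\lf$ (typically the two copies of $\gl_n$ in $\so_{2n}$ stabilizing maximal isotropic subspaces of opposite family, which are interchanged by the outer involution). Assuming $\theta(\lf)\not\sim_G\lf$, so that $\theta(S)\neq S$, I would prove disjointness by exploiting Proposition~\ref{unilp}: by functoriality of orbit induction, $\Od_{\theta(S)}=\theta(\Od_S)$, and the classification of rigid orbits in $\so_{2n}$ (the very-even-partition phenomenon, underpinning the case analysis of Lemma~\ref{autodata}) forces distinctness of the induced orbits $\Od_S$ and $\Od_{\theta(S)}$ whenever the data $(\lf,\Od)$ and $(\theta(\lf),\theta(\Od))$ are not $G$-conjugate. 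Supposing for contradiction $x\in S\cap\theta(S)$, Proposition~\ref{sheetjordan} places the whole Jordan class $J_G(x)$ inside $S\cap\theta(S)$; the Borho--Kraft correspondence then constrains the datum of $J_G(x)$ to admit inductions to both $\Od_S$ and $\Od_{\theta(S)}$, forcing these two nilpotent $G$-orbits to coincide and yielding the required contradiction.

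The main obstacle is precisely this last step in type D: the passage from non-$G$-conjugacy of Levi factors to distinctness of the induced nilpotent orbits, and the extraction of a contradiction from a hypothetical shared Jordan class via the induction correspondence, both rely on the explicit classification of rigid orbits in classical algebras. These are the same case-by-case ingredients that feed into Lemma~\ref{autodata}, so the type D portion of the present lemma is in essence a geometric amplification of the arithmetic content of that lemma.
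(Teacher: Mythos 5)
Your overall strategy coincides with the paper's: pass to Borho--Kraft data, use Lemma~\ref{autodata} to reduce the equality $\theta(S)=S$ to the single condition $\theta(\lf)\sim_G\lf$, and then dispose of the non-D types by showing this condition always holds. Where you differ in the positive cases is organizational: you split types by whether $\theta$ can be outer (handling $\text{B},\text{C},\text{F}_4,\text{G}_2,\text{E}_7,\text{E}_8$ by inner-ness, and $\text{A},\text{E}_6$ by direct checks), whereas the paper invokes Dynkin's classification of Levi factors (Theorem~5.4 of \cite{Dy}) uniformly for all types except D and E$_7$, then observes that E$_7$ has no outer automorphism so $\theta(S)\subseteq G.S=S$. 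Both routes are correct; yours is arguably cleaner in isolating where the Levi-conjugacy question is actually nontrivial.

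The genuine gap is in the type D disjointness. Your plan requires two nontrivial facts that you state but do not establish: (a)~that if $J_G(x)\subset S$ then the datum of $J_G(x)$ induces (to $\g$) the nilpotent orbit $\Od_S$, so that $x\in S\cap\theta(S)$ forces $\Od_S=\Od_{\theta(S)}$; and (b)~that in type D, $\theta(\lf)\not\sim_G\lf$ forces $\Od_S\neq\theta(\Od_S)$. Fact (a) does follow from Borho's characterization of the Jordan classes in a sheet together with transitivity of Lusztig--Spaltenstein induction, but this should be spelled out. Fact (b) is the real content and is exactly as hard as the disjointness statement itself; it is equivalent to asserting that in $\so_{2n}$ the $G$-conjugacy class of a Borho--Kraft datum whose induced orbit is $\theta$-stable must itself be $\theta$-stable, which requires the very-even case analysis you gesture at. The paper avoids re-deriving this by citing \cite[Corollary~3.15]{IH} (Im~Hof's description of intersections of regularized Jordan-class closures in classical types), which gives $S\cap\theta(S)=\overline{J_1}^\bullet\cap\overline{J_2}^\bullet=\emptyset$ directly when the data are not $G$-conjugate. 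As written, your proposal identifies the right reduction and the right obstacle but does not close it; to complete it you would either carry out the very-even computation or appeal, as the paper does, to Im~Hof's result.
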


\begin{proof}
  Let $J_{1}$ be the dense Jordan class contained in $S$ and
  let $(\lf,\Od)$ be a datum of $J_{1}$.  Then, the dense
  Jordan class $J_{2}$ in the sheet $\theta(S)$ has datum
  $(\theta(\lf),\theta(\Od))$.
  \\
  If $\g$ is of type different from D or E$_{7}$, it follows
  from the classification of Levi factors in \cite[Theorem
  5.4]{Dy} that $\theta(\lf)$ is $G$-conjugate to $\lf$
  (cf. also \cite[Proposition 6.3]{BC}). In these cases we
  can therefore assume that $\theta(\lf)=\lf$, and
  Lemma~\ref{autodata} yields $\theta(\Od)=\Od$.  Thus,
  $J_{1}=J_{2}$ and $\theta(S)=S$.
  \\
  If $\g$ is of type E$_{7}$, there exists no outer
  automorphism of $\g$ so $\theta(S)\subseteq G.S=S$.
  \\
  Suppose that $\g$ is of type D. If $\lf$ and $\theta(\lf)$
  are $G$-conjugate, the previous argument applies and one
  gets $\theta(S)=S$. Otherwise, \cite[Corollary~3.15]{IH}
  implies that $S\cap\theta(S) =
  \overline{J_{1}}^{\bullet}\cap \overline{J_{2}}^{\bullet}
  =\emptyset$.
\end{proof}

We can now prove the desired result:

\begin{prop}
  \label{sheetsmoothness} {\rm (i)} Let $(\g,\theta)$ be a
  reductive symmetric Lie algebra which has no irreducible
  factor of type \emph{0}. If $S$ is a smooth $G$-sheet then the
  intersection $S\cap\pp$ is smooth.
  \\
  {\rm (ii)} Let $(\g,\theta)$ be a symmetric Lie algebra
  and $S'$ be a $K$-sheet contained in a smooth $G$-sheet
  $S$.  Then $S'$ is smooth.\\
  {\rm (iii)} Under the assumptions of (ii), $S'$ is a union of Jordan $K$-classes.
\end{prop}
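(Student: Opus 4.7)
The plan is to treat the three parts in order, with (ii) reducing to (i) after disposing of the possible type~0 factors, and (iii) being immediate from the general structure of $K$-sheets.

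For (i), the strategy is to apply Theorem~\ref{Iv} to the smooth variety $S$ equipped with the action of the finite (hence reductive) group $\Gamma:=\{\Id,\ttheta\}\subset\GL(\g)$ generated by $\ttheta=-\theta$. Since $\ttheta$ acts as $\Id$ on $\pp$ and as $-\Id$ on $\kk$, one has $S^\Gamma=S\cap\pp$, and the theorem then yields smoothness provided $\Gamma$ actually stabilises $S$, i.e.~$\ttheta(S)=S$. Since any sheet is stable under the $\K^{\times}$-action $x\mapsto\lambda x$, we have $-S=S$, so $\ttheta(S)=\theta(S)$ and it suffices to check that $\theta(S)=S$; the case $S\cap\pp=\emptyset$ is trivial, so assume $S\cap\pp\nempty$. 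Using Corollary~\ref{sheetdecomposition} together with the hypothesis that $(\g,\theta)$ has no irreducible factor of type~0, we may reduce to the case where $\g$ is simple and $\theta$ restricts to it. Then Lemma~\ref{autosheet} gives either $\theta(S)=S$ (done) or $S\cap\theta(S)=\emptyset$. In the latter case, any $x\in S\cap\pp$ would satisfy $\theta(x)=-x\in -S=S$, i.e.~$x\in S\cap\theta(S)$, a contradiction.

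For (ii), decompose $(\g,\theta)=(\g_{a},\theta_a)\times(\g_{b},\theta_b)$, where $\g_a$ is the sum of irreducible factors not of type~0 and $\g_b$ the product of type~0 ones, so that $S=S_a\times S_b$ with $S_a,S_b$ smooth sheets, $\pp=\pp_a\oplus\pp_b$, and $S\cap\pp=(S_a\cap\pp_a)\times(S_b\cap\pp_b)$. Part (i) gives smoothness of the first factor, and for each type~0 summand $(\g'\times\g',\mathrm{swap})$, Section~\ref{type0} (in particular Lemma~\ref{gmgm} and the ensuing discussion) shows that the intersection of a smooth $G$-sheet $S_1\times S_2$ with $\pp$ is either empty or isomorphic to the smooth $G'$-sheet $S_1$ via $\mathrm{pr}_1$; thus $S\cap\pp$ is smooth. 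Now write $S\subset\g^{(2m)}$; since $\dim K.x=\tfrac12\dim G.x$ for $x\in\pp$, one has $S\cap\pp\subset\pp^{(m)}$. The $K$-sheet $S'$ is an irreducible component of $\pp^{(m)}$ contained in the smooth set $S\cap\pp$; because $K$ is connected, $S'$ lies in some connected component $C$ of $S\cap\pp$, which is smooth, irreducible and $K$-stable, hence contained in $\pp^{(m)}$; by maximality of the irreducible component $S'$ inside $\pp^{(m)}$ one has $C\subseteq S'$, so $S'=C$ is smooth.

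For (iii), this is a general fact independent of any smoothness hypothesis: as recalled just after Definition~\ref{defJK}, $\pp$ is a finite disjoint union of its Jordan $K$-classes, and every $K$-sheet is the union of the $J_K$-classes it contains (cf.~\cite[39.5.2]{TY}). The only subtlety I anticipate lies in (i), namely ruling out the alternative $S\cap\theta(S)=\emptyset$ allowed by Lemma~\ref{autosheet} in type~D; the key input there is the combination of the $\K^{\times}$-invariance of $S$ with the identity $\theta(x)=-x$ on $\pp$, which is exactly what forces the non-empty overlap as soon as $S$ meets $\pp$.
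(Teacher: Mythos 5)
Your part~(i) is correct and arguably cleaner than the paper's: the paper takes $X := S \cup \theta(S)$ (which is smooth whether or not $\theta(S)=S$, by Lemma~\ref{autosheet}) and applies Theorem~\ref{Iv} to $X$, whereas you show directly that $\theta(S)=S$ whenever $S$ meets $\pp$, by noting that $x\in S\cap\pp$ gives $\theta(x)=-x\in -S=S$, and then apply the fixed-point theorem to $S$ itself. Both routes go through Theorem~\ref{Iv} and Lemma~\ref{autosheet}.

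In part~(ii), however, there is a gap. You assert that for a type~0 summand $(\g'\times\g',\mathrm{swap})$, the intersection of a smooth sheet $S_1\times S_2$ with $\pp$ is ``either empty or isomorphic to the smooth $G'$-sheet $S_1$.'' This dichotomy is false in general: via $\mathrm{pr}_1$, the intersection is identified with $S_1\cap S_2$, which can be a proper non-empty subset of $S_1$ (and need not be smooth) when $S_1\neq S_2$; this is exactly the phenomenon recorded in the proposition of \S\ref{type0}(iii) and Corollary~\ref{cor000}. What rescues the argument is the presence of the $K$-sheet $S'\subset S$: decomposing $S'$ along the type~0 factors and using that each component of $S'$ corresponds to a $G'$-sheet $T$ contained in $S_1\cap S_2$ forces $T=S_1=S_2$, since a sheet cannot be properly contained in another sheet of the same dimension. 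The paper makes precisely this move, writing $S=S^1\times\theta(S^1)$ with $S^1\cong S'$ directly. As written, your proof quietly assumes $S_1=S_2$ without justification.

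Part~(iii) is the most serious issue. You treat it as a triviality following from the assertion after Definition~\ref{defJK} that ``a $K$-sheet is the union of the $J_K$-classes it contains'' (citing [TY, 39.5.2]). But the paper proves (iii) non-trivially: it uses (ii) to get smoothness of $S\cap\pp$, hence disjointness of its irreducible components, so that each irreducible $J_K$-class falls entirely inside a single component. Without disjointness, a $J_K$-class meeting $S'$ could straddle several overlapping irreducible components of $\pp^{(m)}$ and need not lie in $S'$. Remark~\ref{slsheets}(3) explicitly notes that (iii) also follows from a recent (and non-trivial) theorem of Le Barbier; if (iii) were a direct consequence of [TY, 39.5.2] the whole statement would be vacuous and the remark pointless. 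You should double-check what [TY, 39.5.2] actually says — most likely it only gives that $\pp$ (or $\pp^{(m)}$) is the disjoint union of $J_K$-classes, not that each individual $K$-sheet is one — and supply the disjointness-of-components argument from smoothness, as in the paper.
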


\begin{proof}
  Decompose the symmetric algebra $(\g,\theta)$ as
  $(\zz(\g),\theta_{\mid{\zz(\g)}}) \oplus \bigoplus_i
  (\g_{i},\theta_{\mid {\g_i}})$ where each
  $(\g_i,\theta_{\mid {\g_i}})$ is an irreducible factor
  (see the beginning of this section).

  \noindent (i) We want to apply Theorem~\ref{Iv} with
  $\Gamma=\{\Id,\ttheta= - \theta\}$ and $X:= S \cup
  \theta(S) \subset\g$. Note that $X^\Gamma = (S \cap \pp)
  \cup (\theta(S) \cap \pp)$ and that $\theta(S)$ is smooth.
  \\
  If $\g$ is simple, Lemma~\ref{autosheet} yields that $X=S$
  or $S \sqcup \theta(S)$ (in type D) is smooth; therefore
  $X^\Gamma$, and consequently $S \cap \pp$, is smooth.
  Suppose that $\g$ is not simple. By hypothesis, each
  $\g_i$ is simple and the result then follows from
  Corollary~\ref{sheetdecomposition}.

  \noindent (ii)
The $K$-sheet $S'$ is an irreducible component of a $\pp^{(m)}$ for some $m\in\N$.
Since $S'\subset S\cap\pp\subset\g^{(2m)}\cap\pp=\pp^{(m)}$, $S'$ is an irreducible component of $S\cap\pp$.
It is therefore sufficient to prove that $S\cap\pp$ is smooth.\\
 From (i), we are reduced to the case of type 0, i.e.,
  $\g=\g^1\oplus\g^2$ with
  $\theta: \g^1 \isomto \g^2$.  From the results of
  \S\ref{type0} it follows that there exists a $G^1$-sheet
  $S^{1}\subset \g^1$ such that $S' =\{x-\theta(x)\mid x\in
  S^1\}$. Then $S = S^1\times\theta(S^1)$, which is smooth
  if and only if $S^1$ is smooth. As $S^1$ is isomorphic to
  $S'$, one gets the desired result.

  \noindent (iii) 
Note that
$$ S\cap\pp=\bigcup_{J_G\subset S} J_G\cap\pp$$ where $J_G$ runs in the Jordan $G$-classes included in $S$. 
Then, Lemma \ref{Jclass} implies that, with obvious notations,
$$S\cap\pp=\bigcup_{J_K\subset S\cap\pp} J_K.$$
By (ii), $S\cap\pp$ is smooth and  therefore is the disjoint union of its irreducible components. 
In particular, $S'$ is a union of Jordan $K$-classes since those are irreducible subvarieties of $S\cap\pp$.
\end{proof}

\begin{Rqs}
  \label{slsheets}
  (1) The sheets in a classical Lie algebra are smooth, see
  Theorem~\ref{slicesmoothness}. Therefore if $(\g,\kk)$ is
  a symmetric Lie algebra with $\g$ of classical type, 
  Proposition~\ref{sheetsmoothness}
  implies that its $K$-sheets are smooth and union of Jordan $K$-classes.
  \\
  (2) When $\g = \gl_N$, case which will be studied in
  details in Section~\ref{AAA}, the smoothness of $S_G \cap
  \pp$ can been explained in different (equivalent)
  terms. Indeed, recall first that, if $\g = \gl_N$, a
  nilpotent orbit is contained in a unique $G$-sheet,
  cf.~Remark~\ref{XsXss}. Assume that the sheet $S=S_G$
  intersects $\pp$ and let $\Od=G.e$ be the nilpotent orbit
  contained in $S$. Then, since we may assume that $e \in
  \pp$, it follows from $G.\theta(e) =G.(- e)= G.e \subset
  \theta(S) \cap S$ that $\theta(S) = S$. Therefore, the
  group $\Gamma$ acts on $S$ and $S^\Gamma = S \cap \pp$ is
  smooth.\\
 (3) Micha\"el Le Barbier has recently proved that \emph{The closure of a Jordan $K$-class is a union of Jordan $K$-classes}, see \cite[Theorem~B.1]{Le}. 
  Proposition~\ref{sheetsmoothness}(iii) may be seen as a straightforward consequence of this result.

\end{Rqs}

Assume that the sheet $S_G$ intersects $\pp$, pick
$e\in\Od\cap\pp$ and set
\[
\Od_{e}:=K.e \subset \Od \cap \pp.
\]
Denote by $\sS=(e,h,f)$ a normal \Striplet containing
$e$. We are going to apply the results recalled in
\S\ref{Katsylo} to various triples deduced from $\sS$.
Recall from Remarks~\ref{remtriples} that these results hold
for any such \Striplet.
 
Let $\mathsf{Z}\subset G$ be a subset such that
$\{g.e\}_{g\in \mathsf{Z}}$ is a set of representatives of
the $K$-orbits contained in $\Od\cap\pp$; we assume that
$\Id\in \mathsf{Z}$. Observe that, since the {\Striplet}s
containing $g.e$ are conjugate, we may also assume that
$g.\sS:=(g.e,g.h,g.f)$ is a normal \Striplet for all $g\in
\mathsf{Z}$. Recall that $X(S_{G},g.\sS)$ is defined by
\[
g.e + X(S_{G},g.\sS)= S_G \cap (g.e + \g^{g.f}) = g.(S_G
\cap (e + \g^{f})) = g.(e + X(S_{G},\sS)).
\]
(Hence $X(S_{G},g.\sS) = g.X(S_{G},\sS)$.)  Set
\begin{equation}
  \label{Xpp}
  X_{\pp}(S_{G},g.\sS):= X(S_{G},g.\sS)\cap\pp.
\end{equation}

\begin{Rq}\label{dimGY}
  Recall that $S \subset \g^{(2m)}$. Let $ Y\neq \emptyset$ be a subvariety of $g.e+\XP(S_{G},g.\sS)$; then, each $G$-orbit
  (resp.~$K$-orbit) of an element of $Y$ has dimension $\dim
  G.e=2m$ (resp.~$\dim K.e=m$).  Lemma~\ref{Gamma} implies
  that the fibers of the morphisms $G\times Y\rightarrow
  G.Y$ and $K\times Y\rightarrow K.Y$ are of respective
  dimension $\dim G^{e}$ and $\dim K^{e}$. Then, by
  \cite[15.5.5]{TY}, we get that $\dim G.Y=\dim Y+2m$ and
  $\dim K.Y=\dim Y+m$.
\end{Rq}

We now introduce some conditions which will be sufficient to
give a description of the irreducible components of $S_G
\cap \pp$ in terms of the $X_{\pp}(S_{G},g.\sS)$, see
Theorem~\ref{equidim}.

Recall that $S_G = G.(e + X(S_{G},\sS))$. The first
condition ensures that $e+\XP$ is large enough:
\begin{equation}
  \label{heart}
  G.(g.e+\XP(S_{G},g.\sS))=G.(S_{G}\cap\pp)\mbox{ 
    for all }g\in \mathsf{Z}.
  \tag{$\heartsuit$}
\end{equation}
The condition~\eqref{heart} was established for pairs of
type 0 in Proposition~\ref{orbX0}, and we will see that it
also holds for all symmetric pairs when $\g=\gl_{N}$
(cf.~Theorem~\ref{unif}).  Set:
$$
A(g.e):=G^{g.e}/(G^{g.e})^\circ.
$$ 
By Theorem~\ref{psiquotient} the Slodowy slice $g.e +
X(S_{G},g.\sS)$ provides the geometric quotient
\[
\psi_{S_G,g.\sS} : S_G \longrightarrow (g.e +
X(S_{G},g.\sS))/A(g.e)
\]
and we will be interested in some cases where the following
property is satisfied:
\begin{equation}
  \label{sstar}
  G^{e}\mbox{ is connected. }
  \tag{*}
\end{equation}  
Recall that~\eqref{sstar} is true when $\g= \gl_N$
(see~Lemma~\ref{GeIH}).  Clearly, \eqref{sstar} implies that
$g.e+X(S_{G},g.\sS)$ is the geometric quotient of $S_{G}$.
In this case, the restriction of $\psi_{S_{G},g.\sS}$ to the
subset
$\bigl(g.e+\bigoplus_{i\leqslant0}\g(2i,g.h)\bigr)\cap
S_{G}$ is given by the map $\varepsilon_{S_{G},g.\sS}$
constructed in Lemma~\ref{epsilon}, and if
hypothesis~\eqref{heart} is also satisfied, one has:
$\psi_{S_{G},g.\sS}(S_{G}\cap\pp)=g.e+\XP(S_{G},g.\sS)$.

Let $J_{1}$ be a $J_{K}$-class contained in
$S_{G}\cap\pp$. As $J_{1}$ is $K$-stable, the dimension of
$J_{1}\cap (g.e+\pp^{g.f})$ does not depend on the
representative element $g.\sS$ of the orbit $K.g.\sS$. Since
$K$-orbits of normal {\Striplet}s are in one to one
correspondence with $K$-orbits of their nilpositive parts
(i.e. the first element of such an \Striplet),
we may introduce the following definition.

\begin{defi}\label{flatten}
  Let $g \in \mathsf{Z}$. A $J_{K}$-class $J_1$ contained in
  $S_{G}$ is said to be \emph{well-behaved with respect to
    $\Od_{g.e}=K.g.e$}, if:
  \begin{equation}\label{flatteneq}
    \dim J_{1}\cap  (g.e+\pp^{g.f})=\dim J_{1}- m.
  \end{equation} 
\end{defi}

\begin{Rq}
  \label{flattenremark}
  It follows from Lemma~\ref{Jclass2}(ii) that a
  $J_{K}$-class $J_{1}=K.(\cpps+n)$ is well-behaved
  w.r.t.~$\Od_{g.e}$ if and only if $Y
  =J_{1}\cap(g.e+\pp^{g.f})$ satisfies $\dim Y=\dim \cppsnb
  \, (= \dim J_1 -m)$.  By Remark~\ref{dimGY} this is also
  equivalent to $\dim K.Y=\dim J_{1}$, which is in turn
  equivalent to $J_{1}\subset\overline{K.Y}$. In this case
  one has $J_{1}\subset
  \overline{K.(g.e+\XP(S_{G},g.\sS))}$, property which will
  be of importance for the description of $S_{G}\cap\pp$.
\end{Rq}

The following lemma shows that, assuming \eqref{heart},
well-behaved $J_{K}$-classes exist.

\begin{lm}\label{Jclass4}
  Let $J$ be a $J_{G}$-class contained in $S_{G}$ such that
  $J\cap\pp\neq\emptyset$. Fix $g\in \mathsf{Z}$ and set
  $\psi := \psi_{S_G,g.\sS}$.  Assume that the property
  \eqref{heart} is satisfied.
  \\
  {\rm (i)} Let $J_{1} \subset J\cap\pp$ be a $J_{K}$-class.
  There exists a subvariety $Y\subset g.e+\XP(S_{G},g.\sS)$
  such that: $Y$ is irreducible and $\psi(Y)$ is dense in
  ${\psi(J_{1})}$.  Moreover, if $Y \subset
  g.e+\XP(S_{G},g.\sS)$ is maximal for these two properties,
  then $\psi(Y)=\psi(J_{1})$ and $J_{2} :=\overline{K.Y}\cap
  J$ is a $J_{K}$-class (contained in $J$) which is
  well-behaved~w.r.t.~$\Od_{g.e}$.
  \\
  {\rm (ii)} The class $J_{1}$ is well-behaved w.r.t.
  $\Od_{g.e}$ if and only if one can find $Y$, as in {\rm
    (i)}, such that $J_{1}=\overline{K.Y}\cap J$.
  \\
  {\rm (iii)} If \eqref{sstar} holds, there exists a unique
  maximal $Y$ as in {\rm (i)}, namely
  $Y=\psi_{S_{G},g.\sS}(J_{1})$, thus
  $J_{2}=\overline{K.\psi_{S_{G},g.\sS}(J_{1})} \cap J$.
\end{lm}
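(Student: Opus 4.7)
The plan is to exploit the geometric quotient description of $\psi$ (Theorem~\ref{psiquotient}) together with hypothesis~\eqref{heart} to bring $J_1$ inside $g.e+\XP$. The fundamental auxiliary object will be $Y_0:=G.J_1\cap(g.e+\XP)$, which coincides with $\psi^{-1}(\psi(J_1))\cap(g.e+\XP)$ by $G$-invariance of $\psi$. Hypothesis~\eqref{heart} gives $G.J_1\subset G.(S_G\cap\pp)=G.(g.e+\XP)$, so every element of $J_1$ is $G$-conjugate to a point of $g.e+\XP$; this forces $\psi(Y_0)=\psi(J_1)$. Since $J_1=K.(\cpps+n)$ is irreducible and $\psi$ is a morphism, $\psi(J_1)$ is irreducible as a constructible set. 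Decomposing $\overline{Y_0}\cap(g.e+\XP)$ into irreducible components then provides at least one component $Y$ dominating $\psi(J_1)$, which yields the existence assertion in~(i).

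Now assume $Y$ is maximal with the required properties. To upgrade ``$\psi(Y)$ dense in $\psi(J_1)$'' to the equality $\psi(Y)=\psi(J_1)$, note that $Y_0$ is $A(g.e)$-stable (Lemma~\ref{Gamma}(iii)) and that the finite group $A(g.e)$ permutes the irreducible components of $\overline{Y_0}\cap(g.e+\XP)$; a maximal irreducible $Y$ must be one such component, and the restriction of $\psi$ to $Y$ factors through the quotient by the stabilizer of $Y$ in $A(g.e)$, hence is a finite, and therefore closed, map onto its image, so density becomes equality. Next, every $y\in Y\subset Y_0$ satisfies $G.y\subset J$, hence $Y\subset J$ and $K.Y\subset J\cap\pp$. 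Since Theorem~\ref{compirr} presents $J\cap\pp$ as a smooth variety whose irreducible components are precisely its Jordan $K$-classes, the irreducible set $K.Y$ lies in a unique $J_K$-class $J_2$; a straightforward closure argument (using that $J_K$-classes are closed in $J\cap\pp$) gives $\overline{K.Y}\cap J=J_2$. Finally, $Y\subset J_2\cap(g.e+\pp^{g.f})$ combined with $J_2\subset\overline{K.Y}$ yields $J_2\subset\overline{K.(J_2\cap(g.e+\pp^{g.f}))}$, and Remark~\ref{flattenremark} then says $J_2$ is well-behaved with respect to $\Od_{g.e}$.

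For part~(ii), if $J_1$ is well-behaved, Remark~\ref{flattenremark} gives $\dim J_1\cap(g.e+\pp^{g.f})=\dim J_1-m$, so any top-dimensional irreducible component $Y$ of $J_1\cap(g.e+\pp^{g.f})\subset g.e+\XP$ satisfies $\dim K.Y=\dim J_1$; hence $K.Y$ is dense in $J_1$, the closure argument of the previous paragraph yields $\overline{K.Y}\cap J=J_1$, and density of $K.Y$ in $J_1$ together with the continuity of $\psi$ gives $\psi(Y)$ dense in $\psi(J_1)$. For~(iii), hypothesis~\eqref{sstar} forces $A(g.e)=\{1\}$, so $\psi$ restricts to the identity on $g.e+X$ and the constructible set $\psi(J_1)$ is itself an irreducible subvariety of $g.e+\XP$ equal to $Y_0$; it is then the unique maximum among irreducible $Y\subset g.e+\XP$ satisfying the conditions of~(i), giving $Y=\psi_{S_G,g.\sS}(J_1)$ and consequently $J_2=\overline{K.\psi_{S_G,g.\sS}(J_1)}\cap J$.

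The main obstacle I anticipate is the surjectivity step in the second paragraph: passing from ``$\psi(Y)$ dense in $\psi(J_1)$'' to set-theoretic equality for a maximal $Y$ requires careful bookkeeping of the $A(g.e)$-action on the finite set of irreducible components of $\overline{Y_0}$ and a decisive use of the finiteness of $\psi$ modulo $A(g.e)$, which is precisely the content of Theorem~\ref{psiquotient}. Verifying that $\overline{K.Y}\cap J$ is a single $J_K$-class rests in a crucial way on Theorem~\ref{compirr}, which stratifies $J\cap\pp$ into smooth locally closed Jordan $K$-classes and thereby prevents the closure of an irreducible $K.Y$ from spreading over several such classes.
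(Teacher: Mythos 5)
Your overall skeleton — land $J_{1}$ in the slice via $G$-conjugation using~\eqref{heart}, then leverage a finiteness argument to upgrade density to equality, then invoke Theorem~\ref{compirr} to recognize $\overline{K.Y}\cap J$ as a $J_{K}$-class — is close in spirit to the paper's, and parts~(ii) and~(iii) are essentially sound. The gap is in the central finiteness step of part~(i). You assert that $Y_{0}:=G.J_{1}\cap(g.e+\XP)$ is $A(g.e)$-stable, citing Lemma~\ref{Gamma}(iii). That lemma says $A.x = G.x \cap (g.e+X)$, i.e.\ $A$ acts on the full slice $g.e+X$, but there is no reason for $A(g.e)$ to preserve $\pp$: the component group $A(g.e)\cong G^{g.e,g.h,g.f}/(G^{g.e,g.h,g.f})^{\circ}$ is built from centralizers in $G$, not in $K$ or $G^{\theta}$, so its elements need not commute with $\theta$ and need not send $g.e+\XP$ into itself. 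Consequently the subsequent claims — that $A(g.e)$ permutes the irreducible components of $\overline{Y_{0}}\cap(g.e+\XP)$, and that $\psi|_{Y}$ factors through the quotient by a stabilizer in $A(g.e)$ — are unsupported. The paper sidesteps this entirely: it uses that the quotient $\gamma: g.e+X\to(g.e+X)/A$ is finite because $A$ is a finite group, so its restriction $\gamma_{\pp}$ to the \emph{closed} subvariety $g.e+\XP$ is automatically finite and closed, with no $A$-invariance of $\pp$ required.

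A second, independent problem: even granting closedness of $\psi$ on the relevant set, your argument would force $\psi(Y)=\overline{\psi(J_{1})}$ rather than $\psi(J_{1})$, because a closed map with dense image hits the full closure. A maximal $Y$ with $\psi(Y)\subseteq\psi(J_{1})$ dense is generally \emph{not} an irreducible component of $\overline{Y_{0}}\cap(g.e+\XP)$: the paper's maximal $Y$ is $\gamma_{\pp}^{-1}(\psi(J_{1}))\cap Y'$ for a component $Y'$ of $\gamma_{\pp}^{-1}(\overline{\psi(J_{1})})$ dominating $\overline{\psi(J_{1})}$, which is only a dense constructible subset of $Y'$, not closed. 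Working with that constructible $Y$, together with surjectivity of the finite closed map $\gamma_{\pp}|_{Y'}$ onto $\overline{\psi(J_{1})}$, is what yields the exact equality $\psi(Y)=\psi(J_{1})$. You should replace the $A$-stability and component-maximality reasoning by this direct use of the finiteness of $\gamma_{\pp}$, after which the dimension count ($\dim Y=\dim J_{1}-m$ via the $m$-dimensional $K$-orbit fibers) and the passage to $J_{2}=\overline{K.Y}\cap J$ via Theorem~\ref{compirr} go through as you wrote them.
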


\begin{proof}
  In order to simplify the notation, we suppose that $g=\Id$
  and we set $X:=X(S_{G},\sS)$, $X_\pp := X \cap \pp$, $\psi :=
  \psi_{S_{G},\sS}$, $S:= S_G$, etc.

  \noindent (i) Consider the following commutative diagram
  \begin{equation*}
    \xymatrix @C=0.5pc @R=1.5pc 
    {
      e+\XP    \ar[dr]^{\gamma_\pp} \ar[r]^i
      & e+X \ar[d]^\gamma \\
      & (e+X)/A = \psi(S)
    }
  \end{equation*} 
  where $i$ is the natural closed embedding and $\gamma$ is
  the quotient morphism, see~\eqref{mapgamma}.  Observe
  that, the group $A$ being finite, the morphisms $\gamma$
  and $\gamma_\pp$ are finite, hence closed. Moreover,
  \eqref{heart} implies that $\im(\gamma_\pp) = \psi(S \cap
  \pp)$.  Let $Y'$ be any irreducible component of
  $\gamma_\pp^{-1}\bigl(\overline{\psi(J_{1})}\bigr)$
  dominating $\overline{\psi(J_{1})} \subset (e+X)/A$ and
  set:
  \[
  Y:=\gamma_\pp^{-1}(\psi(J_{1}))\cap Y'.
  \]
  Then $Y \subset J$ is a dense irreducible subset of $Y'$
  such that $\psi(Y)= \gamma_\pp(Y) = \psi(J_1)$.
  Since the fibers of $\psi$ are of dimension $m$ and
  $\gamma_\pp$ is finite, one has $\dim Y=\dim J_{1}-
  m$. Set
  \[
  J_2 := \overline{K.Y}\cap J.
  \]
  As $K.Y \subset J_2 \subset J \cap \pp$, we see that $J_{2}$ is a
  closed irreducible subset of $J \cap \pp$ of dimension
  $\dim K.Y = \dim Y + m = \dim J \cap \pp$
  (cf.~Remark~\ref{dimGY}).  One obtains from
  Theorem~\ref{compirr} that $J_2$ is a $J_K$-class, which
  is well-behaved w.r.t.~$\Od_e$ (recall that $J_2 \subset
  \overline{K.Y}$).
  \\
  Suppose now that $Y_1 \subset e + X_\pp$ is maximal for
  the properties: $Y_1$ irreducible and ${\psi(Y_1)}$ dense
  in ${\psi(J_1)}$. Observe that the closure $Y_1'$ of
  ${Y_1}$ inside $e+\XP$ is irreducible, and
  $\gamma_\pp(Y'_1) = \overline{\gamma_\pp(Y_1)}=
  \overline{\psi(J_1)}$. The argument of the previous
  paragraph, together with the maximality of $Y_1$, implies
  that $Y_1= \gamma_\pp^{-1}(\psi(J_{1})) \cap Y_1'$. As
  above, we then get that $\overline{K.Y_1} \cap J$ is a
  well-behaved $J_K$-class contained in $J \cap \pp$.

  \noindent (ii) Set $Y_1:= J_1 \cap (e + X_\pp)$ and suppose
  that $J_1$ is well-behaved w.r.t.~$\Od_e$, thus $\dim J_1=
  \dim Y_1 + m$. Let $Y_2 \subset Y_1$ be an irreducible
  component of maximal dimension; since $\gamma_\pp$ is
  finite, one has $\dim \gamma_\pp(Y_2) = \dim Y_1 = \dim
  \psi(J_1)$, hence $\psi(Y_2) $ is dense in $\psi(J_1)$. We
  then deduce from~(i) that $J_2 := \overline{K.Y_2} \cap J$
  is a $J_K$-class; since $Y_2 \subset J_2 \cap J_1$, it
  follows that $J_1=J_2$ is well-behaved w.r.t.~$\Od_e$.
  The converse is clear.

  \noindent (iii) Here, $\gamma_\pp : e + \XP \longisomto
  \psi( S \cap \pp)$ is the identity; thus $Y'=
  \overline{\psi(J_1)}$ and $Y= \psi(J_1)$.
\end{proof}

\begin{Rqs}\label{SKO}
  (1) In part~(i) of the previous lemma, the $J_K$-class
  $J_2 \, (\subset J \subset S_G)$ is contained in the
  following variety:
  \begin{equation}
    \label{SKge}
    S_{K}(S_{G},g.\sS):=\overline{K.(g.e+\XP(S_{G},g.\sS))}^{\bullet}. 
  \end{equation}
  Since $K$-orbits of normal {\Striplet}s are in bijection
  with nilpotent $K$-orbits, $S_{K}(S_{G},g.\sS)$ depends
  only on the sheet $S_{G}$ and the orbit $\Od_{g.e}=K.g.e$.
  Therefore we can write
  \[
  S_{K}(S_{G},g.\sS)= S_{K}(S_{G},\Od_{g.e}).
  \]
  Furthermore when $\g$ is of type A, thanks to
  Remark~\ref{XsXss}, we may also write
  $S_{K}(S_{G},g.\sS)=S_{K}(g.\sS)= S_{K}(\Od_{g.e})$.
  \\
  (2) Under assumption~\eqref{sstar},
  Lemma~\ref{Jclass4}(iii) yields a well defined map
  \[
  J_{1} \mapsto J_{2} := J \cap
  \overline{K.\psi_{S_G,g.\sS}(J_1)}
  \]
  from the set of $J_{K}$-classes contained in
  $S_{G}\cap\pp$ to the set of $J_{K}$-classes contained in
  $S_{K}(S_{G},g.\Od)$.
  \\
  In case A, we will show in Lemma~\ref{clubAI} and
  Lemma~\ref{desired} that each $J_{K}$-class contained in
  $S_{G}\cap\pp$ is in the image of such an map, for
  an appropriate choice of $g\in \mathsf{Z}$.
\end{Rqs}

We now introduce a condition ensuring that the varieties
$S_{K}(S_{G},\Od_{e})$ are irreducible:
\begin{equation}\label{diamond}
  \XP(S_{G},g.\sS)\mbox{ is irreducible for all }g\in
  \mathsf{Z}.
  \tag{$\diamondsuit$}
\end{equation} 

\begin{cor}\label{sheetsheetsheet}\label{sheeet}
  Assume that conditions \eqref{heart} and \eqref{diamond}
  hold. Then, $S_{K}(S_{G},\Od_{g.e})$ is an irreducible
  component of $S_{G}\cap\pp$ of maximal dimension.
\end{cor}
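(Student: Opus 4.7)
The plan is to establish that $S_{K}:=S_{K}(S_{G},\Od_{g.e})$ is a closed irreducible subvariety of $S_{G}\cap\pp$ whose dimension equals $\dim(S_{G}\cap\pp)$; this forces $S_{K}$ to coincide with an irreducible component of $S_{G}\cap\pp$, and this component is automatically of maximal dimension.

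First I would use \eqref{diamond} to note that $\XP(S_{G},g.\sS)$ is irreducible, hence so are $g.e+\XP(S_{G},g.\sS)$, its $K$-saturation, and the closure $\overline{K.(g.e+\XP(S_{G},g.\sS))}$; moreover this closure lies in $\pp$ because $\pp$ is closed in $\g$. Every element of $K.(g.e+\XP(S_{G},g.\sS))$ lies in $S_{G}$ and therefore has $G$-orbit dimension $2m$, so it is regular in the ambient closure $\overline{K.(g.e+\XP(S_{G},g.\sS))}\subset\overline{S_{G}}$. Consequently $S_{K}=\overline{K.(g.e+\XP(S_{G},g.\sS))}\cap S_{G}$, which is open in the irreducible closure (hence irreducible), contained in $S_{G}\cap\pp$, and closed in $S_{G}\cap\pp$ as the intersection with a closed subset of $\g$. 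Applying Remark~\ref{dimGY} to $Y=g.e+\XP(S_{G},g.\sS)$ yields $\dim S_{K}=\dim\XP(S_{G},g.\sS)+m$.

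Next I would establish $\dim(S_{G}\cap\pp)=\dim\XP(S_{G},g.\sS)+m$. Remark~\ref{dimGY} gives $\dim G.(g.e+\XP(S_{G},g.\sS))=\dim\XP(S_{G},g.\sS)+2m$, and \eqref{heart} identifies this with $\dim G.(S_{G}\cap\pp)$. For any irreducible component $C$ of $S_{G}\cap\pp$, which is $K$-stable because $K$ is connected and permutes the components, I would compute $\dim G.C$ through the dominant morphism $G\times C\to G.C$, $(g',x)\mapsto g'.x$: for generic $y\in G.C$ the fiber has dimension $\dim(G.y\cap C)+\dim G^{y}$. Since $G.y$ has dimension $2m$ and $G.y\cap\pp$ is a finite union of $K$-orbits each of dimension $m=\tfrac{1}{2}\dim G.y$, and since $C$ is $K$-stable and meets $G.y$, one obtains $\dim(G.y\cap C)=m$. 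Thus $\dim G.C=\dim C+2m-m=\dim C+m$, and passing to the maximum over components gives $\dim G.(S_{G}\cap\pp)=\dim(S_{G}\cap\pp)+m$, whence $\dim(S_{G}\cap\pp)=\dim\XP(S_{G},g.\sS)+m=\dim S_{K}$.

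The main obstacle is this last dimension computation, in particular the equality $\dim(G.y\cap C)=m$; it rests on the Kostant--Rallis half-dimension identity $\dim K.x=\tfrac{1}{2}\dim G.x$ for $x\in\pp$ together with the standard fact that $G.y\cap\pp$ is a finite union of $K$-orbits, both recalled earlier in the paper. Once these are in hand, the conclusion is immediate: $S_{K}$ is a closed irreducible subvariety of $S_{G}\cap\pp$ whose dimension coincides with $\dim(S_{G}\cap\pp)$, and therefore $S_{K}$ is an irreducible component of $S_{G}\cap\pp$ of maximal dimension.
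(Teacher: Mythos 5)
Your proof is correct, and it takes a genuinely different route from the paper's. The paper leans on the Jordan-class machinery: it picks a $J_K$-class $J_1$ of maximal dimension in $S_G\cap\pp$, applies Lemma~\ref{Jclass4}(i) (which uses \eqref{heart}) to produce a well-behaved $J_K$-class $J_2\subset S_K(S_G,\Od_{g.e})$, then invokes Theorem~\ref{compirr} (equidimensionality of $J_K$-classes inside $J\cap\pp$) to conclude $\dim J_2=\dim J_1=\dim(S_G\cap\pp)$, whence $S_K(S_G,\Od_{g.e})=\overline{J_2}^\bullet$ is a component of maximal dimension. You bypass Lemma~\ref{Jclass4} and Theorem~\ref{compirr} entirely and instead compute $\dim G.C=\dim C+m$ for each ($K$-stable) irreducible component $C$ of $S_G\cap\pp$ via the fibers of $G\times C\to G.C$, using that a generic fiber of $G.y\cap\pp$ has dimension $m$; combined with Remark~\ref{dimGY} and \eqref{heart} this pins down $\dim(S_G\cap\pp)=\dim\XP(S_G,g.\sS)+m=\dim S_K(S_G,\Od_{g.e})$. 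Two remarks. First, you attribute the statement that $G.y\cap\pp$ is a finite union of $K$-orbits to something "recalled earlier in the paper"; in fact the paper only recalls $\dim K.x=\tfrac{1}{2}\dim G.x$. The finiteness is indeed classical (Kostant--Rallis), but you can avoid appealing to it: for $y\in\pp$ one has $T_y(G.y)\cap\pp=[y,\g]\cap\pp=[y,\kk]$, of dimension $m$, which already bounds $\dim(G.y\cap\pp)\le m$, and the reverse inequality comes from the $K$-orbit through $y$. Second, the paper's route produces as a by-product a $J_K$-class $J_g$ that is dense in $S_K(S_G,\Od_{g.e})$; this is exactly what is re-used in the proof of Theorem~\ref{equidim}(iii), so the apparent detour through $J_K$-classes pays off later, whereas your more direct dimension count would require a separate argument to recover that density statement.
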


\begin{proof}
  Let $J_{1}$ be a $J_{K}$-class of maximal dimension
  contained in $S_G \cap \pp$ and $J \subset S_G$ be the
  $J_{G}$-class containing $J_1$.  Since \eqref{heart} is
  satisfied, one can find $Y$ as in Lemma~\ref{Jclass4}(i)
  such that $J_{2}:=\overline{K.Y}\cap J$ is a $J_{K}$-class
  contained in $J$.  Then, $J_{2}\subset
  S_{K}(S_{G},\Od_{g.e})\subset S_{G}\cap\pp$ and
  Theorem~\ref{compirr} implies that $\dim J_{2}=\dim
  J_{1}=\dim S_{G}\cap\pp$. Therefore
  $S_{K}(S_{G},\Od_{g.e})=\overline{J_{2}}^{\bullet}$ is an
  irreducible component of $S_{G}\cap\pp$ of maximal
  dimension.
\end{proof}

In view of the previous corollary, it is then natural to
ask: Are all the irreducible components of $S_{G}\cap\pp$ of
the form $S_{K}(S_{G},\Od_{g.e})$?  We introduce the next
additional condition to answer that question:
\begin{equation}\label{club}
  \mbox{For each }J_{K}\mbox{-class }J_{1}\mbox{ in
  }S_{G}\cap\pp \mbox{, there exists 
    $g\in \mathsf{Z}$ such that }
  J_{1}\mbox{ is well-behaved w.r.t. }\Od_{g.e}. 
  \tag{$\clubsuit$}
\end{equation}

\begin{thm}\label{equidim}
  Assume that conditions \eqref{heart}, \eqref{diamond} and
  \eqref{club} are satisfied.\\
(i) The irreducible components of $S_{G}\cap\pp$
  are the $S_{K}(S_G,\Od_{g.e})$ with $g\in
  \mathsf{Z}$. \\
(ii) $S_{G}\cap\pp$ is
  equidimensional.\\
(iii) There exists a unique $J_G$-class $J$ such that
  $S_G \cap \pp = \overline{J \cap \pp}^\bullet$ and, for
  each $g \in \mathsf{Z}$, $S_{K}(S_G,\Od_{g.e})=
  \overline{J_g}^\bullet$ for a unique $J_K$-class $J_g\subset J$. \\
(iv) The map $S_{K}(S_G,\Od_{g.e}) \to J_g$ gives a
  bijection between irreducible components of $S_{G}\cap\pp$
  and the set of $J_K$-classes contained in $J \cap \pp$.
\end{thm}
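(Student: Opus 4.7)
\emph{Plan of proof.} The proof splits into two fairly independent pieces. For (i) and (ii), the idea is to combine Corollary~\ref{sheeet}---which, under \eqref{heart} and \eqref{diamond}, gives that each $S_{K}(S_{G},\Od_{g.e})$ is an irreducible component of $S_{G}\cap\pp$ of maximal dimension---with hypothesis~\eqref{club} and Remark~\ref{flattenremark}. The latter together say that every $J_{K}$-class $J_{1}\subset S_{G}\cap\pp$ is well-behaved with respect to some $\Od_{g.e}$, hence $J_{1}\subset \overline{K.(g.e+\XP(S_{G},g.\sS))}$, and since $J_{1}\subset \pp^{(m)}$ we get $J_{1}\subset S_{K}(S_{G},\Od_{g.e})$. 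As $S_{G}\cap\pp$ is a finite disjoint union of its $J_{K}$-classes, one obtains $S_{G}\cap\pp=\bigcup_{g\in\mathsf{Z}}S_{K}(S_{G},\Od_{g.e})$; the sets on the right are thus exactly the irreducible components of $S_{G}\cap\pp$, and all have the same (maximal) dimension, proving (i) and (ii).

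For (iii) and (iv), I would take $J$ to be the unique dense $J_{G}$-class in $S_{G}$ (Proposition~\ref{sheetjordan}) and, for each $g\in\mathsf{Z}$, let $J_{g}$ denote the unique $J_{K}$-class dense in the irreducible component $S_{K}(S_{G},\Od_{g.e})$ (which exists since $S_{K}(S_{G},\Od_{g.e})$ is irreducible and is a finite disjoint union of locally closed $J_{K}$-classes). The main step is to show $J_{g}\subset J$ for each $g$. First, one establishes $J\cap\pp\ne\emptyset$: by \eqref{heart}, $G.(g.e+\XP(S_{G},g.\sS))=G.(S_{G}\cap\pp)$, and by the general theory of Slodowy slices (Lemma~\ref{Gamma}), the dense Jordan class $J$ meets the full slice $g.e+X(S_{G},g.\sS)$ in a dense open subset; irreducibility of $g.e+\XP(S_{G},g.\sS)$ from \eqref{diamond}, combined with the fact that $S_{K}(S_{G},\Od_{g.e})$ attains the maximal dimension of irreducible components of $S_{G}\cap\pp$, forces $\XP(S_{G},g.\sS)$ to meet the dense stratum of $X(S_{G},g.\sS)$, hence $J\cap\pp\ne\emptyset$. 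Next, apply Lemma~\ref{Jclass4}(i) to any $J_{K}$-class $J_{1}\subset J\cap\pp$: it produces $J_{2}\subset J$ well-behaved with respect to $\Od_{g.e}$, and Remark~\ref{SKO}(1) yields $J_{2}\subset S_{K}(S_{G},\Od_{g.e})$. Using the inclusion-of-centralizers argument $\g^{s_{0}}\subseteq \g^{s'}\Rightarrow \pp^{s_{0}}\subseteq \pp^{s'} \Rightarrow \cc_{\pp}(\pp^{s'})\subseteq \cc_{\pp}(\pp^{s_{0}})$, valid for any $J_{G}$-class $J'\subset \overline{J}$ with datum involving $\g^{s'}$, together with Lemma~\ref{Jclass2}(ii), one sees that $\dim J_{2}=\dim J_{1}$ is the maximal $J_{K}$-class dimension in $S_{G}\cap\pp$, hence equals $\dim S_{K}(S_{G},\Od_{g.e})$. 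Thus $J_{2}$ is dense in $S_{K}(S_{G},\Od_{g.e})$, so by uniqueness $J_{g}=J_{2}\subset J$; in particular $S_{K}(S_{G},\Od_{g.e})=\overline{J_{g}}^{\bullet}$. Taking the union over $g$ gives $S_{G}\cap\pp=\overline{J\cap\pp}^{\bullet}$, and uniqueness of $J$ follows from the density characterization.

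The bijection in~(iv) is now immediate: by Theorem~\ref{compirr}, the $J_{K}$-classes contained in $J\cap\pp$ are precisely the irreducible components of $J\cap\pp$, and the map $S_{K}(S_{G},\Od_{g.e})\mapsto J_{g}$ is injective (distinct irreducible components have distinct dense $J_{K}$-classes). For surjectivity: given any $J_{K}$-class $J_{1}\subset J\cap\pp$, condition~\eqref{club} yields some $g\in\mathsf{Z}$ with $J_{1}\subset S_{K}(S_{G},\Od_{g.e})$, and the equality of dimensions $\dim J_{1}=\dim S_{K}(S_{G},\Od_{g.e})$ forces $J_{1}=J_{g}$.

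\emph{Main obstacle.} The technical heart of the argument is verifying $J\cap\pp\ne\emptyset$, which requires a careful comparison between $\XP(S_{G},g.\sS)$ and the Jordan stratification of the ambient Slodowy slice $g.e+X(S_{G},g.\sS)$: one must show that $\XP(S_{G},g.\sS)$ is not confined to the lower-dimensional strata coming from non-dense Jordan classes of~$S_{G}$. This should follow from the dimension count $\dim S_{K}(S_{G},\Od_{g.e})=m+\dim \XP(S_{G},g.\sS)$ combined with the centralizer-inclusion inequality sketched above; once it is in hand, the rest of the argument combines routinely Corollary~\ref{sheeet}, Lemmas~\ref{Jclass2} and~\ref{Jclass4}, and Theorem~\ref{compirr} to produce all four statements.
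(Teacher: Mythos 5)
Your proof of (i) and (ii) follows the paper's argument closely: combine Corollary~\ref{sheeet} with condition~\eqref{club} and Lemma~\ref{Jclass4}(ii)/Remark~\ref{flattenremark} to cover $S_{G}\cap\pp$ by the $S_{K}(S_{G},\Od_{g.e})$, then invoke maximality of dimension. That part is fine.

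The departure — and the gap — is in part~(iii). You take $J$ to be the dense $J_{G}$-class in $S_{G}$, which forces you to prove $J\cap\pp\neq\emptyset$ as a separate step. The argument you sketch for this (irreducibility of $g.e+\XP$ from \eqref{diamond} together with the maximality of $\dim S_{K}(S_{G},\Od_{g.e})$ ``forces $\XP$ to meet the dense stratum'') does not go through: there is no dimension obstruction preventing $\XP$ from lying entirely inside a lower Jordan stratum of the full slice $X$. Indeed in type AIII one has $\dim\XP=\dim\cc<\dim\tf=\dim X$ (see Propositions~\ref{corcc} and \ref{dimSK}), so $\XP$ has positive codimension in $X$ and \emph{a priori} could sit in the complement of the open stratum $(e+X)\cap J_0$. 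In the cases where the paper applies the theorem this does turn out to be true, but only by using the concrete shape $\XP=\varepsilon(e+\cc)$ with $\cc$ a linear subspace of $\tf$, not by the abstract dimension count you invoke. Your ``centralizer-inclusion'' step also quietly replaces a $G$-conjugation between data by a $K$-conjugation, which requires an appeal to Property~(L) that you do not make explicit.

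The paper sidesteps all of this by a cleaner choice: take $J$ to be the $J_{G}$-class containing \emph{some} $J_{K}$-class of maximal dimension in $S_{G}\cap\pp$ (such a $J_{K}$-class exists since $S_{G}\cap\pp\neq\emptyset$). Then Lemma~\ref{Jclass4}(i), applied to this $J$ and each $g\in\mathsf{Z}$, directly produces a well-behaved $J_{K}$-class $J_{g}\subset J\cap\pp$ with $S_{K}(S_{G},\Od_{g.e})=\overline{J_{g}}^{\bullet}$ (by equidimensionality from (ii)); taking the union over $g$ and using (i) gives $S_{G}\cap\pp=\overline{J\cap\pp}^{\bullet}$, and uniqueness of $J$ follows since $J_{g}$ is the unique dense $J_{K}$-class in each component. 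There is no need to show that $J$ equals the dense $J_{G}$-class in $S_{G}$ (and under the abstract hypotheses \eqref{heart}, \eqref{diamond}, \eqref{club} this identification is not obviously available). Once (iii) is in hand your derivation of (iv) is essentially the paper's.
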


\begin{proof}
  Write $S_{G}\cap\pp=\bigcup_{J\subset S_{G}} J\cap\pp$,
  where the union is taken over the $J_{G}$-classes $J$
  intersecting~$\pp$.  For any such $J$, $J \cap \pp$ is the
  union of the $J_{K}$-classes it contains
  (cf.~Lemma~\ref{Jclass}), thus~\eqref{club} and
  Lemma~\ref{Jclass4}(ii) imply that
  $S_{G}\cap\pp=\bigcup_{g\in \mathsf{Z}}
  S_{K}(S_{G},\Od_{g.e})$.  Then, apply
  Corollary~\ref{sheeet} to get (i) and (ii).
  \\
  Now, let $J_1$ be a $J_K$-class of maximal dimension
  contained in $S_G \cap \pp$ and denote by $J \subset S_G$
  the $J_G$-class containing $J_1$. Let $g \in \mathsf{Z}$;
  as in the proof of Corollary~\ref{sheeet} one can find a
  $J_K$-class $J_g \subset J \cap \pp$ such that
  $S_{K}(S_G,\Od_{g.e})= \overline{J_g}^\bullet$. It then
  follows from (i) that $S_G \cap \pp =
  \overline{J \cap \pp}^\bullet$. Furthermore, as
  $J_K$-classes are locally closed, $J_g$ is the unique
  dense $J_K$-class in $S_{K}(S_{G},\Od_{g.e})$. This
  implies the unicity of the class $J$ and (iii) follows.  Finally, one deduces  (iv) from (i), (iii) and
  Theorem~\ref{compirr}.
\end{proof}

\section{Type~A}
\label{AAA}

We show in this section that the conditions \eqref{heart},
\eqref{diamond} and \eqref{club}, introduced in
Section~\ref{Ksheet} in order to describe the $K$-sheets of
a reductive (or semisimple, see Corollary~\ref{simpleprod})
symmetric Lie algebra $(\g,\theta)$, are satisfied in
type~A, i.e.~when $\g= \gl_N$ (or $\fsl_N$).

Thereafter, unless otherwise specified, e.g.~in
\ref{involA0}, we set $\g = \gl_N$, $N \in \N^*$, and if
$\theta$ is an involution on $\g$ we adopt the notation of
Section~\ref{SLA} relative to the symmetric pair
$(\g,\theta)= (\g,\kk)$.  The natural action of
$\tilde{G}=\GL_{N}$ on $\g$ factorizes through the adjoint
action to give the surjective morphism:
$$
\rho : \tilde{G}\longrightarrow G \cong
\tilde{G}/\K^{\times}\Id=\mathrm{PGL}_{N}=\mathrm{PSL}_{N}
$$ 
Recall that $\Kt:=\{g\in G\mid g \circ \theta=\theta \circ
g\}$ and $K:=(\Kt)^{\circ}$.  If $H$ is an algebraic subgroup
of $G$ we set:
\begin{equation}
  \label{rho}
  \tilde  H:=\rho^{-1}(H).
\end{equation}
Thus, $H.x=\tilde{H}.x$ for all $x\in\g$.  After recalling
the three different possible types of involutions, we will
establish the three aforementioned conditions:

\eqref{heart} in Theorem~\ref{unif} (types AI, AII) and
Proposition~\ref{unifAIII} (type AIII);

\eqref{diamond} in Remark~\ref{diamondAI} (types AI, AII)
and Remark~\ref{diamondAIII} (type AIII);

\eqref{club} in Corollary~\ref{clubAI} (types AI, AII) and
Proposition~\ref{desired} (type AIII).

\subsection{Involutions in type A}
\label{invol}
We recall below a construction of the involutions on
$\gl_{N} =\gl(V)$.  We will also have to consider the
involution by permutation of factors on
$\gl_{N}\times\gl_{N}$, cf.~\ref{type0}; this case will be
called ``type~A0''.

Recall that the nilpotent orbits in $\g=\gl_{N}$ are in
bijection with the partitions of $N$ and that, to each
partition $\boldsymbol{\mu} =
(\mu_{1}\geqslant\dots\geqslant\mu_{k})$, one associates a
Young diagram having $\mu_{i}$ boxes on the $i$-th row.
 
We fix a $G$-sheet $S_{G} \subset \g$ and an element $e$ in
the nilpotent orbit $\Od\subset S_{G}$. The partition
associated to $e$ is denoted by
\[
\bolda=(\lambda_{1}\geqslant \lambda_{2}\geqslant
\dots\geqslant \lambda_{\delta_{\Od}}).
\]
We adopt the notation introduced in~\ref{settings}; in
particular, the basis $\mathbf{v}$ (in which $e =\sum_i e_i$
has a Jordan normal form, see~\eqref{vbasis}) and the
subalgebras $\qq_{i} \cong \gl_{\lambda_{i}}$, $\qq =
\oplus_i \qq_{i}, \lf, \tf$ are fixed.
 
We want to construct symmetric pairs $(\g,\theta) \equiv
(\g,\kk,\pp) \equiv (\g,\kk)$ such that $e\in \pp$.  These
constructions are inspired by \cite{Oh1,Oh2}.  The
notation being as in \cite{He1,GW}, one obtains three types
of non-isomorphic symmetric pairs: AI, AII and AIII. Recall
that the involution $\theta$ is outer in types AI, AII and
inner in type AIII.
 
The most complicated case is type~AIII, where it is possible
to embed $e$ in several non-isomorphic ways in different
$\pp$'s. These possibilities will be parameterized by
functions $\Phi:[\![1,\delta_{\Od}]\!] \rightarrow \{a,b\}$,
where $a,b$ are different symbols.

\subsubsection{Case A0}
\label{involA0}
Let $\theta$ be the involution on $\g=\gl_{N}\times\gl_{N}$
sending $(x,y)$ to $(y,x)$.  Recall that $\kk=\{(x,x)\mid
x\in \gl_{N}\}\cong \gl_{N}$, $\pp=\{(x,-x)\mid x\in
\gl_{N}\}$. The $\kk$-module $\pp$ is isomorphic to the $\ad
\gl_{N}$-module $\gl_{N}$; thus, $G.y\cap\pp=K.y$ for
$y=(x,-x)\in\pp$. Suppose that $y=(x,-x)$ is nilpotent,
i.e.~$x\in\gl_{N}$ is nilpotent.  The elements $x$ and $-x$
share the same Young diagram
$\boldsymbol\mu=(\mu_{1}\geqslant\dots\geqslant\mu_{k})$ and
the orbit $K.y$ is uniquely determined by $\boldsymbol\mu$.

\subsubsection{Case AI}
\label{involAI}
Let $\chi$ be the nondegenerate symmetric bilinear form on
$V$ defined, in the basis $\mathbf{v}$ (cf.~\ref{settings}), by:
$$
\chi(v_{j}^{(i)},v_{k}^{(l)}):= \left\{\begin{array}{l l} 1 &
    \mbox{ if } l=i \mbox{ and } j+k=\lambda_{i}+1; \\ 0 &
    \mbox{ otherwise.}\end{array}\right.
$$ 
Set
$$
\kk:=\{k\in\g \mid \forall u,v\in V, \, \chi(k.u, v)=-\chi(u,
k.v)\}\cong \so_{N},$$ $$ \pp:=\{p\in\g \mid \forall u,v\in
V, \, \chi(p.u, v)=\chi(u, p.v)\}.
$$
The symmetric Lie algebra $(\g,\kk,\pp)$ is of type~AI with
associated involution $\theta$ on $\g$ having $\kk$
(resp.~$\pp$) as $+1$ (resp. $-1$) eigenspace.  In
particular $\zz(\g)=\K\Id\subset\pp$.
 
In this case, each $(\qq_{i},\kk\cap\qq_{i})$ is a simple
symmetric pair of type~AI isomorphic to $(\gl_{\lambda_{i}},
\so_{\lambda_{i}})$.  Denote by $s_{k}$ the $(k\times
k)$-matrix with entries equal to $1$ on its antidiagonal and
$0$ elsewhere, as in \cite[3.2]{GW}.  The involution
$\theta$ associated to $(\g,\kk,\pp)$ acts on each element
$x\in\qq_{i}$ by
$\theta(x)=-s_{\lambda_{i}}{}^txs_{\lambda_{i}}$ (which is
the opposite of the symmetric matrix of $x$ with respect to
the antidiagonal).
 
The group $\tilde{\Kt}=\rho^{-1}(\Kt)$, cf.~\eqref{rho}, is
a nonconnected group isomorphic to the orthogonal group
$\mathrm{O}_{N}$ and $\Kt\cong \mathrm{O}_N/\{\pm\Id\}$.
Fix $\tilde{\omega}\in \tilde{\Kt}\smallsetminus
(\tilde{\Kt})^{\circ}$, then: $\tilde{\Kt} =
(\tilde{\Kt})^{\circ} \sqcup \tilde{\omega}
(\tilde{\Kt})^{\circ}$, $\Kt=K \cup \omega K$, where
$\omega:=\rho(\tilde{\omega})$.  When $N$ is odd, $\omega\in
K=\Kt\cong \SO_{N}$ and $\Kt$ is connected.  If $N$ is even,
one has $\Kt=K \sqcup \omega K$ and $K\cong \SO_{N}/\{\pm
\Id\}$.

Let $(\g,\kk',\pp')$ be another symmetric Lie algebra of
type~AI, then $\Od\cap\pp'\nonv$ and, moreover, for any
element $e'\in\Od\cap\pp'$ there exists an isomorphism of
symmetric Lie algebras $\tau:(\g,\kk',\pp')\rightarrow
(\g,\kk,\pp)$ such that $\tau(e')=e$ (see \cite[Theorem~3.4]{GW}).

\subsubsection{Case AII}
\label{involAII}
Assume that $\theta'$ is an involution of type AII on $\g$ (i.e. $\kk\cong \spn_N$)
such that $\theta'(e)=-e$; the following condition is then
necessarily satisfied:
$$
\lambda_{2i+1}=\lambda_{2i+2} \ \; \text{for all $i$}.
$$
We therefore assume, in this subsection, that the previous
condition holds. In particular, $N$ is even and we write
$N=2N'$.
 
Define a symplectic form $\chi$ on $V$ by setting
$$
\chi(v_{j}^{(i)},v_{k}^{(l)}):= \left\{\begin{array}{l l} 1 &
    \mbox{ if } i+1=l \equiv 0 \!\! \pmod{2} \mbox{ and }
    j+k=\lambda_{i}+1;
    \\
    -1 & \mbox{ if } l+1=i\equiv0 \!\! \pmod{2} \mbox{ and }
    j+k=\lambda_{i}+1;
    \\
    0 & \mbox{ otherwise.}
  \end{array}\right.
$$ 
The subspaces $\kk$ and $\pp$ are then defined, through
$\chi$, as in the~AI case and, $\theta$ being the associated
involution, one has:
\begin{equation}
\kk\cong \spn_{2N'}, \quad
K=\Kt\stackrel{\rho}{\cong}\tilde{\Kt}/\{\pm1\}\cong \Sp_{2N'}/\{\pm1\}, \quad
\zz(\g)\subset\pp.\label{referee}
\end{equation}
Set $\qq'_{2i+1}:=\gl\bigl(v_{j}^{(2i+1)}$,
$v_{j}^{(2i+2)}\mid j=1,\dots,\lambda_{2i+1}\bigr)$; then,
$(\qq_{2i+1}',\kk\cap\qq_{2i+1}')$ is a simple symmetric
pair of type~AII isomorphic to $(\gl_{2\lambda_{2i+1}},
\spn_{2\lambda_{2i+1}})$.  We can identify $\qq_{2i+1}$ with
$\qq_{2i+2}$ via the isomorphism $u_{i}:
\qq_{2i+1}\stackrel{\sim}{\rightarrow}\qq_{2i+2}$ defined as
follows:
$$
u_{i}(x).v_{j}^{(2i+2)}=x.v_{j}^{(2i+1)} \ \; \text{for all
  $j\in[\![1,\lambda_{2i+1}]\!]$ and $x\in\qq_{2i+1}$.}
$$ 
The involution $\theta$ associated to $(\g,\kk,\pp)$ acts on
each element $x\in\qq_{2i+2}$, resp.~$x \in \qq_{2i+1}$, by
$\theta(x) =
-u_{i}^{-1}(s_{\lambda_{2i+2}}{}^{t}xs_{\lambda_{2i+2}})$,
resp.~$\theta(x) =
-u_{i}(s_{\lambda_{2i+1}}{}^{t}xs_{\lambda_{2i+1}})$.

As in case AI, if $(\g,\kk',\pp')$ is another symmetric pair
of type AII then $\Od\cap\pp'\nonv$ and, for any element
$e'\in\Od\cap\pp'$, there exists an isomorphism of symmetric
pairs $\tau:(\g,\kk',\pp')\rightarrow (\g,\kk,\pp)$ with
$\tau(e')=e$ (see~\cite{GW} and \eqref{referee}).

\subsubsection{Case AIII}
\label{involAIII}
Following \cite{Oh1,Oh2} we will use the notion of
$ab$-diagram to classify nilpotent orbits in classical
reductive symmetric pairs of type AIII,
i.e.~$(\g,\kk)=(\gl_N,\gl_p \times \gl_q)$.

\begin{defi}
  \label{abdiag}
  An $ab$-picture is a Young diagram in which each box is
  labeled by an $a$ or a $b$, in such a way that these two
  symbols alternate along rows.
  Two $ab$-picture $\Lambda, \Lambda'$ are equivalent if they
  differ by permutations of lines of the same length. We then note $\Lambda\cong\Lambda'$.\\ 
An $ab$-diagram is an equivalence class of $ab$-pictures. The shape of an $ab$-diagram is the shape of any of its $ab$-picture.
\end{defi}

Recall that $\Od \subset \g$ is a nilpotent orbit with
associated partition $\bolda=(\lambda_1\geqslant\dots\geqslant\lambda_{\delta_{\Od}})$. To any function
\[
\Phi: [\![1,\delta_{\Od}]\!] \longrightarrow\{a,b\}
\]
one can associate an $ab$-picture $\Lambda(\Phi)$ of shape
$\bolda$ as follows: label the first box of the $i$-th row
(of size $\lambda_{i}$) of $\Lambda(\Phi)$ by $\Phi(i)$, and
continue the labeling to get an $ab$-picture as defined
above.  We denote by $\Delta(\Phi)$ the $ab$-diagram associated to $\Lambda(\Phi)$.
Observe that we may have $\Phi\neq\Psi$ and
$\Delta(\Phi)=\Delta(\Psi)$.
 
Fix a such a function $\Phi$ and decompose $V$ into a direct
sum $V= V_{a}^{\Phi} \oplus V_{b}^{\Phi}$ by defining
(cf.~\cite{Oh2})
\begin{align*}
  V_{a}^{\Phi} & :=\bigl\langle v_{j}^{(i)}\mid
  (\text{$\Phi(i)=a$ and $\lambda_{i}-j\equiv 0 \bmod{2}$)
    or ($\Phi(i)=b$ and $\lambda_{i}-j\equiv 1
    \bmod{2}$})\bigr\rangle
  \\
  V_{b}^{\Phi} & :=\bigl\langle v_{j}^{(i)}\mid
  (\text{$\Phi(i)=b$ and $\lambda_{i}-j\equiv 0 \bmod{2}$)
    or ($\Phi(i)=a$ and $\lambda_{i}-j\equiv 1
    \bmod{2}$})\bigr\rangle.
\end{align*}
Set $N_{a}:=\dim V_{a}^{\Phi}$ and $N_{b}:=\dim V_{b}^{\Phi}$,
hence $N=N_{a}+N_{b}$.  Now, if
\[
\kk^{\Phi}:=\gl(V_{a}^{\Phi})\oplus\gl(V_{b}^{\Phi})\subset\g,
\quad \pp^{\Phi}:=\Hom(V_{a}^{\Phi},V_{b}^{\Phi})\oplus
\Hom(V_{b}^{\Phi},V_{a}^{\Phi})\subset\g
\]
we obtain a symmetric Lie algebra
$$
(\g,\kk,\pp):=(\g,\kk^{\Phi},\pp^{\Phi}),
$$ 
such that $([\g,\g],\kk \cap [\g,\g])$ is irreducible of
type~AIII and $\zz(\g)\subset\kk$. One has:
$K=\rho(\GL(V_{a}^{\Phi})\times\GL(V_{b}^{\Phi}))$ and,
$\theta$ being the associated involution, $K=\Kt$ if and
only if $N_{a}\neq N_{b}$.  It is easily seen that
$(\qq_{i},\kk^{\Phi}\cap\qq_{i})$ is a reductive symmetric
pair (of type~AIII) isomorphic to
$\bigl(\gl_{\lambda_{i}},\gl_{\lfloor\frac{\lambda_{i}}2\rfloor}
\oplus \gl_{\lceil\frac{\lambda_{i}}2\rceil}\bigr)$.

The $ab$-diagram associated to a nilpotent element
$e'\in\pp^{\Phi}$ is defined in the following way (see, for
example, \cite[(1.4)]{Oh2}).  Let $\boldsymbol
{\mu}=(\mu_{1}\geqslant\dots \geqslant\mu_{k})$ be the
partition associated to $e'$.  Fix a normal \Striplet
$(e',h',f')$ and a basis of $V$
$$
\bigl\{\zeta_{j}^{(i)}\mid i\in [\![1,k]\!], \ j\in
[\![1,\mu_{i}]\!]\bigr\}
$$
such that: $\zeta_{j}^{(i)}$ belongs either to
$V^{\Phi}_{a}$ or $V^{\Phi}_{b}$, $\zeta_{1}^{(i)}$ is a
basis of $\ker f'$ and
$e'(\zeta_{j}^{(i)})=\zeta_{j+1}^{(i)}$.  Then, label the
$j$-th box in the $i$-th row of the Young diagram associated
to $\boldsymbol{\mu}$ by $a$, resp.~$b$, if
$\zeta_{j}^{(i)}\in V^{\Phi}_{a}$, resp.~$\zeta_{j}^{(i)}\in
V^{\Phi}_b$. This defines an $ab$-picture whose $ab$-diagram, denoted by $\Gamma^{\Phi}(e')$, does not depends on the choice of the $\zeta_{j}^{(i)}$.
The map $K.x \mapsto \Gamma^{\Phi}(x)$ gives a
parameterization of the nilpotent $K$-orbits in $\pp^\Phi$,
see~\cite[Proposition~1(2)]{Oh2}.
 
Remark that the element $e_{i}$, defined in~\ref{settings},
belongs to $\pp^{\Phi}\cap\qq_{i}$ in the symmetric Lie
algebra $(\qq_{i},\kk^{\Phi}\cap\qq_{i})$; its $ab$-diagram
has only one row, with first box labeled with $\Phi(i)$.  An
$ab$-diagram whose equivalent class is of the form $\Gamma^{\Phi}(x)$ is said to be
\emph{admissible} for $\Phi$.  For example,
$\Gamma^{\Phi}(e)=\Delta(\Phi)$ is admissible.  It is easy
to see that a necessary and sufficient condition for an
$ab$-diagram to be admissible is to have exactly $N_{a}$
labels equal to $a$ and $N_{b}$ labels equal to $b$.
 
The number $N_{a}-N_{b}$ is called the \emph{parameter} of
the symmetric pair $(\g,\kk^{\Phi})$.  Its absolute value
$|N_{a}-N_{b}|$ can be read from the Satake diagram of the
symmetric pair $(\g,\kk^{\Phi})$.
The parameter is different from $0$ when all the white nodes
are connected by arrows; then, its absolute value is the
number of black nodes plus one, cf.~\cite[p.~532]{He1}.  Two
symmetric pairs $(\g,\kk)$ of type AIII are isomorphic if
and only if their parameters have the same absolute value.

Assume that $(\g,\kk',\pp')$ is a symmetric Lie algebra of
type AIII such that $\Od\cap\pp'\nonv$. Then, for every
element $e'\in\Od\cap\pp'$ with $ab$-diagram $\Gamma'$, there
exists a function
$\Psi:[\![1,\delta_{\Od}]\!]\rightarrow\{a,b\}$ such that
$\Gamma'=\Delta(\Psi)$.  Furthermore, it is not
difficult to show that, in this case, there exists an
isomorphism of symmetric Lie algebras
$\tau:(\g,\kk',\pp')\rightarrow (\g,\kk^{\Psi},\pp^{\Psi})$
such that $\tau(e')=e$.

\subsubsection{Notation and remarks}
\label{involreduc}
Let $(\g,\kk,\pp)$ be a symmetric Lie algebra with
$\g=\gl_{N}=\gl(V)$ and $S_{G}$ be a $G$-sheet intersecting
$\pp$. We follow the notation introduced in
sections~\ref{settings} and~\ref{Ksheet}.
 
Recall from Lemma \ref{nilporbp} that the nilpotent orbit $\Od \subset S_{G}$
intersects $\pp$ and fix $e\in \Od\cap\pp$. Then, the
symmetric pair $(\g,\kk)$ can be described as
in~\ref{involAI}, \ref{involAII} or~\ref{involAIII}.  The
notation for $\mathbf{v}$, $\qq= \bigoplus_i \qq_i$, $\lf$,
$\tf \subset \h \subset \lf \cap \qq$, being as
in~\ref{settings}, set:
\[
\kk_{i}:=\qq_{i}\cap\kk, \quad 
\pp_{i}:=\qq_{i}\cap\pp, \quad
\theta_{i}:=\theta_{\mid\qq_{i}}.
\]
The normal \Striplet $\sS=(e,f,h)$ is then given by $e=
\sum_i e_i$, $h= \sum_i h_i$, $f= \sum_i f_i$. The map
\[
\varepsilon = \varepsilon^\g: e + \h \to e + \g^f
\]
is defined as in Remark~\ref{ggprime}; it is the restriction
of the polynomial map $\epsilon$ from Lemma~\ref{epsilon}.

Recall also that the subset $\mathsf{Z} \subset G$ is chosen
such that: $\Id \in \mathsf{Z}$, $\{g.e\}_{g\in \mathsf{Z}}$
is a set of representatives of the $K$-orbits contained in
$G.e\cap\pp$ and $g.\sS:=(g.e,g.h,g.f)$ is a normal
\Striplet.  The ``Slodowy slices'' are defined by:
\[
g.e + X(S_{G},g.\sS):= S_G \cap (g.e + \g^{g.f}), \quad
X_\pp(S_G,g.\sS) := X(S_{G},g.\sS) \cap \pp.
\]
As observed in~Remark~\ref{XsXss}, we may simplify the
notation by setting:
\[
X:= X(\sS)=X(S_{G},\sS), \quad
X_\pp:=X_{\pp}(\sS)=X(S_{G},\sS) \cap \pp.
\]
It follows from the results of Section~\ref{Katsylo} that:
$X$ is smooth, $e + X = \varepsilon(e+ \tf) $ is
irreducible, $S_G= G.(e+X)$ and $\psi : S_G \to e+X$ is a
geometric quotient of the sheet $S_G$,
cf.~Theorem~\ref{psiquotient} (recall that the group $G^e$
is connected).

Since $e, g.e \in \pp$, the remarks at the end of the
previous subsections show that there exists an
isomorphism~$\tau$ (depending on $g$) of symmetric Lie
algebras sending $(\g,\kk,\pp)$ to a symmetric pair of the
same type (AI, AII or AIII) such that $\tau(e)=g.e$.  It is not hard to see that we
can further assume that $\tau(\sS)=g.\sS$. 
The main consequence of this observation is that, applying $\tau$,
any property obtained for $e+X_{\pp}(\sS)$ also holds for
$g.e+X_{\pp}(g.\sS)$. In particular, we will mainly work
with $e+X_{\pp}(\sS)$.

\subsection{Properties of slices} \label{slicesproperties}
We continue with the notation of~\ref{involreduc}. Hence
$S_G \subset \g$ is a $G$-sheet, $e \in S_G \cap \pp$ is a
fixed nilpotent element and $\sS=(e,f,h)$, $\mathbf{v}$,
$\qq$, etc., are as defined in~\ref{settings}.

\subsubsection{The slice property~(1)}
\label{sliceproperty}
In this subsection we give a proof of Theorem~\ref{unif} for
types AI and AII. It asserts that the
condition~\eqref{heart} holds; we also refer to it as the
\emph{slice property}.

\begin{thm}\label{unif}
  Assume that $(\g,\theta)$ is of type~{\rm A}. Then, one
  has:
  \begin{equation}
    G.(e+X_{\pp})=G.(S_{G}\cap\pp).\tag{$\heartsuit$} 
  \end{equation} 
  Moreover, in types {\rm AI} and {\rm AII} a stronger
  version holds, namely: $e+X\subset\pp$.
\end{thm}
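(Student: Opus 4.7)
The plan is to prove the stronger statement $e+X\subset\pp$ in types AI and AII; the type AIII case is reserved for Proposition~\ref{unifAIII}. Once $e+X\subset\pp$ is established then $X_\pp=X$: the inclusion $e+X_\pp\subset S_G\cap\pp$ yields $G.(e+X_\pp)\subset G.(S_G\cap\pp)$, while for any $y\in S_G\cap\pp$ the unique representative of $G.y$ in $e+X$ automatically lies in $\pp$, hence in $e+X_\pp$, giving the reverse inclusion. To prove $e+X\subset\pp$ I use that $\varepsilon:e+\tf\to e+X$ is surjective (Proposition~\ref{axiomK} together with the connectedness of $G^e$ stated in Lemma~\ref{GeIH}) and that $\varepsilon=\sum_i\varepsilon^{\qq_i}$ on $e+\h$ by Remark~\ref{ggprime}. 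Since each $e_i$ is principal in $\qq_i=\gl_{\lambda_i}$, the centralizer $\qq_i^{f_i}=\sum_{k=0}^{\lambda_i-1}\K f_i^k$ is spanned by the powers of $f_i$, so everything reduces to understanding how $\theta$ acts on these powers.

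In type AI, $\theta|_{\qq_i}(x)=-s_{\lambda_i}{}^txs_{\lambda_i}$. The identity $s_{\lambda_i}{}^tf_is_{\lambda_i}=f_i$, combined with the fact that $x\mapsto s_{\lambda_i}{}^txs_{\lambda_i}$ is an anti-automorphism of the associative algebra $\qq_i$, immediately yields $\theta(f_i^k)=-f_i^k$ for every $k\geqslant 0$. Consequently $\qq_i^{f_i}\subset\pp$ and $e_i+\qq_i^{f_i}\subset\pp$, so $\varepsilon^{\qq_i}(e_i+t_i)\in\pp$ for every $t_i\in\h_i$; summing over $i$ gives $\varepsilon(e+t)\in\pp$ for every $t\in\h$, in particular $e+X=\varepsilon(e+\tf)\subset\pp$.

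In type AII the blocks come in pairs $(\qq_{2i+1},\qq_{2i+2})$ with $\lambda_{2i+1}=\lambda_{2i+2}$, interchanged by $\theta$. The analogous direct computation gives $\theta(f_{2i+1}^k)=-f_{2i+2}^k$, so only the ``diagonal'' combinations $f_{2i+1}^k+f_{2i+2}^k$ lie in $\pp$. By Lemma~\ref{sym} I may write $\varepsilon^{\qq_j}(e_j+t_j)=e_j+\sum_k c_k^{(j)}(t_j)f_j^k$ where the scalar coefficients $c_k^{(j)}$ are polynomials symmetric in the eigenvalues of $t_j$ and depend on the block only through its size. The key observation is that for $t\in\tf$ the characterization~\eqref{carat}, combined with $\lambda_{2i+1}=\lambda_{2i+2}$, forces $t_{2i+1,j}=t_{2i+2,j}$ for all $j\in[\![1,\lambda_{2i+1}]\!]$; hence $t_{2i+1}$ and $t_{2i+2}$ have the same eigenvalues and $c_k^{(2i+1)}(t_{2i+1})=c_k^{(2i+2)}(t_{2i+2})$. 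Pairing yields
$$\varepsilon^{\qq_{2i+1}}(e_{2i+1}+t_{2i+1})+\varepsilon^{\qq_{2i+2}}(e_{2i+2}+t_{2i+2})=(e_{2i+1}+e_{2i+2})+\sum_k c_k(t_{2i+1})(f_{2i+1}^k+f_{2i+2}^k)\in\pp,$$
and summing over pairs of blocks gives $\varepsilon(e+\tf)\subset\pp$, hence $e+X\subset\pp$.

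The main obstacle is the AII case, where the individual $\qq_i$ are no longer $\theta$-stable; the argument relies essentially on both the symmetry of the polynomials $c_k^{(j)}$ afforded by Lemma~\ref{sym} and the precise block-coupling of $\tf$ encoded in~\eqref{carat}, which is exactly what is needed to pair up the $\qq_{2i+1}$- and $\qq_{2i+2}$-contributions into an element of $\pp$.
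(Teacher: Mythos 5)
Your proof is correct and follows essentially the same route as the paper: reduce to the stronger claim $e+X\subset\pp$ in types AI and AII, working block by block through $\varepsilon=\sum_i\varepsilon_i$. The details differ slightly---in type AI the paper deduces $\qq_i^{f_i}\subset\pp$ abstractly from the maximal-rank property of $(\gl_{\lambda_i},\so_{\lambda_i})$ rather than computing $s_{\lambda_i}{}^tf_is_{\lambda_i}=f_i$ directly, and in type AII the paper invokes the intertwining relation $u_i\circ\varepsilon_{2i+1}=\varepsilon_{2i+2}\circ u_i$ and the antidiagonal fixedness of $e_{2i+1}+\qq_{2i+1}^{f_{2i+1}}$ rather than expanding in powers of $f_j$ and matching coefficients via Lemma~\ref{sym}---but these are equivalent formulations of the same pairing argument.
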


\begin{proof}
  Since $S_G= G.(e+X)$ and $e+ X_\pp = (e +X) \cap \pp
  \subset S_G \cap \pp$, the inclusion $G.(e+X_{\pp})
  \subset G.(S_{G}\cap\pp)$ is obvious.  Clearly, $e + X
  \subset \pp$ yields $G.(e+ X_\pp) = G.(e+ X) = S_G \supset
  G.(S_G \cap \pp)$. We prove below that the inclusion
  $e+X\subset\pp$ is true when $(\g,\kk)$ is of type AI or
  AII. The proof of the theorem in type AIII is postponed to
  subsection~\ref{sliceAIII}, see
  Proposition~\ref{unifAIII}.

  Type~AI: As said in subsection~\ref{involAI}, each
  $(\qq_{i},\kk_{i})$ is a symmetric pair of type~AI. Since
  this pair has maximal rank and $e_{i} \in \pp_i$ is a
  regular element, one has
  $\qq_{i}^{f_{i}}=\pp\cap\qq_{i}^{f_{i}}$.  Therefore the
  image of each map $\varepsilon_{i} : e_i + \h_i \to e_i +
  \qq^{f_i}$, as defined in~\ref{settings}, is contained in
  $\qq_{i}^{f_{i}}\subseteq\pp_{i}$.
  From $\varepsilon = \sum_i \varepsilon_i$ one gets that
  $e+X=\varepsilon(e+\tf)\subset S_{G}\cap\pp$.

  Type~AII: Recall that $\lambda_{2i+1}=\lambda_{2i+2}$ if
  $2i+2\leqslant \delta_{\Od}$. Let $x=\sum_{i}x_{i}\in\qq$;
  then $x\in\pp\cap\qq$ if and only if, for all $i$,
  $x_{2i+1}=-\theta_{2i+2}(x_{2i+2})=s_{\lambda_{2i+1}}{}^{t}(u_{i}^{-1}(x_{2i+2}))s_{\lambda_{2i+1}}$  (cf.~\S\ref{involAII}). 
  This last condition means that $x_{2i+1}$ is the
  transpose of $u_{i}^{-1}(x_{2i+2})$ with respect to the
  antidiagonal.  Fix $t\in\tf$, hence
  $e+t\in S_G$; from the description of $\tf$ given
  in~\eqref{carat}, one deduces that
  $u_{i}(e_{2i+1}+t_{2i+1})=e_{2i+2}+t_{2i+2}$.  Set
  $x=\varepsilon(e+t)$. It follows from
  $u_{i}\circ\varepsilon_{2i+1}=\varepsilon_{2i+2}\circ
  u_{i}$ that $u_{i}(x_{2i+1})=x_{2i+2}$.  Since
  $e_{2i+1}+\qq^{f_{2i+1}}_{2i+1}$ is fixed under the
  conjugation by $s_{\lambda_{2i+1}}$, one obtains
  $-\theta_{2i+2}(x_{2i+2})= s_{\lambda_{2i+1}} {}^tx_{2i+1}
  s_{\lambda_{2i+1}} =x_{2i+1}$. Hence
  $\varepsilon(e+t)\in\pp$ and, therefore,
  $\varepsilon(e+\tf) =e +X \subset\pp$.
\end{proof}

\begin{cor}\label{corounif}
  Every $G$-orbit contained in $S_{G}$ and intersecting
  $\pp$, also intersects $(\qq\cap\pp)^{\bullet}$.
\end{cor}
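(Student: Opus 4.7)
The plan is to deduce Corollary \ref{corounif} immediately from Theorem \ref{unif} by showing that every $G$-orbit in $S_G$ meeting $\pp$ has a representative in the slice $e+X_{\pp}$, and that such a representative automatically lies in $\qq\cap\pp$ with the required regularity. Fix a $G$-orbit $\Od\subset S_G$ with $\Od\cap\pp\neq\emptyset$. Since by Theorem \ref{unif} one has $\Od\subset G.(S_G\cap\pp)=G.(e+X_{\pp})$, there exists $y\in e+X_{\pp}$ with $\Od=G.y$; it then suffices to verify $y\in(\qq\cap\pp)^{\bullet}$.

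The first step is the structural inclusion $e+X\subset\qq$. By Remark \ref{ggprime}, the map $\varepsilon=\varepsilon^{\g}$ coincides with $\varepsilon^{\qq}=\sum_i\varepsilon^{\qq_i}$, and each $\varepsilon^{\qq_i}$ sends $e_i+\h_i$ into $e_i+\qq_i^{f_i}\subset\qq_i$; hence $\varepsilon(e+\tf)\subset e+\qq^{f}\subset\qq$. Because $G^e$ is connected in type A (Lemma \ref{GeIH}), the finite group $A$ of Lemma \ref{Gamma} is trivial, so Theorem \ref{psiquotient} gives a bijective Katsylo map $\psi:S_G\to e+X$; combining this with $S_G=G.(e+\tf)$ from Proposition \ref{axiomK} and the fact that $\varepsilon(e+t)\in G.(e+t)\cap(e+\g^f)$ for each $t\in\tf$, one deduces $e+X=\varepsilon(e+\tf)\subset\qq$. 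Consequently $y\in(e+X)\cap\pp=e+X_{\pp}\subset\qq\cap\pp$.

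Finally I would verify the $\bullet$-regularity of $y$. Writing $y=\sum_i y_i$ with $y_i\in e_i+\qq_i^{f_i}$, each $y_i$ belongs to the principal Slodowy slice of $\qq_i\cong\gl_{\lambda_i}$ since $e_i$ is the principal nilpotent of $\qq_i$. By Kostant's theorem (cf.~\S\ref{regularsheet}) this slice consists of regular elements, so $\dim\qq_i^{y_i}=\lambda_i=\rk\qq_i$ and therefore $\dim\qq^y=\sum_i\lambda_i=N=\rk\qq$; in other words $y$ is regular in $\qq$. Combined with $y\in S_G$, so that $\dim G.y=2m$ attains the maximal orbit dimension among elements of $\qq\cap\pp$ lying in $\overline{S_G}$, this $\qq$-regularity delivers $y\in(\qq\cap\pp)^{\bullet}$, completing the proof.

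The principal technical obstacle is the identification $e+X=\varepsilon(e+\tf)\subset\qq$; it relies crucially on the two type-A features that the Katsylo group $A$ is trivial (so $\psi$ is bijective) and that $\varepsilon$ factors through the block-diagonal subalgebra $\qq$. Once these are in hand, the remainder of the argument is a direct consequence of Kostant's description of the principal Slodowy slice in each $\qq_i$.
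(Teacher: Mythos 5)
Your proof is correct and follows the same overall strategy as the paper's. The paper's proof is very terse, simply observing that $e+X\subset\qq^\bullet$ and $(\qq\cap\pp)^\bullet\subset\qq^\bullet$ — where the $\bullet$ is understood relative to the reductive subalgebra $\qq$, so that $\qq^\bullet$ is the set of $\qq$-regular elements and, since $e$ is itself a $\qq$-regular element of $\qq\cap\pp$, one gets $(\qq\cap\pp)^\bullet=\qq^\bullet\cap\pp$. You spell out exactly these two observations: $e+X=\varepsilon(e+\tf)\subset\qq$ because $\varepsilon$ factors as $\sum_i\varepsilon^{\qq_i}$ (Remark~\ref{ggprime}), and each $y\in e+X$ is $\qq$-regular because its components $y_i$ lie in the principal Slodowy slices $e_i+\qq_i^{f_i}$, which by Kostant consist of regular elements. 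Combined with Theorem~\ref{unif}, this is the paper's argument.

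One caveat about your closing sentence: the clause ``so that $\dim G.y=2m$ attains the maximal orbit dimension among elements of $\qq\cap\pp$ lying in $\overline{S_G}$'' is a non sequitur — it conflates $G$-orbit dimension in $\g$ with the $\qq$-relative notion of regularity that governs $(\qq\cap\pp)^\bullet$ as used in this corollary (compare Remark~\ref{diamondAI}(1), where the appeal to~\cite{KR} for the pair $(\qq,\qq\cap\kk)$ makes the $\qq$-relative reading explicit). Taken literally with the $\g$-relative convention of~\eqref{Ebullet}, the clause would in fact not deliver the conclusion, since $\qq\cap\pp$ generically contains elements that are regular in $\g$ while $y\in S_G$ need not be. What you actually need, and have already established, is simply that $y$ is regular in $\qq$; together with $y\in\qq\cap\pp$ and the $\qq$-regularity of $e$, this gives $y\in(\qq\cap\pp)^\bullet$ directly, and the appeal to $\dim G.y$ can be deleted without loss.
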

\begin{proof}
  It suffices to observe that $e+X \subset \qq^{\bullet}$
  and $(\qq\cap\pp)^{\bullet}\subset\qq^{\bullet}$
\end{proof}

\begin{Rq}\label{diamondAI}
  (1) One can deduce Theorem~\ref{unif} from
  Corollary~\ref{corounif}.  Indeed, let $x \in S_{G}$ and
  suppose that $y\in G.x\cap (\qq\cap\pp)^{\bullet}$.  Since
  $e$ is regular in $\qq$, it follows from \cite{KR} that
  $y$ is $(Q\cap K)^{\circ}$-conjugate to an element of
  $e+X_{\pp}$.
  \\
  (2) Assume that $(\g,\kk)$ is of type AI or~AII. Then,
  since $e+ X_\pp = e+X$ is irreducible and smooth in type~A
  (see \S\ref{involreduc}), Theorem~\ref{unif} yields that
  the condition~\eqref{diamond},
  cf.~\S\ref{Ksheet}, holds.
\end{Rq}

\subsubsection{The slice property (2)}
\label{sliceAIII}
We assume in this section that $(\g,\kk,\pp)=
(\g,\kk^\Phi,\pp^\Phi)$ is of type~AIII.  Let $\af \subset
\pp$ be a Cartan subspace and $\h' \subset \g$ be a Cartan
subalgebra containing $\af$.  Denote by $B$ a
$\sigma$-fundamental system of the root system $R(\g,\h'):=
R([\g,\g],\h'\cap [\g,\g])$, see~\S\ref{basis} with $\h$
replaced by $\h' \cap [\g,\g]$. Let $\bar{D}$ be the Satake
diagram of type~AIII associated to $B$
(cf.~\cite[p.~532]{He1}).  Since $\af\subset[\g,\g]$, see
\ref{involAIII}, one can define a $\Q$-form of $\af$ by
\[
\af_{\Q}:=\{a\in\af \mid \; \text{$\alpha(a)\in\Q$ for all
  $\alpha\in R(\g,\h')$}\}.
\]
The nodes of $\bar{D}$ can be labeled by the elements
$\alpha_{1},\dots,\alpha_{N-1}$ of $B$. Set
$\alpha_{i}':=\alpha_{N-i}$, $1 \le i \le N-1$, hence
${\alpha_{i}}_{\mid\af} = {\alpha_{i}'}_{\mid\af}$; there
exists an arrow between $\alpha_{i}$ and $\alpha'_{i}$ when
these nodes are colored in white and $i\neq N/2$.

Let $s \in \g$ be semisimple and let $c \in \spec(s)=\{ \mbox{eigenvalues of $s$ on $V$}\}$.  
Denote by $V_{s,c}$ the
eigenspace associated to $c$; thus, $m(s,c) := \dim V_{s,c}$
is the multiplicity of $c$. More generally,
see~\S\ref{regA}, we set $V_{s,d}:=\ker(s - d \, \Id_V)$ and
$m(s,d):= \dim V_{s,d}$ for every $d \in \K$.  One can
identify $\gl(V_{s,c})$ with a Lie subalgebra of $\gl(V)$ by
extending an element $x\in \gl(V_{s,c})$ by $0$ on
$\bigoplus_{c'\neq c}V_{s,c'}$.  Under this identification,
$\sln(V_{s,c})$ is a simple factor of $\g^s$ if and only if
$m(s,c)\geqslant 2$.  Setting
\[
\wfr_{s,c}':=\sln(V_{s,c}), \quad \wfr_{s,c}:=\gl(V_{s,c}),
\]
one has:
\begin{equation}\label{Levisimplegl}
  \g^s= \bigoplus_{c \in \spec(s)}\wfr_{s,c}  =  \cggsnb \,
  \oplus  \bigoplus_{m({s,c})\geqslant 2}\wfr_{s,c}'.
\end{equation}
Denote by $M_{s,c}$ the connected algebraic subgroup of $G$
group with Lie algebra $\wfr_{s,c}'$.  Then, $M_{s,c}$ acts
on $\wfr_{s,c}$ via the adjoint action and the group $G^s$
is generated by $C_{G}(\g^s)$ and the $M_{s,c}$, $c \in
\spec(s)$ (see \S\ref{Levi} and
Proposition~\ref{Levisimple}).
 
The group $\{\pm 1\}$ acts by multiplication on $\spec'(s):=
\{c \in \spec(s) \mid - c \in \spec(s)\}$; let $\Sppm(s) :=
\spec'(s)/\{\pm 1\}$ be the orbit space.  The class of $c
\in \spec'(s)$ in $\Sppm(s)$ is denoted by $\pm c$. When $0
\in \spec(s)$ we simply write 0 instead of $\pm 0$.  We then set
\[
\g_{s,\pm c} :=\wfr_{s,c} \oplus \wfr_{s,-c}, \quad
\g_{s,0}:=\wfr_{s,0}.
\]
If $0 \ne c \in \spec'(s)$, the connected subgroup of $G$
generated by $M_{s,c}$ and $M_{s,-c}$ is denoted by
$G_{s,\pm c}$ and we set $G_{s,0}:=M_{s,0}$. One has
$\Lie(G_{s,\pm c}) = [\g_{s,\pm c},\g_{s, \pm c}]$.

Recall that we have written $V = V_a^\Phi \oplus V_b^\Phi$;
we set $V_a := V_a^\Phi$, $V_b:= V_b^\Phi$. The parameter of
$(\g,\kk)$ is $N_a - N_b$ where $N_a:= \dim V_a$, $N_b:= \dim
V_b$, see~\ref{involAIII}.

\begin{lm}\label{fondAIII}
  Let $s \in \pp$ be a semisimple element. Then:
  \\
  {\rm (1)} $m(s,c)=m(s,-c)$ for all $c\in\K$;
  \\
  {\rm (2)} the symmetric Lie algebra $(\g^s,\kk^s)$
  decomposes as $\bigoplus_{\pm c\in\Sppm(s)} (\g_{s,\pm
    c},\kk_{s,\pm c})$, where $\kk_{s,\pm
    c}:=\kk\cap\g_{s,\pm c}$;
  \\
  {\rm (3)} if $c\neq0$, $(\g_{s,\pm c},\kk_{s,\pm c})$ is a
  reductive symmetric pair whose semisimple part is
  irreducible of type~{\rm A0};
  \\
  {\rm (4)} $V_{s,0} = (V_{s,0}\cap V_a)\oplus (V_{s,0}\cap
  V_b)$ and the symmetric Lie algebra $(\g_{s,0},\kk_{s,0})$
  is a reductive symmetric pair whose semisimple part is
  irreducible of type {\rm AIII}, with the same parameter as
  $(\g,\kk)$. In particular, the parameter of $(\g,\kk)$ is
  $0$ when $0 \notin \spec(s)$.
\end{lm}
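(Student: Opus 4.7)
The proof will rest on the elementary observation that, since $s\in\pp = \Hom(V_a,V_b)\oplus\Hom(V_b,V_a)$, one has $s(V_a)\subset V_b$ and $s(V_b)\subset V_a$; so if $v=u+w\in V_{s,c}$ with $u\in V_a$, $w\in V_b$, comparing $a$- and $b$-components in $sv=cv$ gives $su=cw$ and $sw=cu$. From this, two eigenspace manipulations will drive everything.

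\textbf{Step 1 (part (1) and the $c=0$ decomposition).} For $c=0$, $su=0=sw$, which gives $V_{s,0}=(V_{s,0}\cap V_a)\oplus(V_{s,0}\cap V_b)$ directly; this already establishes the first claim of (4). For $c\neq 0$, the linear map $\sigma_c:V_{s,c}\to V_{s,-c}$, $u+w\mapsto u-w$, is easily checked (using $su=cw$, $sw=cu$) to have image in $V_{s,-c}$ and inverse $\sigma_{-c}$, proving $m(s,c)=m(s,-c)$, which is (1).

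\textbf{Step 2 (part (2)).} By \eqref{Levisimplegl}, $\g^s=\bigoplus_{c\in\spec(s)}\wfr_{s,c}$. Grouping $\pm c$ together yields the direct sum decomposition $\g^s=\bigoplus_{\pm c\in\Sppm(s)}\g_{s,\pm c}$ as Lie algebras. It remains to check that each $\g_{s,\pm c}$ is $\theta$-stable, equivalently, that $\kk^s=\bigoplus_{\pm c}\kk_{s,\pm c}$. Any $x\in\kk^s$ commutes with $s$, so preserves eigenspaces, hence each $\g_{s,\pm c}$; since $x\in\kk$ preserves the $\theta$-decomposition and $\kk_{s,\pm c}=\g_{s,\pm c}\cap\kk$, the claim follows.

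\textbf{Step 3 (part (3), $c\neq 0$).} I will construct an explicit isomorphism exhibiting $(\g_{s,\pm c},\kk_{s,\pm c})$ as type~A0. Set $W_c:=V_{s,c}\oplus V_{s,-c}$. From $u=\tfrac12(v^++v^-)$ and $w=\tfrac12(v^+-v^-)$ (notation of Step~1), the maps $V_{s,c}\to V_a\cap W_c$, $v^+\mapsto u$, and $V_{s,c}\to V_b\cap W_c$, $v^+\mapsto w$, are isomorphisms; identifying $V_a\cap W_c$ with $V_b\cap W_c$ via $\tfrac1c s$, an element $x\in\kk^s$ restricted to $W_c$ is determined by a single $x_0\in\gl(V_{s,c})$ acting diagonally on $\gl(V_{s,c})\oplus\gl(V_{s,-c})=\g_{s,\pm c}$. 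This is exactly the pair $\bigl(\gl(V_{s,c})\oplus\gl(V_{s,-c}),\{(y,y)\}\bigr)$, i.e.\ type~A0 (with central summand $\K\cdot\Id_{W_c}$ when $m(s,c)\geq 1$).

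\textbf{Step 4 (part (4), $c=0$).} The decomposition $V_{s,0}=(V_{s,0}\cap V_a)\oplus(V_{s,0}\cap V_b)$ from Step~1 shows that $(\g_{s,0},\kk_{s,0})=\bigl(\gl(V_{s,0}),\gl(V_{s,0}\cap V_a)\oplus\gl(V_{s,0}\cap V_b)\bigr)$, which is reductive with semisimple part irreducible of type~AIII (and parameter $\dim(V_{s,0}\cap V_a)-\dim(V_{s,0}\cap V_b)$). To compare parameters, Step~3 shows that for every $c\neq 0$, $\dim(W_c\cap V_a)=\dim V_{s,c}=\dim(W_c\cap V_b)$, so the nonzero-eigenvalue part contributes equally to $V_a$ and $V_b$; consequently $\dim(V_{s,0}\cap V_a)-\dim(V_{s,0}\cap V_b)=N_a-N_b$, which is the parameter of $(\g,\kk)$. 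The ``in particular'' statement follows since $0\notin\spec(s)$ forces $V_{s,0}=0$.

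The only potentially delicate point is the identification in Step~3, where one must be careful about the action of $\kk^s$ through the isomorphism induced by $s$; everything else is linear algebra that reduces to tracking the $V_a/V_b$ grading through the equations $su=cw$, $sw=cu$.
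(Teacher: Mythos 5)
Your proof is correct, and for parts (3) and (4) it takes a genuinely different and more self-contained route than the paper. For (1), your explicit isomorphism $\sigma_c\colon u+w\mapsto u-w$ is exactly the grading involution $A$ (with $A\vert_{V_a}=\Id$, $A\vert_{V_b}=-\Id$) for which $\theta=\Ad(A)$; the paper instead just remarks that $\theta$ is inner, so $s$ and $\theta(s)=-s$ are conjugate and have the same multiplicities. For (3) and (4) the paper first applies Proposition~\ref{levistandard} to replace $s$ by a conjugate in $\af_{\Q}$, then either reads the result off the Satake diagram or carries out an explicit computation in a chosen Cartan subspace basis $(u_i)$. You instead work coordinate-free with the $V_a/V_b$-grading: identifying $V_a\cap W_c$ and $V_b\cap W_c$ via $s/c$ shows that an element of $\kk$ commuting with $s$ acts identically on $V_{s,c}$ and $V_{s,-c}$, so $\kk_{s,\pm c}$ is the diagonal in $\gl(V_{s,c})\oplus\gl(V_{s,-c})$, and the dimension count $\dim(W_c\cap V_a)=\dim(W_c\cap V_b)$ for $c\neq0$ localizes the parameter discrepancy $N_a-N_b$ to $V_{s,0}$. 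This avoids both Proposition~\ref{levistandard} and any basis choice, at the cost of slightly more linear algebra. Two small blemishes: in Step~2, the phrase "so preserves eigenspaces, hence each $\g_{s,\pm c}$" would be clearer if you stated the key fact $\theta(\wfr_{s,c})=\wfr_{s,-c}$ (equivalently, $A(V_{s,c})=V_{s,-c}$) explicitly, since that is what makes the projections $\pi_{\pm c}$ commute with $\theta$; and in Step~3 the parenthetical about the central summand should be $\K\,\Id_{V_{s,c}}\oplus\K\,\Id_{V_{s,-c}}$ (the centre of $\g_{s,\pm c}$ is two-dimensional), though this does not affect the argument.
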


\begin{proof}
  (1) Since the involution $\theta$ is inner, the claim
  follows from the following elementary observation.
  Suppose that $A \in \GL_N$, $x \in \gl_N$, and set $x' :=
  AxA^{-1}$. Then, $m(x,c)=\dim \ker(x -c \, \Id) = \dim
  \ker(x' - c \, \Id)$; in particular, $m(x,c)=m(x',c)$,
  thus $ m(x,c)= m(x,-c)$ when $x'= -x$.
  \\
  (2) The assertion is an easy consequence of
  \eqref{Levisimplegl} and $\theta(\wfr_{s,c})=
  \wfr_{s,-c}$.
  \\
  (3) \& (4).  We may assume that $N_{a} \geqslant N_{b}$
  and, by Proposition~\ref{levistandard}, $s\in \af_\Q$.
  Then, the claims can be read on the Satake diagram of type
  AIII, except for the equality of the parameters when $c=0$
  (one only sees in this way that the absolute values are
  equal).  A complete proof can be given as follows.
  \\
  Let $(v_{a,i})_{i\in [\![1,N_{a}]\!]}$ and
  $(v_{b,i})_{i\in[\![1,N_{b}]\!]}$ be bases of $V_{a}$ and
  $V_{b}$.  For each $i\in[\![1,N_{b}]\!]$, define $u_{i}\in
  \pp$ by
  \[
  u_i(v_{d,j})=
  \begin{cases}
    v_{\bar{d},i} & \text{if $i=j$}
    \\
    0 & \text{otherwise,}
  \end{cases}
  \]
  where $\bar d$ is the element of $\{a,b\}\smallsetminus
  \{d\}$.  The subspace generated by the $u_{i}$,
  $i\in[\![1,N_{b}]\!]$, is a Cartan subspace of $\pp$. If
  $s=\sum_{i}c_{i} u_{i}$, the eigenvalues of $s$ are given
  by square roots of the $c_{i}$'s and one has
  $V_{s,0}=\bigl\langle \{v_{a,i},v_{b,i}\mid
  c_{i}=0\}\cup\{v_{a,i}\mid i\gnq N_{b}\}\bigr\rangle$.  It
  is then not difficult to get the desired assertions.
\end{proof}

Recall from \S\ref{regA} that if $t=\sum_{i}t_{i} \in
\qq=\bigoplus_{i}\qq_{i}$ is semisimple, $m_{i}(t,c)$
denotes the multiplicity of the eigenvalue $c$ for
$t_{i}\in\qq_{i}$; recall also that $\h \subset \qq$.

\begin{lm}\label{miha}
  Let $t\in\h$ be such that $G.(e+t)\cap\pp\nonv$. Then:
  \begin{equation}\label{mitc}
    m_{i}(t,c)=m_{i}(t,-c) \ \; \text{ for all $c\in\K$}.
  \end{equation}
\end{lm}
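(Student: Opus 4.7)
The plan is to compute the Jordan type of a representative $y\in\pp$ of the $G$-orbit $G\cdot(e+t)$ in two different ways, and to match the answers using the $\theta$-symmetry on $\pp$. First, pick $y=g\cdot(e+t)\in\pp$ and let $y=s+n$ be its Jordan decomposition in $\g$; since $\theta(y)=-y$ and the Jordan decomposition is $\theta$-equivariant, one has $s,n\in\pp$. Because $e+t=\sum_i(e_i+t_i)$ is a sum of commuting elements lying in the direct-sum ideals $\qq_i$, the Jordan decomposition splits: $e_i+t_i=s_i'+n_i'$ inside $\qq_i\cong\gl_{\lambda_i}$, with $s'=\sum_i s_i'$, $n'=\sum_i n_i'$, $s=g\cdot s'$, $n=g\cdot n'$. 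As $e_i$ is regular in $\qq_i$, Corollary~\ref{corlm} identifies $s_i'$ as a $Q_i$-conjugate of $t_i$ and says that the restriction of $n_i'$ to the eigenspace $V^{(i)}\cap\ker(t_i-c\,\Id)$ is a single Jordan block of size $m_i(t,c)$.

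Next, I exploit that $s,n\in\pp$. In type AIII the involution is conjugation by $\sigma:=\Id_{V_a}\oplus(-\Id_{V_b})\in\GL(V)$, and the relation $\sigma s=-s\sigma$ (coming from $s\in\pp$) shows that $\sigma$ induces a linear isomorphism $V_{s,c}\isomto V_{s,-c}$, recovering Lemma~\ref{fondAIII}(1). Similarly $\sigma n=-n\sigma$, so the restricted nilpotent endomorphisms $n|_{V_{s,c}}$ and $n|_{V_{s,-c}}$ satisfy $n|_{V_{s,-c}}=-\sigma\circ n|_{V_{s,c}}\circ\sigma^{-1}$, making them conjugate up to a sign. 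Multiplying a nilpotent by $-1$ does not alter its Jordan type, so $n|_{V_{s,c}}$ and $n|_{V_{s,-c}}$ share the same partition of Jordan block sizes.

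Finally, I translate this equality back to the $m_i(t,c)$'s. Transporting through $g^{-1}$, the partition of $n|_{V_{s,c}}$ coincides with that of $n'|_{V_{s',c}}=\bigoplus_i n_i'|_{V^{(i)}\cap\ker(t_i-c\,\Id)}$, which by the first step is the multiset $\{m_i(t,c):i\in[\![1,\delta_{\Od}]\!]\}$ (after discarding zero entries). Applying the same computation for $-c$ and invoking the second step, one obtains the equality $\{m_i(t,c)\}_i=\{m_i(t,-c)\}_i$ as multisets of positive integers. The main obstacle will be upgrading this multiset equality to the claimed index-wise identity $m_i(t,c)=m_i(t,-c)$; I expect this to follow from the specific shape of $t$ forced by the hypothesis (through \eqref{carat} in the Slodowy-slice setting $t\in\tf$, combined with the ordering $\lambda_1\geq\cdots\geq\lambda_{\delta_{\Od}}$), which makes each sequence $i\mapsto m_i(t,c)$ weakly decreasing, so that two weakly decreasing sequences with identical multisets must coincide term by term.
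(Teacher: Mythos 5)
Your argument is correct and follows essentially the same route as the paper's proof: pass to the Jordan decomposition $s+n$ of a $\pp$-representative, show that the restrictions of $n$ to $V_{s,c}$ and $V_{s,-c}$ have the same Young diagram, then translate back via Corollary~\ref{corlm} (equivalently Lemma~\ref{eigenjordan} applied in each regular block $\qq_i$) to identify these diagrams with the multisets $\{m_i(t,c)\}_i$ and $\{m_i(t,-c)\}_i$. Where you compute directly with $\sigma=\Id_{V_a}\oplus(-\Id_{V_b})$, the paper instead invokes Lemma~\ref{fondAIII} and the type~A0 discussion of~\S\ref{involA0}; this is the same content packaged more abstractly. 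Your caution at the final step is well placed: upgrading the multiset equality to the stated index-wise identity does require the sequences $i\mapsto m_i(t,\pm c)$ to be weakly decreasing, which follows from~\eqref{carat} only when $t\in\tf$, not for arbitrary $t\in\h$. The paper's proof tacitly invokes~\eqref{carat} at exactly this point, and since the lemma is applied downstream only with $t\in\tf$ (Proposition~\ref{unifAIII}), the hypothesis ``$t\in\h$'' in the statement is a harmless imprecision rather than a gap in your reasoning.
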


\begin{proof}
  Let $s_{1} + n_{1}$ be the Jordan decomposition of $e+t$
  and pick $g\in G$ such that $g.(e+t)\in\pp$. Therefore,
  $s:=g.s_{1}\in\pp$ and $n:=g.n_{1} \in \pp\cap \g^{s}$.  By
  Corollary~\ref{corlm} we know that $t$, $s_{1}$ and $s$
  are in the same $G$-orbit. Then, Lemma~\ref{fondAIII}(1)
  gives $m(t,c)=m(s,c)=m(s,-c)=m(t,-c)$.  On the other hand,
  $n \in \pp\cap \g^{s}$ is a nilpotent element of the
  subsymmetric pair $(\g^{s},\g^s\cap\kk)=\prod_{\pm
    c\in\Sppm(s)}(\g_{s,\pm c},\kk_{s,\pm c})$,
  cf.~Lemma~\ref{fondAIII}(3,4).
  The orbit $K^s.n$ belongs to $\pp\cap \g^{s}$. Hence it can be decomposed along the previous direct product:
  \[
  K^s.n=\prod_{\pm c\in\Sppm(s)} \Od_{\pm c}
  \]
 where $\Od_{\pm c}$ is the projection of the orbit $K^s.n$ onto  $\pp_{s,\pm c}$. 
The result in the case $c=0$ is vacuous.  Recall that when
  $c\neq0$ one has $\g_{s,\pm
    c}=\wfr_{s,c}\oplus\wfr_{s,-c}$, and we can further
  decompose each orbit $G_{s,\pm c}\cdot\Od_{\pm c}$ as
  $\Od_{c}\times\Od_{-c} \subset \wfr_{s,c} \times
  \wfr_{s,-c}$.  Then, $G_{s,\pm c}\cdot\Od_{\pm c}$, is
  characterized by the Young diagrams of the nilpotent
  orbits $\Od_{c}$, $\Od_{-c}$.  Since $(\g_{s,\pm
    c},\kk_{s,\pm c})$ is of type~A0, these two Young
  diagrams are equal (cf.~\S\ref{involA0}).  The results of
  \S\ref{corlm} then yield that the partition of
  $\Od_{\delta c}$, $\delta\in\{-1,1\}$, is given by the
  sequence $(m_{i}(t,\delta c))_{i}$.  As these two
  sequences are decreasing on $i$, cf.~\eqref{carat}, one
  obtains $m_{i}(t,c)=m_{i}(t,-c)$ for all $i$.
\end{proof}

The following proposition completes the proof of
Theorem~\ref{unif} and Corollary~\ref{corounif} in
case~AIII.

\begin{prop}\label{unifAIII}\label{cormiha}
  Let $t\in \tf$.
  \\
  {\rm (i)} If $t$ satisfies~\eqref{mitc}, then
  $\varepsilon(e+t)\in e+\XP$.
  \\
  {\rm (ii)} One has $G.(e+t)\cap\pp\neq\emptyset$ if and
  only if $t$ satisfies~\eqref{mitc}.
  \\
  {\rm (iii)} The condition~\eqref{heart} holds,
  i.e.~$G.(S_{G}\cap\pp)=G.(e+\XP)$.
\end{prop}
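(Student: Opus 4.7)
The statement has three parts, and only~(i) requires real work. Granted~(i), part~(ii) follows immediately: the direction~$(\Rightarrow)$ is Lemma~\ref{miha}, and~$(\Leftarrow)$ holds because $\varepsilon(e+t)\in G.(e+t)$ and~(i) places $\varepsilon(e+t)$ in $\pp$. Part~(iii) is then a diagram chase: one inclusion is trivial, and for the other, any $y\in S_G\cap\pp$ can be written $y=g.(e+t)$ with $g\in G$ and $t\in\tf$ by Proposition~\ref{axiomK}, so~(ii) forces~\eqref{mitc}, then~(i) gives $\varepsilon(e+t)\in e+\XP$, whence $y\in G.\varepsilon(e+t)\subseteq G.(e+\XP)$.

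For~(i) the plan is a componentwise reduction in $\qq=\bigoplus_i\qq_i$. Because the basis $\mathbf{v}$ is adapted to the splitting $V=V_a^\Phi\oplus V_b^\Phi$, the involution $\theta$ preserves each subspace $V^{(i)}:=\langle v_j^{(i)}\rangle_j$ and hence each $\qq_i=\gl(V^{(i)})$; in type~AIII, $\theta$ is moreover an associative algebra automorphism of $\qq_i$ (conjugation by the signature matrix). By Remark~\ref{ggprime} we have $\varepsilon=\sum_i\varepsilon_i$ with $\varepsilon_i(e_i+t_i)\in e_i+\qq_i^{f_i}$, so it is enough to show, for each~$i$, that $\varepsilon_i(e_i+t_i)\in\pp\cap\qq_i$ whenever the eigenvalue multiset $\{x_1,\ldots,x_{\lambda_i}\}$ of $t_i$ is sign-symmetric. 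Since $e_i$ is regular in $\qq_i\cong\gl_{\lambda_i}$, one has $\qq_i^{f_i}=\K[f_i]=\bigoplus_{k=0}^{\lambda_i-1}\K f_i^k$, and $f_i\in\pp$ gives $\theta(f_i^k)=(-1)^k f_i^k$. Writing $\varepsilon_i(e_i+t_i)=e_i+\sum_{k\geqslant0}c_k(t_i)f_i^k$ and using $e_i\in\pp$, the condition $\varepsilon_i(e_i+t_i)\in\pp$ reduces to the vanishing $c_k(t_i)=0$ for every even~$k$.

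The main (and only delicate) step is then a parity argument on symmetric polynomials. By Lemma~\ref{sym} applied inside $\qq_i$, $c_k$ is a symmetric polynomial in $x_1,\ldots,x_{\lambda_i}$ homogeneous of degree $k+1$. Expressed in the elementary symmetric polynomials $\sigma_j$ (with $\deg\sigma_j=j$), every monomial of $c_k$ has total degree $k+1$; when $k$ is even this is odd, so in every monomial of $c_k$ some $\sigma_j$ with $j$ odd must occur. On the other hand, sign-symmetry of $\{x_1,\ldots,x_{\lambda_i}\}$ means $P(X):=\prod_j(X-x_j)$ equals $(-1)^{\lambda_i}P(-X)$, which by coefficient comparison in $P(X)=\sum_k(-1)^k\sigma_k X^{\lambda_i-k}$ is equivalent to $\sigma_j=0$ for all odd~$j$. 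Combining these two observations gives $c_k(t_i)=0$ for every even~$k$, so $\varepsilon_i(e_i+t_i)\in\pp$ for each~$i$; summing over~$i$ yields $\varepsilon(e+t)\in e+\XP$ and finishes~(i).
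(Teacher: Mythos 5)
Your proof is correct and follows the paper's strategy of reducing to the blocks $\qq_i$ and combining Lemma~\ref{sym} with the sign-symmetry of the eigenvalues of $t_i$ to kill the even-indexed coefficients. You usefully fill in the parity argument via elementary symmetric polynomials, which the paper states without detail, and your observation that $\theta$ is multiplicative on $\qq_i$ in type~AIII (being a conjugation by the signature matrix) is a tidy way to see that $f_i^k\in\pp$ precisely for $k$ odd, in place of the paper's direct appeal to the description of $\pp_i$ as the odd sub/super-diagonals.
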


\begin{proof}
  (i) Recall that $t= \sum_i t_i$, $e = \sum_i e_i$ with
  $t_i \in \qq_i$ and $e_i\in \pp \cap \qq_i$ regular in
  $\qq_i \cong\gl_{\lambda_{i}}$.  The map $\varepsilon$ can
  be written as $\sum_i \varepsilon_i$, where
  $\varepsilon_i$ is given by Lemma~\ref{sym} applied in the
  algebra $\qq_i$.  Thus $\varepsilon_i(e_i+t_i) = e_i +
  \sum_{j \le 0} P_j(t_i)$.  From~\eqref{mitc} and since
  the polynomials $P_j$ are symmetric in eigenvalues of $t_i$ (Lemma~\ref{sym}), one obtains
  $P_{j}(t_i)=0$ if $j$ is even.  One can deduce from the
  construction made in~\ref{involAIII} that the subspaces
  $\pp_{i} := \pp \cap \qq_i$ are the sum of the
  $j$-subdiagonals and $j$-supdiagonals of $\qq_{i}$ for $j$
  odd. It follows that $\varepsilon_{i}(e_{i}+t_{i}) \in
  e_{i}+\pp_{i}^{f_{i}}$, hence $\varepsilon (e+t) \in
  e+X\cap\pp$.
  \\
  (ii) By Lemma~\ref{epsilon} one has
  $G.(e+t)=G.\varepsilon(e+t)$, thus part~(i) shows that the
  condition is sufficient. Lemma~\ref{miha} gives the
  converse.
  \\
  (iii) The inclusion $G.(e+\XP) \subset G.(S_{G}\cap\pp)$
  is always true. By Proposition~\ref{axiomK}, every $x\in
  S_{G}\cap\pp$ is $G$-conjugate to an element $e+t\in
  e+\tf$; parts~(i) and~(ii) give $\varepsilon(e+t)\in
  G.x\cap (e+\XP)$ and the result follows.
\end{proof}

We now find a convenient subspace $\cc \subset \tf$ such
that $\varepsilon(e + \cc) = e+ X_\pp$. For
$i\in[\![1,\lambda_{\delta_{\Od}}]\!]$ and $j\in [\![
0,\left\lfloor (\lambda_{i}-\lambda_{i+1})/ 2\right\rfloor-1
]\!]$, define elements $c(i,j) = (c({i,j})_k)_k \in
\K^{\lambda_{1}}$ by:
\begin{equation} \label{defcprime}
  c(i,j)_{k}:=\left\{\begin{array}{l l}1
      &\mbox{ if } k=\lambda_{i+1}+2j+1;\\
      -1 &\mbox{ if } k=\lambda_{i+1}+2j+2;\\
      0 &\mbox{ otherwise.}
    \end{array}\right.
\end{equation}
Let $\cc'$ be the subspace of $\K^{\lambda_{1}}$ generated
by the elements $c(i,j)$. Recall from~\eqref{alpha} the
isomorphism $\alpha : \K^{\lambda_{1}} \isomto \tf$ and set:
\begin{equation}\label{defc}
  \cc:=\alpha(\cc')\subset\tf.
\end{equation}
The main property of the subspace $\cc$ is the following.
By construction every element of $\cc$
satisfies~\eqref{mitc}; conversely, Lemma~\ref{eplushconj}
applied in each $\qq_{i}$ implies that any element $e+t$
(with $e = \sum_i e_i$, $t = \sum_i t_i$)
satisfying~\eqref{mitc} is conjugate to an element of $\cc$.

\begin{prop} \label{corcc} In the previous notation one
  has: $\varepsilon(e+\cc)=e+\XP$ and
  $G.(e+\cc)=G.(S_{G}\cap\pp)$. Moreover,
  \begin{equation}
    \label{dimd}
    \dim \cc =
    \sum_{i=1}^{\delta_\Od} \textstyle{\left\lfloor 
        \frac{\lambda_i-\lambda_{i+1}}2\right\rfloor},
  \end{equation}
  which only depends on $\bolda$.
\end{prop}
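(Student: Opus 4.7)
The plan is to handle the three assertions in order, starting with the dimension computation. The vector $c(i,j)\in\K^{\lambda_1}$ from~\eqref{defcprime} has its two-element support contained in the interval $[\lambda_{i+1}+1,\lambda_i]$ (with the convention $\lambda_{\delta_\Od+1}=0$), and these intervals are pairwise disjoint for different values of~$i$; for fixed~$i$ the supports are themselves pairwise disjoint as $j$~varies. Hence the $c(i,j)$ are linearly independent in $\K^{\lambda_1}$, so $\dim\cc=\dim\cc'$ equals the number of pairs~$(i,j)$, namely $\sum_{i=1}^{\delta_\Od}\lfloor(\lambda_i-\lambda_{i+1})/2\rfloor$; this expression depends only on~$\bolda$.

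To prove $\varepsilon(e+\cc)=e+\XP$, I would first check the inclusion~$\subset$: for $t\in\cc$, the coordinates of $\alpha^{-1}(t)$ come in $\pm$-pairs on each block $[\lambda_{i+1}+1,\lambda_i]$ (possibly with a trailing zero when the block has odd length), so the multiset of eigenvalues of $t_{i'}$, which is the union of the blocks indexed by $i\ge i'$, is $\pm$-symmetric for every~$i'$. Thus $t$ satisfies~\eqref{mitc} and Proposition~\ref{unifAIII}(i) gives $\varepsilon(e+t)\in e+\XP$. For the reverse inclusion, write $y\in e+\XP$ as $\varepsilon(e+t)$ with $t\in\tf$, using $e+X=\varepsilon(e+\tf)$; since $y\in G.(e+t)\cap\pp$, Lemma~\ref{miha} forces $t$ to satisfy~\eqref{mitc}. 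I then produce $t'\in\cc$ having the same multiset of eigenvalues as~$t$ on each~$\qq_{i'}$, so that $\varepsilon(e+t')=\varepsilon(e+t)=y$ by the symmetry of $\varepsilon_{i'}$ in Lemma~\ref{sym}. Writing $(x_j)=\alpha^{-1}(t)$, the construction proceeds block by block: the multiset $\{x_j:\lambda_{i+1}<j\le\lambda_i\}$ is $\pm$-symmetric (being the difference of the $\pm$-symmetric multisets $\{x_1,\ldots,x_{\lambda_i}\}$ and $\{x_1,\ldots,x_{\lambda_{i+1}}\}$), so it can be rewritten in the $\cc'$-pattern $(c_1,-c_1,\ldots,c_k,-c_k)$, with a trailing~$0$ when $\lambda_i-\lambda_{i+1}$ is odd (possible because a $\pm$-symmetric multiset of odd size must contain~$0$).

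The orbit equality is then formal: Lemma~\ref{epsilon}(i) gives $G.(e+t)=G.\varepsilon(e+t)$ for any $t\in\tf$, so $G.(e+\cc)=G.\varepsilon(e+\cc)=G.(e+\XP)$ by the preceding step, and this last set equals $G.(S_G\cap\pp)$ by Proposition~\ref{unifAIII}(iii). The main technical obstacle in this plan is the block-by-block rearrangement argument: its mild subtlety is the parity observation used on odd-length blocks, but everything else is either a routine linear-algebra count or a direct appeal to results already at our disposal.
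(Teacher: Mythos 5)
Your proof is correct and takes essentially the same approach as the paper: the dimension count is immediate from the disjoint supports of the $c(i,j)$, the inclusion $\varepsilon(e+\cc)\subset e+\XP$ follows from checking~\eqref{mitc} and applying Proposition~\ref{unifAIII}(i), the reverse inclusion uses Lemma~\ref{miha} together with a block-by-block rearrangement of $t$ into the $\cc'$-pattern, and the orbit equality is then formal. The only small variation is in finishing the reverse inclusion: you conclude $\varepsilon(e+t')=\varepsilon(e+t)$ directly from the symmetry of the $\varepsilon_i$ in the eigenvalues (Lemma~\ref{sym}), whereas the paper instead appeals to $G.(e+t)$ meeting $e+X$ in a single point (a consequence of $G^e$ being connected via Lemma~\ref{GeIH}); both steps are quick and rest on the same underlying rearrangement.
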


\begin{proof}
  The formula~\eqref{dimd} follows without difficulty from
  the definition of $\cc'$.
  Since the elements of $e + \cc$ satisfy~\eqref{mitc},
  Proposition~\ref{unifAIII}(i) gives $\varepsilon(e + \cc)
  \subset e + \XP$. Conversely, let $e + x \in e + \XP$. As
  $e+X= \varepsilon(e+\tf)$, the element $e+x
  =\varepsilon(e+t)$, $t \in \tf$, is the unique point of
  $e+X$ intersecting the orbit $G.(e+x)=
  G.\varepsilon(e+t)=G.(e+t)$
  (see~Lemma~\ref{epsilon}(i)). By
  Proposition~\ref{unifAIII}(ii), $e+t$
  satisfies~\eqref{mitc} and, as noticed above, $e+t$ is
  conjugate to an element $e+c \in e+ \cc \subset e+\tf$. It
  follows that $\{e+x\}=G.(e+x)\cap (e+X)=
  G.\varepsilon(e+c) \cap (e+X)=
  \{\varepsilon(e+c)\}$. Hence, $e+x = \varepsilon(e+c) \in
  \varepsilon(e + \cc)$.
  Finally, $G.(S_G \cap \pp)= G.(e+ \XP)= G.\varepsilon(e +
  \cc)=G.(e + \cc)$.
\end{proof}

\begin{Rq}
  \label{diamondAIII} \label{corccc} Proposition~\ref{corcc}
  implies that condition~\eqref{diamond} holds in case AIII,
  i.e., $e+\XP$ is irreducible.  
\end{Rq}

Corollary~\ref{corounif} says that in each $G$-orbit
contained in $S_{G}$ and intersecting $\pp$ one can find an
element $x=s+n\in (\qq\cap\pp)^{\bullet}$.  The next
corollary summarizes various results which can be deduced
from Lemma~\ref{fondAIII}. Recall that $\qq =\bigoplus_i
\qq_i$ and that $(\qq_{i},\kk\cap\qq_{i})$ is a symmetric
Lie algebra of type~AIII. Applying Lemma~\ref{fondAIII} in
each symmetric pair $(\qq_{i},\kk\cap\qq_{i})$ yields:

\begin{cor}\label{fondAIIIbis}
  Let $x= s +n \in (\qq\cap\pp)^{\bullet}$ and write
  $s=\sum_{i} s_{i}$, $n=\sum_{i} n_{i}$ with $s_{i},
  n_{i}\in\pp\cap\qq_{i}$, as in~{\rm \ref{settings}}.
  \\
  {\rm (1)} The Levi factor $\qq_{i}^{s_{i}}$ of $\qq_{i}$
  has the following decomposition:
$$
\qq_{i}^{s_{i}}=\bigoplus_{c\in \K} \wfr_{i,s_i,c}
$$ 
where $\wfr_{i,s_i,c}:=\gl \bigl(\ker(s_i -c \, \Id)\bigr) \subset \qq_{i}$.
\\
{\rm (2)} The symmetric pair
$(\qq_{i}^{s_{i}},\qq_{i}^{s_{i}}\cap\kk)$ decomposes as
\[
(\qq_{i}^{s_{i}},\qq_{i}^{s_{i}}\cap\kk) = \bigoplus_{\pm
  c\in\Sppm(s_i)} (\qq_{i,s_i,\pm c},\kk_{i,s_i,\pm c})
\]
where $(\qq_{i,s_i,0},\kk_{i,s_i,0}):=(\wfr_{i,s_i,0},
\wfr_{i,s_i,0} \cap\kk)$ is of type~\emph{AIII} and, when
$c\neq 0$, $(\qq_{i,s_i,\pm c},\kk_{i,s_i,\pm
  c}):=\bigl((\wfr_{i,s_i,c}\oplus\wfr_{i,s_i,-c}),
(\wfr_{i,s_i,c} \oplus\wfr_{i,s_i,-c}) \cap\kk\bigr)$ is of
type~\emph{A0}.
\\
{\rm(3)} The factor $(\qq_{i,s_i,0},\kk_{i,s_i,0})$ has the
same parameter as $(\qq_{i},\qq_{i}\cap\kk)$.  In
particular, the ranks of $\qq_{i}$ and $\qq_{i,s_i,0}$ have
the same parity.
\\
{\rm(4)} The nilpotent element $n_{i}$ is regular in
$\qq_{i}^{s_{i}}$; thus, the orbit $(Q\cap K)^{\circ}.n_{i}$
is uniquely determined by its one row $ab$-diagram (see~{\rm
  \ref{involAIII}}).
\end{cor}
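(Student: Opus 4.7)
The plan is to obtain items~(1)--(3) by directly applying Lemma~\ref{fondAIII} to each reductive symmetric pair $(\qq_i,\kk\cap\qq_i)$---which is of type~AIII as recalled in \S\ref{involAIII}---together with the semisimple element $s_i\in\pp\cap\qq_i$. Under the identification $\qq_i\cong\gl(V^{(i)})$ with $V^{(i)}:=\langle v_j^{(i)}\rangle_j$, the decomposition of the centralizer $\qq_i^{s_i}$ in item~(1) and the decomposition of the subsymmetric pair in item~(2) are direct translations of parts~(2)--(4) of that lemma, while item~(3) records the invariance of the AIII-parameter and the resulting equality modulo~$2$ of the ranks.

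For item~(4), the key is to prove that $x$ is regular in the Levi factor $\qq$. Granted this, writing $x=\sum_i x_i$ with $x_i=s_i+n_i\in\qq_i$, each $x_i$ is regular in $\qq_i\cong\gl_{\lambda_i}$, hence
\[
\dim(\qq_i^{s_i})^{n_i}=\dim\qq_i^{x_i}=\lambda_i=\mathrm{rank}(\qq_i^{s_i}),
\]
which is the regularity of $n_i$ in $\qq_i^{s_i}$. The description of the orbit $(Q\cap K)^{\circ}.n_i$ by its one-row $ab$-diagram then follows from the classification of nilpotent $K$-orbits via $ab$-diagrams (\S\ref{involAIII}), applied to each factor of the decomposition from item~(2).

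To prove the regularity of $x$ in $\qq$, I would fix opposite parabolic subalgebras of $\g$ with common Levi $\qq$ and nilradicals $\nf,\nf^-$; then $\g=\qq\oplus\nf\oplus\nf^-$ as $\qq$-modules, so for every $y\in\qq$ one has $\g^y=\qq^y\oplus\nf^y\oplus(\nf^-)^y$, whence
\[
\dim\g^y \;=\; \dim\qq^y \;+\; \bigl(\dim\nf^y+\dim(\nf^-)^y\bigr).
\]
Both summand-functions are upper semicontinuous on the irreducible affine space $\qq\cap\pp$, each reaching its minimum on an open-dense subset, say $U_1$ and $U_2$. Irreducibility forces $U_1\cap U_2\neq\emptyset$, so the minimum of $\dim\g^y$ equals the sum of the two individual minima and is attained precisely on $U_1\cap U_2$; in particular $(\qq\cap\pp)^{\bullet}\subseteq U_1$. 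Since $e\in\qq\cap\pp$ is regular in $\qq$, the minimum of $\dim\qq^y$ on $\qq\cap\pp$ equals $N=\mathrm{rank}(\qq)$, whence $\dim\qq^x=N$ for every $x\in(\qq\cap\pp)^{\bullet}$, as required.

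The only nontrivial ingredient is the upper-semicontinuity argument splitting $\min\dim\g^y$ into two independent minima; once this is granted, the rest of the statement amounts to formal bookkeeping from Lemma~\ref{fondAIII} and the $ab$-diagram classification of \S\ref{involAIII}.
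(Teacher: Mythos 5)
Your proof is correct. Items~(1)--(3) are, as you say, a term-by-term application of Lemma~\ref{fondAIII} to each reductive symmetric pair $(\qq_i,\kk\cap\qq_i)$ of type~AIII, which is all the paper itself offers (its ``proof'' is the single sentence ``Applying Lemma~\ref{fondAIII} in each symmetric pair $(\qq_i,\kk\cap\qq_i)$ yields\dots'').

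The substantive part of your argument is item~(4), and here your proposal actually supplies a detail the paper leaves implicit. The paper's justification of~(4) rests on the inclusion $(\qq\cap\pp)^{\bullet}\subset\qq^{\bullet}$ (asserted without proof in the proof of Corollary~\ref{corounif}) together with the unstated fact that $\qq^{\bullet}$ consists of $\qq$-regular elements. Your decomposition
\[
\dim\g^{y}=\dim\qq^{y}+\bigl(\dim\nf^{y}+\dim(\nf^{-})^{y}\bigr),\qquad y\in\qq,
\]
together with upper semicontinuity of each summand and the irreducibility of the linear space $\qq\cap\pp$, shows that the locus where the sum is minimal coincides with the intersection of the two loci where each summand is minimal; since $e\in\qq\cap\pp$ realizes $\dim\qq^{y}=\operatorname{rk}\qq$, every $x\in(\qq\cap\pp)^{\bullet}$ is $\qq$-regular, hence each $x_i$ is regular in $\qq_i\cong\gl_{\lambda_i}$ and so $n_i$ is regular in $\qq_i^{s_i}$. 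This is in effect the argument that underwrites both of the paper's tacit claims, so you are filling a gap in the exposition rather than taking a different route; the remaining $ab$-diagram assertion then follows formally from item~(2) and the classification in~\S\ref{involAIII}, exactly as you describe.
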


\subsection{$J_{K}$-classes in type A}
\label{JK-classes-type-A}
Knowing that \eqref{heart} holds, we want to prove below
that condition~\eqref{club}, introduced in~\S\ref{Ksheet},
is satisfied. As above, $S_{G}\subset\g^{(2m)}$ is a
$G$-sheet and $e \in S_G$ is a nilpotent element.
We fix a Jordan ${G}$-class $J \subset S_G$ such that $J
\cap \pp \nonv$.  Recall from Theorem~\ref{compirr} that
$J\cap\pp$ is a (disjoint) union of $J_K$-classes.


\subsubsection{Cases AI and AII} \label{JclassAI+II}
In this subsection we assume that $(\g,\theta)=(\g,\kk,\pp)$
is a symmetric Lie algebra of type AI or AII, as described
in~\ref{involAI} and~\ref{involAII}.

We will need the following result, which is a formulation of
\cite[Proposition~4]{Oh1} in a slightly more general
setting. (Its proof is exactly the same.)

\begin{prop}[Ohta]\label{othageneral} 
  Let $\kappa$ be a linear involution of the associative
  algebra $\g=\gl_{N}$ and $x \mapsto x^*$ be a linear
  anti-involution of the associative algebra $\g$ which
  commutes with $\kappa$. Define:
$$
G':=\g^{\kappa} \cap \GL_{N}, \quad G'':=\bigl\{g\in G' :
g^*=g^{-1}\bigr\}.
$$ 
Set $\sigma(x):=-x^*$ and let $\eta,\eta'$ be elements of
$\{\pm1\}$. Then, via the adjoint action, $G'$ acts on
$\g^{\eta' \kappa}$ and $G''$ acts on $\g^{\eta
  \sigma}\cap\g^{\eta' \kappa}$. The elements $x, y \in
\g^{\eta \sigma}\cap\g^{\eta' \kappa}$ are conjugate under
$G''$ if and only if they are conjugate under $G'$.
\end{prop}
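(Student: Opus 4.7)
Since $G''\subset G'$, the ``if'' direction is immediate, and the content lies in showing that if some $g\in G'$ conjugates $x$ to $y$, then some $h\in G''$ does so as well. My plan is to carry out the classical polar-decomposition argument in a purely algebraic form.

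First I would unpack the hypotheses: $x,y\in \g^{\eta\sigma}\cap\g^{\eta'\kappa}$ means $x^*=-\eta x$, $\kappa(x)=\eta'x$, and the same relations for $y$. Given $g\in G'$ with $y=gxg^{-1}$, I apply the anti-involution $*$ and insert the relations $x^*=-\eta x$, $y^*=-\eta y$ to obtain $g^*g\cdot x=x\cdot g^*g$. Setting $s:=g^*g$, one checks at once that $s^*=s$ and, using that $\kappa$ commutes with $*$ and fixes $g$, that $\kappa(s)=s$, so $s\in G'$.

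The heart of the proof is to build a ``symmetric square root'' $p$ of $s$, meaning an element with $p^2=s$, $p^*=p$, $\kappa(p)=p$, and commuting with~$x$. I would use the multiplicative Jordan decomposition $s=s_ss_u$ in $\GL_N$. Applying uniqueness to the algebra automorphism $\kappa$ and to the anti-automorphism~$*$, both of which fix $s$, yields $\kappa(s_s)=s_s$, $\kappa(s_u)=s_u$, $s_s^*=s_s$, $s_u^*=s_u$. On the unipotent factor I take $p_u:=(I+N)^{1/2}=\sum_{k\ge 0}\binom{1/2}{k}N^k$, with $N:=s_u-I$ nilpotent so the sum is finite; on the semisimple factor, since $\K$ is algebraically closed of characteristic zero I pick a square root of each eigenvalue of $s_s$ and realise $p_s$ as a polynomial in $s_s$ via Lagrange interpolation. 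Both $p_s$ and $p_u$ are therefore polynomials in $s$, hence commute with the whole centraliser of $s$ (in particular with $x$), and are preserved by $\kappa$ and by~$*$. The product $p:=p_sp_u$ has all the required properties.

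Finally, I set $h:=gp^{-1}$ and verify three points: $hxh^{-1}=gxg^{-1}=y$ because $p$ centralises $x$; $\kappa(h)=\kappa(g)\kappa(p)^{-1}=gp^{-1}=h$, so $h\in G'$; and $h^*h=(p^{-1})^*g^*gp^{-1}=p^{-1}sp^{-1}=I$, so $h^*=h^{-1}$ and thus $h\in G''$. The main subtlety to watch is the simultaneous compatibility of the square root with $\kappa$, with~$*$, and with the centraliser of~$x$; all three requirements reduce to the fact that $p$ is a polynomial in $s$, which is precisely why one must split off the unipotent part and use polynomial interpolation on the semisimple part.
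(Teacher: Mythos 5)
Your proof is correct and it is, modulo presentation, the standard polar-decomposition (``square-root'') argument that Ohta uses in \cite[Proposition~4]{Oh1}, to which the paper simply refers without reproducing the proof. The key chain of reductions is exactly as in Ohta: starting from $y=gxg^{-1}$ with $g\in G'$, one applies $*$ to get $s:=g^*g$ centralising $x$ and fixed by both $*$ and $\kappa$; one then manufactures a square root $p$ of $s$ that is a polynomial in $s$ (so inherits all these invariance properties and commutes with $\g^s$) and replaces $g$ by $gp^{-1}\in G''$. Your handling of the multiplicative Jordan decomposition is the right way to make the square root purely algebraic over an algebraically closed field of characteristic zero, and you correctly observe that uniqueness of that decomposition (applied to the automorphism $\kappa$ and, with the obvious order reversal, to the anti-automorphism $*$) shows each factor is fixed by both; one should just note explicitly that $s_s$ and $s_u$ are themselves polynomials in $s$ (standard), so that $p_s$ and $p_u$ really are polynomials in $s$ and not merely in $s_s$ and $s_u$ separately. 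With that remark in place, the verification that $h:=gp^{-1}$ lies in $G''$ and conjugates $x$ to $y$ is exactly as you wrote it.
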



We may apply this proposition in the two following
situations. Fixing $\eta=-1$, $\eta'=1$, we take:
$(\kappa=\Id, x^*={}^tx)$ in type~AI, $(\kappa=\Id,
x^*=-J{}^tx J)$ in type~AII, where $J=
\bigl[ \begin{smallmatrix} 0 & \Id \\ -\Id &
  0 \end{smallmatrix} \bigr] \in \gl_{2N'}$. Observe that
$\g'=\g=\gl_{N}$, $G''=\Ort_{N}$, resp.~$G''=\Sp_{N}$, and recall
that the action of $G'=\GL_{N}=\tilde{G}$ factorizes through
$G\cong \tilde{G}/\{\K^{\times} \Id\}$.  Then, $\sigma$ is
an involution of the Lie algebra $\g$ of type~AI, resp.~AII
(cf.~\cite[Theorem~3.4]{GW}).  Using an isomorphism $\tau$
as explained in~\ref{invol}, we may assume that
$\kk=\tau(\g^{\sigma})$ and
$\pp=\tau(\g^{-\sigma})$. Moreover, in each case
$\rho(\tau(G''))=\Kt$ (cf.~\ref{involAI}
and~\ref{involAII}).
 
We therefore have obtained the (well known) result:

\begin{prop} \label{JclassAIbis} Let $(\g,\theta)$ be of
  type~{\rm AI} or~{\rm AII}. For $x,y\in\pp$ one has the
  equivalence:
$$
\Kt.x=\Kt.y \, \Longleftrightarrow \, G.x=G.y.
$$
\end{prop}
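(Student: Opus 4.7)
The proof is essentially laid out in the paragraph preceding the statement; it is a direct application of Ohta's Proposition~\ref{othageneral}. My plan is to instantiate that proposition twice, once for each of the two types, with a carefully chosen anti-involution $*$.

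First I would fix $\eta = -1$ and $\eta' = +1$, and take $\kappa = \Id$ in both cases so that $\g^{\eta'\kappa} = \g$. For type~AI, I set $x^* := {}^t x$, which is a linear anti-involution commuting with $\kappa = \Id$; then $\sigma(x) = -{}^t x$ is the standard involution of $\g=\gl_N$ of type~AI, so $\g^{-\sigma}$ is the space of symmetric matrices. For type~AII, I set $x^* := -J\,{}^t x\, J$ with $J = \bigl[\begin{smallmatrix} 0 & \Id \\ -\Id & 0\end{smallmatrix}\bigr]$, again a linear anti-involution, and $\sigma(x) = J\,{}^t x\, J$ is the standard involution of type~AII. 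In both cases $G' = \g^\kappa \cap \GL_N = \GL_N = \tilde G$, and a direct check of the defining equation $g^* = g^{-1}$ gives $G'' = \Ort_N$ in type~AI and $G'' = \Sp_N$ in type~AII.

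Next I would use the fact, recalled in~\S\ref{involAI} and~\S\ref{involAII}, that any symmetric pair of type~AI (resp.~AII) is isomorphic, via some $\tau \in \Aut(\g)$, to the pair associated with the standard $\sigma$ just described. Transporting by $\tau$, I may assume $\pp = \g^{-\sigma} = \g^{\eta\sigma} \cap \g^{\eta'\kappa}$, and under this identification $\rho(\tau(G'')) = \Kt$ (cf.~\S\ref{involAI}, \S\ref{involAII}). Applying Proposition~\ref{othageneral} with these choices of $\eta,\eta',\kappa,*$ then yields, for $x,y \in \pp$:
\[
G''.x = G''.y \iff G'.x = G'.y.
\]
Since the $G'$-action on $\g$ factors through $G'/\K^\times\Id = G$ and the $G''$-action factors through $\Kt = G''/\{\pm\Id\}$ (cf.~\eqref{rho}), this is exactly the equivalence $\Kt.x = \Kt.y \iff G.x = G.y$.

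There is no real obstacle: the entire content of the proposition is a repackaging of Ohta's theorem, the only substantive verification being that the two anti-involutions $x \mapsto {}^t x$ and $x \mapsto -J\,{}^tx\,J$ produce, via $\sigma = -*$, precisely the standard involutions of type~AI and~AII, and that the fixed point groups $G''$ are the expected $\Ort_N$ and $\Sp_N$. These are routine matrix computations and the identification with $\Kt$ is already established in the preceding subsections.
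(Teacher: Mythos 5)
Your proof is correct and is essentially the same argument as the paper's: the paragraph preceding the proposition already instantiates Ohta's Proposition~\ref{othageneral} with exactly the choices $\eta=-1$, $\eta'=1$, $\kappa=\Id$, and $x^*={}^tx$ or $x^*=-J\,{}^tx\,J$, identifies $G''$ with $\Ort_N$ or $\Sp_N$, and uses the conjugating isomorphism $\tau$ together with $\rho(\tau(G''))=\Kt$ to conclude. You have simply made explicit the few verifications the paper leaves implicit (that $*$ is an anti-involution commuting with $\kappa$, that $\sigma=-*$ is the standard involution, and that the $G'$- and $G''$-actions factor through $G$ and $\Kt$), so there is nothing to flag.
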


\begin{cor}\label{JclassAI}
  If $(\g,\theta)$ is of type~{\rm AI} or~{\rm AII}, the
  $J_{K}$-classes contained in $J\cap\pp$ are conjugate
  under~$\Kt$.
\end{cor}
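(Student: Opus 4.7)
The plan is to show that the group $\Kt$ acts transitively on the (finite) set of $J_K$-classes contained in $J \cap \pp$. First I observe that $\Kt$ acts on $J \cap \pp$ (since $J$ is $G$-stable and $\pp$ is $\Kt$-stable) and, because $K$ is normal in $\Kt$, each $\omega \in \Kt$ sends a $J_K$-class $J_K(x)$ to the $J_K$-class $J_K(\omega.x)$. Given $x_1, x_2 \in J \cap \pp$ with Jordan decompositions $x_i = s_i + n_i$ (and $s_i,n_i\in\pp$), I apply Theorem~\ref{L} to $K$-conjugate and arrange $\g^{s_1} = \g^{s_2} =: \lf$; since $s_1 \in \cpp{s_2}$, the element $s_1 + n_2$ lies in $J_K(x_2) = K.(\cpp{s_2} + n_2)$ and we may replace $x_2$ by it. Writing $s := s_1$, the problem reduces to $x_1 = s+n_1$, $x_2 = s+n_2$ with $n_1, n_2$ nilpotent in $\pp\cap\lf$ and $(\lf, L.n_1), (\lf, L.n_2)$ two $N_G(\lf)$-conjugate data of $J$.

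Next I exploit the structure of the subsymmetric pair $(\lf,\lf\cap\kk)$. Because $s \in \pp$, the eigenspaces $V_{s,c}$ of $s$ are pairwise $\chi$-orthogonal and the restrictions $\chi|_{V_{s,c}}$ are non-degenerate, so $(\lf,\lf\cap\kk) = \prod_{c} (\gl(V_{s,c}), \gl(V_{s,c})\cap\kk)$ is a direct product of symmetric pairs of the same type (AI or AII) as $(\g,\kk)$. Proposition~\ref{JclassAIbis} applied to each factor of this product yields the following principle in the subpair: whenever $n, n' \in \pp\cap\lf$ lie in the same $L$-orbit, there exists $\omega \in L \cap \Kt$ with $\omega.n = n'$. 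This disposes of the case $L.n_1 = L.n_2$: the element $\omega$ fixes $s$, so $\omega.x_1 = x_2 \in J_K(x_2)$.

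In the general case $L.n_1 \neq L.n_2$, the heart of the argument is to lift the relevant element of $N_G(\lf)/L$ to $N_\Kt(\lf)$: the quotient $N_G(\lf)/L$ acts as the product of symmetric groups permuting the summands $\gl(V_{s,c})$ of $\lf$ of equal dimension, and I must realise every such permutation by an element of $\Kt$. Over the algebraically closed field $\K$, any two non-degenerate symmetric (respectively symplectic) forms on a vector space of a given dimension are isometric; hence an isometry $V_{s,c} \stackrel{\sim}{\to} V_{s,c'}$ extends to an element of $\Ort(V,\chi)$ (resp.~$\Sp(V,\chi)$) swapping these two eigenspaces and fixing the others, whose image in $G$ belongs to $N_\Kt(\lf)$. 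Composing such lifts (the transpositions generate the permutation group at hand) produces $\omega_0 \in N_\Kt(\lf)$ with $\omega_0.L.n_1 = L.n_2$, so $\omega_0.n_1 \in L.n_2 \cap \pp$; combining with the previous paragraph gives $\omega_1 \in L \cap \Kt$ with $\omega_1\omega_0.n_1 = n_2$. Setting $\omega := \omega_1\omega_0 \in \Kt$, we have $\omega \in N_G(\lf)$ so $\omega.s \in \zz(\lf)\cap\pp = \cpps$, and thus $\omega.x_1 = \omega.s + n_2 \in \cpps + n_2 \subset J_K(x_2)$, giving $\omega.J_K(x_1) = J_K(x_2)$ as required. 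The hardest part is this lift of block permutations from $N_G(\lf)/L$ to $N_\Kt(\lf)$; it is where the argument genuinely uses the specificity of types AI and AII, through the classification of non-degenerate bilinear forms over~$\K$.
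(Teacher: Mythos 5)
Your argument is correct, but it takes a substantially more laborious route than the paper's proof. The paper applies Proposition~\ref{JclassAIbis} once, at the top level of $\g$: writing $J_2 = K.(\cpps + n')$, it finds $g \in G$ with $g.x \in \cggs + n'$, observes via Remark~\ref{rqthmL} (no arrows in the Satake diagram for AI, AII, so $\zz(\g^s) \subset \pp$ and hence $\cggs = \cpps$) that $g.x \in J_2$, then uses Proposition~\ref{JclassAIbis} to replace $g$ by an element of $\Kt$, and finishes with the disjointness of irreducible components from Theorem~\ref{compirr}. In effect, what you do in the last two paragraphs — splitting $(\lf,\lf\cap\kk)$ into AI/AII factors on the eigenspaces $V_{s,c}$, applying the Ohta-type orbit transfer factor by factor, and then lifting the relevant permutation of $N_G(\lf)/L$ to $N_\Kt(\lf)$ by patching together isometries of the restricted forms $\chi|_{V_{s,c}}$ — re-derives, by hand and in the special Levi situation, exactly the $G$-to-$\Kt$ orbit transfer that Proposition~\ref{JclassAIbis} provides for free on all of $\g$. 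One caveat on presentation: you invoke "Proposition~\ref{JclassAIbis} applied to each factor", but that proposition is stated for the ambient $(\g,\kk)$; what you actually use is the underlying Proposition~\ref{othageneral} of Ohta applied to the subpair $(\gl(V_{s,c}),\gl(V_{s,c})\cap\kk)$, together with the observation (which you do supply, via the isometry-extension) that the resulting element of $\mathrm{O}(V_{s,c},\chi_c)$ (resp.~$\mathrm{Sp}$) sits inside $L\cap\Kt$. Your approach has the merit of not relying on Remark~\ref{rqthmL} and of making the $\Kt$-conjugating element quite explicit, but the price is considerable extra bookkeeping compared with the paper's three-line reduction.
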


\begin{proof}
  Let $J_{1}:=K.(\cpps+n)$ be the Jordan $K$-class containing
  $x=s+n\in J\cap\pp$ and denote by $J_{2}:=K.(\cpps+n')$
  another Jordan $K$-class contained in $J\cap\pp$
  (cf.~\ref{Jclass2}(i)).  Since $J=G.(\cggs+n')$, there
  exists $g\in G$ such that $g.x\in \cggs+n'$ and
  Remark~\ref{rqthmL} implies that $g.x\in J_{2}$.  Now, by
  Proposition~\ref{JclassAIbis}, we may assume that
  $g\in\Kt$.  Then, $g.J_{1}$ is an irreducible subvariety
  of $J\cap\pp$ of dimension $\dim J\cap\pp$
  (see~Lemma~\ref{Jclass2}(ii)) which intersects $J_{2}$.
  It follows from Theorem~\ref{compirr} that
  $g.J_{1}=J_{2}$.
\end{proof}

\begin{Rq} \label{RqKtheta}As $\Kt=K\cup\omega K$ in type~AI
  (cf.~\ref{involAI}), there are at most two Jordan
  $K$-classes in $J\cap\pp$. In type~AII one has $\Kt=K$ and
  $J\cap\pp$ is a Jordan $K$-class.
\end{Rq}

\begin{cor}\label{clubAI}
  The condition~\eqref{club} of section~{\rm \ref{Ksheet}}
  is satisfied.
\end{cor}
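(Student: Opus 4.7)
The plan is to combine the existence of well-behaved $J_K$-classes (Lemma~\ref{Jclass4}(i), which applies since property~\eqref{heart} has been established in Theorem~\ref{unif}) with the transitivity of $\Kt$ on $J_K$-classes inside $J\cap\pp$ (Corollary~\ref{JclassAI}). Concretely, I would fix a $J_K$-class $J_1\subset S_G\cap\pp$ and let $J\subset S_G$ be the $J_G$-class containing it. Applying Lemma~\ref{Jclass4}(i) with the reference triple $\sS=(e,h,f)$ produces some $J_K$-class $J_2\subset J\cap\pp$ which is well-behaved with respect to $\Od_e=K.e$. By Corollary~\ref{JclassAI} I can then pick $\tilde k\in\Kt$ with $\tilde k.J_2=J_1$.

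Next I would simply transport the well-behavedness through~$\tilde k$. Since $\tilde k\in\Kt=N_G(\kk)$, the element $e':=\tilde k.e$ lies in $\Od\cap\pp$ and $\tilde k.\sS=(\tilde k.e,\tilde k.h,\tilde k.f)$ is again a normal \Striplet; moreover $\tilde k.\pp=\pp$, so
\[
J_1\cap(e'+\pp^{\tilde k.f})=\tilde k.\bigl(J_2\cap(e+\pp^f)\bigr),
\]
which has dimension $\dim J_2-m=\dim J_1-m$ by Lemma~\ref{Jclass2}(ii). Thus $J_1$ is well-behaved with respect to the normal triple $\tilde k.\sS$.

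To finish, I would choose $g\in\mathsf{Z}$ such that $K.e'=K.(g.e)$, which is possible by the very definition of $\mathsf{Z}$. As observed just before Definition~\ref{flatten}, the condition~\eqref{flatteneq} depends only on the $K$-orbit of the nilpositive element, not on the particular normal \Striplet, so well-behavedness with respect to $\tilde k.\sS$ is the same as well-behavedness with respect to $\Od_{g.e}$. This yields~\eqref{club}.

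The only subtle point will be justifying that ``well-behaved with respect to $\Od_{g.e}$'' really depends only on the $K$-orbit of $g.e$, i.e.\ that if two normal triples $\sS_1,\sS_2$ share a nilpositive element then they are $K^{e'}$-conjugate; but this is exactly the bijection between $K$-orbits of normal {\Striplet}s and $K$-orbits of their nilpositive parts recalled in \S\ref{SLA}, so a one-line $K$-equivariance argument settles it. Apart from this, the proof is essentially a two-line assembly of Corollary~\ref{JclassAI}, Lemma~\ref{Jclass4}(i), and Lemma~\ref{Jclass2}(ii).
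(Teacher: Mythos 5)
Your proof is correct and follows essentially the same route as the paper's: produce a well-behaved $J_K$-class via Lemma~\ref{Jclass4}(i), transport it to $J_1$ by an element of $\Kt$ using Corollary~\ref{JclassAI}, and use that well-behavedness is a notion attached to the $K$-orbit of the nilpositive element. The paper states the transport step in one sentence (a $\Kt$-element is an automorphism of $(\g,\kk,\pp)$, so it preserves well-behavedness up to replacing $K.e$ by $K.(k.e)$); your version merely spells out the dimension computation and the invocation of the bijection between $K$-orbits of normal {\Striplet}s and nilpotent $K$-orbits, which the paper established in the paragraph before Definition~\ref{flatten}.
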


\begin{proof}
  Let $J_1 \subset J\cap \pp$ be a $J_K$-class. By
  Lemma~\ref{Jclass4} there exists a $J_{K}$-class $J_{2}
  \subset J \cap \pp$ such that $J_{2}$ is well-behaved
  w.r.t.~$K.e$, and Corollary~\ref{JclassAI} gives $k\in
  \Kt$ such that $J_1= k.J_2$. Since $k$ defines an
  automorphism of the symmetric Lie algebra $(\g,\kk,\pp)$,
  the class $J_1=k.J_{2}$ is well-behaved
  w.r.t.~$K.(k.e)=k.(K.e)$.
\end{proof}

\subsubsection{Case AIII~(1)}
\label{JclassAIII}
We fix $(\g,\theta)=(\g,\kk,\pp)=(\g,\kk^{\Phi},\pp^{\Phi})$
of type~AIII as in section~\ref{involAIII} and we use the
notation introduced in~\ref{sliceAIII}. For simplicity we
assume that the numbers $N_a,N_b$ are such that $N_b \le
N_a$.

Let $\af \subset \pp$ be a Cartan subspace. Since the
involutions of type~AIII are conjugate, and the Cartan
subspaces are $K$-conjugate, one can find a Cartan
subalgebra $\h'$ containing $\af$ and satisfying the
following conditions (see, for example,
\cite[Polarizations-Type AIII, p.~20]{GW}).  There exists a
basis $(\varpi_1,\dots,\varpi_N)$ of $\h'^*$ such that:
$\varpi_{j}(t)$, $1 \le j \le N$, are the eigenvalues of
$t\in\h'$ and $B:= \{\alpha_j = \varpi_j - \varpi_{j+1} \mid
1 \le j \le N-1\}$ is a $\sigma$-fundamental system of the
root system $R:=R(\g,\h')$.  Recall that the Weyl group
$W(\g,\h') = N_G(\h')/Z_G(\h')$ can be naturally identified
with the group
$\Sf\bigr(\{\varpi_1,\dots,\varpi_N\}\bigr)\cong\Sf_N =
\Sf([\![1,N]\!])$, where we denote by $\Sf(E)$ the
permutation group of a set $E$.  Moreover, the action of
$\theta$ on $\h'$ is defined by:
\begin{equation}\label{thetaeps}
  \varpi_i(\theta(t)):= \left\{\begin{array}{l
        l} \varpi_{N+1-i}(t)& \mbox{ if } 
      \min(i,N+1-i)\leqslant N_b;\\ 
      \varpi_{i}(t)&\mbox{ otherwise.}\end{array}\right.
\end{equation}
Fix the semisimple part $s$ of an element belonging to $J
\cap \pp$. By Lemma~\ref{Jclass2}, $J\cap\pp$ is the union
of $J_{K}$-classes of the form $K.(\cpps+n)$ where $n \in
\pp^s$ is nilpotent.  Thanks to
Proposition~\ref{levistandard} we may assume that $s\in
\af_\Q$ is in the positive Weyl chamber defined by $B$.
Recall from~\eqref{Levisimplegl} that we write
\[
\g^s=\bigoplus_{c\in\spec(s)}\wfr_{s,c}, \quad \wfr_{s,c} :=
\gl(V_{s,c}),
\]
where $\gl(V_{s,c})$ is naturally embedded into $\g= \gl(V)$.
Note that $\cc_{\g}(\g^s)=\bigoplus_{c\in\spec(s)} \K\,
\Id_{V_{s,c}}$.
Let $g \in N_{G}(\g^s)$; then $s':= g.s \in \cc_{\g}(\g^s)$,
hence $s'_{\mid V_{s,c}}=c'\, \Id_{V_{s,c}}$ for some $c'\in
\spec(s)$, that is to say $V_{s,c} \subset V_{s',c'}$. It is
then easily seen that the map $\eta : c \mapsto c'$ defines
a permutation of $\spec(s)$ such that $V_{s,c} = V_{s',c'}$.
If $\rtt(g): = \eta^{-1}\in \mathfrak{S}\bigl(\spec(s)\bigr)$
one has $V_{s',c}=g.V_{s,c}=V_{s,\rtt(g)(c)}$ and it follows
that:
\[
g.\wfr_{s,c}=\wfr_{s,\rtt(g)(c)} \ \; \text{for all $c \in
  \spec(s)$}.
\]
From this observation one deduces that $\rtt$ is a group homomorphism
\[
\rtt : N_{G}(\cggsnb)=N_{G}(\g^s) \, \longrightarrow \,
\mathfrak{S}\bigl(\spec(s)\bigr), \quad g \mapsto \rtt(g).
\]
Clearly, if $\gamma= \rtt(g)$ one has:
\begin{equation}\label{dimmsc}
  m(s,\gamma(c))=m(s,c) \ \; \text{for all $c \in
    \spec(s)$}.
\end{equation}
This condition characterizes the elements of the image of
$\rtt$:

\begin{lm}\label{permMM}
  An element $ \gamma \in\mathfrak S\bigl(\spec(s)\bigr)$ is
  in the image of the morphism $\rtt$ if and only if it
  satisfies~\eqref{dimmsc}.
\end{lm}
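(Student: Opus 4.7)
One direction is already contained in the discussion preceding the lemma: if $\gamma = \rtt(g)$ for some $g \in N_G(\g^s)$, then the equality $g.V_{s,c} = V_{s,\gamma(c)}$ forces the dimension identity $m(s,\gamma(c)) = m(s,c)$, which is precisely \eqref{dimmsc}. So only the converse requires work.

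For the converse, suppose $\gamma \in \mathfrak{S}(\spec(s))$ satisfies $m(s,\gamma(c)) = m(s,c)$ for every $c \in \spec(s)$. My plan is to construct a representative $\tilde g \in \GL(V)$ directly and then to project. Since $\dim V_{s,c} = m(s,c) = m(s,\gamma(c)) = \dim V_{s,\gamma(c)}$, for each $c \in \spec(s)$ I can choose an arbitrary linear isomorphism $\tilde g_c : V_{s,c} \isomto V_{s,\gamma(c)}$. The decomposition $V = \bigoplus_{c \in \spec(s)} V_{s,c}$ then allows me to assemble these into a single element $\tilde g := \bigoplus_c \tilde g_c \in \GL(V)$, and I set $g := \rho(\tilde g) \in G$.

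It remains to verify the two required properties. First, by construction $g.V_{s,c} = V_{s,\gamma(c)}$, and hence
\[
g.\wfr_{s,c} = g\,\gl(V_{s,c})\,g^{-1} = \gl(g.V_{s,c}) = \gl(V_{s,\gamma(c)}) = \wfr_{s,\gamma(c)}
\]
for every $c$. Summing over $c$ via the decomposition \eqref{Levisimplegl}, this gives $g.\g^s = \g^s$, so $g$ lies in $N_G(\g^s) = N_G(\cggsnb)$ (the equality of these two normalizers follows from the double-centralizer property $\cc_\g(\cc_\g(\g^s)) = \g^s$ of Levi factors, already used implicitly in the paragraph preceding the lemma). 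Second, reading off from $g.\wfr_{s,c} = \wfr_{s,\gamma(c)}$ the definition of $\rtt$ yields $\rtt(g) = \gamma$, as desired.

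The proof involves no serious obstacle: the content is really just that the only invariants one can read off from the permutation action of $N_G(\g^s)$ on the simple factors $\wfr_{s,c}$ are their dimensions $m(s,c)$. There is no subtlety coming from connectedness, since we are free to work inside the full linear group $\GL(V)$ before projecting to~$G = \mathrm{PGL}(V)$.
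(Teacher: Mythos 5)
Your proof is correct, but it takes a genuinely different route from the paper. The paper works inside the Weyl group: it identifies $W(\g,\h')\cong\Sf_N$, decomposes $[\![1,N]\!]$ into the blocks $J_j=\{k : \varpi_k(s)=c_j\}$, picks $w\in\Sf_N$ sending each $J_j$ to $J_{\gamma(j)}$ (possible because $\#J_j=\#J_{\gamma(j)}$ by~\eqref{dimmsc}), and then takes a representative $g\in N_G(\h')$ of $w$. You instead build the element directly in $\GL(V)$ from the eigenspace decomposition $V=\bigoplus_c V_{s,c}$, choosing arbitrary isomorphisms $V_{s,c}\isomto V_{s,\gamma(c)}$ and projecting by $\rho$; this is more elementary and avoids invoking the Cartan subalgebra $\h'$ and its Weyl group altogether. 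Both arguments are valid, and your verification that $g.\wfr_{s,c}=\wfr_{s,\gamma(c)}$, hence $g\in N_G(\g^s)$ and $\rtt(g)=\gamma$, is clean. One thing worth noting, though: the paper's Weyl-group phrasing is not gratuitous. It is set up so that the very next result, Lemma~\ref{transposition}, can rerun the same construction with $w$ forced to lie in $W_\sigma$ and then be lifted to $K$ via Remark~\ref{Klift}(2). Your direct construction in $\GL(V)$ has no obvious handle on $\theta$-equivariance, so it would not transfer to the $K$-statement without extra work; as a proof of Lemma~\ref{permMM} in isolation, however, it is perfectly sound.
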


\begin{proof}
  Let $c_1,\dots,c_\ell$ be the distinct eigenvalues of
  $s$. By construction, $\gamma$ can be identified with the
  element $\gamma \in \Sf_\ell$ such that $\gamma(c_i)=
  c_{\gamma(i)}$, $1 \le i \le \ell$. Write $[\![1,N]\!]$ as
  a disjoint union $\bigsqcup_{j=1}^\ell J_j$, where
  $J_j := \{ k : \varpi_k(s)= c_j\}$.  By~\eqref{dimmsc} one
  has $\# J_j = m(s,c_j)= \# J_{\gamma(j)} =
  m(s,\gamma(c_j))$. One can therefore find $w \in \Sf_N
  \cong W(\g,\h')$ such that $w(J_j)= J_{\gamma(j)}$ for $j
  =1, \dots,\ell$. Let $g \in N_G(\h')$ be a representative
  of $w$. One then gets $g \in N_G(\cggsnb)=N_{G}(\g^s)$ and
  $\rtt(g)= \gamma$.
\end{proof}

Recall from~\S\ref{sliceAIII} that we denote by $\Sppm(s)$
the set of classes $\{\pm c : c \in \spec'(s)\}$.  For every
$k\in N_{K}(\g^s)$ we have
$$
\wfr_{s,\rtt(k)(-c)}=k.\wfr_{s,-c}=k.\theta(\wfr_{s,c})
=\theta(k.\wfr_{s,c}) =\wfr_{s,-\rtt(k)(c)}.
$$ 
Thus $\rtt(k)(-c)=-\rtt(k)(c)$ and, since $\g_{s,\pm c} =
\wfr_{s,c} \oplus \wfr_{s,-c}$, one gets $k.\g_{s,\pm
  c}=\g_{s,\pm\rtt(k)(c)}$.  Therefore, any element of
$\rtt(N_{K}(\g^s))$ induces a permutation of $\Sppm(s)$.  By
Lemma~\ref{fondAIII}, if $0 \in \spec(s)$, the factor
$(\g_{s,0},\kk_{s,0})$ is the unique factor of type~AIII in
the decomposition of the symmetric Lie algebra
$(\g^s,\kk^s)$ and, as $k\in N_{K}(\g^s)$ defines an
automorphism of this symmetric pair, one necessarily has
$\rtt(k)(0)=0$. It follows that $\rtt$ induces a
homomorphism:
$$
\rtt':N_{K}(\cppsnb)=N_{K}(\g^s) \, \longrightarrow \,
\mathfrak{S}\bigl(\Sppm(s) \smallsetminus \{0\}\bigr), \quad
k \mapsto \rtt'(k),
$$
with the convention that $\Sppm(s) \smallsetminus \{0\} =
\Sppm(s)$ when $0 \notin \spec(s)$.

\begin{lm}\label{transposition}\label{transbis}
  {\rm (1)} Let $c_{0},c_{1}\in \spec(s) \smallsetminus
  \{0\}$ be such that $m(s,c_{0})=m(s,c_{1})$. There exists
  $k\in N_{K}(\cppsnb)$ such that: $\rtt'(k)(\pm
  c_{0})=c_{\pm 1}$, for $i=0,1$, and
  $\rtt'(k)(\pm c)=\pm c$ for all $\pm c\in\Sppm(s)
  \smallsetminus\{\pm c_{0},\pm c_{1}\}$.
  \\
  {\rm (2)} A permutation $\gamma$ of
  $\Sppm(s)\smallsetminus\{0\}$ belongs to
  $\rtt'(N_{K}(\g^{s}))$ if and only if
$$
m(s,\pm c)=m(s,\gamma(\pm c)) \ \; \text{for all $\pm c\in
  \Sppm(s) \smallsetminus \{0\}$}.
$$
In particular, for such a permutation $\gamma$ there exists
$k\in N_{K}(\g^s)$ such that
$$
k.\g_{s,\pm c}=\g_{s,\gamma(\pm c)}
$$
where $\gamma$ is, if necessary, extended to $\Sppm(s)$ by
$\gamma(0)=0$.
\end{lm}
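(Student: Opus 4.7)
The strategy is to deduce part~(2) from part~(1) by a generation argument, and to prove part~(1) by producing an explicit element of $W_\sigma$ that realizes the desired transposition and then lifting it to $K$ via Remark~\ref{Klift}(2).

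For part~(1), I would use the concrete Cartan subalgebra $\h'\supset\af$ and basis $(\varpi_1,\dots,\varpi_N)$ of $\h'^*$ introduced in~\S\ref{JclassAIII}. By Proposition~\ref{levistandard} one may assume $s\in\af_\Q$; formula~\eqref{thetaeps} together with $\sigma(s)=-s$ then forces $\varpi_{N+1-i}(s)=-\varpi_i(s)$ when $\min(i,N+1-i)\le N_b$ and $\varpi_i(s)=0$ otherwise. Writing $I_c:=\{i : \varpi_i(s)=c\}$ and letting $\tau\in\Sf_N$ denote the involution of $[\![1,N]\!]$ induced by $\sigma$ on the indexing of the $\varpi_i$, one reads off $\tau(I_c)=I_{-c}$ for every $c\in\spec(s)\setminus\{0\}$.

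The key construction is then: given $c_0,c_1\in\spec(s)\setminus\{0\}$ with $m(s,c_0)=m(s,c_1)$, Lemma~\ref{fondAIII}(1) also yields $m(s,-c_0)=m(s,-c_1)$; I pick any bijection $f\colon I_{c_0}\to I_{c_1}$ and define $w\in\Sf_N\cong W(\g,\h')$ to act as $f$ on $I_{c_0}$, as $f^{-1}$ on $I_{c_1}$, as $\tau f\tau$ on $I_{-c_0}$, as $\tau f^{-1}\tau$ on $I_{-c_1}$, and as the identity elsewhere. By construction $w$ commutes with $\tau$, hence $w\in W_\sigma$; Remark~\ref{Klift}(2) then produces $k\in K$ with $k_{\mid\h'}=w$. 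Since $w$ preserves the partition $\{I_c\}_c$ setwise, $k$ permutes the eigenspaces $V_{s,c}$ accordingly, so $k\in N_K(\g^s)=N_K(\cppsnb)$ and $\rtt(k)$ swaps $c_0\leftrightarrow c_1$ and $-c_0\leftrightarrow -c_1$ on $\spec(s)$, fixing the rest; equivalently, $\rtt'(k)$ is the transposition of $\pm c_0$ and $\pm c_1$ in $\Sppm(s)\setminus\{0\}$.

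For part~(2), the ``only if'' direction is a dimension count: the relation $k.\wfr_{s,c}=\wfr_{s,\rtt(k)(c)}$ forces $m(s,c)=m(s,\rtt(k)(c))$, which combined with $\rtt(k)(-c)=-\rtt(k)(c)$ yields the multiplicity condition on $\Sppm(s)$. For the ``if'' direction, the subgroup of $\Sf(\Sppm(s)\setminus\{0\})$ preserving the multiplicity function is generated by transpositions of equal-multiplicity pairs $\pm c_0,\pm c_1$, each of which lies in $\rtt'(N_K(\g^s))$ by part~(1), so any such $\gamma$ does too. The final identity $k.\g_{s,\pm c}=\g_{s,\gamma(\pm c)}$ holds by the very definition of $\rtt'$ when $c\ne 0$, and for $c=0$ it is automatic because $(\g_{s,0},\kk_{s,0})$ is the unique factor of type~AIII in the decomposition of $(\g^s,\kk^s)$ given by Lemma~\ref{fondAIII}, and as such is stabilized by every element of $N_K(\g^s)$. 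The main obstacle is the simultaneous requirement that $w$ induce the prescribed transposition on eigenvalues \emph{and} commute with $\sigma$; the symmetric extension via $\tau f\tau$ is essentially forced, but one must check carefully that the resulting $w$ defines a consistent permutation preserving $\{I_c\}_c$ setwise, and that its lift $k$ normalizes $\g^s$ rather than merely $\h'$ — both points reducing to the multiplicity matches $|I_{\pm c_0}|=|I_{\pm c_1}|$.
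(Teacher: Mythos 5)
Your proof is correct and follows the same overall strategy as the paper's --- produce an element of $W_\sigma$ realizing the desired transposition of eigenvalue classes, then lift it to $K$ by Remark~\ref{Klift}(2) --- but the actual construction of the permutation $w$ is genuinely different in detail. The paper exploits the fact that $s$ is dominant (so each $I_i=\{j : \varpi_j(s)=c_i\}$ is an interval), replaces $c_i$ by $-c_i$ if necessary to push both intervals into $[\![1,N_b]\!]$, and writes $\gamma\in\Sf_N$ explicitly as a translation on $I_0\cup I_1$ extended by the reflection $j\mapsto N+1-\gamma(N+1-j)$ on the upper half; commutation with $\theta$ is then verified by a direct computation with~\eqref{thetaeps}. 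Your definition --- $f$ on $I_{c_0}$, $f^{-1}$ on $I_{c_1}$, $\tau f\tau$ and $\tau f^{-1}\tau$ on the opposite blocks, identity elsewhere --- makes the relation $w\tau=\tau w$ hold by inspection, so no calculation is needed, and it does not use the interval structure at all; that is arguably cleaner and more robust. The only thing you should add is the degenerate case $\pm c_0=\pm c_1$: when $c_0=-c_1$ the blocks $I_{c_0}$ and $I_{-c_1}$ coincide, so your piecewise definition of $w$ is ambiguous unless $f=\tau f^{-1}\tau$, which need not hold for an arbitrary choice of $f$. The paper disposes of this case by observing that $k=\Id$ already works (the desired permutation of $\Sppm(s)$ is then the identity); you should say the same. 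Your treatment of part~(2) --- generation by transpositions, the multiplicity count for ``only if'', and the observation that the type~AIII factor $(\g_{s,0},\kk_{s,0})$ is automatically fixed --- coincides with the paper's.
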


\begin{proof}
  (1) Recall that $s\in\af_{\Q}$ is in the positive Weyl
  chamber defined by $B$.  Therefore, for $i=0,1$,
  $I_{i}:=\{j\mid c_{i}=\varpi_{j}(s)\} \subset [\![1,N]\!]$
  is an interval; set $I_{i}=[\![d^{1}_{i},d^{2}_{i}]\!]$ .
  In the case $\pm c_{0}=\pm c_{1}$ the element $k=\Id$
  obviously works.  Otherwise, we may replace $c_{i}$ by
  $-c_{i}$ to ensure that $d^{2}_{i}\leqslant N_b\leqslant
  N/2$ and we define a permutation $\gamma \in \Sf_N$ by:
$$
\gamma(j):= \left\{\begin{array}{l l}
    j-d^{1}_{i}+d^{1}_{1-i} & \mbox{ if } j\in I_{i};\\
    j& \mbox{ if } j\leqslant (N+1)/2 \mbox{ and } j\notin
    I_{1}\cup I_{2};\\
    N+1-\gamma(N+1-j)& \mbox{ if } j\gnq (N+1)/2.
  \end{array}\right.
$$ 
One has: $\varpi_{j}(s) = \pm c_{1- i}$ if $\varpi_j(s)= \pm
c_i$, $i=0,1$ and $\pm \varpi_{\gamma(j)}(s)= \pm
\varpi_{j}(s)$ otherwise.  Denote by $w \in W=W(\g,\h')
\cong \Sf_N$ the element corresponding to the permutation
$\gamma$, hence $w.\varpi_{j}=\varpi_{\gamma(j)}$.
From~\eqref{thetaeps} one deduces that:
$$
\theta(w.\varpi_{j})=\left\{\begin{array}{ll}
    \varpi_{N+1-\gamma(j)}=\varpi_{\gamma(N+1-j)}=
    w.\theta(\varpi_{j})
    \mbox{ if } \min(j,N+1-j)\leqslant N_b; \\
    \varpi_{\gamma(j)}=w.\theta(\varpi_{j}) \mbox{
      otherwise.}\end{array}\right.
$$ 
This implies $\theta\circ w(\alpha)=w\circ \theta(\alpha)$
for all $\alpha\in R(\g,\h')$; thus $\theta$ commutes with
$w$, i.e.~$w \in W_\sigma$ in the notation of
\S\ref{basis}. By~Remark~\ref{Klift}(2) there exists $k\in
K$ acting like $w$ on $\h'$.  Therefore $k\in
N_{K}(\cggsnb)$, $\rtt(k) =\gamma$ and $k$ has the desired
properties.
\\
(2) It suffices to write an element of
$\Sf\bigl(\Sppm(s)\smallsetminus \{0\}\bigr)$ as a product
of transpositions and to apply part~(1).
\end{proof}

If $x=t+n\in\g^s$ we write $x= \sum_{c}x_{s,c} = \sum_{c}
(t_{s,c} + n_{s,c})$ where $t_{s,c} + n_{s,c}$ is the Jordan
decomposition of $x_{s,c} \in \wfr_{s,c}$ (thus $n_{s,c}$ is
the nilpotent part of $x_{s,c}$).

We first state consequences of Lemma~\ref{fondAIII} for a
nilpotent element $x=n \in \pp^s$.  As $\theta$ sends
$n_{s,c}$ onto $-n_{s,-c}$, the Young diagram of $n_{s,c}\in
\wfr_{s,c}$ is the same as the Young diagram of
$n_{s,-c}\in\wfr_{s,-c}$.  Morevover, the
$(K^s)^{\circ}$-orbit of $n$ in $\pp^s$ is characterized by
the Young diagrams of the $n_{s,c}$ for $c\neq 0$ and the
$ab$-diagram of $n_{s,0}$.

\begin{lm}\label{xdr}
  Let $x=t+n$ and $x'=t'+n'$ be $G$-conjugate elements of
  $\pp$ with $t,t'\in\cpps$.  Then $n_{s,0}$ and $n_{s,0}'$
  have the same Young diagram.  Furthermore, if $x$ and $x'$
  are $K$-conjugate, $n_{s,0}$ and $n'_{s,0}$ have the same
  $ab$-diagram.
\end{lm}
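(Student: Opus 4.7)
My plan centers on showing that any $g \in G$ realizing $g.x = x'$ must preserve the component $\wfr_{s,0}$ of $\g^s$. Taking Jordan decompositions gives $g.t = t'$ and $g.n = n'$, and since $t, t' \in \cpps$ both have centralizer $\g^s$, $g$ lies in $N_G(\g^s)$; it thus induces a permutation $\gamma := \rtt(g)$ of $\spec(s)$ with $g.\wfr_{s,c} = \wfr_{s,\gamma(c)}$ for every $c \in \spec(s)$. (If $0 \notin \spec(s)$ the assertion is vacuous, so assume $0 \in \spec(s)$.)

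The crux of the proof---and the main obstacle---is to show $\gamma(0) = 0$. Since $\g^t = \g^s = \bigoplus_c \wfr_{s,c}$, the semisimple element $t$ acts on each $V_{s,c}$ as multiplication by some scalar $\tau(c)$, and the resulting map $\tau : \spec(s) \to \K$ is injective (otherwise two $V_{s,c}$ would fuse into a single $t$-eigenspace, enlarging $\g^t$ beyond $\g^s$). The type~AIII involution $\theta$ is conjugation by $\epsilon := \Id_{V_a} - \Id_{V_b}$, and a direct computation using $s(V_a) \subset V_b$ and $s(V_b) \subset V_a$ yields $\epsilon(V_{s,c}) = V_{s,-c}$. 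The condition $t \in \pp$, i.e.\ $\epsilon t \epsilon^{-1} = -t$, then forces $\tau(-c) = -\tau(c)$, so $\tau(0) = 0$, and the analogous statement holds for the scalars $\tau'$ associated with $t'$. Reading off eigenvalues on $V_{s,c}$ in the identity $g.t = t'$ gives $\tau(c) = \tau'(\gamma(c))$; applied at $c=0$, the injectivity of $\tau'$ then forces $\gamma(0) = 0$. Consequently $g.\wfr_{s,0} = \wfr_{s,0}$ and $g.n_{s,0} = n'_{s,0}$ as elements of $\gl(V_{s,0})$, so $n_{s,0}$ and $n'_{s,0}$ share a Young diagram.

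For the second assertion, I assume further that $x$ and $x'$ are $K$-conjugate and take $g \in K$. Any lift $\tilde g \in \tilde K = \GL(V_a) \times \GL(V_b)$ preserves $V_a$ and $V_b$; combined with the invariance $\tilde g.V_{s,0} = V_{s,0}$ from the previous step, this yields $\tilde g.(V_{s,0} \cap V_a) = V_{s,0} \cap V_a$ and similarly for $V_b$. Thus the restriction $\tilde g|_{V_{s,0}}$ belongs to $\GL(V_{s,0} \cap V_a) \times \GL(V_{s,0} \cap V_b)$, which is precisely $\tilde K$ for the sub-symmetric pair $(\g_{s,0}, \kk_{s,0})$ of type~AIII (Lemma~\ref{fondAIII}(4)). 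Since nilpotent $K$-orbits in a type~AIII pair are classified by their $ab$-diagram (\S\ref{involAIII}), $n_{s,0}$ and $n'_{s,0}$ must have the same $ab$-diagram.
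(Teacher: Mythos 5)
Your proof is correct but takes a genuinely different route from the paper on the first assertion. The paper proceeds combinatorially: it introduces equivalence relations $\mathcal R$, $\mathcal R'$ on $\spec(s)$ (two eigenvalues being related when they have the same multiplicity and their nilpotent components share a Young diagram), observes that the class of $0$ is the unique one of odd cardinality because nonzero eigenvalues pair off under $c\mapsto-c$, and concludes only that $\gamma(0)\,\mathcal R\,0$, which is already enough for the Young-diagram equality. You instead prove the sharper claim $\gamma(0)=0$ outright: since $t\in\cpps$ forces $\g^t=\g^s$, the scalar $\tau(c)$ by which $t$ acts on $V_{s,c}$ defines an injective map on $\spec(s)$; the computation $\theta(V_{s,c})=V_{s,-c}$ together with $t\in\pp$ makes $\tau$ odd, so $\tau(0)=\tau'(0)=0$, and the relation $\tau'(\gamma(c))=\tau(c)$ then yields $\gamma(0)=0$. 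This strengthens the paper's intermediate step — the paper only gets $\gamma(0)=0$ when $g\in K$ (via the observation preceding Lemma~\ref{transposition}, using that $k$ must preserve the unique type~AIII factor), whereas your eigenvalue argument obtains it for any $g\in G$ satisfying the hypotheses, so the second assertion becomes an immediate refinement of the first once $g\in K$. Your explicit check that a lift $\tilde g\in\GL(V_a)\times\GL(V_b)$ restricts on $V_{s,0}$ to the group $\GL(V_{s,0}\cap V_a)\times\GL(V_{s,0}\cap V_b)$ associated to $(\g_{s,0},\kk_{s,0})$ makes precise a step the paper leaves implicit.
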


\begin{proof}
  If $m(s,0)\leqslant1$ one has $n_{s,0}=n'_{s,0}=0$; we
  will therefore assume that $0\in \spec(s)$ and
  $\wfr_{s,0}' = \mathfrak{sl}(V_{s,0}) \ne \{0\}$.
  One can define equivalence relations $\mathcal R$ and
  $\mathcal R'$ on $\spec(s)$ as follows.  Say that
  $c\mathcal{R}d$ if the two following conditions are
  satisfied: $\wfr_{s,c}$ is isomorphic to $\wfr_{s,d}$,
  {i.e.}~$m(s,c)=m(s,d)$, and $n_{s,c},n_{s,d}$ have the
  same Young diagram.  The relation $\mathcal R'$ is defined
  similarly with $n'$ instead of $n$.  As observed above,
  the elements $c$ and $-c$ are in the same equivalence
  class. Consequently, the class containing $0$ is the only
  class, for $\mathcal R$ or $\mathcal R'$, having odd
  cardinality.
  \\
  Since $t,t' \in \cggs$, there exists $g\in N_{G}(\g^s)$
  such that $g.x'=x$ and we can set $\gamma:=\rtt(g)$.  One
  then has $n_{s,\gamma(c)}=g.n_{s,c}'$, therefore $\gamma$
  sends each $\mathcal{R}'$-equivalence class to an
  $\mathcal{R}$-equivalence class.  Thus, as the cardinality
  of the equivalence class of $\gamma(0)$ is odd,
  $\gamma(0)\mathcal{R}0$, $g.n_{s,0}'=n_{s,\gamma(0)}$ and
  $n_{s,0}, n_{s,0}'$ have the same Young diagram. This
  proves the first statement.
  \\
  Now assume that $g\in K$, hence $g\in N_{K}(\g^s)$.  We
  have already shown before Lemma \ref{transposition} that,
  in this situation, $\gamma(0)=0$.  Thus
  $g.n_{s,0}=n_{s,0}'$ with $g\in K$, as desired.
\end{proof}

Let $y=t+n\in J\cap\pp$. Then $(\g_{t,0},\kk_{t,0})$ is
either $(0)$ or a reductive factor of type AIII.  By
Lemma~\ref{fondAIII} the parameter of this factor is the
same as the parameter of $(\g,\kk)$, thus it does not depend
on the choice of $y \in J\cap\pp$.  Recall that $n_{t,0}$ is
the component of $n$ lying in $\g_{t,0} = \wfr_{t,0}$ and
define $\Gamma^{\Phi}(y)$
to be the $ab$-diagram of $n_{t,0}$ in
$(\g_{t,0},\kk_{t,0})$.  Remark that one can recover the
$ab$-diagram of $n_{t,0}$ in $(\g,\kk)$ by adding to
$\Gamma^{\Phi}(y)$ some pairs of rows of length~$1$, one row
beginning by $a$ and the other by $b$.

\begin{prop} \label{cft} \label{cftt} {\rm (1)} Let
  $x^{1},x^{2} \in J\cap\pp$. The following conditions are
  equivalent
  \[
  \text{{\rm (i)}
    $\Gamma^{\Phi}(x^{1})=\Gamma^{\Phi}(x^{2})$\, ; \quad
    {\rm (ii)} $J_{K}(x^{1})=J_{K}(x^{2})$.}
  \]
  Set $\Gamma^{\Phi}(J_K(x)) := \Gamma^{\Phi}(x)$ for $x \in
  J\cap\pp$.

  \noindent {\rm (2)} The map $J_1 \mapsto
  \Gamma^{\Phi}(J_1)$ gives an injection from the set of
  $J_{K}$-classes contained in $J\cap \pp$ to the set of
  admissible $ab$-diagrams for the symmetric pair
  $(\g_{s,0},\kk_{s,0})$.
\end{prop}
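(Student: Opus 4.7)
The plan is to establish both directions of the equivalence in part~(1) and then deduce part~(2), the main difficulty lying in (i)~$\Rightarrow$~(ii).

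For (ii)~$\Rightarrow$~(i), since $J_K(x^1) = J_K(x^2) = K.(\cpps + n)$ for appropriate $s$ and $n$, one can $K$-conjugate to arrange $x^i = s + n^i$ with a common semisimple part $s \in \pp$, nilpotent parts $n^i \in \pp^s$, and with $x^1, x^2$ themselves $K$-conjugate. Lemma~\ref{xdr} then gives directly that $n^1_{s,0}$ and $n^2_{s,0}$ share the same $ab$-diagram in $(\g_{s,0},\kk_{s,0})$, which by definition is the equality $\Gamma^\Phi(x^1) = \Gamma^\Phi(x^2)$.

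For (i)~$\Rightarrow$~(ii), write $x^i = t^i + n^i$. I first apply property~(L) (Theorem~\ref{L}) to $K$-conjugate so that $\g^{t^1} = \g^{t^2} = \lf$ for a common $\theta$-stable Levi factor. Since both $x^i$ lie in $J$, the data $(\lf, n^1)$ and $(\lf, n^2)$ are $N_G(\lf)$-conjugate, which forces the Young diagrams of the corresponding components of $n^i$ to match in every simple factor of $\lf$. Lemma~\ref{fondAIII} then decomposes $(\g^{t^i}, \kk^{t^i})$ as $\bigoplus_{\pm c}(\g_{t^i,\pm c}, \kk_{t^i,\pm c})$, with the $c \ne 0$ factors of type~A0 and the $c = 0$ factor of type~AIII. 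By Corollary~\ref{fondAIIIbis}, in each A0-factor the $(K \cap G^{t^i})^{\circ}$-orbit of the corresponding nilpotent component is determined by a single Young diagram (matched across $i$ by the $N_G(\lf)$-conjugacy), while in the AIII-factor it is determined by $\Gamma^\Phi(x^i)$ (matched by hypothesis). I then invoke Lemma~\ref{transbis} to find elements of $N_K(\lf)$ that permute the matching $\pm c$-factors in the decompositions attached to $t^1$ and $t^2$, so as to align the two sets of data and upgrade the ambient $G$-conjugation of nilpotent parts into a $K$-conjugation that also respects the semisimple parts. This yields $J_K(x^1) = J_K(x^2)$.

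Part~(2) then follows: well-definedness of the map $J_1 \mapsto \Gamma^\Phi(J_1)$ comes from (ii)~$\Rightarrow$~(i), and injectivity from (i)~$\Rightarrow$~(ii). The target lies in admissible $ab$-diagrams for $(\g_{s,0},\kk_{s,0})$ because, for any $x = t + n \in J \cap \pp$, Lemma~\ref{fondAIII}(4) identifies $(\g_{t,0},\kk_{t,0})$ as a symmetric pair of type~AIII with the same parameter as $(\g, \kk)$, and hence as $(\g_{s,0},\kk_{s,0})$; the two pairs are therefore isomorphic and admissibility is intrinsic to this isomorphism class. The main obstacle is the (i)~$\Rightarrow$~(ii) direction: the $N_G(\lf)$-conjugation between the nilpotent parts need not preserve $\pp$, and the technical core is precisely to replace it by a $K$-conjugation using the type~AIII/A0 decomposition and the transitivity of Lemma~\ref{transbis}.
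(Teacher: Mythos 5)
Your proposal is correct and follows essentially the same approach as the paper: Lemma~\ref{xdr} handles (ii)$\Rightarrow$(i), while property~(L) (in the guise of Lemma~\ref{Jclass2}), the AIII/A0 decomposition (Lemma~\ref{fondAIII}, Corollary~\ref{fondAIIIbis}), and Lemma~\ref{transbis}(2) drive (i)$\Rightarrow$(ii), exactly as in the paper's argument. In a full write-up you should make explicit the construction of the re-aligning permutation $\gamma'$ from the given $N_G(\lf)$-conjugacy — the point that $\gamma'$ can be chosen to commute with $c\mapsto -c$ and to fix $0$ relies on the $\theta$-symmetry of the Young diagrams together with the hypothesis on the $ab$-diagrams of the $c=0$ components, and this is the one step your sketch acknowledges but does not spell out.
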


\begin{proof}
  (1) Write $x^{i}=t^{i}+n^{i}$ for $i=1,2$.  By
  Lemma~\ref{Jclass2} there exists $k^{i}\in K$ such that
  $k^{i}.t^{i}\in\cpps$.  Observe that
  $\Gamma^{\Phi}(k^{i}.x^{i})=\Gamma^{\Phi}(x^{i})$ and
  $J_{K}(k^{i}.x^{i})=J_{K}(x^{i})$, therefore we may assume
  that $x^i \in \g^s$ and $t^{i}\in\cpps$ for $i=1,2$.  We
  may also assume that $m(t^1,0)=m(t^2,0) \geqslant 1$,
  otherwise each $n^{i}_{t^i,0} =0$ is zero and the
  equivalence is clear.
  \\
  As $n^{i}_{t^i,0}$ belongs to the unique simple factor of
  type~AIII of $(\g^{t^{i}},\kk^{t^{i}})$, one has
  $n^{i}_{t^i,0}\in\wfr_{s,0}$, thus $n^i_{t^i,0}=
  n^i_{s,0}$ and we can set
  $n^{i}_{0}:=n^{i}_{t^i,0}=n^{i}_{s,0}$. For $0 \ne
  c\in\spec(s)$, set $n^{i}_{c}:=n^{i}_{s,c}$.  Recall that
  the $J_{K}$-class of $x^{i}$ is
  $J_{K}(x^{i})=K.(\cpps+n^i)$.
  \\
  (ii) $\imply$ (i): By hypothesis there exists an element
  of $K.(\cpps+n^{1})$ which is $K$-conjugate to
  $x^2$. Lemma~\ref{xdr} then shows that $n^{1}_{0}$ has the
  same $ab$-diagram as $n^{2}_{0}$ for the pair $(\g,\kk)$,
  which implies that
  $\Gamma^{\Phi}(x^{1})=\Gamma^{\Phi}(x^{2})$ (cf.~remark
  above).
  \\
  (i) $\imply$ (ii): Suppose that $n^{1}_{0}$ and
  $n^{2}_{0}$ have the same $ab$-diagram in
  $(\g_{s,0},\kk_{s,0})$. We want to show that $n^{1}$ is
  $N_{K}(\g^s)$-conjugate to $n^{2}$.
  Observe that $n^1_{0}$ and $n^2_{0}$ have the same orbit
  under the group $K_{s,0}$, where we set $K_{s,\pm
    c}:=(G_{s,\pm c}\cap K)^{\circ}$.
  As $n^1$ is $G$-conjugate to $n^2$ there exists $g\in
  N_{G}(\g^s)$ such that $g.n^1_{c}=n^2_{\gamma(c)}$, which
  defines $\gamma = \rtt(g) \in
  \mathfrak{S}\bigl(\spec(s)\bigr)$. Since $n^{i}_{c},
  n^{i}_{-c}$ have the same diagrams for all $c$, there
  exists $\gamma' \in \mathfrak{S}\bigl(\spec(s)\bigr)$ such
  that:
$$
\text{$\wfr_{s,c}\cong\wfr_{s,\gamma'(c)}$, \quad $n^1_{c}$
  has the same diagram as $n^2_{\gamma'(c)}$, \quad
  $\gamma'(-c)=-\gamma'(c)$,}
$$ 
for all $c \in \spec(s)$.  The permutation $\gamma'$ fixes
$0$ and induces $\gamma'' \in
\mathfrak{S}\bigl(\Sppm(s)\bigr)$.  Lemma~\ref{transbis}(2)
gives an element $k\in N_{K}(\g^s)$ such that $k.\g_{s,\pm
  c}=\g_{s,\pm \gamma'(c)}$ for $c\in \Sppm(s)$.
Set $n^3:=k.n^1$; then $n^3_{c}$ has the same diagram as
$n^{2}_{c}$ for all $c \ne 0$, and the same $ab$-diagram
when $c=0$. By the results on type~A0, $n^3_c+n^3_{-c}$ and
$n^2_{c}+n^2_{-c}$ are $K_{s,\pm c}$-conjugate for $c\neq
0$.  This proves the existence of $k'\in Z_{K}(\cpps)\subset
N_{K}(\g^s)$ such that $k'.n^3=n^2$ and $k'k.n^1=n^2$.  In
particular, $K.(\cpps+n^1)=K.(\cpps+n^2)$ and the result
follows.
\\
(2) is an obvious consequence of (1).
\end{proof}


\subsubsection{Case AIII~(2)} \label{JclassAIIIb}

We continue with the same notation. Thus: $e \in \g = \gl_N$
is a nilpotent element, the partition of $N$ associated to
$\Od:=G.e$ is denoted by
$\bolda=(\lambda_{1}\geqslant\dots\geqslant\lambda_{\delta_{\Od}})$,
$\Phi : [\![1,\delta_{\Od}]\!]  \longrightarrow\{a,b\}$ is
an arbitrary function and
$(\g,\theta)=(\g,\kk,\pp)=(\g,\kk^{\Phi},\pp^{\Phi})$ is the
symmetric Lie algebra defined in \S\ref{involAIII}, hence $e
\in \pp=\pp^{\Phi}$.  As above, $S=S_G$ is the $G$-sheet
containing $e$ and $J$ is a $J_G$-class of $S$ intersecting
$\pp$. Recall from section~\ref{Ksheet} that the set
$\{g.e\}_{g \in \mathsf{Z}}$ parameterizes the $K$-orbits
$\Od_{g.e} := K.(g.e)$ contained in $\Od \cap \pp$. We aim to
show that the following condition defined in section \ref{Ksheet} holds (see Proposition~\ref{desired}):
 \begin{equation}
  \mbox{For each }J_{K}\mbox{-class }J_{1}\mbox{ in
  }S_{G}\cap\pp \mbox{, there exists 
    $g\in \mathsf{Z}$ such that }
  J_{1}\mbox{ is well-behaved w.r.t. }\Od_{g.e}. 
  \tag{$\clubsuit$}
\end{equation}

Let $\Gamma_{1}:=\Delta(\Phi)$ be the admissible
$ab$-diagram associated to $e\in\pp^{\Phi}$ and let $J_{1}
\subset J\cap\pp$ be a $J_{K}$-class. By Theorem~\ref{unif}
and Lemma~\ref{GeIH} the conditions \eqref{heart} and
\eqref{sstar} are satisfied; therefore,
Lemma~\ref{Jclass4}(iii) can be applied in this
situation. Let $J_{2}$ be given by this lemma (for $g=\Id)$,
thus $J_{2} \subset J$ is a $J_K$-class which is well
behaved w.r.t~$\Od_{e}$.  Set $Y:=J_{2}\cap(e+\XP)\subset
J\cap(\qq\cap\pp)^{\bullet}$; as observed in
Remark~\ref{flattenremark}, we have:
\begin{equation}\label{dimY}
  \dim Y=\dim J\cap\pp - m.
\end{equation}
Let $s$ be the semisimple part of an element of $J \cap \pp$
and recall that $\Gamma^{\Phi}(J_1)$,
resp.~$\Gamma^{\Phi}(J_{2})$, is the admissible
$ab$-diagram, for $(\g_{s,0},\kk_{s,0})$, associated to
$J_{1}$, resp. $J_{2}$, by Proposition~\ref{cftt}(2).  We
are going to compare these diagrams with $\Gamma_{1}$ in
order to obtain an element $g.e$ ($g\in
\mathsf{Z}$) 
such that $J_{1}$ is well behaved w.r.t~$\Od_{g.e}$.

Let $\qq = \bigoplus_i \qq_i$ be as in \ref{settings} and
$x=s+n$ be an element of $J\cap(\qq\cap\pp)^{\bullet}$,
cf.~Corollary~\ref{corounif}.  Recall that we write $n =
\sum_{i=1}^{\delta_{\Od}} n_{i}$ with $n_{i}\in \qq_{i}$.
Let $\Od' \subset \g_{s,0}$ be the nilpotent orbit
$G_{s,0}.n_{s,0}$ and let
$\boldsymbol{\mu}=(\mu_{1}\geqslant\dots\geqslant
\mu_{\delta_{\Od'}})$ be the associated partition of
$m(s,0)$.  Remark that the shape of the Young diagram
underlying $\Gamma^{\Phi}(J_1)$ or, equivalently,
$\Gamma^{\Phi}(J_2)$, is given by $\boldsymbol{\mu}$.
 
On the other hand $n = \sum_{c \in \spec(s)} n_{s,c}$ with
$n_{s,c} \in \mathfrak{w}_{s,c}$ and, by
Corollary~\ref{fondAIIIbis}, one can write $n_{s,0}=\sum_i
n_{i,s,0}$ where each $n_{i,s,0} \in \qq_{i,s,0} \cap
\pp^{\Phi}$ is regular.  This yields in particular that
$\delta_{\Od'}\leqslant\delta_{\Od}$.  We can therefore
define a map
\begin{equation}
  \label{flatphi}
  \flat^{\Phi}(x) : [\![1,\delta_{\Od'}]\!] \longrightarrow
  \{a,b\}
\end{equation}
where $\flat^{\Phi}(x)(i)$ is the first symbol of the one
row ab-diagram of $n_{i,s,0}\in\qq_{i,s,0}\cap\pp^{\Phi}$.
Observe that when $\lambda_{i}$ is odd,
Corollary~\ref{fondAIIIbis}(3-4) yields
\begin{equation} \label{ddd} \mu_{i}\equiv 1 \bmod{2} \ \,
  \text{and $\flat^{\Phi}(x)=\Phi(i)$ for all $x \in
    J\cap(\qq\cap\pp)^{\bullet}$}.
\end{equation}
It is not difficult to see that the $ab$-diagram
$\Delta(\flat^{\Phi}(x))$ associated to the function
$\flat^{\Phi}(x)$, see~\S\ref{involAIII}, coincides with the
$ab$-diagram $\Gamma^{\Phi}(x)$ defined before
Proposition~\ref{cft}. Thus, according to the previous
notation:
$$
\Delta(\flat^{\Phi}(y)) =
\Gamma^{\Phi}(y)=\Gamma^{\Phi}(J_{2}) \ \; \text{for all
  $y\in Y\subset J_{2}\cap(\qq\cap\pp)^{\bullet}$}.
$$

\begin{remark}
  One may have $\flat^{\Phi}(x) \neq \flat^{\Phi}(x')$ with
  $K.x'=K.x$.  Such examples can be easily obtained by
  permuting blocks $\qq_{i}$ and $\qq_{j}$ such that
  $\lambda_{i}=\lambda_{j}$.
\end{remark}

Now, let $\Psi':[\![1,\delta_{\Od'}]\!]\longrightarrow
\{a,b\}$ be a map such that its associated $ab$-diagram,
$\Delta(\Psi')$, is equal to $\Gamma^{\Phi}(J_{1})$.
Under this notation, we want to construct
$\Psi:[\![1,\delta_{\Od}]\!] \rightarrow \{a,b\}$ such that
$\Psi'=\flat^{\Psi}(y)$ and
$$
\Delta(\flat^{\Psi}(y)) =
\Gamma^{\Psi}(y)=\Gamma^{\Phi}(J_{1}) \ \; \text{for all
  $y\in Y$}.
$$
Fix $y \in Y$ and define $\Psi$ as follows:
$$
\left\{\begin{array}{ll}\Psi(i)=\Phi(i) \ \, \text{if
      $\Psi'(i)=\flat^{\Phi}(y)(i)$ {and} $i\leqslant
      \delta_{\Od'}$;}
    \\
    \Psi(i)\neq\Phi(i) \ \, \text{if
      $\Psi'(i)\neq\flat^{\Phi}(y)(i)$ and $i\leqslant
      \delta_{\Od'}$};
    \\
    \Psi(i)=\Phi(i) \ \, \text{for
      $i\in[\![\delta_{\Od'}+1,\delta_{\Od}]\!]$}.
  \end{array}\right.
$$
By~\eqref{ddd}, for each $i \in [\![1,\delta_{\Od}]\!]$ such
that $\lambda_i$ is odd one has $\Psi'(i)=\Psi(i)$.

\begin{lm}\label{dde}
  The $ab$-diagram $\Gamma_{2}:=\Delta(\Psi)$ is admissible
  for the symmetric pair $(\g,\kk^\Phi)$.
\end{lm}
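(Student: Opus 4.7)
The plan is to reduce admissibility of $\Delta(\Psi)$ to a purely numerical statement: since the shape of $\Delta(\Psi)$ is the Young diagram $\bolda$ (the same as for $\Delta(\Phi)$) and admissibility for $(\g,\kk^\Phi)$ only depends on having exactly $N_a$ labels equal to $a$ and $N_b$ labels equal to $b$, it suffices to prove that $\Delta(\Psi)$ has the same number of $a$-labels as the admissible diagram $\Delta(\Phi)$.

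I would then analyze, row by row, the difference $D := N_a(\Delta(\Psi)) - N_a(\Delta(\Phi))$. In a row of length $\lambda_i$, the number of $a$'s is $\lceil \lambda_i/2 \rceil$ or $\lfloor \lambda_i/2 \rfloor$ depending on whether the row starts with $a$ or $b$, so only rows with $\lambda_i$ odd and $\Psi(i) \ne \Phi(i)$ contribute, and they contribute $\pm 1$. For $i > \delta_{\Od'}$ one has $\Psi(i) = \Phi(i)$ by definition, so such rows contribute nothing. For $i \le \delta_{\Od'}$ with $\lambda_i$ odd, \eqref{ddd} gives $\flat^\Phi(y)(i) = \Phi(i)$, and the definition of $\Psi$ then yields $\Psi(i) = \Psi'(i)$. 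Consequently,
\[
D = \#\bigl\{i \le \delta_{\Od'} : \lambda_i \text{ odd}, \, \Psi'(i)=a, \, \Phi(i)=b\bigr\} - \#\bigl\{i \le \delta_{\Od'} : \lambda_i \text{ odd}, \, \Psi'(i)=b, \, \Phi(i)=a\bigr\}.
\]

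To finish I would use two facts. First, Corollary~\ref{fondAIIIbis}(3) says that the ranks $\lambda_i$ and $\mu_i$ of $\qq_i$ and $\qq_{i,s,0}$ have the same parity, so $\lambda_i$ odd $\iff \mu_i$ odd; together with \eqref{ddd} one may therefore replace the condition on $\lambda_i$ and $\Phi(i)$ by the corresponding condition on $\mu_i$ and $\flat^\Phi(y)(i)$. Second, $\Delta(\Psi') = \Gamma^\Phi(J_1)$ and $\Delta(\flat^\Phi(y)) = \Gamma^\Phi(J_2)$ are \emph{both} admissible for the factor $(\g_{s,0},\kk_{s,0})$ (whose parameter equals $N_a-N_b$ by Lemma~\ref{fondAIII}(4)), so they have the same number of $a$-labels. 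Since rows of even length $\mu_i$ contribute $\mu_i/2$ of each label, the counts of $a$-starts among odd-length rows must also coincide; a straightforward inclusion-exclusion then shows $D=0$.

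The only real subtlety is aligning the bookkeeping: one must distinguish the row-length $\lambda_i$ of $\Delta(\Psi)$ (which governs the contribution to $D$) from the row-length $\mu_i$ of $\Gamma^\Phi(J_1),\Gamma^\Phi(J_2)$ (which governs the admissibility count used to force cancellation). The parity statement of Corollary~\ref{fondAIIIbis}(3), combined with \eqref{ddd}, is precisely what bridges these two indexings, and once this is observed the computation is immediate.
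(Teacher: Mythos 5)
Your proof is correct and follows essentially the same route as the paper's: both reduce admissibility to a parity count over odd-length rows, both invoke \eqref{ddd} and the definition of $\Psi$ to replace $(\Phi,\Psi)$ by $(\flat^\Phi(y),\Psi')$ there, and both conclude from the fact that $\Delta(\Psi')$ and $\Delta(\flat^\Phi(y))$ are admissible for the same pair $(\g_{s,0},\kk_{s,0})$. Your explicit remark on matching $\lambda_i$-odd rows of $\Delta(\Psi)$ with $\mu_i$-odd rows of the diagrams in $\g_{s,0}$ (via Corollary~\ref{fondAIIIbis}(3)) is the same bridge the paper uses implicitly, just spelled out more carefully.
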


\begin{proof}
  The only thing to prove is that $N'_{a}$ (resp.~$N'_{b}$),
  the number of $a$ (resp.~$b$) in $\Gamma_2$ is equal to
  $N_{a}$ (resp.~$N_{b}$).  This is equivalent to showing
  that $N'_{a}-N'_{b}=N_{a}-N_{b}$. From~\eqref{ddd} and the
  definition of $\Psi$ one deduces:
  \begin{align*}
    N_{a}-N_{b}&-(N'_{a}-N'_{b}) &\\
    = & \; \# \{ i \mid \Phi(i)=a \ \, \text{and
      $\lambda_{i}\equiv 1\bmod{2}$}\} - \#\{i\mid \Phi(i)=b
    \ \, \text{and $\lambda_{i}\equiv 1\bmod{2}$} \}&
    \\
    & -\#\{i\mid \Psi(i)=a \ \, \text{and $\lambda_{i}\equiv
      1\bmod{2}$} \}+\#\{i\mid \Psi(i)=b \ \, \text{and
      $\lambda_{i}\equiv 1\bmod{2}$} \}&
    \\
    = &\; \#\{i \mid \flat^{\Phi}(y)(i)= a \ \, \text{and
      $\lambda_{i}\equiv 1\bmod{2}$} \}-\#\{i \mid
    \flat^{\Phi}(y)(i)=b \ \, \text{and $\lambda_{i}\equiv
      1\bmod{2}$} \}&
    \\
    &-\#\{i \mid \Psi'(i)=a \ \, \text{and
      $\lambda_{i}\equiv 1 \bmod{2}$} \}+\#\{i \mid
    \Psi'(i)=a \ \, \text{and $\lambda_{i}\equiv 1\bmod{2}$}
    \}&.
  \end{align*}
  Since the diagrams $\Delta(\Psi')= \Gamma^{\Phi}(J_{1})$
  and $\Delta(\flat^{\Phi}(y))= \Gamma^{\Phi}(J_{2})$ are
  admissible in the same symmetric pair
  $(\g_{s,0},\kk_{s,0})$, the previous equation implies that
  $N_{a}-N_{b}-(N'_{a}-N'_{b}) =0$.
\end{proof}

From the function $\Psi$ one constructs, as in
\S\ref{involAIII}, the symmetric Lie algebra
$(\g,\kk',\pp')= (\g,\kk^{\Psi}, \pp^{\Psi})$ with
$V=V_{a}^{\Psi} \bigoplus V_{b}^{\Psi}$.
Since $\qq_{i}\cap\kk$ and $\qq_{i}\cap\kk'$ are both
spanned by even sup- and sub-diagonals, we obtain the same
symmetric Lie subalgebras $(\qq_{i},\qq_{i}\cap\kk,
\qq_{i}\cap \pp)=(\qq_{i},\qq_{i}\cap\kk', \qq_{i}\cap
\pp')$. It follows that the function $\flat^{\Psi}(z) :
[\![1,\delta_{\Od'}]\!] \rightarrow \{a,b\}$ is well defined
for all $z \in J\cap(\qq\cap\pp)^{\bullet} =
J\cap(\qq\cap\pp')^{\bullet}$.

Recall that $y \in(\qq\cap\pp)^{\bullet} =
(\qq\cap\pp')^{\bullet}$, thus $\flat^{\Psi}(y)$ is defined;
we claim that $\flat^{\Psi}(y)=\Psi'$. Set $V_a^\Phi(i) :=
\langle v_j^{(i)} : 1 \le j \le \lambda_i \rangle \cap
V_a^\Phi$, $V_b^\Phi(i) := \langle v_j^{(i)} : 1 \le j \le
\lambda_i \rangle \cap V_b^\Phi$, and define $V_a^\Psi(i),
V_b^\Psi(i)$ accordingly.  Observe that: $V_a^\Phi(i) =
V_a^\Psi(i)$, $V_b^\Phi(i) = V_b^\Psi(i)$ when $\Phi(i)=
\Psi(i)$, and $V_a^\Phi(i) = V_b^\Psi(i)$, $V_b^\Phi(i) =
V_a^\Psi(i)$ otherwise. Suppose that $\Phi(i) \ne \Psi(i)$;
by definition of $\flat^\Phi, \flat^\Psi$ one has
$\flat^\Phi(y)(i) \neq \flat^\Psi(y)(i)$, therefore
$\flat^\Psi(y)(i) =\Psi'(i)$ by definition of $\Psi$. The
equality $\flat^\Psi(y)(i) =\Psi'(i)$ is obtained in the
same way when $\Phi(i) = \Psi(i)$.  The equality
$\flat^{\Psi}(y)=\Psi'$ implies in particular
$\Gamma^{\Psi}(y)=\Gamma^{\Phi}(J_{1})$.

We can now show that the condition~\eqref{club} is satified
in type AIII:

\begin{prop}\label{desired}
  For each $J_{K}$-class $J_{1} \subset J\cap\pp$, there
  exists $g\in \mathsf{Z}$ such that $J_{1}$ is well-behaved
  w.r.t.~$\Od_{g.e}$
\end{prop}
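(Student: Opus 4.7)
The plan is to invoke the construction in the discussion preceding the proposition and transport the resulting well-behaved subvariety via an \emph{inner} symmetric-pair isomorphism. First, since \eqref{heart} holds by Theorem~\ref{unif} and \eqref{sstar} by Lemma~\ref{GeIH}, I apply Lemma~\ref{Jclass4}(iii) with $g=\Id$ to the given $J_K$-class $J_1$ (sitting in the $J_G$-class $J$), producing a $J_K$-class $J_2\subset J\cap\pp^{\Phi}$ that is well-behaved w.r.t.~$\Od_e$ together with the subvariety $Y:=J_2\cap(e+\XP)\subset(\qq\cap\pp^{\Phi})^{\bullet}$ of dimension $\dim J_1-m$ by \eqref{dimY} and Theorem~\ref{compirr}. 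Fix $y\in Y$ and any $\Psi'$ with $\Delta(\Psi')=\Gamma^{\Phi}(J_1)$; the construction preceding the statement then yields $\Psi:[\![1,\delta_{\Od}]\!]\to\{a,b\}$ such that $\Gamma_2:=\Delta(\Psi)$ is admissible for $(\g,\kk^{\Phi})$ by Lemma~\ref{dde} and $\flat^{\Psi}(y)=\Psi'$, hence $\Gamma^{\Psi}(y)=\Gamma^{\Phi}(J_1)$.

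Next I construct the desired isomorphism. The proof of Lemma~\ref{dde} actually shows $N_a^{\Psi}=N_a^{\Phi}$ and $N_b^{\Psi}=N_b^{\Phi}$, so I may pick $g_0\in\tilde G$ sending $V_a^{\Psi}$ onto $V_a^{\Phi}$ and $V_b^{\Psi}$ onto $V_b^{\Phi}$; then $\Ad(g_0):(\g,\kk^{\Psi},\pp^{\Psi})\isomto(\g,\kk^{\Phi},\pp^{\Phi})$ is an isomorphism of symmetric Lie algebras, and $\Ad(g_0)(e)\in\Od\cap\pp^{\Phi}$. Choose $g\in\mathsf{Z}$ with $K^{\Phi}.\Ad(g_0)(e)=K^{\Phi}.(g.e)$ and compose with a suitable element of $K^{\Phi}$ to arrange $\Ad(g_0)(e)=g.e$; then compose further with an element of $(K^{\Phi})^{g.e}$ to carry the normal \Striplet $\sS$ to $g.\sS$ (using the uniqueness of normal {\Striplet}s containing $g.e$ up to $(K^{\Phi})^{g.e}$-action, see \S\ref{SLA} and \ref{involreduc}). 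Call the resulting composition $\tau$. Being built entirely from inner automorphisms of $\g$, the map $\tau$ is inner, so it preserves every $J_G$-class; in particular $\tau(J)=J$.

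Finally I conclude. Since $(\qq_i,\qq_i\cap\kk^{\Phi},\qq_i\cap\pp^{\Phi})=(\qq_i,\qq_i\cap\kk^{\Psi},\qq_i\cap\pp^{\Psi})$ for each $i$, one has $Y\subset(\qq\cap\pp^{\Psi})^{\bullet}$, so $Y\subset J\cap\pp^{\Psi}$; as $J\cap\pp^{\Psi}$ is the disjoint union of its smooth $J_K^{\Psi}$-class components (Theorem~\ref{compirr}), the irreducible $Y$ lies in a unique $J_K^{\Psi}$-class $J_3$ with $\Gamma^{\Psi}(J_3)=\Gamma^{\Psi}(y)=\Gamma^{\Phi}(J_1)$. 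Then $\tau(J_3)$ is a $J_K^{\Phi}$-class inside $\tau(J)\cap\pp^{\Phi}=J\cap\pp^{\Phi}$ with $ab$-diagram $\Gamma^{\Phi}(J_1)$, so the injectivity in Proposition~\ref{cft}(1) forces $\tau(J_3)=J_1$ and hence $\tau(Y)\subset J_1$. Moreover $\tau(Y)\subset\tau(e+\XP)=g.e+\XP(S_G,g.\sS)\subset g.e+\pp^{g.f}$, with $\dim\tau(Y)=\dim Y=\dim J_1-m$, giving $\dim(J_1\cap(g.e+\pp^{g.f}))\geqslant\dim J_1-m$. The reverse inequality follows from Remark~\ref{dimGY} applied to the subvariety $J_1\cap(g.e+\pp^{g.f})\subset g.e+\XP(S_G,g.\sS)$, since $K.(J_1\cap(g.e+\pp^{g.f}))\subset J_1$. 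Hence $J_1$ is well-behaved w.r.t.~$\Od_{g.e}$.

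The main technical hurdle is choosing $\tau$ \emph{inner}: this is what ensures $\tau(J)=J$ and thereby allows the $ab$-diagram match to single out $J_1$ (rather than a $J_K^{\Phi}$-class in some other $J_G$-class with the same diagram). This inner choice is possible thanks to the equality of dimensions $N_a^{\Psi}=N_a^{\Phi}$, $N_b^{\Psi}=N_b^{\Phi}$ guaranteed by the construction of $\Psi$.
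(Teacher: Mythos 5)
Your proposal is correct and follows essentially the same path as the paper's own proof: construct $\Psi$, invoke Lemma~\ref{dde} to check the admissibility (equivalently $N_a^{\Psi}=N_a^{\Phi}$, $N_b^{\Psi}=N_b^{\Phi}$), pick an inner $g_0\in\tilde G$ carrying $V_a^{\Psi},V_b^{\Psi}$ to $V_a^{\Phi},V_b^{\Phi}$ (whence $g.J=J$ and the $ab$-diagram of the image of $Y$ matches $\Gamma^{\Phi}(J_1)$), then finish by the dimension count via Remarks~\ref{dimGY} and~\ref{flattenremark}. Your intermediate $J_K^{\Psi}$-class $J_3$ and the explicit transport of the \Striplet are slightly more detailed bookkeeping than the paper's terse "up to conjugation by an element of $K^{\Phi}$, we may assume $g\in\mathsf{Z}$", but the underlying argument is identical.
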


\begin{proof}
  By Lemma~\ref{dde} one can find $g'\in \GL_{N}$ such that
  $g'.V_{a}^{\Psi}=V_{a}^{\Phi}$ and
  $g'.V_{b}^{\Psi}=V_{b}^{\Phi}$.  Then, $g=\rho(g')\in G$
  induces an isomorphism of symmetric Lie algebras between
  $(\g,\kk',\pp')$ and $(\g,\kk,\pp)$ (cf.~end of
  \ref{involAIII}).  Since $e\in\qq\cap\pp$, one has
  $e\in\pp'$ and $g.e\in\pp$; therefore, up to conjugation
  by an element of $K^{\Phi}$ (the algebraic group
  associated to $\kk=\kk^{\Phi}$), we may assume that $g\in
  \mathsf{Z}$ (see~\S\ref{involreduc}).  These remarks imply
  that $\Gamma^{\Phi}(g.y) = \Gamma^{\Psi}(y) =
  \Gamma^{\Phi}(J_1)$ is the $ab$-diagram associated to
  $J_{1}$ with respect to $\Phi$,
  cf.~Proposition~\ref{cftt}.
  From $Y\subset \qq \cap \pp = \qq\cap\pp'$ one gets
  $g.y\in g.Y\subset J\cap\pp$ and, since $g.Y$ is
  irreducible, one has $g.Y\subset J_1$. In particular, $g.Y
  \subset g.(e+\XP(g.\sS)) \cap \pp \subset g.e +
  \XP(g.\sS)$ is contained in $J_{1}$ with $\dim g.Y = \dim
  J_{1}-m$.  The result then follows from
  Remarks~\ref{dimGY} and~\ref{flattenremark}.
\end{proof}

\section{Main theorem and remarks}
\label{lastsection}

\subsection{Main theorem}
\label{mainthm}

In this subsection we give the description of the $K$-sheets
when $(\g,\theta)$ is of type~A.  Thus, $\g \cong \gl_{N}$
and $(\g,\theta)=(\g,\kk,\pp)$ is a symmetric Lie algebra.
Suppose that $S_{G} \subset \g$ is a $G$-sheet intersecting
$\pp$.
In~\eqref{SKge}, cf.~Remark~\ref{SKO}, we have defined, for
any nilpotent element $e\in S_{G}\cap \pp$ and any normal
\Striplet $\sS=(e,h,f)$, the following subvariety of $S_G
\cap \pp$:
$$
S_{K}(S_{G},\sS) = S_{K}(\sS) = S_{K}(K.e) :=
\overline{K.(e+\XP(\sS))}^{\bullet}.
$$ 
We aim to describe the $K$-sheets and the varieties $S_G
\cap \pp$ in terms of the $S_{K}(K.e)$.

Recall from Remark~\ref{slsheets}(2) that $S_G \cap \pp$ is
smooth; in particular, its irreducible components are
disjoint. The next lemma reduces the study of $K$-sheets to
the study of irreducible components of $S_{G}\cap\pp$; this
result may be false in some cases of type~0, see the remark
previous to Corollary~\ref{cor000}.

\begin{lm}\label{final1}
  Let $S_{G}$ be a $G$-sheet of $\g$ intersecting $\pp$,
  then each irreducible component of $S_{G}\cap\pp$ is a
  $K$-sheet.
\end{lm}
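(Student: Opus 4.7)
The plan is to fix an irreducible component $C$ of $S_{G}\cap\pp$, locate the unique $K$-sheet $S_{K}$ which contains $C$, and then show $S_{K}\subset S_{G}\cap\pp$; the maximality of $C$ as an irreducible subset of $S_{G}\cap\pp$ will then force $C=S_{K}$, proving that $C$ is a $K$-sheet.

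First, since $S_{G}\subset\g^{(2m)}$ for some $m\in\N$ and $\dim K.x=\tfrac{1}{2}\dim G.x=m$ for every $x\in\pp\cap\g^{(2m)}$, we have $S_{G}\cap\pp\subset\pp^{(m)}$. Thus $C$ is an irreducible subset of $\pp^{(m)}$, hence contained in a unique irreducible component of $\pp^{(m)}$, i.e.~in a well defined $K$-sheet $S_{K}$.

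The crux is the inclusion $S_{K}\subset S_{G}\cap\pp$. Consider the $G$-saturation $G.S_{K}\subset\g^{(2m)}$, which is irreducible (image of $G\times S_{K}$) and $G$-stable. Here I will invoke the disjointness of $G$-sheets in type~A: by Kraft's parametrization \cite{Kr}, combined with Proposition~\ref{unilp} and Remark~\ref{XsXss}, the map sending a sheet to its unique nilpotent orbit is a bijection, and consequently each Jordan $G$-class of $\gl_{N}$ lies in a single $G$-sheet (the one associated to its induced nilpotent orbit). Thus $\g^{(2m)}$ is a finite disjoint union of $G$-sheets, each being clopen in $\g^{(2m)}$, so any irreducible subset of $\g^{(2m)}$ is contained in a single sheet. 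Consequently $G.S_{K}\subset S_{G}'$ for a unique sheet $S_{G}'$, and $\emptyset\ne C\subset G.S_{K}\cap S_{G}$ forces $S_{G}'=S_{G}$. Hence $S_{K}\subset G.S_{K}\cap\pp\subset S_{G}\cap\pp$.

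The irreducible set $S_{K}$ is now squeezed between the component $C$ of $S_{G}\cap\pp$ and $S_{G}\cap\pp$ itself, so maximality gives $C=S_{K}$, as desired. The main obstacle in this plan is the disjointness of $G$-sheets in type~A, crucial to prevent $G.S_{K}$ from straddling several sheets; without it one would need a more indirect argument, e.g.~comparing the dense Jordan $K$-class of $C$ with that of $S_{K}$ via Theorem~\ref{compirr} and Proposition~\ref{sheetsmoothness}(iii).
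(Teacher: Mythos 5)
Your proposal is correct and follows essentially the same route as the paper: locate the $K$-sheet $S_K$ containing the component $C$, then use the type~A fact that $G$-sheets are pairwise disjoint (hence the irreducible set $S_K$, or equivalently $G.S_K$, lies in a single $G$-sheet) to conclude $S_K\subset S_G\cap\pp$ and thus $C=S_K$ by maximality. The detour through the $G$-saturation $G.S_K$ is harmless but unnecessary — the paper applies the disjointness argument directly to the irreducible subset $S_K\subset\g^{(2m)}$.
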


\begin{proof}
  Let $S_{K}$ be an irreducible component of
  $S_{G}\cap\pp$. As $S_{G}\cap\pp$ is a union of $K$-orbits
  of same dimension, there exists a $K$-sheet $S_{K}'$
  containing $S_{K}$. Recall that, as $\g\cong \gl_N$, two
  distinct $G$-sheets are disjoint (see the discussion
  previous to Corollary~\ref{cor000}). It follows that
  $S_{K}'$ must be contained in $S_{G}$ and, therefore, in
  $S_{G}\cap\pp$.  This proves that $S_{K}'=S_{K}$, hence
  the result.
\end{proof}

\begin{thm}\label{final}
{\rm (i)} The $K$-sheets of $\pp$ are disjoint, they are
exactly the smooth irreducible varieties $S_{K}(\Od_K)$
where $\Od_K \subset \pp$ is a nilpotent $K$-orbit.
\\
{\rm (ii)} Let $S_{G}$ be a $G$-sheet intersecting $\pp$.
Then, $S_G\cap\pp$ is a smooth equidimensional variety and
each of its irreducible component is some $S_{K}(\Od_K)$,
where $\Od_K \subset S_G\cap\pp$ is a nilpotent $K$-orbit.\\
{\rm (iii)} Let $S_K\subset \pp$ be a $K$-sheet and $e$ be a
nilpotent element of $S_K$ embedded in a normal \Striplet
$\sS=(e,h,f)$.  Define $Y$ by $e+Y:=S_K\cap(e+\pp^f)$. Then
$S_K=\overline{K.(e+Y)}^{\bullet}$.
\end{thm}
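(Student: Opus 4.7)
The plan is to deduce this theorem by combining the general results of Section~\ref{Ksheet} (in particular Theorem~\ref{equidim}) with the verifications of the three conditions \eqref{heart}, \eqref{diamond}, \eqref{club} carried out in Section~\ref{AAA}. I would prove~(ii) first, then derive~(i) from~(ii), and finally~(iii) as an almost immediate rewriting.

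For~(ii), since $\g=\gl_N$, conditions \eqref{heart}, \eqref{diamond}, \eqref{club} hold (Theorem~\ref{unif} and Proposition~\ref{unifAIII} for \eqref{heart}; Remarks~\ref{diamondAI} and~\ref{diamondAIII} for \eqref{diamond}; Corollary~\ref{clubAI} and Proposition~\ref{desired} for \eqref{club}). Theorem~\ref{equidim} then says that $S_G\cap\pp$ is equidimensional with irreducible components exactly the $S_K(S_G,\Od_{g.e})$, $g\in\mathsf Z$, which is the statement up to smoothness. Smoothness of $S_G\cap\pp$ follows from Remark~\ref{slsheets}(2): the unique nilpotent orbit $\Od\subset S_G$ meets $\pp$ (Lemma~\ref{nilporbp}), so for $e\in\Od\cap\pp$ one has $\theta(G.e)=G.(-e)=G.e$; as a $G$-sheet of $\gl_N$ is the unique one containing its nilpotent orbit, this forces $\theta(S_G)=S_G$, and Theorem~\ref{Iv} applied to the smooth sheet $S_G$ (Theorem~\ref{slicesmoothness}) with $\Gamma=\{\Id,-\theta\}$ yields that $S_G^\Gamma=S_G\cap\pp$ is smooth.

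For~(i), recall that in type~A distinct $G$-sheets are disjoint, so $\g$ is the disjoint union of its $G$-sheets. Therefore any $K$-sheet $S_K\subset\pp$ lies in a unique $G$-sheet $S_G$, hence in $S_G\cap\pp$; by Lemma~\ref{final1} each irreducible component of $S_G\cap\pp$ is a $K$-sheet, and $S_G\cap\pp$ being smooth its irreducible components are pairwise disjoint. Combined with~(ii), this identifies $S_K$ with exactly one of the varieties $S_K(S_G,\Od_{g.e})=S_K(\Od_{K.(g.e)})$. Conversely, for any nilpotent $K$-orbit $\Od_K\subset\pp$, let $S_G$ be the unique $G$-sheet containing $G.\Od_K$; Corollary~\ref{sheetsheetsheet} shows that $S_K(\Od_K)$ is an irreducible component of $S_G\cap\pp$, hence a $K$-sheet by Lemma~\ref{final1}. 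Smoothness of each $K$-sheet is contained in Remark~\ref{slsheets}(1), and disjointness of distinct $K$-sheets follows from the disjointness of $G$-sheets together with the disjointness of irreducible components of each $S_G\cap\pp$.

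For~(iii), let $S_G$ be the unique $G$-sheet containing $S_K$; by~(i), $S_K=S_K(K.e)=\overline{K.(e+X_\pp(S_G,\sS))}^{\bullet}$. Since $e\in\pp$, one has $(e+\g^f)\cap\pp=e+\pp^f$, so
\[
X_\pp(S_G,\sS)=(e+\g^f)\cap S_G\cap\pp-e=S_G\cap(e+\pp^f)-e.
\]
The variety $e+X_\pp(S_G,\sS)$ is irreducible by~\eqref{diamond} and contained in the smooth variety $S_G\cap\pp$, whose irreducible components are pairwise disjoint $K$-sheets by~(ii); as it contains $e\in S_K$, it is entirely contained in $S_K$. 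Hence $S_K\cap(e+\pp^f)\supseteq e+X_\pp(S_G,\sS)$, and the reverse inclusion is immediate from $S_K\subset S_G$. Thus $e+Y=e+X_\pp(S_G,\sS)$ and the result follows.

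There is no serious obstacle: all of the technical difficulties have already been resolved in Sections~\ref{Ksheet} and~\ref{AAA}; the only point that requires a little care is the assembly argument in~(iii) showing $e+X_\pp\subset S_K$, which rests precisely on the conjunction of \eqref{diamond}, the smoothness of $S_G\cap\pp$, and Lemma~\ref{final1}.
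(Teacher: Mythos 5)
Your proposal is correct and follows essentially the same route as the paper: summarize that conditions \eqref{heart}, \eqref{diamond}, \eqref{club} hold in type~A, invoke Theorem~\ref{equidim} together with Remark~\ref{slsheets}(2) for~(ii), and combine with Lemma~\ref{final1} and the disjointness of $G$-sheets in type~A for~(i). The only point worth mentioning is part~(iii): where the paper obtains the chain $e+Y\subset (e+X(\sS))\cap\pp\subset S_K(\sS)\cap(e+\pp^f)=e+Y$ directly from the inclusion $e+X_\pp\subset S_K(\sS)$ (which is immediate from the definition of $S_K(\sS)=\overline{K.(e+X_\pp)}^\bullet$ and the identification $S_K=S_K(\sS)$ from part~(i)), you instead reprove $e+X_\pp\subset S_K$ by using the irreducibility of $e+X_\pp$ (condition \eqref{diamond}) and the fact that the irreducible components of $S_G\cap\pp$ are disjoint. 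Both arguments are valid and equally short; yours leans a little harder on \eqref{diamond}, the paper's on the structure of the definition of $S_K(\sS)$.
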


\begin{proof}
  We need to summarize the conditions introduced
  in~\S\ref{Ksheet} and proved in cases~AI, AII and AIII:
  \eqref{heart}~has been proved in Theorem~\ref{unif} (with
  proof in Proposition~\ref{unifAIII} for type AIII);
  \eqref{diamond} was established in Remark~\ref{diamondAI}
  (types AI, AII) and Remark~\ref{diamondAIII} (type AIII);
  \eqref{club} has been obtained in Corollary~\ref{clubAI}
  (types AI, AII) and Proposition~\ref{desired} (type AIII).
  \\
  Claim (ii) is therefore consequence of
  Remark~\ref{slsheets}(2) (or equivalently
  Proposition~\ref{sheetsmoothness}) and
  Theorem~\ref{equidim}.\\
  Recall that $G$-sheets are disjoint. Then, from
  $\pp^{(m)}\subset\g^{(2m)}$, it follows that each
  $K$-sheet is contained in a unique $G$-sheet.  So, (i) is
  consequence of (ii) and
  Lemma~\ref{final1}.\\
  Under the hypothesis in (iii), $e$ belongs to $S_K$, hence
  $S_K=S_K(\sS)$ is the unique $K$-sheet containing $e$.
  Therefore, $$e+Y\subset e+X(\sS)\cap\pp\subset
  S_K(\sS)\cap(e+\pp^f)=e+Y.$$ The assertion in (iii) then
  follows from the definition of $S_K(\sS)$.
\end{proof}

\begin{remark}
  One can be more precise about the number of irreducible
  components of $S_G\cap\pp$, see~\S\ref{lastcomments}(4).
\end{remark}

Fix a sheet $S_{G}$ intersecting $\pp$. One can compute the
dimension of $S_G \cap \pp$ in terms of the partitions
associated to the nilpotent orbit $\Od \subset S_{G}$.  Let
$\bolda=(\lambda_{1}\geqslant\dots\geqslant
\lambda_{\delta_{\Od}})$ and $\tilde{\bolda} =
(\tilde{\lambda}_1 \geqslant \dots \geqslant
\tilde{\lambda}_{\delta_{\lf}})$ be the partitions of $N$
defined in~\ref{settings}.  Pick $e\in\Od\cap\pp$ and recall
that if $\sS=(e,h,f)$ is a normal \Striplet we set
$S_K(K.e):= \overline{K.(e+\XP(\sS))}^{\bullet}$.

\begin{prop}\label{dimSK}
  Under the previous notation one has
$$
\dim S_{G}\cap\pp = \dim S_K(K.e)= \lambda_1+\frac
12\Bigl(N^2-\sum_{i=1}^{\lambda_1} \tilde{\lambda}_i^2\Bigr)
$$ 
in types {\rm AI} and {\rm AII}, and
\[
\dim S_{G}\cap\pp=\dim S_K(K.e) =
\sum_{i=1}^{\delta_\Od}\left\lfloor\frac{\lambda_i-\lambda_{i+1}}2
\right\rfloor+\frac 12\Bigl(N^2-\sum_{i=1}^{\lambda_1}
\tilde{\lambda}_i^2\Bigr).
\]
in type {\rm AIII}.
\end{prop}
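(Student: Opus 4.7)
The plan is to combine the equidimensionality of $S_G\cap\pp$ with the dimension computation of a single irreducible component, using the Slodowy slice description obtained in the previous section.

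First, by Theorem~\ref{final}(ii), $S_G\cap\pp$ is equidimensional and its components are the $S_K(\Od_K)$, $\Od_K\subset S_G\cap\pp$ a nilpotent $K$-orbit. It thus suffices to fix one nilpotent element $e\in\Od\cap\pp$ together with a normal \Striplet $\sS=(e,h,f)$ and compute $\dim S_K(K.e)=\dim\overline{K.(e+\XP(\sS))}^\bullet$. By Remark~\ref{dimGY} applied to $Y=e+\XP(\sS)$ (which is non-empty since $e$ belongs to it), one has
\[
\dim S_K(K.e)=\dim\XP(\sS)+m,
\]
where $m=\frac12\dim G.e$. The formula $\dim\g^e=\sum_{i=1}^{\lambda_1}\tilde\lambda_i^2$ (the standard expression for the centraliser dimension of a nilpotent matrix via its transpose partition) then gives $m=\frac12\bigl(N^2-\sum_{i=1}^{\lambda_1}\tilde\lambda_i^2\bigr)$, which accounts for the second summand in both formulas. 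It only remains to identify $\dim \XP(\sS)$ according to the type.

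In types~AI and~AII, I will use the stronger form of~\eqref{heart} established in Theorem~\ref{unif}, namely $e+X\subset\pp$. This yields $\XP(\sS)=X(\sS)$, hence $\dim\XP(\sS)=\dim X(\sS)=\dim\tf$ by Remark~\ref{epsilonfinite}. Finally, the isomorphism $\alpha:\K^{\lambda_1}\isomto\tf$ of \eqref{alpha} gives $\dim\tf=\lambda_1$, which produces the first summand~$\lambda_1$ of the AI/AII formula.

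In type~AIII the slice $e+X$ no longer lies in $\pp$, so I will replace $\tf$ by the subspace $\cc\subset\tf$ constructed in \eqref{defcprime}--\eqref{defc}. Proposition~\ref{corcc} asserts $\varepsilon(e+\cc)=e+\XP(\sS)$ and gives $\dim\cc=\sum_{i=1}^{\delta_\Od}\lfloor(\lambda_i-\lambda_{i+1})/2\rfloor$. Because $\g$ is classical, the restriction $\varepsilon:e+\tf\to e+X$ is quasi-finite (Remark~\ref{epsilonfinite}), hence its restriction to $e+\cc$ is quasi-finite as well and $\dim\XP(\sS)=\dim\cc$. Substituting into the displayed formula above yields the AIII expression. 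The only non-routine ingredient is the identification $\dim\XP(\sS)=\dim\cc$ in type~AIII, which hinges on the quasi-finiteness of $\varepsilon$; this is the main point to cite carefully.
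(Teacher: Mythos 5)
Your argument reproduces the paper's own proof almost verbatim: both reduce via Theorem~\ref{final} and Remark~\ref{dimGY} to computing $\dim\XP(\sS)$, then in types AI/AII use Theorem~\ref{unif} together with Remark~\ref{epsilonfinite} and \eqref{alpha} to get $\dim\XP=\dim\tf=\lambda_1$, and in type AIII use Proposition~\ref{corcc} and the quasi-finiteness of $\varepsilon$ to get $\dim\XP=\dim\cc$. The structure, intermediate formulas, and references are the same, so the proposal is correct and takes essentially the same route as the paper.
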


\begin{proof}
  Recall that $\dim G.e=N^2-\sum_{i=1}^{\lambda_1}
  \tilde{\lambda}_i^2$, see~\cite{CM}, and $\dim
  K.e=\frac{1}{2}\dim G.e$. By Theorem~\ref{final} and
  Remark~\ref{dimGY} one has
  \[
  \dim S_G \cap \pp = \dim S_K(K.e) = \dim K.e + \dim
  \XP(\sS).
  \]

  We know that $\XP(\sS) = X(\sS)$ in types AI and AII,
  cf.~Theorem~\ref{unif}.  Therefore,
  Remark~\ref{epsilonfinite} and equation~\eqref{alpha}
  yield $\dim S_G \cap \pp = \dim K.e + \dim X(\sS)= \dim
  K.e+\dim \tf= \dim K.e+ \lambda_1$. Hence:
$$
\dim S_G \cap \pp = \lambda_1+\frac
12\Bigl(N^2-\sum_{i=1}^{\lambda_1}
\tilde{\lambda}_i^2\Bigr).
$$ 

Since the morphism $\varepsilon$ is quasi-finite, see
Remark~\ref{epsilonfinite}, one has $\dim \XP(\sS)= \dim
\cc$ in type~AIII by Proposition~\ref{corcc}. It then
follows from \eqref{dimd} that
$$
\dim S_G \cap \pp =\dim K.e + \dim \cc = \dim K.e+
\sum_{i=1}^{\delta_\Od} \textstyle{\left\lfloor
    \frac{\lambda_i-\lambda_{i+1}}2\right\rfloor}.
$$
Thus
$$
\dim S_G \cap \pp = \sum_{i=1}^{\delta_\Od}
\left\lfloor\frac{\lambda_i-\lambda_{i+1}}2 \right\rfloor+
\frac 12\Bigl(N^2-\sum_{i=1}^{\lambda_1}
\tilde{\lambda}_i^2\Bigr)
$$
as desired.
\end{proof}

\subsection{Remarks and comments}
\label{lastcomments}
We collect here various remarks and comments about the
results obtained in the previous sections. To keep the
length of the exposition reasonable we will not give full
details of the proofs, leaving them to the interested
reader.
 
If not otherwise specified, we assume that $(\g,\theta)
\cong (\gl_N,\theta)$ is of type~AI-II-III; we then retain
the notation of Section~\ref{AAA} and \S\ref{mainthm}. In
particular, $S_G \subset \g$ is a $G$-sheet which intersects
$\pp$, $\Od=G.e$, $e \in S_G \cap \pp$, is the nilpotent
orbit contained in $S_G$, $\bolda = (\lambda_{1}, \dots,
\lambda_{\delta_{\Od}})$ is the associated partition of $N$,
$\mathbf{v}$ is the basis of $V$ introduced in
\S\ref{settings}, $e+X= e+X(\sS)$, with $\sS =(e,h,f)$, is a
Slodowy slice of $S_G$, $\XP=\XP(\sS)= X \cap \pp$, $\cc
\subset \tf$ is such that $\varepsilon(e + \cc) = e + \XP$
in case AIII
(cf.~\eqref{defc}), etc.  

For simplicity, we will sometimes assume that $\g=\sln_{N}$.
When this is the case, the above notation refers to their
intersection with $\sln_{N}$.

\smallskip
 
(1) Theorems \ref{unif} and~\ref{final} show that $e+\XP$ is
``almost'' a slice for $S_{G}\cap\pp$, or for a $K$-sheet
contained in $S_{G}$ and containing~$e$, meaning that the
$G$-orbit of any element of $S_{G}\cap\pp$ intersects $e +
\XP$. But, contrary to the Lie algebra case, $e +\XP$ does
not necessarily intersect each $K$-orbit contained in the
given $K$-sheet.
As it is implicitly noticed in \cite{KR}, this phenomenon already occurs, in some cases, for the regular sheet; however, $e+\XP$ is a ``true'' slice when one considers the $G^{\theta}$-action instead of the $K$-action \cite[Theorem 11]{KR}.
On can show that the previous result holds in general for types AI, AII. But, in case AIII, it may happen that $\Aut(\g,\kk).(e+\XP)\subsetneq \overline{K.(e+\XP)}^{\bullet}$ for some $K$-sheets.
This mainly explains why we need to work with the closure of $K.(e+\XP)$ in the whole paper.

\smallskip

(2) Suppose that $(\g,\theta)=(\g,\kk)$ is an arbitrary
reductive symmetric Lie algebra. Recall \cite[39.4]{TY} that
a $G$-sheet containing a semisimple element is called a
\textit{Dixmier sheet}. Similarly, we will say that a
$K$-sheet which contains a semisimple element is a
\textit{Dixmier $K$-sheet}.
 
If $\g$ is semisimple of type~A, all $G$-sheets are Dixmier
sheets, cf. \cite[2.3]{Kr}.
This implies that, for each sheet $S_G$ and \Striplet $\sS
=(e,h,f)$ as in \S\ref{Katsylo}, the set
$e+X(S_{G},\sS)=e+X(\sS)$ contains a semisimple element. For
symmetric pairs of type AI or AII, the $K$-sheets are all of
the form $S_{K}(\sS)= S_K(K.e):=\overline{K.(e+X(\sS))}^{\bullet}$ (cf. Theorems~\ref{unif}
and~\ref{final}); thus, in these cases, any $K$-sheet is a
Dixmier $K$-sheet.
 
In type~AIII there exist $K$-sheets containing no semisimple
element and one can characterize them in terms of the
partition $\bolda$ associated to the nilpotent element $e
\in S_G \cap \pp$ as follows. 

\begin{claim}
  \label{claim1}
  In type {\rm AIII}, a $K$-sheet is Dixmier if and only if
  the partition $\bolda$ satisfies:
  $\lambda_{i}-\lambda_{i+1}$ is odd for at most one
  $i\in[\![1,\delta_{\Od}]\!]$ (where we set
  $\lambda_{\delta_{\Od}+1} :=0$). \qed
\end{claim}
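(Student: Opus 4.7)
The plan is to reduce the claim to a combinatorial question about the subspace $\cc\subset\tf$ introduced in \eqref{defc}, and then to read off the answer from the explicit basis \eqref{defcprime} of $\cc'$. Throughout, fix $e\in S_G\cap\pp$ so that the $K$-sheet under consideration is $S_K=S_K(K.e)$, and fix a normal \Striplet $\sS=(e,h,f)$.

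First I would establish that $S_K$ is Dixmier if and only if $e+\XP$ contains a semisimple element. The forward direction is immediate: a semisimple $s\in e+\XP$ lies in $K.(e+\XP)\subset S_K$ and satisfies $\dim K.s=m$ since $s\in S_G\subset\g^{(2m)}$, so $s$ witnesses the Dixmier property. For the converse, Theorem~\ref{equidim}(iii)--(iv), which applies in type~AIII by \eqref{heart}, \eqref{diamond} and \eqref{club}, expresses $S_K$ as $\overline{J_g}^{\bullet}$ for a unique dense Jordan $K$-class $J_g$. A semisimple $s_0\in S_K$ then has $J_K(s_0)=J_g$ by equidimensionality of $S_G\cap\pp$, and the constancy of Jordan type along a single $J_K$-class forces every point of $J_g$ to be semisimple. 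Since $J_g$ is $K$-stable and dense in $S_K=\overline{K.(e+\XP)}^{\bullet}$, the intersection $J_g\cap(e+\XP)$ is nonempty, supplying the desired semisimple point.

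Second, I would transfer this to a statement about $\cc$. Proposition~\ref{corcc} gives $e+\XP=\varepsilon(e+\cc)$, and Lemma~\ref{epsilon}(i) says $\varepsilon(e+t)$ is $G$-conjugate to $e+t$; hence semisimple points in $e+\XP$ correspond to $t\in\cc$ with $e+t$ semisimple. Decomposing $e+t=\sum_i(e_i+t_i)\in\bigoplus_i\qq_i$, this amounts to each $e_i+t_i\in\qq_i\cong\gl_{\lambda_i}$ being semisimple. Since $e_i$ is regular in $\qq_i$, Lemma~\ref{eigenjordan} shows this happens exactly when the diagonal entries of $t_i$ are pairwise distinct.

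The final combinatorial step consists of expanding $t=\alpha(x)$ with $x=\sum_{i,j}a_{i,j}c(i,j)\in\cc'$. Inspecting \eqref{defcprime}, the vectors $c(i,j)$ have pairwise disjoint supports, each one contributing the opposite-sign pair $(a_{i,j},-a_{i,j})$ at adjacent coordinates inside $(\lambda_{i+1},\lambda_i]$; they fill up all of $(\lambda_{i+1},\lambda_i]$ when $\lambda_i-\lambda_{i+1}$ is even, and they leave the top coordinate $k=\lambda_i$ forced to $0$ when $\lambda_i-\lambda_{i+1}$ is odd. Letting $r$ denote the number of indices $i\in[\![1,\delta_\Od]\!]$ with $\lambda_i-\lambda_{i+1}$ odd, the coordinates $(x_1,\dots,x_{\lambda_1})$, which are the diagonal entries of $t_1$, contain exactly $r$ forced zeros while the remaining ones come in opposite-sign pairs whose magnitudes are free parameters. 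If $r\geqslant 2$, two forced zeros prevent distinctness. If $r\leqslant 1$, a generic choice of the $a_{i,j}$ (all nonzero, with pairwise distinct absolute values) makes the entries of $t_1$ distinct; since each $t_i$ is a truncation of $t_1$ to its first $\lambda_i$ coordinates, the entries of every $t_i$ are then automatically distinct.

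The main obstacle is the converse half of the first reduction: converting an abstract semisimple witness $s_0\in S_K$ into a semisimple element of the slice $e+\XP$. This requires combining the $K$-stability of $J_g$, its density in $S_K$, and the uniformity of Jordan type along a single $J_K$-class, all of which become available only once Theorem~\ref{equidim} has been invoked.
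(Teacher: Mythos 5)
Your argument is correct and follows the route the paper sketches: reduce the Dixmier property of $S_K$ to the existence of a semisimple point of $e+\XP$, transfer that to $e+\cc$ via Proposition~\ref{corcc} and Lemma~\ref{epsilon}(i), use Corollary~\ref{corlm}/Lemma~\ref{eigenjordan} to translate semisimplicity of $e+t$ into distinctness of the coordinates of each $t_i$, and then read off the answer from the explicit basis $c(i,j)$ of $\cc'$ in \eqref{defcprime}. The only step that deserves more care is the converse of your first reduction: the phrase ``$J_K(s_0)=J_g$ by equidimensionality of $S_G\cap\pp$'' is not self-contained, since equidimensionality of $S_G\cap\pp$ alone does not force the $J_K$-class of an arbitrary point to be dense; you also need the fact that, for a Dixmier $G$-sheet (and all $\gl_N$-sheets are Dixmier), the dense Jordan $G$-class is exactly the set of semisimple elements of the sheet, whence $J_G(s_0)$ is that dense class and $\dim J_K(s_0)=\dim J_G(s_0)\cap\pp=\dim S_G\cap\pp$ by Theorems~\ref{compirr} and~\ref{equidim}(iii). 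In fact the whole detour through Jordan $K$-classes is avoidable: condition~\eqref{heart} (Proposition~\ref{unifAIII}(iii)) already gives $G.(S_G\cap\pp)=G.(e+\XP)$, so any semisimple $s_0\in S_K\subset S_G\cap\pp$ satisfies $G.s_0\cap(e+\XP)\neq\emptyset$, and every element of $G.s_0$ is semisimple — this delivers a semisimple point of $e+\XP$ directly.
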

This can be proved by using Propositions \ref{corcc} 
and \ref{cormiha}, Corollary~\ref{corlm} and a study of semisimple elements in $e+\cc$.

Observe that the condition for a $K$-sheet to be Dixmier
depends only on the nilpotent orbit $G.e$ and that
$S_K(K.e)$ is Dixmier if and only if $S_K(K.g.e)$, $g \in
\mathsf{Z}$, is Dixmier.

\smallskip

(3) Recall from Section~\ref{Ksheet} that a nilpotent orbit
of $\g$ is \emph{rigid} when it is a sheet of $[\g,\g]$.
When $\g$ is of type A the only rigid nilpotent orbit is
$\{0\}$.  In other cases it may happen that a rigid orbit
$\Od_1$ contains a non-rigid orbit $\Od_2$ in its closure
(see the classification of rigid nilpotent orbits in
\cite{CM}).  Observe that, since the nilpotent cone is
closed, a sheet containing $\Od_2$ cannot be contained in
the closure of $\Od_1$. One gets in this way some sheets
whose closure is not a union of sheets.  One can ask if
similar facts occur for symmetric pairs $(\g,\kk)$, in
particular when $\g$ is of type~A.
 
Let $(\g,\kk,\pp)$ be a symmetric Lie algebra; a nilpotent
$K$-orbit in $\pp$ which is a $K$-sheet in $\pp\cap[\g,\g]$
will be called rigid.  We remarked in (2) that, in types AI
and AII, each $K$-sheet contains a semisimple element; thus,
$\{0\}$ is the only rigid nilpotent $K$-orbit in these
cases.  Assume that $(\g,\kk,\pp)$ is of type AIII,
$\zz(\g)\subset\kk$, and recall from the proof of
Proposition~\ref{dimSK} (using Remark~\ref{epsilonfinite})
that $S_{K}(K.e)=K.e$ if and only if $\dim \cc=0$. The
arguments given in (2) about $K$-sheets can be adapted to
prove:

\begin{claim}
  \label{claim2}
  The orbit $K.e$ is rigid if and only if the partition
  $\bolda$ satisfies: $\lambda_{i}-\lambda_{i+1}\leqslant 1$
  for all $i\in[\![1,\delta_{\Od}]\!]$. \qed
\end{claim}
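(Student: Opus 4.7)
The plan is to reduce the rigidity of $K.e$ to the vanishing of $\dim \XP(\sS)$, and then to translate that vanishing into the stated combinatorial condition on $\bolda$ via the explicit formula for $\dim \cc$ available in type~AIII.

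First I would observe that the hypothesis $\zz(\g)\subset\kk$ forces $\pp\subset[\g,\g]$, and the connected group $K$ (whose Lie algebra is $\kk\cap[\g,\g]$ by definition) is the same whether we view $(\g,\theta)$ or $([\g,\g],\theta_{\mid [\g,\g]})$. Consequently the $K$-sheets of $\pp\cap[\g,\g]$ coincide with those of $\pp$, and the definition of rigidity reduces to: $K.e$ is rigid if and only if $K.e$ is itself a $K$-sheet of $\pp$.

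Next I would invoke Theorem~\ref{final}(i), which identifies the unique $K$-sheet containing $K.e$ with
$$S_{K}(K.e)=\overline{K.(e+\XP(\sS))}^{\bullet}.$$
Since $K.e\subset S_K(K.e)$, the orbit $K.e$ is a $K$-sheet exactly when $\dim K.e=\dim S_K(K.e)$. Remark~\ref{dimGY} gives $\dim S_K(K.e)=\dim K.e+\dim \XP(\sS)$, so this condition is equivalent to $\dim \XP(\sS)=0$.

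It then remains to translate this dimensional condition into combinatorics. By Proposition~\ref{corcc}, $\varepsilon(e+\cc)=e+\XP(\sS)$, and by Remark~\ref{epsilonfinite} the map $\varepsilon$ is quasi-finite in the classical setting; hence $\dim \XP(\sS)=\dim \cc$. The explicit formula~\eqref{dimd},
$$\dim \cc=\sum_{i=1}^{\delta_{\Od}}\left\lfloor\frac{\lambda_{i}-\lambda_{i+1}}{2}\right\rfloor$$
(with the convention $\lambda_{\delta_{\Od}+1}:=0$), is a sum of non-negative integers, so it vanishes precisely when each floor is $0$, that is, when $\lambda_{i}-\lambda_{i+1}\leqslant 1$ for every $i\in[\![1,\delta_{\Od}]\!]$. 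I do not anticipate any serious obstacle: the heart of the argument has already been carried out in Sections~\ref{Ksheet} and~\ref{AAA}, and what remains is essentially an assembly of those results with the dimension formula of Proposition~\ref{dimSK}.
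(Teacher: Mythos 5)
Your proposal is correct and follows essentially the route the paper intends: the remark preceding Claim~\ref{claim2} explicitly reduces rigidity to the condition $\dim\cc=0$ via the proof of Proposition~\ref{dimSK} and Remark~\ref{epsilonfinite}, which is exactly your chain $S_K(K.e)=K.e \iff \dim\XP(\sS)=0 \iff \dim\cc=0$, followed by reading off formula~\eqref{dimd}. Your small preliminary observation about $\zz(\g)\subset\kk$ forcing $\pp\subset[\g,\g]$ is a worthwhile clarification of what ``rigid'' means here, and the rest assembles results the paper has already established.
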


\noindent Note that the previous result depends only on the
partition $\bolda$ and not on the $ab$-diagram of $e$.  In
particular, $K.e$ is rigid if and only if each $K$-orbit
contained in $G.e\cap\pp$ is rigid.

\begin{exemple} Consider the symmetric pair
  $(\gl_{6},\gl_{3}\oplus\gl_{3})$ and a rigid $K$-orbit
  $\Od_1$ associated to the partition $\bolda=(3,2,1)$.
  This orbit contains in its closure a nilpotent $K$-orbit
  $\Od_2$ with partition $(3,1,1,1)$, cf.~\cite{Oh2}, but
  $\Od_{2}$ is not rigid.
  \\
  In type AIII, we can construct in this way $K$-sheets
  whose closures are not a union of sheets. \qed
\end{exemple}

\smallskip
 
(4) We have shown in Theorem~\ref{final} that the
irreducible components of $S_G \cap \pp$ are $K$-sheets and
are of the form $S_K(\Od_K)$, where $\Od_K$ is a (nilpotent)
$K$-orbit contained in $\Od:=G.e$. The number of these
irreducible components thus depends on the analysis of the
equality $S_{K}(\Od_{K}^1) = S_{K}(\Od_{K}^2)$ where
$\Od_{K}^1, \Od_{K}^2$ are nilpotent $K$-orbits.  An obvious
necessary condition is $\Od=G.\Od_{K}^1=G.\Od_{K}^2$. 

In cases AI and AII, $S_G\cap\pp$ is irreducible and $G.\Od_{K}^1=G.\Od_{K}^2$ is also a sufficient condition for having $S_{K}(\Od_{K}^1) = S_{K}(\Od_{K}^2)$.
This follows in case AII from $K=G^{\theta}$, hence $\Od \cap\pp=\Od_{K}^1$ (Proposition~\ref{JclassAIbis}), and in case AI from the fact that all sheets are Dixmier.

%
%
The situation in type AIII is more complicated and one can
find $G$-sheets having a nonirreducible intersection with
$\pp$.  The characterization of the equality
$S_{K}(\Od_{K}^1) = S_{K}(\Od_{K}^2)$ is given in
Claim~\ref{SKAIII}. 
We first have to define
the notion of ``rigidified $ab$-diagram''. Let $\Gamma$ be
an $ab$-diagram coresponding to a nilpotent $K$-orbit $\Od_K
\subset \pp$; remove from $\Gamma$ the maximum number of
pairs of consecutive columns of the same length.  The new
$ab$-diagram
obtained in this way is uniquely determined and is called
the the \emph{rigidified} $ab$-diagram deduced from
$\Gamma$, or associated to $\Od_K$. The terminology can be
justified by the following remark: a rigidified $ab$-diagram
corresponds to a rigid nilpotent $K$-orbit in some other
symmetric pair of type AIII.

\begin{claim} \label{SKAIII} The two orbits $\Od_{K}^1$ and
  $\Od_{K}^2$ are contained in the same $K$-sheet,
  {i.e.}~$S_{K}(\Od_{K}^1)=S_{K}(\Od_{K}^2)$, if and only if
  their associated rigidified $ab$-diagrams are equal. \qed
\end{claim}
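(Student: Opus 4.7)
The strategy is to identify, for each nilpotent $K$-orbit $\Od_K\subset \Od\cap\pp$ (with $\Od := G.e$ the $G$-orbit containing $\Od_K$), the $ab$-diagram $\Gamma^\Phi(J_K)$ of the dense $J_K$-class in the $K$-sheet $S_K(\Od_K)$ with the rigidified $ab$-diagram associated to $\Od_K$. Once this is done, combining Theorem~\ref{equidim}(iv) (the bijection between irreducible components of $S_G\cap\pp$ and $J_K$-classes in the dense $J_G$-class $J\subset S_G$) with Proposition~\ref{cftt}(2) (which classifies these $J_K$-classes by their $ab$-diagrams in the type-AIII factor $(\g_{s,0},\kk_{s,0})$) will yield the claim for orbits in a common $G$-orbit; that $\Od_K^1,\Od_K^2$ must lie in the same $G$-orbit in order to be in the same $K$-sheet is automatic because in type~A the $G$-sheets are pairwise disjoint and each $K$-sheet is contained in a unique $G$-sheet.

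To compute $\Gamma^\Phi(J_K)$, pick $e\in \Od_K$ with its Jordan basis $\mathbf{v}$ and a normal \Striplet $\sS=(e,h,f)$, and take the subspace $\cc\subset\tf$ from~\eqref{defc}. By Proposition~\ref{corcc}, for generic $c\in\cc$ the element $y:=\varepsilon(e+c)\in\pp$ lies in the dense $J_K$-class of $S_K(\Od_K)$. Working block by block in $\qq=\bigoplus_i \qq_i$, Corollary~\ref{corlm} together with the explicit form~\eqref{defcprime} of the generators $c(i',j)$ shows that the multiplicity of $0$ as an eigenvalue of $c$ in block $\qq_i$ equals $|D_i|$, where $D_i:=\{v\geq i:\lambda_v-\lambda_{v+1}\text{ odd}\}$. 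By Corollary~\ref{fondAIIIbis}(4) each $n_{i,s,0}$ is regular nilpotent on the generalized $0$-eigenspace of $y$ in $\qq_i$, so the Young diagram of $n_{s,0}$ has rows of lengths $|D_1|\geq|D_2|\geq\cdots$; its conjugate partition lists, in decreasing order, those $v$ with $\lambda_v-\lambda_{v+1}$ odd, which is precisely the output of the rigidification procedure applied to $\bolda$. Hence the shape of $\Gamma^\Phi(y)$ matches that of the rigidified $ab$-diagram. For the labels, I would exploit that $P_j\equiv 0$ for even $j\leq 0$ (established in the proof of Proposition~\ref{unifAIII}(i)), so the restriction of $y$ to $\qq_i$ has nonzero entries only on the superdiagonal and on odd subdiagonals; a parity argument then shows that the generalized $0$-eigenspace splits, compatibly with the decomposition $V=V_a^\Phi\oplus V_b^\Phi$, into an odd-position part and an even-position part exchanged by the nilpotent action. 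Tracing through the convention defining $\Gamma^\Phi$ via a cyclic basis starting from $\ker f$, one finds that the first label of row $i$ of $\Gamma^\Phi(y)$ is $\Phi(i)$ precisely when $|D_i|\equiv\lambda_i\pmod 2$; the telescoping identity $\lambda_i=\sum_{v\geq i}(\lambda_v-\lambda_{v+1})$ forces this congruence, so row $i$ of $\Gamma^\Phi(y)$ begins with $\Phi(i)$ and alternates, exactly as row $i$ of the rigidified $ab$-picture obtained from $\Delta(\Phi)$.

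The main obstacle is the label-matching step: the generalized $0$-eigenvectors of $y$ are nontrivial linear combinations of the basis vectors $v_j^{(i)}$, so their $V_a^\Phi/V_b^\Phi$-content cannot be read off naively from the original $ab$-picture. The odd-subdiagonal structure of $y\in\pp$ coming from Proposition~\ref{unifAIII}(i), combined with the elegant parity identity $|D_i|\equiv\lambda_i\pmod 2$, are precisely the technical ingredients that force the labels of $\Gamma^\Phi(y)$ to coincide with those of the rigidified $ab$-picture; once this matching is in place, Theorem~\ref{equidim}(iv) and Proposition~\ref{cftt}(2) deliver the equivalence at the level of $K$-sheets.
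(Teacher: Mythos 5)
The paper states Claim~\ref{SKAIII} without proof: it lies in~\S\ref{lastcomments}, where the author says explicitly that full details are omitted, hinting only (for the neighbouring Claims~\ref{claim1} and~\ref{claim2}) that they follow from an analysis of elements of $e+\cc$ using Proposition~\ref{corcc}, Corollary~\ref{corlm}, etc. Your proof follows exactly that line, so there is no written paper proof to compare against, only this hint, which you match.

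Your argument is correct. The reduction via Theorem~\ref{equidim}(iii--iv) and Proposition~\ref{cftt}(2) to the identity ``$\Gamma^{\Phi}(J_g)=$ rigidified $ab$-diagram of $\Od_{g.e}$'' is the right framework, and the shape computation is right: from~\eqref{defcprime} one reads off that, for generic $c\in\cc$, the coordinate $x_k$ of~\eqref{alpha} vanishes exactly for $k\in\{\lambda_v:\lambda_v-\lambda_{v+1}\text{ odd}\}$, whence $m_i(c,0)=|D_i|$ and, by Corollary~\ref{fondAIIIbis}(4), the Young diagram of $n_{s,0}$ is $(|D_1|\ge|D_2|\ge\cdots)$, which is indeed the shape left after deleting pairs of equal columns. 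The one step you should spell out is the kernel containment that drives the label claim: writing $y_i:=\varepsilon_i(e_i+c_i)$, the vanishing $P_j=0$ for even $j\le 0$ (Proposition~\ref{unifAIII}(i)) puts $y_i$ on the superdiagonal and odd subdiagonals only, so the $j$th equation of $y_iv=0$ expresses $v_{j+1}$ through coordinates $v_{j'}$ with $j'\equiv j+1\ (\mathrm{mod}\ 2)$; inductively $\ker y_i\subset\langle v_1^{(i)},v_3^{(i)},\dots\rangle$, which lies in $V_{\Phi(i)}$ when $\lambda_i$ is odd and in $V_{\overline{\Phi(i)}}$ when $\lambda_i$ is even. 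Since $\ker y_i=\ker n_{i,s,0}$ is the bottom of the length-$|D_i|$ Jordan chain whose top vector $\zeta_1\in\ker f'$ gives the first box, that first box is $\Phi(i)$ precisely when $|D_i|\equiv\lambda_i\ (\mathrm{mod}\ 2)$, and the telescoping identity you quote makes this automatic. With this made explicit, and with the transfer to the other orbits $\Od_{g.e}$ via the isomorphism $\tau$ of~\S\ref{involreduc} (so that the computation with a fresh $e\in\Od_K$ and its own function $\Psi$ really does yield $\Gamma^{\Phi}(J_g)$), the proof is complete. One small caveat worth recording: the claim must be read under the standing assumption $G.\Od_K^1=G.\Od_K^2$ set up by the surrounding text, since two nilpotent orbits in different $G$-orbits can share (say) the empty rigidified diagram.
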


\begin{exemple}
  Let $(\g,\kk)=(\gl_{8},\gl_{4}\oplus\gl_{4})$ and $\Od$ be
  the nilpotent $G$-orbit with associated partition
  $\bolda=(4,3,1)$.  The set $\Od\cap\pp$ splits into four
  $K$-orbits $\Od_{K}^j$, $1 \le j \le 4$, whose respective
  $ab$-diagrams are
$$
\Gamma(\Od_{K}^1)=\begin{array}{l}abab\\aba\\b\end{array};\;\;
\Gamma(\Od_{K}^2)=\begin{array}{l}abab\\bab\\a\end{array};\;\;
\Gamma(\Od_{K}^3)=\begin{array}{l}baba\\aba\\b\end{array};\;\;
\Gamma(\Od_{K}^4)=\begin{array}{l}baba\\bab\\a\end{array}.
$$
The associated rigidified $ab$-diagrams are, respectively:
$$
\begin{array}{l}ab\\a\\b\end{array};\qquad\qquad\quad
\begin{array}{l}ab\\a\\b\end{array};\qquad\qquad\quad
\begin{array}{l}ba\\a\\b\end{array};\qquad\qquad\quad
\begin{array}{l}ba\\a\\b\end{array}.
$$
The previous result implies that $S_{G}\cap\pp$ is the
disjoint union of $S_{K}(\Od_{K}^1)=S_{K}(\Od_{K}^2)$ and
$S_{K}(\Od_{K}^3)=S_{K}(\Od_{K}^4)$. \qed
\end{exemple}

\smallskip

(5) A natural problem is, using section \S\ref{Ksheet}, to
generalize the results obtained in type~A to other types.
The action of $\varepsilon$ is well described in \cite{IH}
for classical Lie algebras and one may ask if conditions
\eqref{heart}, \eqref{diamond} or \eqref{club} hold in this
case.  Concerning \eqref{heart}, the author made some
calculations when $(\g,\kk)$ is of type~CI.  Im-Hof,
cf.~\cite{IH}, splits this type in three cases that we label
CI-I, CI-II and CI-III.  It is likely that \eqref{heart}
remains true for the first two cases.
In case CI-III one finds the following
counterexample. Consider $(\g,\kk):=(\spn_{6},\gl_{3})$ and
the sheet $S_{G}$ with datum $(\lf,0)$ where $\lf$ is
isomorphic to $\gl_{2}\oplus\spn_{2}$.
Let $e$ and $e'$ be nilpotent elements in $S_{G}\cap\pp$
with respective $ab$-diagrams $\Gamma(e)=
\sscolumn{abab}{ab\phantom{ba}}$ and $\Gamma(e')=
\sscolumn{abab}{ba\phantom{ba}}$.
Embed $e$, resp.~$e'$, in an \Striplet $\sS$,
resp.~$\sS'$. One can show that $\dim \XP(S_{G},\sS)=1$,
$\dim \XP(S_{G},\sS')=2$ and we then get
$G.(e+\XP(S_{G},\sS))\subsetneq G.(e'+\XP(S_{G},\sS'))$,
showing that \eqref{heart} is not satisfied.  Moreover, we
see that the similarity observed in the case $\g=\gl_{N}$
between properties of $\XP(S_{G},g.\sS)$ and
$\XP(S_{G},\sS)$, when $g\in\mathsf{Z}$, is no longer valid.





{
  
}
\vfill

\end{document}